\theoremstyle{definition}
\newtheorem{defn}{Definition}[section]
\newtheorem{para}[defn]{}
\newtheorem{egg}[defn]{Example}
\newtheorem{rmk}[defn]{Remark}
\newtheorem{assumption}[defn]{Assumption}
\newtheorem*{assumption*}{Assumption}
\theoremstyle{plain}
\newtheorem{prop}[defn]{Proposition}
\newtheorem{theo}[defn]{Theorem}
\newtheorem{lem}[defn]{Lemma}
\newtheorem{cor}[defn]{Corollary}
\newtheorem*{claim*}{Claim}
\newtheorem*{just*}{Justification}
\newtheorem*{lem*}{Lemma}
\newtheorem*{prop*}{Proposition}
\newcommand{\J}{\mathscr{J}}
\newcommand{\V}{\mathscr{V}}
\newcommand{\ob}{\operatorname{\mathsf{ob}}}
\newcommand{\mor}{\operatorname{\mathsf{mor}}}
\newcommand{\tensor}{\otimes}
\newcommand{\Cat}{\mathsf{Cat}}
\newcommand{\underJ}{\kern -0.5ex \mathscr{J}}
\newcommand{\Set}{\mathsf{Set}}
\newcommand{\op}{\mathsf{op}}
\newcommand{\C}{\mathscr{C}}
\newcommand{\B}{\mathscr{B}}
\newcommand{\A}{\mathscr{A}}
\newcommand{\Mod}{\text{-}\mathsf{Mod}}
\newcommand{\y}{\mathsf{y}}
\newcommand{\Alg}{\mathsf{Alg}}
\newcommand{\F}{\mathscr{F}}
\newcommand{\K}{\mathcal{K}}
\newcommand{\M}{\mathscr{M}}
\newcommand{\limit}{\mathsf{lim}}
\newcommand{\II}{\mathbb{I}}
\newcommand{\colim}{\mathsf{colim}}
\newcommand{\G}{\mathscr{G}}
\newcommand{\Iso}{\mathsf{Iso}}
\newcommand{\h}{\mathscr{H}}
\newcommand{\W}{\mathscr{W}}
\newcommand{\D}{\mathscr{D}}
\newcommand{\scrT}{\mathscr{T}}
\newcommand{\StrongEpi}{\mathsf{StrongEpi}}
\newcommand{\Mono}{\mathsf{Mono}}
\newcommand{\Cts}{\mathsf{Cts}}
\newcommand{\X}{\mathscr{X}}
\newcommand{\Epi}{\mathsf{Epi}}
\newcommand{\sketch}{\Psi}
\newcommand{\Eclass}{\mathscr{E}}
\newcommand{\Mclass}{\mathscr{M}}
\newcommand{\E}{\mathscr{E}}
\newcommand{\StrongMono}{\mathsf{StrongMono}}
\newcommand{\Top}{\mathsf{Top}}
\newcommand{\scrTAlg}{\scrT\text{-}\Alg}
\newcommand{\CAT}{\text{-}\mathsf{CAT}}
\newcommand{\Day}{\mathsf{\scriptscriptstyle Day}}
\newcommand{\Ev}{\mathsf{Ev}}
\newcommand{\Sub}{\mathsf{Sub}}
\newcommand{\Y}{\mathscr{Y}}
\newcommand{\Z}{\mathscr{Z}}
\DeclareMathOperator{\tensorbar}{\bar{\tensor}}
\titleformat{\section}{\normalsize\bfseries}{\thesection}{1em}{}
\titleformat{\subsection}{\normalsize\bfseries}{\thesubsection}{1em}{}
\numberwithin{equation}{section}
\begin{document}

\title{\Large \textbf{Locally bounded enriched categories}}

\author{Rory B. B. Lucyshyn-Wright\let\thefootnote\relax\thanks{We acknowledge the support of the Natural Sciences and Engineering Research Council of Canada (NSERC), [funding reference numbers RGPIN-2019-05274, RGPAS-2019-00087, DGECR-2019-00273].  Cette recherche a été financée par le Conseil de recherches en sciences naturelles et en génie du Canada (CRSNG), [numéros de référence RGPIN-2019-05274, RGPAS-2019-00087, DGECR-2019-00273].} \medskip \\ Jason Parker \medskip
\\
\small Brandon University, Brandon, Manitoba, Canada}

\date{}

\maketitle

\begin{abstract}
We define and study the notion of a locally bounded \emph{enriched} category over a (locally bounded) symmetric monoidal closed category, generalizing the locally bounded ordinary categories of Freyd and Kelly.  In addition to proving several general results for constructing examples of locally bounded enriched categories and locally bounded closed categories, we demonstrate that locally bounded enriched categories admit fully enriched analogues of many of the convenient results enjoyed by locally bounded ordinary categories. In particular, we prove full enrichments of Freyd and Kelly's reflectivity and local boundedness results for orthogonal subcategories and categories of models for sketches and theories.  We also provide characterization results for locally bounded enriched categories in terms of enriched presheaf categories, and we show that locally bounded enriched categories admit useful adjoint functor theorems and a representability theorem.  We also define and study the notion of $\alpha$-bounded-small weighted limit enriched in a locally $\alpha$-bounded closed category, which parallels Kelly's notion of $\alpha$-small weighted limit enriched in a locally $\alpha$-presentable closed category, and we show that enriched categories of models of $\alpha$-bounded-small weighted limit theories are locally $\alpha$-bounded.
\end{abstract}

\section{Introduction}

Based on ideas of Freyd and Kelly in \cite{FreydKelly}, the notion of locally bounded category has its foundation in the theory of factorization systems and was explicitly introduced by Kelly in \cite[Chapter 6]{Kelly}, where a given locally bounded symmetric monoidal closed category $\V$ was used as the basis for a general treatment of enriched limit theories. Locally bounded categories subsume locally presentable categories and a significant number of other examples, including many topological categories and various elementary quasitoposes that are not locally presentable, as well as numerous categories of structures internal to these categories. As such, the notion of locally bounded category is much weaker than the notion of locally presentable category: for example, a locally bounded category (such as the category $\Top$ of topological spaces and continuous functions) need not have a small dense subcategory or even a strong generator. Nevertheless, locally bounded categories still retain some of the convenient features of locally presentable categories, such as reflectivity results for orthogonal subcategories (see \cite{FreydKelly, Kellytrans}) and results on the existence of free monads, colimits of monads, and colimits in categories of algebras (see \cite{Kellytrans} and further references therein). The enriched limit theories over locally bounded closed categories treated by Kelly in \cite[Chapter 6]{Kelly} generalize the enriched finite limit theories that Kelly introduced in \cite{Kellystr} along with the notions of locally presentable symmetric monoidal closed category and locally presentable $\V$-category. However, notably absent from the literature has been a notion of locally bounded $\V$-category that would complete the parallel between the locally bounded and locally presentable settings. 

Our aims in writing this paper are therefore twofold: firstly, we wish to define and study the notion of a locally bounded enriched category as an object of study in its own right. Secondly, the authors have discovered (and will further demonstrate in forthcoming work \cite{Pres, EP, Struct}) that locally bounded enriched categories provide a fruitful and more expansive environment for studying certain phenomena in enriched categorical algebra and, more generally, in the study of structures internal to enriched categories via enriched limit theories and related methods. Indeed, some results of this type that already appear in the present paper may be found in Section \ref{sketches}. Hence, we wished to develop a full treatment of locally bounded enriched categories that would aid in our investigation of these enriched algebraic phenomena and results.  

We now provide an outline of the paper. After briefly reviewing some notation and terminology in Section \ref{background}, we begin by reviewing and studying the notion of a locally bounded \emph{ordinary} category in Section \ref{locallyboundedordinary}. In Section \ref{locallyboundedVcategories} we proceed to introduce the notion of a locally bounded $\V$-category over a (not necessarily locally bounded) symmetric monoidal closed category $\V$ satisfying certain mild assumptions. To define this class of $\V$-categories, we first introduce the notion of a $\V$\emph{-factegory}, which is a $\V$-category equipped with an enriched factorization system that is suitably \emph{compatible} with a given enriched factorization system on $\V$. After first proving some basic results about locally bounded $\V$-categories, we then develop the notion of a \emph{bounding right adjoint}, and we show that any bounding right adjoint whose codomain is locally bounded induces a locally bounded structure on its domain. We then use this result to prove a characterization theorem for locally bounded $\V$-categories, to the effect that a $\V$-category (satisfying certain mild assumptions) is locally bounded iff it has a bounding right adjoint into a presheaf $\V$-category. 

To prove stronger results about locally bounded $\V$-categories, we begin Section \ref{locbdclosedsection} by reviewing and proving some new results about Kelly's notion of a locally bounded symmetric monoidal closed category $\V$. We then show that a large number of (elementary) quasitoposes (which may fail to be locally presentable) are locally bounded cartesian closed categories. We also develop classes of examples of locally bounded closed categories that are topological over the category of sets. 

In the remainder of the paper, we study locally bounded $\V$-categories enriched over a locally bounded closed category $\V$. In Section \ref{enrichedvsordinary} we examine the relationship between the local boundedness of a $\V$-category versus that of its underlying ordinary category. In Section \ref{boundedpresentable} we compare locally bounded $\V$-categories with locally presentable $\V$-categories and the enriched locally generated categories of \cite{Libertienriched}. In Section \ref{representability} we show that locally bounded $\V$-categories satisfy particularly useful adjoint functor and representability theorems. 

In Section \ref{commutation}, we establish analogues in locally bounded $\V$-categories of results on the commutation of $\alpha$-small limits and $\alpha$-filtered colimits in locally $\alpha$-presentable categories. In more detail, we adapt Kelly's notion of \emph{$\alpha$-small weight} enriched in a locally $\alpha$-presentable closed category \cite{Kellystr} by replacing the notion of $\alpha$-presentable object with the notion of $\alpha$-bounded object, thus defining the notion of $\alpha$\emph{-bounded-small weight} enriched in a locally $\alpha$-bounded closed category, and we show that $\alpha$-bounded-small limits commute with $\alpha$-filtered unions in any locally $\alpha$-bounded $\V$-category.  

In Section \ref{orth_subcats}, we prove a fully enriched analogue of Freyd and Kelly's result that certain orthogonal subcategories of suitable locally bounded ordinary categories are reflective and locally bounded (see \cite[4.1.3, 4.2.2]{FreydKelly}).

In Section \ref{sketches}, we use results from Sections \ref{commutation} and \ref{orth_subcats} to obtain enriched reflectivity and local boundedness results for $\V$-categories of models of enriched limit sketches and enriched limit theories in arbitrary locally bounded $\V$-categories. In this way, we obtain a full enrichment of another main result of Freyd and Kelly (see \cite[5.2.1]{FreydKelly}), namely that certain categories of models of sketches valued in locally bounded ordinary categories are reflective in the associated functor categories and are themselves locally bounded.  Thus we obtain theorems on the reflectivity and local boundedness of $\V$-categories of models in a locally bounded $\V$-category $\C$, recalling that in the $\V$-enriched context, Kelly had proved only the reflectivity in just the $\C = \V$ case \cite[\S 6.3]{Kelly}.  We prove that the $\V$-category of models of any $\alpha$\emph{-bounded-small limit theory} in a suitable locally $\alpha$-bounded $\V$-category is itself locally $\alpha$-bounded, thus obtaining an analogue of the well-known result that the $\V$-category of models of an enriched finite limit theory in a locally finitely presentable closed category is itself locally finitely presentable.  Taken together, our results on $\V$-categories of models of sketches and theories in locally bounded $\V$-categories establish locally bounded $\V$-categories as an expansive and yet convenient ambient environment for the study of structures internal to enriched categories.  We conclude Section \ref{sketches} with a result that illustrates the applicability of locally bounded enriched categories to the study of enriched algebraic theories.  In Section \ref{sec:lbclosed_vcats_sm_phitheories}, we show that certain $\V$-categories of models of \textit{symmetric monoidal weighted limit theories} are locally bounded symmetric monoidal closed $\V$-categories, thus providing a further source of locally bounded closed categories.

One topic that we have chosen \emph{not} to include in the present paper concerns $\V$-categories of algebras for $\V$-monads on locally bounded $\V$-categories, but we plan to present results on this topic in forthcoming work.

\section{Notation and terminology}
\label{background}

In this section we fix some notation and terminology from enriched category theory; for the most part, we use the notation of \cite{Kelly}. Throughout the paper we make use of the standard methods of enriched category theory that are treated in \cite{Dubucbook, Kelly} and \cite[Chapter 6]{Borceux2}.

We generally work with categories enriched over a symmetric monoidal closed category $\V = (\V_0, \tensor, I)$ with $\V_0$ locally small, about which we do not make any further blanket assumptions (specific assumptions about $\V$ will be made as needed). We distinguish in the usual way between \emph{small} and \emph{large} classes; we also refer to small classes as (small) sets. We let $\Set$ be the cartesian closed category of (small) sets; an \emph{(ordinary) category} $\C$ is then a $\Set$-enriched (i.e. locally small) category. 

A \emph{weight} is a $\V$-functor $W : \B \to \V$ with $\B$ a (not necessarily small) $\V$-category; we then have the usual notions of \emph{(weighted) (co)limit} in a $\V$-category, and preservation of such by a $\V$-functor. Following \cite{Kelly}, we usually just say ``colimit" rather than ``weighted colimit". If $W : \B \to \V$ is a weight, then we say that a $\V$-category $\C$ is $W$\emph{-(co)complete} if $\C$ admits all $W$-weighted (co)limits, and that a $\V$-functor is $W$\emph{-(co)continuous} if it preserves all $W$-weighted (co)limits. If $\Phi$ is a class of weights, then we define the notions of $\Phi$\emph{-(co)completeness} and $\Phi$\emph{-(co)continuity} analogously. In particular, a $\V$-category $\C$ is \emph{(co)complete} if it is $\Phi$\emph{-(co)complete} for $\Phi$ the class of all small weights. Given objects $V \in \ob\V$ and $C \in \ob\C$, we denote the \emph{cotensor} of $C$ by $V$ in $\C$ (if it exists) by $[V, C]$, and the \emph{tensor} of $C$ by $V$ in $\C$ (if it exists) by $V \tensor C$.

Finally, we need the notion of a \emph{regular cardinal}, which is an infinite cardinal $\alpha$ that is not the sum of a smaller number of smaller cardinals, so that $\aleph_0$ is the smallest regular cardinal. It is well known that there is always a regular cardinal larger than every element of a given set of regular cardinals; cf. e.g. the (stronger) result \cite[2.13(6)]{LPAC}.   

\section{Locally bounded ordinary categories}
\label{locallyboundedordinary}

We begin by recalling the definition of a \emph{locally bounded ordinary category} from \cite[Section 6.1]{Kelly}. Recall that an \emph{(orthogonal) factorization system} on an ordinary category $\C$ is a pair $(\Eclass, \Mclass)$ of classes of morphisms of $\C$ with the following properties: $\E$ and $\M$ both contain the isomorphisms and are closed under composition; every morphism of $\C$ can be factorized as an $\E$-morphism followed by an $\M$-morphism; and every $\E$-morphism is orthogonal to every $\M$-morphism. It follows that a morphism belongs to $\E$ iff it is orthogonal to every $\M$-morphism, and dually a morphism belongs to $\M$ iff every $\E$-morphism is orthogonal to it. It is also a simple consequence of this definition that any morphism in $\Eclass \cap \Mclass$ must be an isomorphism. A factorization system $(\Eclass, \Mclass)$ on $\C$ is \emph{proper} if $\Eclass$ is contained in the epimorphisms and $\Mclass$ in the monomorphisms; in this case, every regular epimorphism (and in particular, every retraction) lies in $\Eclass$ (and dually for $\M$), while $g \circ f \in \Eclass$ implies $g \in \Eclass$ and $g \circ f \in \Mclass$ implies $f \in \Mclass$ (see \cite[2.1.4]{FreydKelly}). We often refer to $\E$ as the left class and to $\M$ as the right class.

Since we mostly consider categories equipped with proper factorization systems in this paper, we introduce the following abbreviated terminology:

\begin{defn}
\label{factegory}
A \textbf{(proper) factegory} is a category $\C$ equipped with a proper factorization system $(\Eclass, \Mclass)$. A factegory $\C$ is \textbf{cocomplete} if the category $\C$ is cocomplete and every (even large) class of $\E$-morphisms with common domain has a cointersection (i.e. wide pushout) in $\C$. \qed
\end{defn}

It is shown in \cite[1.3]{Kellytrans} that the cocompleteness of a factegory actually implies that $\E$ must be contained in the epimorphisms. Of course, if $\C$ is cocomplete and $\E$\emph{-cowellpowered}, in the sense that every object of $C$ has just a set of isomorphism classes of $\E$-quotients (i.e. $\E$-morphisms with domain $C$), then the factegory $\C$ is automatically cocomplete. Throughout, we implicitly equip $\Set$ with its usual proper factorization system $(\Epi, \Mono)$, making $\Set$ a cocomplete factegory.  
 
Given an object $C$ of a factegory $\C$ (assumed proper), an \textit{$\M$-subobject} of $C$ is, in this paper, simply an $\M$-morphism with codomain $C$ (rather than equivalence class of such morphisms).  The $\M$-subobjects of $C$ are the objects of a full subcategory $\Sub_\M(C)$ of the slice category $\C \slash C$, and this category $\Sub_\M(C)$ is a preordered class since $\M$ consists of monomorphisms.  From one point of view, an $\M$-subobject of $C$ is an object $A$ of $\C$ equipped with a specified $\M$-morphism $m:A \rightarrow C$, so we sometimes write simply the domain $A$ to denote such an $\M$-subobject $m$.  In particular, if $m:A \rightarrow C$ and $n:B \rightarrow C$ are $\M$-subobjects, then we write that $A \cong B$ \textit{as $\M$-subobjects of $C$} to mean that $m \cong n$ in $\Sub_\M(C)$.

\begin{para}\label{union}
We now recall the notion of a \emph{union} of $\Mclass$-subobjects in a factegory $\C$ (see \cite[2.4]{FreydKelly}). Recall that, in general, a sink in a category $\C$ is a family of morphisms $(f_i : C_i \to C)_{i \in I}$ with common codomain, but in this paper we use the term \textit{sink} to refer only to small sinks, i.e. those whose indexing class $I$ is a (small) set. If $\C$ is a factegory, then we say that a sink $(f_i : C_i \to C)_{i \in I}$ is $\E$\emph{-tight}\footnote{Here we have adopted a variation on Kelly's term \textit{$\E$-tight (co)cone} \cite[2.2]{Kellytrans}.  Since $\M$ consists of monomorphisms, $\E$-tight sinks are precisely those (small) sinks that are orthogonal to $\M$-morphisms in the standard sense \cite[II.5.3]{Monoidaltop}.}, or is \emph{jointly in $\E$}, if a morphism $h : C \to B$ factors through an $\M$-subobject $m : A \to B$ iff each composite $h \circ f_i$ ($i \in I$) factors through $m$. If the coproduct $\coprod_i C_i$ exists in $\C$, then it is easy to see that $(f_i : C_i \to C)_{i \in I}$ is $\E$-tight iff the induced morphism $\coprod_i C_i \xrightarrow{[f_i]_i} C$ is in $\E$ (see \cite[2.4]{FreydKelly}). If the category $\C$ is cocomplete and $D : \A \to \C$ is a small diagram, then every colimit cocone $(s_A : DA \to \colim \ D)_{A \in \A}$ is $\E$-tight, because (by the construction of colimits from coproducts and coequalizers) the induced morphism $\coprod_A DA \to \colim \ D$ is a regular epimorphism and thus lies in $\E$. We also say that a sink $(m_i : C_i \to C)_{i \in I}$ is an $\M$\emph{-sink} if $m_i \in \M$ for each $i \in I$.   A sink $(f_i : C_i \to C)_{i \in I}$ \emph{factors} through a morphism $g : D \to C$ with the same codomain if there is a sink $(h_i : C_i \to D)_{i \in I}$ with $g \circ h_i = f_i$ for each $i \in I$, which is unique if $g$ is a monomorphism. Given an $\M$-sink $(m_i : C_i \to C)_{i \in I}$ in a factegory $\C$, we now say that an $\M$-subobject $m : D \to C$ is a \textbf{union} (or $\M$\textbf{-union}) of the given $\M$-sink if the $\M$-sink factors (uniquely) through $m$ via an $\E$-tight sink.  It is straightforward to show that if $m$ is a union of an $\M$-sink $(m_i : C_i \to C)_{i \in I}$, then $m$ is a join $\bigvee_i m_i$ in the preordered class $\Sub_\M(C)$.  In particular, the union of an $\M$-sink $(m_i : C_i \to C)_{i \in I}$ is unique up to isomorphism in $\Sub_\M(C)$ if it exists, in which case we write it as $\bigcup_i C_i \rightarrow C$, and we also write simply $\bigcup_i C_i$ to denote the latter $\M$-subobject.

Now supposing that $\C$ is a factegory with small coproducts, we have the following straightforward way of constructing unions in $\C$.  For each object $C \in \ob\C$, let us write $J_C : \Sub_\M(C) \to \C/C$ for the inclusion functor. It is well known (see e.g. \cite[5.5.5]{Borceux1}) that $J_C$ has a left adjoint $K_C : \C/C \to \Sub_\M(C)$, which sends each morphism $f : D \to C$ to the $\M$-component of its $(\E, \M)$-factorization.  But the slice $\C \slash C$ has small coproducts, formed as in $\C$, so if $(m_i : C_i \to C)_{i \in I}$ is an $\M$-sink then the join $\bigvee_i m_i$ in the preordered class $\Sub_\M(C)$ may be formed by first taking the coproduct in $\C/C$ of the same family $\left(J_C(m_i) : C_i \to C\right)_{i \in I}$, and then taking its reflection into $\Sub_\M(C)$; that is, $\bigvee_{i \in I} m_i = K_C\bigl(\coprod_{i \in I} J_C(m_i)\bigr)$.  But the coproduct $\coprod_{i \in I} J_C(m_i)$ in $\C \slash C$ is simply the induced morphism $\coprod_i C_i \to C$, so by taking the $(\E,\M)$-factorization $\coprod_i C_i \xrightarrow{e} D \xrightarrow{m} C$ of the latter morphism we find that $m = \bigvee_i m_i$ in $\Sub_\M(C)$.  But the given $\M$-sink $(m_i)$ clearly factors through $m$ by way of an $\E$-tight sink, so in fact this join $m = \bigvee_i m_i:D \rightarrow C$ is, moreover, a union of the sink $m_i$, that is, $D \cong \bigcup_i C_i$ as $\M$-subobjects of $C$.  In summary, every $\M$-sink $(m_i : C_i \to C)_{i \in I}$ in $\C$ has a union, obtained by taking the $(\E,\M)$-factorization
$\coprod_i C_i \xrightarrow{e} \bigcup_i C_i \xrightarrow{m} C$
of the canonical morphism.

Thus, any factegory with small coproducts (in particular, $\Set$) has unions of all $\M$-sinks.  If $\alpha$ is a regular cardinal, then an $\M$-sink $\left(m_i : C_i \to C\right)_{i \in I}$ is \textit{$\alpha$-filtered} if the $\M$-subobjects $m_i$ constitute an $\alpha$-filtered full subcategory of $\Sub_\M(C)$, equivalently, if $\{m_i \mid i \in I\}$ is a $\alpha$-directed subset of the preordered class $\Sub_\M(C)$, i.e., for every subset $J \subseteq I$ of cardinality less than $\alpha$, there is some $i \in I$ with $m_j$ factoring through $m_i$ for all $j \in J$.  An $\alpha$\textbf{-filtered union\footnote{We follow Kelly \cite[\S 6.1]{Kelly} in calling these $\alpha$-filtered unions rather than $\alpha$-directed unions.} of} $\Mclass$\textbf{-subobjects} or an $\alpha$\textbf{-filtered} $\Mclass$\textbf{-union} is the union of an $\alpha$-filtered $\M$-sink. A factegory $\C$ has ($\alpha$-filtered) $\M$-unions if every ($\alpha$-filtered) $\M$-sink in $\C$ has a union. \qed 
\end{para}

We now wish to formulate the notion of a functor \emph{preserving unions}. First, we require the following definition:

\begin{defn}
\label{preservesrightclass}
A \textbf{right-class functor} is a functor $U:\C \to \D$ between factegories $\C$ and $\D$ such that $U$ \textbf{preserves the right class}, meaning that $m \in \Mclass$ implies $Um \in \Mclass$.  Similarly, a \textbf{left-class functor} is a functor $F:\D \to \C$ between factegories $\D$ and $\C$ such that $F$ \textbf{preserves the left class}, meaning that $e \in \E$ implies $Fe \in \E$. \qed
\end{defn}

\noindent We immediately note the following useful fact:

\begin{lem}
\label{preservesrightclasslem}
Let $U : \C \to \D$ be a functor between factegories, and suppose that $U$ has a left adjoint $F$.  Then $U$ preserves the right class iff $F$ preserves the left class.
\end{lem}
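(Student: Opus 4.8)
The plan is to reduce everything to the orthogonality characterizations of the two classes together with a single ``transposition of orthogonality'' principle across the adjunction $F \dashv U$. Recall from the factorization system axioms (as stated in the excerpt) that in any factegory a morphism $e$ lies in $\E$ iff it is orthogonal to every morphism in $\M$, and dually a morphism $m$ lies in $\M$ iff every morphism in $\E$ is orthogonal to it. So it suffices to relate orthogonality in $\C$ to orthogonality in $\D$ by transposing lifting problems along the adjunction.

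The key step, which I would isolate as a sublemma, is the following: for any morphism $e$ in $\D$ and any morphism $m$ in $\C$, the morphism $Fe$ is orthogonal to $m$ in $\C$ if and only if $e$ is orthogonal to $Um$ in $\D$. To see this, I would use the adjunction hom-bijection $\C(F-,-) \cong \D(-,U-)$, which is natural in both variables. Naturality ensures that a commutative square with left edge $Fe$ and right edge $m$ in $\C$ corresponds bijectively to a commutative square with left edge $e$ and right edge $Um$ in $\D$, obtained by transposing the two horizontal edges and applying $U$ to the right edge; moreover, a diagonal filler $FB \to X$ for the first square transposes precisely to a diagonal filler $B \to UX$ for the second, and this correspondence between fillers respects both commutativity of the two resulting triangles and uniqueness. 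Hence unique diagonal fillers exist on one side iff they do on the other, giving the claimed equivalence of orthogonality statements.

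Granting the sublemma, both implications follow immediately. For the forward direction, suppose $U$ preserves the right class and let $e \in \E$ in $\D$; to show $Fe \in \E$ I must show $Fe$ is orthogonal to every $m \in \M$ of $\C$. Given such $m$, we have $Um \in \M$ in $\D$ by hypothesis, so $e$ is orthogonal to $Um$ since $e \in \E$, whence $Fe$ is orthogonal to $m$ by the sublemma. For the converse, suppose $F$ preserves the left class and let $m \in \M$ in $\C$; to show $Um \in \M$ I must show every $e \in \E$ of $\D$ is orthogonal to $Um$. Given such $e$, we have $Fe \in \E$ in $\C$ by hypothesis, so $Fe$ is orthogonal to $m$ since $m \in \M$, whence $e$ is orthogonal to $Um$ by the sublemma.

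The only real work lies in the sublemma, and specifically in checking that the adjunction bijection carries commutative squares to commutative squares and diagonal fillers to diagonal fillers in a way that is compatible with the two triangle-commutativity conditions and preserves uniqueness; this is a routine but slightly fiddly naturality diagram chase, most cleanly organized by expressing the transposes explicitly in terms of the unit and counit of $F \dashv U$. Everything else is a direct application of the orthogonality characterizations of $\E$ and $\M$.
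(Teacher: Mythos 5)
Your proposal is correct and follows essentially the same route as the paper: the paper's proof also rests on the transposition principle that $Ff \perp g$ iff $f \perp Ug$ (cited from the first sentence of the proof of Freyd--Kelly 4.2.1 rather than re-derived), and then applies the orthogonality characterizations of $\E$ and $\M$ exactly as you do. The only difference is that you sketch a proof of the transposition sublemma where the paper cites it.
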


\begin{proof}
From the first sentence of the proof of \cite[4.2.1]{FreydKelly} we know for any morphisms $f$ of $\D$ and $g$ of $\C$ that $Ff$ is orthogonal to $g$ iff $f$ is orthogonal to $Ug$. If $U$ preserves the right class, then to show that $F$ preserves the left class, it suffices to show that $e \in \E$ implies that $Fe$ is orthogonal to every $m \in \M$, i.e. (by the fact just mentioned) that $e$ is orthogonal to $Um$ for every $m \in \M$, which is true because $e \in \E$ and $Um \in \M$ by assumption. The converse implication is proved analogously. 
\end{proof}

\begin{prop}
\label{leftadjointpreservesEtightness}
Let $\C$ and $\D$ be factegories with small coproducts.  If $F : \D \to \C$ is a left-class functor that preserves small coproducts, then $F$ preserves $\E$-tight sinks.  In particular, if $U:\C \rightarrow \D$ is a right adjoint right-class functor, then its left adjoint preserves $\E$-tight sinks.
\end{prop}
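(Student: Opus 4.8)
The proposition asserts: if $\C$ and $\D$ are factegories with small coproducts, and $F : \D \to \C$ is a left-class functor (preserves $\E$) that preserves small coproducts, then $F$ preserves $\E$-tight sinks. The "in particular" follows because a left adjoint preserves coproducts, and by Lemma \ref{preservesrightclasslem} a right-adjoint right-class functor's left adjoint is a left-class functor.

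**Key characterization to use.** From paragraph \ref{union}: in a factegory with small coproducts, a sink $(f_i : C_i \to C)_{i \in I}$ is $\E$-tight iff the induced morphism $\coprod_i C_i \xrightarrow{[f_i]_i} C$ is in $\E$. This is the crucial bridge — it converts the "$\E$-tight" property, which is defined via orthogonality/factorization, into the single statement that one specific morphism (the copairing out of the coproduct) lies in $\E$.

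**The plan.**
- Start with an $\E$-tight sink $(f_i : C_i \to C)_{i \in I}$ in $\D$.
- By the characterization, the copairing $c := [f_i]_i : \coprod_i C_i \to C$ is in $\E$ (the left class of $\D$).
- Since $F$ preserves the left class, $Fc$ is in $\E$ (the left class of $\C$).
- Since $F$ preserves small coproducts, $F(\coprod_i C_i)$ is the coproduct $\coprod_i FC_i$, and under this identification $Fc$ is the copairing $[Ff_i]_i$ of the image sink $(Ff_i : FC_i \to FC)_{i \in I}$.
- Apply the characterization in the other direction in $\C$: since $[Ff_i]_i \in \E$, the sink $(Ff_i)_i$ is $\E$-tight.

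This naturally falls into three moves: translate $\E$-tightness into "copairing in $\E$" (in $\D$), push through $F$ using the two preservation hypotheses, then translate back (in $\C$). Each direction of the translation relies on the coproduct hypotheses on $\D$ and $\C$ respectively.

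**Anticipated obstacle.** The only point requiring genuine care is the naturality/identification step: that $F$ applied to the copairing $[f_i]_i$ coincides, under the canonical comparison $F(\coprod_i C_i) \xrightarrow{\cong} \coprod_i FC_i$, with the copairing $[Ff_i]_i$. This is the standard fact that a coproduct-preserving functor sends copairings to copairings, and it holds because both morphisms restrict to $Ff_i$ along the $i$-th coproduct injection, so they agree by the universal property. The final clause is then immediate: a right adjoint preserves all colimits, hence small coproducts, and is a left-class functor by Lemma \ref{preservesrightclasslem} applied to its role as the right adjoint, so the general statement applies with $F$ the left adjoint.
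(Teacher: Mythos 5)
Your proof is correct and follows exactly the paper's argument: translate $\E$-tightness into the copairing $\coprod_i C_i \to C$ lying in $\E$, push this through $F$ using coproduct- and left-class-preservation, and deduce the final clause from Lemma \ref{preservesrightclasslem} together with the fact that left adjoints preserve coproducts. No gaps.
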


\begin{proof}
Firstly, if $F:\D \rightarrow \C$ has a right adjoint that preserves the right class, then $F$ certainly preserves small coproducts and also preserves the left class by \ref{preservesrightclasslem}. Now supposing just these latter properties, let $(f_i : D_i \to D)_{i \in I}$ be an $\E$-tight sink, so that the induced morphism $\coprod_i D_i \to D$ lies in $\E$. Then the assumptions imply that the canonical morphism $\coprod_i FD_i \cong F\left(\coprod_i D_i\right) \to FD$ lies in $\E$, so that the induced sink $(Ff_i : FD_i \to FD)_{i \in I}$ is $\E$-tight, as desired.
\end{proof}

\begin{para}\label{uc}
If $U : \C \to \D$ is a right-class functor, then each object $C$ of $\C$ determines a functor
$$U_C\;:\;\Sub_\M(C) \longrightarrow \Sub_\M(UC)$$
that sends each $\M$-subobject $m:A \to C$ to the $\M$-subobject $Um:UA \to UC$.  In particular, $U_C$ is a monotone map between preordered classes. \qed
\end{para}

\begin{defn}\label{preservesunions}
Let $U:\C \to \D$ be a right-class functor.  Given a union $m:\bigcup_i C_i \to C$ of an $\M$-sink $(m_i : C_i \to C)_{i \in I}$ in $\C$, we say that $U$ \textbf{preserves the union} $m$ of $(m_i)_{i \in I}$ if $Um:U\left(\bigcup_i C_i\right) \rightarrow UC$ is a union of the $\M$-sink $(Um_i : UC_i \to UC)_{i \in I}$ in $\D$.  If $\alpha$ is a regular cardinal, then we say that $U$ \textbf{preserves} ($\alpha$\textbf{-filtered}) $\Mclass$-\textbf{unions} if $U$ preserves every ($\alpha$-filtered) $\M$-union that exists in $\C$. \qed 
\end{defn}

Given a right-class functor $U:\C \rightarrow \D$ between factegories with $\M$-unions, since the union $m:\bigcup_i C_i \rightarrow C$ of an $\M$-sink $(m_i:C_i \to C)_{i \in I}$ in $\C$ is a join $\bigvee_i m_i$ in $\Sub_\M(C)$, we find that $U$ preserves the union $\bigcup_i C_i \rightarrow C$ iff the map $U_C:\Sub_\M(C) \rightarrow \Sub_\M(UC)$ preserves the join $\bigvee_i m_i$, iff $U\left(\bigcup_i C_i\right) \cong \bigcup_i UC_i$ in $\Sub_\M(UC)$.

\begin{para}\label{EMgen}
If $\G$ is a small set of objects in a factegory $\C$ with small coproducts, then $\G$ is an $(\Eclass, \Mclass)$\emph{-generator} if for each $C \in \ob\C$, the canonical morphism $\coprod_{G \in \G} \C(G, C) \cdot G \to C$ is in $\Eclass$, where $\C(G, C) \cdot G$ is the set-indexed copower in $\C$. As noted in \cite[\S 2]{KellyLack}, $\G$ is an $(\Eclass, \Mclass)$-generator iff the functors $\C(G, -) : \C \to \Set$ ($G \in \G$) are \emph{jointly} $\Mclass$\emph{-conservative}, in the sense that $m \in \Mclass$ is an isomorphism if $\C(G, m)$ is a bijection for every $G \in \G$; in \ref{enrichedEMgeneratoralt}, we establish an enriched generalization of this equivalence. Note that if $\E = \Epi$, then an $(\E, \M)$-generator is just a generator in the usual sense (see \cite[4.5.2]{Borceux1}). \qed    
\end{para}

To recall the definition of locally bounded category, we require also the following concept:

\begin{defn}
\label{alphaboundedobject}
Let $\C$ be a factegory with $\M$-unions. Given a regular cardinal $\alpha$, an object $C \in \ob\C$ is $\alpha$\textbf{-bounded} if the functor $\C(C, -) : \C \to \Set$ preserves $\alpha$-filtered $\M$-unions.  An object $C \in \ob\C$ is \textbf{bounded} if it is $\alpha$-bounded for some $\alpha$.\qed
\end{defn}

\noindent Note that the representable functor $\C(C, -) : \C \to \Set$ preserves the right class by properness, recalling that $\Set$ carries its usual proper factorization system. Concretely, an object $C \in \ob\C$ is $\alpha$-bounded if for every $\alpha$-filtered $\M$-sink $(m_i : D_i \to D)_{i \in I}$, every morphism $f : C \to \bigcup_i D_i$ factors through some $D_i$.

\begin{defn}
\label{locallyboundedordinarycategory}
Let $\alpha$ be a regular cardinal. A \textbf{locally} $\alpha$\textbf{-bounded category} is a cocomplete factegory $\C$ equipped with an $(\E, \M)$-generator consisting of $\alpha$-bounded objects. A category is \textbf{locally bounded} if it is locally $\alpha$-bounded for some regular cardinal $\alpha$. \qed  
\end{defn}

\noindent Note that a locally $\alpha$-bounded category is also locally $\beta$-bounded for any regular cardinal $\beta > \alpha$ (because an $\alpha$-bounded object is also $\beta$-bounded). We provide many examples of locally bounded (closed) categories in Section \ref{examples} below.

\begin{para}
\label{every_obj_bdd}
By \cite[3.1.2]{FreydKelly} (or \cite[\S 6.1]{Kelly}), every object of a locally bounded category is bounded. \qed 
\end{para}

We conclude this section with the following characterization result for $\E$\emph{-cowellpowered} locally $\alpha$-bounded categories proved by Sousa in \cite[2.8]{Sousa}, which is analogous to the corresponding characterization of locally $\alpha$-presentable categories in terms of $\alpha$-presentable objects and $\alpha$-filtered colimits. We shall make use of this important result in proving our representability and adjoint functor theorems for locally bounded categories in subsection \ref{representability} below. If $(m_i : C_i \to C)_{i \in I}$ is an $\M$-sink in a factegory $\C$, then we say that the codomain object $C$ \emph{is a union of} $(m_i)_i$ if the identity ($\M$-)morphism $1_C : C \to C$ is a union of $(m_i)_i$, which is clearly equivalent to the $\M$-sink $(m_i)_i$ being $\E$-tight.   

\begin{theo}[Sousa \cite{Sousa}]
\label{Sousathm}
Let $\C$ be a cocomplete and $\E$-cowellpowered factegory. Then $\C$ is locally $\alpha$-bounded iff there is a set $\h$ of $\alpha$-bounded objects of $\C$ such that every object $C \in \ob\C$ is an $\alpha$-filtered $\M$-union of objects of $\h$, in the sense that $C$ is a union of some $\alpha$-filtered $\M$-sink $(m_i : H_i \to C)_{i \in I}$ with each $H_i \in \h$. \qed
\end{theo}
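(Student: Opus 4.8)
The plan is to prove both implications, using the characterization of $(\E,\M)$-generators as jointly $\M$-conservative families from \ref{EMgen}, and reserving $\E$-cowellpoweredness for the harder forward direction. First I would isolate two stability properties of $\alpha$-bounded objects, both immediate from the concrete description following Definition \ref{alphaboundedobject}: (i) an $\alpha$-small coproduct of $\alpha$-bounded objects is $\alpha$-bounded, and (ii) if $e : C \to C'$ lies in $\E$ and $C$ is $\alpha$-bounded, then $C'$ is $\alpha$-bounded. For (i), a map out of $\coprod_j C_j$ into an $\alpha$-filtered $\M$-union is a family of fewer than $\alpha$ maps, each factoring through some member of the sink, and $\alpha$-filteredness (with $\alpha$ regular) produces a single member through which all factor. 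For (ii), given $f : C' \to \bigcup_i D_i$, the composite $f \circ e$ factors through some $D_i$, and orthogonality of $e \in \E$ to the $\M$-inclusion $D_i \to \bigcup_i D_i$ supplies a diagonal filler showing $f$ itself factors through $D_i$.

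For the forward direction, let $\G$ be an $(\E,\M)$-generator of $\alpha$-bounded objects. I would take $\h$ to be a set of representatives for the $\E$-quotients of $\alpha$-small coproducts of objects of $\G$; this is a set because $\G$ is small (so there is only a set of such coproducts up to isomorphism) and $\C$ is $\E$-cowellpowered, and its members are $\alpha$-bounded by (i) and (ii). Given any $C \in \ob\C$, I would index over the $\alpha$-small subsets $S$ of the (small) set of all morphisms $g : G \to C$ with $G \in \G$, forming for each $S$ the $(\E,\M)$-factorization $\coprod_{g \in S}\mathrm{dom}(g) \xrightarrow{e_S} C_S \xrightarrow{m_S} C$ of the induced map; each $C_S$ is then a representative of an object of $\h$. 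Functoriality of the factorization makes $(m_S)$ an $\M$-sink, and it is $\alpha$-filtered because a family of fewer than $\alpha$ such subsets has $\alpha$-small union (regularity of $\alpha$) dominating them all. It remains to check that $C$ is the union of this sink: every $g : G \to C$ factors through the union inclusion $u : \bigcup_S C_S \to C$, so each $\C(G,u)$ is a bijection (surjective by the factorizations, injective since $u$ is monic), whence $u$ is an isomorphism by joint $\M$-conservativity of $\G$ (\ref{EMgen}). Thus $C$ is an $\alpha$-filtered $\M$-union of objects of $\h$.

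For the converse, assume such an $\h$ exists. Its objects are $\alpha$-bounded by hypothesis, so by \ref{EMgen} it suffices to show $\h$ is jointly $\M$-conservative. Let $m : A \to B$ lie in $\M$ with $\C(H,m)$ a bijection for all $H \in \h$, and present $B$ as an $\alpha$-filtered $\M$-union of objects $H_i \in \h$ via an $\E$-tight sink $(m_i : H_i \to B)$. Surjectivity of each $\C(H_i,m)$ makes every $m_i$ factor through $m$, so by the defining property of $\E$-tight sinks (applied to $h = 1_B$ and the $\M$-subobject $m$) the identity $1_B$ factors through $m$; since $m$ is monic, the resulting section is in fact a two-sided inverse, so $m$ is an isomorphism. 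Therefore $\h$ is an $(\E,\M)$-generator of $\alpha$-bounded objects in the cocomplete factegory $\C$, which is locally $\alpha$-bounded.

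The main obstacle lies in the forward direction, and specifically in the twofold role of the hypotheses there: $\E$-cowellpoweredness is exactly what guarantees that the collection of $\E$-quotients of $\alpha$-small coproducts of generators is essentially small, so that $\h$ may be taken to be a genuine set, while the $(\E,\M)$-generator property (in its jointly-$\M$-conservative form) is what forces the canonical $\alpha$-filtered $\M$-sink $(m_S)$ to have union all of $C$ rather than some proper $\M$-subobject. The stability properties (i) and (ii) and the verification of $\alpha$-filteredness are routine once $\alpha$ is used as a regular cardinal.
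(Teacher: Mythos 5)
Your argument is correct. Note that the paper itself gives no proof of \ref{Sousathm}, merely citing Sousa's \cite[2.8]{Sousa}, so there is nothing internal to compare against; your reconstruction is the standard one (take for $\h$ the $\E$-quotients of $\alpha$-small coproducts of generators, which is where $\E$-cowellpoweredness enters, and verify $\E$-tightness of the canonical $\alpha$-directed sink via joint $\M$-conservativity from \ref{EMgen}), and all the supporting steps check out: your closure properties (i) and (ii) are exactly the facts the paper later invokes from \cite[2.5]{Sousa}, the morphisms $D_i \to \bigcup_i D_i$ in (ii) do lie in $\M$ by left-cancellation for proper factorization systems, and the converse correctly uses only $\E$-tightness and properness. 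The only cosmetic point is that your $C_S$ are merely isomorphic to members of $\h$, so one should compose $m_S$ with the relevant isomorphisms to obtain a sink with domains literally in $\h$.
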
 

\section{Locally bounded \texorpdfstring{$\V$}{V}-categories over an arbitrary \texorpdfstring{$\V$}{V}}
\label{locallyboundedVcategories}

In this section we introduce the notion of a locally bounded $\V$\emph{-category} over a symmetric monoidal closed category $\V$ satisfying certain modest assumptions, without requiring $\V$ to be locally bounded.  

\subsection{\texorpdfstring{$\V$}{V}-factegories and enriched \texorpdfstring{$(\E, \M)$}{(E,M)}-generators}
\label{definitionsbasicresults}

We first recall from \cite{enrichedfact} the notion of an \emph{enriched factorization system} on a $\V$-category $\C$ (for an arbitrary symmetric monoidal closed category $\V$). If $e : C \to C'$ and $m : D \to D'$ are morphisms in $\C$, then we say that $e$ is $\V$\emph{-orthogonal} to $m$, which we write as $e \perp_\V m$, if the following commutative square in $\V$ is a pullback:
\begin{equation}\label{Vorthogonal}
\begin{tikzcd}
	{\C(C', D) } && {\C(C', D')} \\
	\\
	{\C(C, D)} && {\C(C, D')}.
	\arrow["{\C(C', m)}", from=1-1, to=1-3]
	\arrow["{\C(e, D)}"', from=1-1, to=3-1]
	\arrow["{\C(C, m)}"', from=3-1, to=3-3]
	\arrow["{\C(e, D')}", from=1-3, to=3-3]
\end{tikzcd}
\end{equation}
If $(\E, \M)$ is a pair of classes of morphisms in a $\V$-category $\C$, then $(\E, \M)$ is an \emph{enriched} \emph{factorization system} on $\C$ if the following conditions are satisfied (see \cite[5.2]{enrichedfact}): $\E$ and $\M$ are closed under composition and contain the isomorphisms; every $\E$-morphism is $\V$-orthogonal to every $\M$-morphism; and every morphism of $\C$ can be factored as an $\E$-morphism followed by an $\M$-morphism.  Every enriched factorization system $(\E,\M)$ is, in particular, an \textit{enriched prefactorization system} \cite[3.1, 5.2]{enrichedfact}, meaning that $\E$ is the class of all morphisms $\V$-orthogonal to all $\M$-morphisms and $\M$ is the class of all morphisms to which all $\E$-morphisms are $\V$-orthogonal.

If $\C$ is tensored, then it follows from \cite[5.7]{enrichedfact} that a pair of classes of morphisms $(\E, \M)$ of $\C$ is an enriched factorization system on $\C$ iff $(\E, \M)$ is an (ordinary) factorization system on $\C_0$ and $\E$ is stable under tensoring in $\C$, meaning that $e \in \E$ implies $V \tensor e \in \E$ for every $V \in \ob\V$. And if $\C$ is cotensored, then it follows from the same result that a pair of classes of morphisms $(\E, \M)$ of $\C$ is an enriched factorization system on $\C$ iff $(\E, \M)$ is an (ordinary) factorization system on $\C_0$ and $\M$ is stable under cotensoring in $\C$, meaning that $m \in \M$ implies $[V, m] \in \M$ for every $V \in \ob\V$. If $\C$ is tensored, then it follows by \cite[2.4]{enrichedfact} that the class of monomorphisms in $\C_0$ is equal to the class of $\V$\emph{-monomorphisms} in $\C$, i.e. morphisms $m$ in $\C$ that are sent to monomorphisms in $\V$ by each representable $\V$-functor $\C(C, -) : \C \to \V$ ($C \in \ob\C$). Dually, if $\C$ is cotensored, then it follows by the same result that the class of epimorphisms in $\C_0$ is equal to the class of $\V$\emph{-epimorphisms} in $\C$, i.e. morphisms $e$ in $\C$ that are sent to monomorphisms by every contravariant representable $\V$-functor $\C(-, C) : \C^\op \to \V$ ($C \in \ob\C$). By a \emph{proper} enriched factorization system on a $\V$-category $\C$ we mean an enriched factorization system on $\C$ such that every $\E$-morphism is a $\V$-epimorphism (and hence an epimorphism) and every $\M$-morphism is a $\V$-monomorphism (and hence a monomorphism). Thus, an enriched proper factorization system on $\C$ is in particular a proper factorization system on $\C_0$. Since $\V$ is a tensored and cotensored $\V$-category, an enriched (proper) factorization system on $\V$ is just a (proper) factorization system on $\V_0$ whose left class is stable under tensoring or, equivalently, whose right class is stable under cotensoring. We now define the following convenient terminology:

\begin{defn}
\label{closedfactegory}
A \textbf{(symmetric monoidal) closed factegory} is a symmetric monoidal closed category $\V$ equipped with an enriched proper factorization system $(\E, \M)$. A closed factegory $\V$ is \textbf{cocomplete} if $\V_0$ is cocomplete and has arbitrary cointersections of $\E$-morphisms. \qed
\end{defn}

\begin{assumption}
For the remainder of Section \ref{locallyboundedVcategories} we suppose that $\V$ is a cocomplete closed factegory, and that $\V_0$ is complete. \qed
\end{assumption}

\begin{defn}
\label{compatible}
Let $\C$ be a $\V$-category with an enriched factorization system $(\E_\C, \M_\C)$. Then $(\E_\C, \M_\C)$ \textbf{is compatible with} $(\E, \M)$ if for every $C \in \ob\C$, the functor $\C(C, -) : \C \to \V$ preserves the right class. \qed
\end{defn}

\begin{defn}
\label{Vfactegory}
A \textbf{(proper)} $\V$\textbf{-factegory} is a $\V$-category $\C$ equipped with an enriched proper factorization system $(\E_\C, \M_\C)$ that is compatible with $(\E, \M)$. The $\V$-factegory $\C$ is \textbf{cocomplete} if the $\V$-category $\C$ is cocomplete and has arbitrary (conical) cointersections of $\E_\C$-morphisms.\footnote{Unless otherwise stated, all ordinary (co)limits in a $\V$-category will be assumed to be conical.} \qed
\end{defn}

\noindent We often write simply $(\E, \M)$ to denote $(\E_\C, \M_\C)$ when there is no cause for confusion. 

\begin{rmk}
\label{VitselfVfactegory}
Note that $\V$ itself is a cocomplete $\V$-factegory, because $(\E, \M)$ is stable under cotensoring and compatible with itself by virtue of being enriched, while the arbitrary cointersections of $\E$-morphisms that $\V_0$ possesses are conical because $\V$ is cotensored. \qed
\end{rmk}

\noindent If $\C$ is tensored, then we deduce the following useful lemma from \ref{preservesrightclasslem}: 

\begin{lem}
\label{compatiblelemma}
Let $\C$ be a tensored $\V$-category with an enriched factorization system. Then for every $C \in \ob\C$, the tensor $\V$-functor $(-) \tensor C : \V \to \C$ preserves the left class iff the representable $\V$-functor $\C(C, -) : \C \to \V$ preserves the right class. In particular, each $\V$-functor $(-) \tensor C : \V \to \C$ ($C \in \ob\C$) preserves the left class if $\C$ is a tensored $\V$-factegory. \qed  
\end{lem}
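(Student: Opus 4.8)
The plan is to recognize the tensor $\V$-functor $(-) \tensor C$ and the representable $\V$-functor $\C(C,-)$ as an adjoint pair on underlying ordinary categories, and then to quote Lemma \ref{preservesrightclasslem}. First I would recall that, by the defining universal property of the tensor, there is a $\V$-natural isomorphism $\C(V \tensor C, D) \cong [V, \C(C,D)]$ in $\V$; applying the representable $\V_0(I, -)$ and using $\V_0(I, [V, W]) \cong \V_0(V, W)$ yields a natural bijection $\C_0(V \tensor C, D) \cong \V_0(V, \C(C,D))$. Hence the underlying ordinary functor of $(-) \tensor C : \V \to \C$ is left adjoint to the underlying ordinary functor of $\C(C, -) : \C \to \V$.

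Next I would check that the two underlying categories carry ordinary factorization systems so that Lemma \ref{preservesrightclasslem} is applicable. On the one hand, $\V_0$ carries $(\E, \M)$ since $\V$ is a closed factegory. On the other hand, the enriched factorization system $(\E_\C, \M_\C)$ induces an ordinary factorization system on $\C_0$ with the same classes: the factorizations are given, and every $\E_\C$-morphism is orthogonal (not merely $\V$-orthogonal) to every $\M_\C$-morphism, because the representable $\V_0(I, -)$ preserves the defining pullback square and sends it to the ordinary orthogonality square of hom-sets. Since ``preserving the left class'' and ``preserving the right class'' for a $\V$-functor are by definition assertions about its action on morphisms, i.e.\ about its underlying ordinary functor, the desired equivalence is exactly the statement that the right adjoint $\C(C,-)$ preserves the right class iff its left adjoint $(-) \tensor C$ preserves the left class, which is precisely Lemma \ref{preservesrightclasslem}.

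For the ``in particular'' clause, I would simply observe that if $\C$ is a tensored $\V$-factegory then, by Definition \ref{compatible}, compatibility of $(\E_\C, \M_\C)$ with $(\E, \M)$ says exactly that $\C(C, -)$ preserves the right class for every $C \in \ob\C$; combined with the equivalence just established, this gives that $(-) \tensor C$ preserves the left class, as claimed.

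I expect the only delicate point to be a matter of bookkeeping around properness: in the main equivalence $\C$ is only assumed to carry an enriched factorization system, not necessarily a proper one, whereas Lemma \ref{preservesrightclasslem} is phrased for (proper) factegories. I would address this by noting that the proof of \ref{preservesrightclasslem} invokes nothing beyond the hom-set adjunction and the orthogonality characterizations of the left and right classes, both of which hold for an arbitrary (enriched or ordinary) factorization system; hence that lemma applies verbatim here. The transfer of the tensor--hom adjunction to underlying ordinary categories and the orthogonality bookkeeping are otherwise entirely routine.
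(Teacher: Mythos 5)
Your proposal is correct and is essentially the paper's intended argument: the paper states \ref{compatiblelemma} as a direct consequence of \ref{preservesrightclasslem} applied to the adjunction $(-)\tensor C \dashv \C(C,-)$, exactly as you do. Your extra care in noting that a tensored enriched factorization system yields an ordinary one on $\C_0$ and that the proof of \ref{preservesrightclasslem} never uses properness is a worthwhile (and correct) piece of bookkeeping that the paper leaves implicit.
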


\noindent We provide the following initial result for constructing $\V$-factegories. By an \textit{($\M$-)union} in a $\V$-factegory, we mean an $\M$-union in the underlying ordinary factegory. 

\begin{prop}
\label{functorfactegory}
Let $\B$ be a cotensored $\V$-factegory and $\A$ a small $\V$-category. Then the $\V$-category $[\A, \B]$ is a $\V$-factegory when equipped with the pointwise factorization system. If the $\V$-factegory $\B$ is cocomplete, then the $\V$-factegory $[\A, \B]$ is also cocomplete, and unions are formed pointwise in $[\A,\B]$.
\end{prop}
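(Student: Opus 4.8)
The plan is to carry out every verification pointwise, reducing each assertion to the corresponding property of the cotensored $\V$-factegory $\B$. First I would note that $[\A,\B]$ is itself cotensored, with cotensors computed pointwise (so that $[V,F]A = [V,FA]$), since $\B$ is cotensored and cotensors are limits; this permits me to invoke the cotensored characterizations of enriched factorization systems and of $\V$-mono/epimorphisms recalled at the start of this section. The crux is the pointwise factorization: given a $\V$-natural transformation $\eta : F \Rightarrow G$, I factor each component as $\eta_A = m_A \circ e_A$ in $\B$ and set $HA$ to be the intermediate object. To promote $A \mapsto HA$ to a $\V$-functor with $\V$-natural legs $e : F \Rightarrow H$ and $m : H \Rightarrow G$, I would define the hom-action $H_{A,A'} : \A(A,A') \to \B(HA,HA')$ as the unique diagonal induced by the enriched orthogonality $e_A \perp_\V m_{A'}$. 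Concretely, the two evident maps from $\A(A,A')$ into $\B(FA,HA')$ and into $\B(HA,GA')$ become equal after composing into $\B(FA,GA')$ --- this equality is exactly the $\V$-naturality of $\eta$, using $m_A \circ e_A = \eta_A$ and $m_{A'} \circ e_{A'} = \eta_{A'}$ --- so the pullback square defining $e_A \perp_\V m_{A'}$ produces a unique $H_{A,A'}$, and the two resulting triangles express precisely the $\V$-naturality of $e$ and of $m$. The $\V$-functoriality of $H$ then follows from the uniqueness of such fill-ins. I expect this to be the main obstacle, since it is the only genuinely enriched part of the argument and demands the most careful bookkeeping.

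Granting the pointwise factorizations, I would finish the enriched factorization system quickly. Closure under composition and containment of the isomorphisms hold pointwise. For $\V$-orthogonality, if $e$ is pointwise in $\E$ and $m$ pointwise in $\M$, then the square in $\V$ witnessing $e \perp_\V m$ in $[\A,\B]$ is obtained by applying the end $\int_A(-)$ to the family of pullback squares witnessing $e_A \perp_\V m_A$ in $\B$; since ends are limits and therefore commute with the pullbacks in question, this square is again a pullback. These three facts establish that the pointwise classes form an enriched factorization system. For properness I would use that, in the cotensored $\V$-category $[\A,\B]$, the $\V$-monomorphisms coincide with the monomorphisms of $[\A,\B]_0$ and the $\V$-epimorphisms with its epimorphisms: a pointwise-$\M$ morphism is pointwise monic by properness of $\B$, hence monic in $[\A,\B]_0$, hence a $\V$-monomorphism, and dually a pointwise-$\E$ morphism is a $\V$-epimorphism. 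Finally, compatibility with $(\E,\M)$ on $\V$ asks that each $[\A,\B](F,-)$ preserve the right class; but for $m$ pointwise in $\M$ one has $[\A,\B](F,m) = \int_A \B(FA,m_A)$, each $\B(FA,m_A)$ lies in $\M$ by the compatibility of $\B$'s factorization system with that of $\V$, and the right class of $\V$ is closed under limits in the arrow category, so this end again lies in $\M$.

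For the cocomplete case I would show that both kinds of colimit are pointwise. When $\B$ is a cocomplete $\V$-category, $[\A,\B]$ admits all small weighted colimits, computed pointwise \cite{Kelly}; likewise an arbitrary cointersection of $\E$-morphisms $(e_j : F \to F_j)_{j}$ is formed by taking, for each $A$, the cointersection of $(e_{j,A} : FA \to F_jA)_j$ in $\B$ (which exists as $\B$ is a cocomplete $\V$-factegory), these assembling into a $\V$-functor that furnishes the desired conical cointersection. Hence $[\A,\B]$ is a cocomplete $\V$-factegory. For unions, recall from \ref{union} that the union of an $\M$-sink $(\mu_i : F_i \to F)_{i \in I}$ is the $\M$-part of the $(\E,\M)$-factorization of the induced morphism $\coprod_i F_i \to F$. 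Since small coproducts in $[\A,\B]$ are formed pointwise and, by the construction above, $(\E,\M)$-factorizations are formed pointwise, it follows that $(\bigcup_i F_i)(A) \cong \bigcup_i (F_iA)$ in $\B$ for each $A \in \ob\A$; that is, unions are formed pointwise, as claimed.
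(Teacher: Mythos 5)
Your proposal follows essentially the same route as the paper: everything is done pointwise, with the enriched factorization system, compatibility, colimits, cointersections, and unions all reduced to the corresponding facts in $\B$. The two places where you diverge are harmless: you construct the intermediate $\V$-functor $H$ explicitly via the diagonal fill-ins coming from $e_A \perp_\V m_{A'}$ (the paper simply defers this to the proof of \cite[6.4]{Kelly}), and you verify $\V$-orthogonality in $[\A,\B]$ directly as an end of pullback squares, whereas the paper instead checks that $\M'$ is stable under (pointwise) cotensors and invokes the characterization of enriched factorization systems on cotensored $\V$-categories. Both are standard and correct, and your explicit treatment of pointwise unions at the end supplies a detail the paper leaves implicit.

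One small slip in the properness argument: you assert that in the cotensored $\V$-category $[\A,\B]$ the $\V$-monomorphisms coincide with the monomorphisms of $[\A,\B]_0$. The characterization recalled in Section \ref{definitionsbasicresults} gives this for \emph{tensored} $\V$-categories; cotensoredness yields only the dual statement that $\V$-epimorphisms coincide with ordinary epimorphisms, and $[\A,\B]$ is not assumed tensored here. The conclusion you need is nevertheless immediate by the same end argument you use elsewhere: each $m_A$ is a $\V$-monomorphism in $\B$ by properness of $(\E_\B,\M_\B)$, so each $\B(GA,m_A)$ is a monomorphism in $\V$, and $[\A,\B](G,m) = \int_A \B(GA,m_A)$ is then a monomorphism because limits of pointwise monomorphisms are monomorphisms. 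With that repair the proof is complete.
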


\begin{proof}
We must define an enriched proper factorization system $(\E', \M')$ on $[\A, \B]$ that is compatible with $(\E, \M)$. We define $\E'$ to consist of the $\V$-natural transformations that are pointwise in $\E_\B$ and $\M'$ to consist of the $\V$-natural transformations that are pointwise in $\M_\B$. The proof of \cite[6.4]{Kelly} then easily extends to show that this is indeed an ordinary proper factorization system on $[\A, \B]_0$. Since $\B$ is cotensored, it follows that $[\A, \B]$ is also cotensored.

To show that $(\E', \M')$ is enriched, equivalently, that $\M'$ is stable under cotensoring, let $V \in \ob\V$ and $\gamma : F \to F'$ in $\M'$, and let us show that $[V, \gamma] : [V, F] \to \left[V, F'\right]$ is in $\M'$, i.e. that $[V, \gamma]_A : [V, F]A \to [V, F']A$ is in $\M_\B$ for all $A \in \ob\A$. Since cotensors in $[\A, \B]$ are formed pointwise, we have $[V, \gamma]_A = \left[V, \gamma_A\right] : [V, FA] \to [V, FA']$. But $\gamma_A \in \M_\B$ because $\gamma \in \M'$, and hence $[V, \gamma_A] \in \M_\B$ because $\M_\B$ is stable under cotensoring, since $(\E_\B, \M_\B)$ is enriched.

To show that $(\E', \M')$ is compatible with $(\E, \M)$, let $F : \A \to \B$ be a $\V$-functor, and let us show that $[\A, \B](F, -) : [\A, \B] \to \V$ sends $\M'$-morphisms to $\M$-morphisms. So let $\gamma : S \to S'$ be in $\M'$, and let us show that the morphism $[\A, \B](F, S) \xrightarrow{[\A, \B](F, \gamma)} [\A, \B]\left(F, S'\right)$, i.e. the morphism
\[ \int_{A \in \A} \B(FA, SA) \xrightarrow{\int_{A \in \A} \B\left(FA, \gamma_A\right)} \int_{A \in \A} \B(FA, S'A), \] is in $\M$. But $\gamma_A \in \M_\B$ for each $A \in \ob\A$ by assumption, and hence $\B(FA, \gamma_A) \in \M$ for each $A \in \ob\A$ because $(\E_\B, \M_\B)$ is compatible with $(\E, \M)$, so that $\int_{A \in \A} \B(FA, \gamma_A) \in \M$ because $\M$ is stable under limits (e.g., by \cite[4.3]{BinKellylimits}). This proves that $[\A, \B]$ is a $\V$-factegory when equipped with the pointwise factorization system $(\E', \M')$.

If the $\V$-factegory $\B$ is cocomplete, then $[\A,\B]$ has small colimits, formed pointwise, and $[\A,\B]$ also has arbitrary cointersections of $\E'$-morphisms, also formed pointwise.
\end{proof}

\begin{defn}
A \textbf{right-class $\V$-functor} is a $\V$-functor $U:\C \rightarrow \D$ between $\V$-factegories $\C$ and $\D$ such that $U$ preserves the right class (cf. \ref{preservesrightclass}).  Analogously, a left-class $\V$-functor is a $\V$-functor $F$ between $\V$-factegories such that $F$ preserves the left class. \qed
\end{defn}

\begin{egg}\label{reps_rclass_vfunc}
If $C$ is an object of a $\V$-factegory $\C$, then $\C(C,-):\C \rightarrow \V$ is a right-class $\V$-functor, by compatibility of the factorization systems. \qed
\end{egg}

\begin{defn}\label{Mconservative}
Let $U:\C \to \D$ be a $\V$-functor between $\V$-factegories.  We say that $U$ is \mbox{\textbf{$\M$-conservative}} if for every $m \in \M$, if $Um$ is an isomorphism then $m$ is an isomorphism.  We say that $U$ \textbf{reflects the left class} if for every morphism $e$ in $\C$, if $Ue \in \E$ then $e \in \E$. \qed
\end{defn}

\begin{prop}
\label{boundingequivalent}
Let $U : \C \to \D$ be a a right-class $\V$-functor. Then $U$ is $\M$-conservative iff $U$ reflects the left class.
\end{prop}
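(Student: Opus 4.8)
The plan is to argue entirely at the level of the underlying ordinary factegories, since both hypotheses and both conclusions concern ordinary morphisms, and since an enriched proper factorization system restricts to an ordinary proper factorization system on $\C_0$ (and on $\D_0$). Thus I would freely invoke the two standard consequences of properness recalled at the start of Section~\ref{locallyboundedordinary}: that $\E \cap \M$ consists exactly of the isomorphisms, and that $g \circ f \in \E$ forces $g \in \E$.

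For the direction ``reflects the left class $\Rightarrow$ $\M$-conservative'', I would take $m \in \M$ with $Um$ invertible, observe that $Um$ then lies in $\E$ (isomorphisms being in $\E$), deduce $m \in \E$ from the reflection hypothesis, and conclude that $m \in \E \cap \M$ is an isomorphism.

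For the converse ``$\M$-conservative $\Rightarrow$ reflects the left class'', I would start from a morphism $e$ with $Ue \in \E$ and take its $(\E, \M)$-factorization $e = m \circ e'$ in $\C$. Applying $U$ yields $Ue = Um \circ Ue'$ with $Um \in \M$ (because $U$ preserves the right class). Properness then turns $Ue \in \E$ into $Um \in \E$, so $Um \in \E \cap \M$ is an isomorphism; $\M$-conservativity makes $m$ an isomorphism, and hence $e = m \circ e' \in \E$ because $\E$ contains the isomorphisms and is closed under composition.

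The only step requiring any care is this converse direction, where the idea is to factor $e$ and then combine $U$-preservation of the right class with properness to isolate the $\M$-part $Um$ and show it is invertible; $\M$-conservativity is then precisely what is needed to transfer invertibility of $Um$ back to $m$. I expect no genuine obstacle beyond correctly assembling these elementary properties of proper factorization systems.
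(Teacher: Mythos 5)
Your proposal is correct and follows essentially the same argument as the paper's own proof: the forward direction factors $e$ as $m \circ e'$, uses properness to cancel $Ue'$ and get $Um \in \E \cap \M$, then applies $\M$-conservativity; the converse uses that an invertible $Um$ lies in $\E$ so that reflection forces $m \in \E \cap \M$. No discrepancies to note.
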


\begin{proof}
Suppose first that $U$ is $\M$-conservative. To show that $U$ reflects the left class, suppose $Ue \in \E$ with $e : C \to C'$, and let us show that $e \in \E$. If $C \xrightarrow{e'} C'' \xrightarrow{m} C'$ is the $(\E, \M)$-factorization of $e$, then it suffices to show that $m \in \M$ is an isomorphism, for which it suffices by hypothesis to show that $Um$ is an isomorphism. But since $Ue = Um \circ Ue'$ and $Ue \in \E$, it follows by properness that $Um \in \E$, so that $Um \in \E \cap \M$ is an isomorphism, as desired. 

Conversely, suppose that $U$ reflects the left class, and let $m \in \M$ with $Um$ an isomorphism. Then $Um \in \E$, so that $m \in \E$ by hypothesis, and hence $m \in \E \cap \M$ is an isomorphism.
\end{proof}

\noindent If the $\V$-functor of \ref{boundingequivalent} has a left adjoint, then we can extend the equivalent conditions thereof as follows.

\begin{prop}
\label{moreboundingequivalent}
Let $U : \C \to \D$ be a right-class $\V$-functor with a left adjoint $F$.  Then the equivalent properties of \ref{boundingequivalent} are also equivalent to the following two properties:
\begin{itemize}[leftmargin=*]
\item The counit of the adjunction $F \dashv U$ is pointwise in $\E$.
\item $U$ reflects the order of $\M$-subobjects, i.e. the functor $U_C : \Sub_\M(C) \to \Sub_\M(UC)$ of \ref{uc} is fully faithful for each $C \in \ob\C$.   
\end{itemize}
\end{prop}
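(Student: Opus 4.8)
The plan is to close a short cycle of implications linking the two new conditions to the equivalent conditions of \ref{boundingequivalent}. Write $\eta, \varepsilon$ for the unit and counit of $F \dashv U$, and abbreviate the four conditions as: (a) $U$ is $\M$-conservative; (b) $U$ reflects the left class; (c) $\varepsilon$ is pointwise in $\E$; (d) each $U_C$ is fully faithful. Since (a) $\iff$ (b) is already \ref{boundingequivalent}, it suffices to prove (b) $\Rightarrow$ (c) $\Rightarrow$ (d) $\Rightarrow$ (a). All of this takes place in the underlying ordinary categories, where $F \dashv U$ restricts to an ordinary adjunction with the same unit and counit, and where the factorization systems and subobject preorders live.

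For (b) $\Rightarrow$ (c), I would fix $C$ and consider $\varepsilon_C : FUC \to C$. The triangle identity $U\varepsilon_C \circ \eta_{UC} = 1_{UC}$ exhibits $U\varepsilon_C$ as a retraction, hence $U\varepsilon_C \in \E$ by properness (retractions lie in the left class); since $U$ reflects the left class, $\varepsilon_C \in \E$. For (d) $\Rightarrow$ (a), let $m : A \to C$ be in $\M$ with $Um$ an isomorphism. Then $Um$ is the top element of $\Sub_\M(UC)$, so $U_C(m) = Um \cong 1_{UC} = U_C(1_C)$ in $\Sub_\M(UC)$; and since $U_C$ is fully faithful — which, between preordered classes, just says $U_C$ reflects the order — this forces $m \cong 1_C$ in $\Sub_\M(C)$. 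A morphism isomorphic to the top $\M$-subobject is a split epimorphism, and being also a monomorphism (as $m \in \M$) it is an isomorphism.

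The crux is (c) $\Rightarrow$ (d), where the main task is to manufacture, from a factorization of $Um$ through $Un$ in $\D$, a corresponding factorization of $m$ through $n$ in $\C$. Given $\M$-subobjects $m : A \to C$ and $n : B \to C$ together with a morphism $k : UA \to UB$ satisfying $Un \circ k = Um$, I would form the transpose $\bar k := \varepsilon_B \circ Fk : FUA \to B$ and verify that $n \circ \bar k = m \circ \varepsilon_A$. Since $\varepsilon_A \in \E$ by (c) and $n \in \M$, the orthogonality $\varepsilon_A \perp n$ (valid in $\C_0$, as the enriched factorization system is in particular an ordinary one) then yields a unique diagonal $d : A \to B$ with $n \circ d = m$, witnessing $m \leq n$ in $\Sub_\M(C)$; hence $U_C$ reflects the order and is fully faithful.

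The only delicate point is the transpose identity $n \circ \bar k = m \circ \varepsilon_A$, which I expect to be a short diagram chase: naturality of $\varepsilon$ at $n$ gives $n \circ \varepsilon_B = \varepsilon_C \circ FUn$, so $n \circ \bar k = \varepsilon_C \circ F(Un \circ k) = \varepsilon_C \circ F(Um)$ by the hypothesis on $k$, and a second application of naturality (at $m$) rewrites $\varepsilon_C \circ FUm = m \circ \varepsilon_A$. I anticipate no genuine obstacle beyond keeping the two naturality squares and the adjunction bookkeeping straight; everything else is a routine consequence of properness and the orthogonality built into the factorization system.
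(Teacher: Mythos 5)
Your proposal is correct and follows essentially the same route as the paper: the same triangle-identity argument for the counit being pointwise in $\E$, the same transpose-plus-orthogonality argument (with the identical diagram chase $n \circ \varepsilon_B \circ Fk = \varepsilon_C \circ FUm = m \circ \varepsilon_A$) for full faithfulness of $U_C$, and the same "$Um \cong 1_{UC}$ forces $m \cong 1_C$" argument for $\M$-conservativity. The only cosmetic difference is that you close a single cycle of implications rather than proving the biconditional between (b) and (c) directly, which is a perfectly valid economy.
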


\begin{proof}
We first show that the counit $\varepsilon : FU \to 1_\C$ is pointwise in $\E$ iff $U$ reflects the left class. Assuming the former property, let $e : C \to C'$ in $\C$ with $Ue \in \E$. Since $F \dashv U$ and $U$ preserves the right class, it follows by \ref{preservesrightclasslem} that $F$ preserves the left class, so that $FUe \in \E$. By the naturality of $\varepsilon$ we have $\varepsilon_{C'} \circ FUe = e \circ \varepsilon_C$, and the left composite is in $\E$ because $\varepsilon_{C'} \in \E$, so $e \circ \varepsilon_C \in \E$, which entails $e \in \E$ by properness.

Supposing now that $U$ reflects the left class, let $C \in \ob\C$ and let us show that $\varepsilon_C \in \E$. By assumption, it suffices to show that $U\varepsilon_C \in \E$. By one of the triangle equalities for the adjunction we have $U\varepsilon_C \circ \eta_{UC} = 1_{UC}$, so that $U\varepsilon_C$ is a retraction and hence lies in $\E$ by properness. 

Now suppose that the counit is pointwise in $\E$, and let us show for any $C \in \ob\C$ that $U_C : \Sub_\M(C) \to \Sub_\M(UC)$ is fully faithful. So let $m : A \to C$ and $n : B \to C$ be $\M$-subobjects, and suppose that the $\M$-subobject $UA \xrightarrow{Um} UC$ factors through the $\M$-subobject $UB \xrightarrow{Un} UC$ via a (unique) $\M$-morphism $UA \xrightarrow{m'} UB$. Since the counit $FUA \xrightarrow{\varepsilon_A} A$ lies in $\E$, it will follow by orthogonality that $m$ factors through $n$ if the composite $FUA \xrightarrow{\varepsilon_A} A \xrightarrow{m} C$ factors through $B \xrightarrow{n} C$, which is true because $m \circ \varepsilon_A = \varepsilon_C \circ FUm = \varepsilon_C \circ F(Un \circ m') = \varepsilon_C \circ FUn \circ Fm' = n \circ \varepsilon_B \circ Fm'$. 

Conversely, suppose that $U$ reflects the order of $\M$-subobjects, and let us show that $U$ is $\M$-conservative.  Let $m:B \rightarrow C$ be an $\M$-morphism in $\C$, and suppose that $Um$ is an isomorphism.  Then $Um \cong 1_{UC} = U1_C$ in $\Sub_\M(UC)$, so since $U_C$ is fully faithful we deduce that $m \cong 1_C$ in $\Sub_\M(C)$, and hence $m$ is an isomorphism in $\C$.
\end{proof}

\noindent If $U : \C \to \D$ is a right-class $\V$-functor, then we say that $U$ \emph{reflects} $\M$\emph{-unions} if for every $\M$-sink $(m_i : C_i \to C)_{i \in I}$ in $\C$ and every $\M$-morphism $n : D \to C$, if $Un$ is a union of the $\M$-sink $(Um_i : UC_i \to UC)_{i \in I}$, then $n$ is a union of the $\M$-sink $(m_i)_i$. 

\begin{cor}
\label{functorreflectsunions}
Let $U : \C \to \D$ be a right-class $\V$-functor between $\V$-factegories with $\M$-unions.  Suppose that $U$ has a left adjoint and satisfies any of the equivalent properties of \ref{boundingequivalent} and \ref{moreboundingequivalent}.  Then $U$ reflects $\M$-unions.
\end{cor}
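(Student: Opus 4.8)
The plan is to reduce the statement to a pair of order comparisons in the preordered classes of $\M$-subobjects, exploiting the fact that the standing hypotheses guarantee, via \ref{moreboundingequivalent}, that each comparison map $U_C : \Sub_\M(C) \to \Sub_\M(UC)$ of \ref{uc} is fully faithful, hence reflects the order, while being always monotone. Throughout I use the fact recorded in \ref{union} that a union of an $\M$-sink is precisely a join of the corresponding $\M$-subobjects (witnessed by an $\E$-tight factorizing sink), together with the uniqueness of unions up to isomorphism in $\Sub_\M(C)$, so that any $\M$-subobject isomorphic to a union is itself a union.

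So I would fix an $\M$-sink $(m_i : C_i \to C)_{i \in I}$ and an $\M$-morphism $n : D \to C$ such that $Un$ is a union of $(Um_i)_{i \in I}$ in $\D$. Since $\C$ has $\M$-unions, I form the union $u : \bigcup_i C_i \to C$ of $(m_i)_i$ in $\C$, which is a join $u = \bigvee_i m_i$ in $\Sub_\M(C)$. It then suffices to prove that $n \cong u$ as $\M$-subobjects of $C$, for then $n$, being isomorphic to a union, is itself a union of $(m_i)_i$. Because $U_C$ is both monotone and fully faithful, for any $\M$-subobjects $a,b$ of $C$ one has $a \le b$ in $\Sub_\M(C)$ iff $Ua \le Ub$ in $\Sub_\M(UC)$, and I establish $n \cong u$ by checking $n \le u$ and $u \le n$ through this equivalence.

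For $n \le u$: since $Un = \bigvee_i Um_i$ in $\Sub_\M(UC)$ and each $m_i \le u$ in $\Sub_\M(C)$ yields $Um_i \le Uu$ by monotonicity, the join $Un$ is dominated by the upper bound $Uu$, so $Un \le Uu$, whence $n \le u$ by order-reflection. For $u \le n$: each $Um_i \le Un$ (as $Un$ is an upper bound of the $Um_i$), so order-reflection gives $m_i \le n$ for all $i$; thus $n$ is an upper bound of $(m_i)_i$ in $\Sub_\M(C)$, and the least upper bound $u = \bigvee_i m_i$ satisfies $u \le n$. Combining the two inequalities gives $n \cong u$, as required.

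The one point requiring care — and the reason the argument is phrased through the universal property of joins rather than by comparing images of unions — is that $U$ is not assumed to preserve $\M$-unions, so $Uu = U\bigl(\bigcup_i C_i\bigr)$ need not coincide with the union $\bigcup_i UC_i$ in $\Sub_\M(UC)$. We know only that $Uu$ is some upper bound of the family $(Um_i)_i$, while $Un$ is their join. Full faithfulness of $U_C$ (i.e. the order-reflection property of \ref{moreboundingequivalent}, equivalently the conditions of \ref{boundingequivalent}) is precisely what converts the comparison $Un \le Uu$ back into $n \le u$ in $\C$; together with the inequality $u \le n$ obtained directly from the join in $\C$, this yields $n \cong u$, and hence $n$ is a union of $(m_i)_i$. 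Note that the $\E$-tightness of the factorizing sink for $n$ is obtained for free from the isomorphism $n \cong u$ with the genuine union $u$, so it need not be verified separately.
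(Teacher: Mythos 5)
Your proof is correct and follows essentially the same route as the paper's: both arguments rest on the fact that the fully faithful monotone map $U_C : \Sub_\M(C) \to \Sub_\M(UC)$ from \ref{moreboundingequivalent} reflects joins, combined with the identification of unions with joins of $\M$-subobjects from \ref{union}. Your version merely unfolds the join-reflection into the two explicit order comparisons and spells out (correctly) why the $\E$-tightness clause in the definition of union transfers across the isomorphism $n \cong u$, a point the paper's one-line proof leaves implicit.
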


\begin{proof}
For each object $C$ of $\C$, $U_C:\Sub_\M(C) \rightarrow \Sub_\M(UC)$ is a fully faithful functor between preordered classes, so $U_C$ reflects joins, and the result follows since $\M$-unions are small joins of $\M$-subobjects.
\end{proof}

We now discuss the notion of enriched $(\E,\M)$-generator, which originated with enriched notions of generator and strong generator in \cite{Kelly,Kellystr,Daytotality}.  If $\C$ is a tensored $\V$-factegory (i.e. a $\V$-factegory with tensors) and $\G \subseteq \ob\C$ is a set, then for every $G \in \G$ and $C \in \ob\C$ we have a canonical $\C$-morphism $\C(G, C) \tensor G \to C$, namely the counit of the adjunction $(-)\otimes G \dashv \C(G,-):\C \rightarrow \V$.

\begin{defn}
\label{enrichedEMgenerator}
Let $\C$ be a cocomplete $\V$-factegory. An \textbf{enriched} $(\E, \M)$\textbf{-generator} for $\C$ is a (small) set of objects $\G \subseteq \ob\C$ such that for every $C \in \ob\C$, the canonical sink $$(\C(G, C) \tensor G \to C)_{G \in \G}$$
is $\E$-tight, i.e. the canonical morphism $\coprod_{G \in \G} \C(G, C) \tensor G \to C$ lies in $\E$. \qed
\end{defn}

\noindent We have the following initial example of an enriched $(\E, \M)$-generator:

\begin{lem}
\label{unitobjectgenerator}
The unit object $I$ is an enriched $(\E, \M)$-generator of $\V$.
\end{lem}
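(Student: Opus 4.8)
The plan is to show directly that, for every object $V \in \ob\V$, the single canonical morphism $\V(I, V) \tensor I \to V$ is already an isomorphism, whence it lies in $\E$ (as $\E$ contains all isomorphisms) and the singleton $\{I\}$ is therefore an enriched $(\E,\M)$-generator for $\V$ (recalling from \ref{VitselfVfactegory} that $\V$ is a $\V$-factegory). Because the generating set here is the singleton $\{I\}$, the coproduct $\coprod_{G \in \{I\}} \V(G,V) \tensor G$ reduces to $\V(I,V) \tensor I$, so there is no genuine joint $\E$-tightness condition to verify: it suffices to treat the one canonical morphism.

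First I would recall that the hom-object $\V(I, V)$ is by definition the internal hom $[I, V]$, and that the canonical morphism $\V(I, V) \tensor I \to V$ is precisely the counit at $V$ of the tensor--hom adjunction $(-) \tensor I \dashv [I, -] = \V(I, -) : \V \to \V$, as spelled out just before \ref{enrichedEMgenerator}. Next I would observe that both functors in this adjunction are naturally isomorphic to the identity functor on $\V$: the functor $(-)\tensor I$ via the right unit isomorphism of the monoidal structure, and $[I, -] = \V(I, -)$ via the canonical isomorphism $[I, V] \cong V$, which follows from the Yoneda lemma since $\V(A, [I,V]) \cong \V(A \tensor I, V) \cong \V(A, V)$ naturally in $A$. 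Consequently the adjunction $(-) \tensor I \dashv \V(I,-)$ is isomorphic to the identity adjunction $\id_\V \dashv \id_\V$, whose counit is the identity; hence the counit $\V(I, V) \tensor I \to V$ is an isomorphism, as required.

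There is essentially no obstacle here: the statement is a direct unwinding of the definitions together with the unit axioms of the monoidal structure. The only point requiring any care is the identification of the canonical morphism with the counit of $(-)\tensor I \dashv \V(I,-)$, and the attendant remark that an adjunction isomorphic to the identity adjunction has an invertible (though not necessarily strictly identity) counit. Once this is in hand, membership in $\E$ is immediate, since every (enriched proper) factorization system contains the isomorphisms in its left class.
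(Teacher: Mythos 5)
Your proof is correct and follows the same route as the paper: the paper's proof simply asserts that the canonical morphism $\V(I,V) \tensor I \to V$ is an isomorphism (hence in $\E$), and your argument supplies the standard justification for that assertion via the unit isomorphisms and the counit of $(-)\tensor I \dashv \V(I,-)$.
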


\begin{proof}
Noting that $\V$ is a cocomplete $\V$-factegory by \ref{VitselfVfactegory}, for every $V \in \ob\V$ the canonical $\V$-morphism $\V(I, V) \tensor I \to V$ is an isomorphism and hence certainly lies in $\E$.
\end{proof}

\noindent From \ref{moreboundingequivalent} we can deduce the following equivalent formulations of the notion of enriched $(\E, \M)$-generator, one of which generalizes a characterization of ordinary $(\E,\M)$-generators discussed in \ref{EMgen}. By \ref{functorfactegory}, any presheaf $\V$-category becomes a $\V$-factegory when equipped with the pointwise factorization system. If $\C$ is a cocomplete $\V$-category and $\G \subseteq \ob\C$ is a set, or equivalently a small full sub-$\V$-category $i:\G \hookrightarrow \C$, then the restricted Yoneda $\V$-functor $\y_\G : \C \to [\G^\op, \V]$ defined by $\y_\G(C) = \C(i-, C)$ is clearly a right-class $\V$-functor and has a left adjoint $(-) \ast i : [\G^\op, \V] \to \C$ sending $W : \G^\op \to \V$ to the colimit $G \ast i$. 

\begin{prop}
\label{enrichedEMgeneratoralt}
Let $\G$ be a small full sub-$\V$-category of a cocomplete $\V$-factegory $\C$. The following are equivalent:
\begin{enumerate}[leftmargin=*]
\item $\G$ is an enriched $(\E, \M)$-generator for $\C$.

\item The restricted Yoneda $\V$-functor $\y_\G : \C \to [\G^\op, \V]$ is $\M$-conservative. 

\item The $\V$-functors $\C(G, -) : \C \to \V$ ($G \in \G$) are jointly $\M$-conservative (i.e. $m \in \M$ is an isomorphism if each $\C(G, m)$ ($G \in \G$) is an isomorphism).
\end{enumerate}
\end{prop}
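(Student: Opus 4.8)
The plan is to prove $(2) \Leftrightarrow (3)$ directly and then to deduce $(1) \Leftrightarrow (2)$ by recognizing condition (1) as the assertion that the counit of the adjunction $(-) \ast i \dashv \y_\G$ is pointwise in $\E$, at which point \ref{moreboundingequivalent} applies. First I would dispatch $(2) \Leftrightarrow (3)$, which is essentially immediate: since isomorphisms in the presheaf $\V$-category $[\G^\op, \V]$ are detected pointwise, a morphism $m : A \to B$ of $\C$ has $\y_\G m$ an isomorphism iff each component $(\y_\G m)_G = \C(G, m)$ is an isomorphism in $\V$. Hence $\y_\G$ is $\M$-conservative precisely when the $\V$-functors $\C(G, -)$ ($G \in \G$) are jointly $\M$-conservative, which is exactly the content of $(2) \Leftrightarrow (3)$.

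For $(1) \Leftrightarrow (2)$, the key is to identify the counit $\varepsilon_C : (\y_\G C) \ast i \to C$ of $(-) \ast i \dashv \y_\G$. Since $\y_\G C = \C(i-, C)$, the domain of $\varepsilon_C$ is the weighted colimit $\C(i-, C) \ast i$, which is the coend $\int^{G \in \G} \C(G, C) \tensor G$, and $\varepsilon_C$ is the map induced on this coend by the tensor--hom counits $\C(G, C) \tensor G \to C$. Expressing the coend as a coequalizer, the canonical coprojection $q : \coprod_{G \in \G} \C(G, C) \tensor G \to \int^{G} \C(G, C) \tensor G$ is a regular epimorphism, hence lies in $\E$ by properness, and the composite $\varepsilon_C \circ q$ is precisely the canonical morphism $\coprod_{G} \C(G, C) \tensor G \to C$ of Definition \ref{enrichedEMgenerator}. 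Because $q \in \E$ while $\E$ is closed under composition and enjoys the right-cancellation property $g \circ f \in \E \Rightarrow g \in \E$ of a proper factorization system, the canonical morphism lies in $\E$ if and only if $\varepsilon_C \in \E$. Thus $\G$ is an enriched $(\E, \M)$-generator (condition (1)) iff the counit of $(-) \ast i \dashv \y_\G$ is pointwise in $\E$.

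Finally, since $\y_\G$ is a right-class $\V$-functor (as a restricted Yoneda $\V$-functor) equipped with the left adjoint $(-) \ast i$, \ref{moreboundingequivalent} tells us that its counit is pointwise in $\E$ iff $\y_\G$ is $\M$-conservative, i.e.\ condition (2). Combining this with the previous paragraph yields $(1) \Leftrightarrow (2)$, and together with $(2) \Leftrightarrow (3)$ the three conditions are all equivalent.

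I expect the main obstacle to be the bookkeeping in the second paragraph: carefully justifying that the domain of $\varepsilon_C$ is the coend $\int^{G} \C(G, C) \tensor G$ and that precomposing $\varepsilon_C$ with the coproduct coprojection $q$ recovers \emph{exactly} the canonical morphism of Definition \ref{enrichedEMgenerator}. This is the one place where the enriched weighted-colimit formalism must be handled with some care, but it is standard once the adjunction $(-) \ast i \dashv \y_\G$ and the coend presentation of weighted colimits are in hand; everything else reduces to the elementary closure and cancellation properties of the proper factorization system.
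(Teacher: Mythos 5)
Your proposal is correct and follows essentially the same route as the paper: both dispose of $(2)\Leftrightarrow(3)$ as immediate from the pointwise nature of isomorphisms in $[\G^\op,\V]$, and both reduce $(1)\Leftrightarrow(2)$ to Proposition \ref{moreboundingequivalent} by identifying the domain of the counit of $(-)\ast i \dashv \y_\G$ as the coend $\int^{G}\C(G,C)\tensor G$ and using the regular epimorphism from $\coprod_{G}\C(G,C)\tensor G$ together with properness (composition plus right-cancellation for $\E$) to transfer membership in $\E$ between the counit and the canonical morphism. The only difference is that you spell out the coequalizer presentation of the coend slightly more explicitly than the paper, which simply cites \cite[3.68]{Kellystr}.
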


\begin{proof}
The equivalence of (2) and (3) is essentially immediate, so it remains to show that (1) and (2) are equivalent. Since $\y_\G$ preserves the right class and has a left adjoint $(-) \ast i$, it now suffices by \ref{moreboundingequivalent} to show that $\G$ is an enriched $(\E, \M)$-generator iff the counit of the adjunction $(-) \ast i \dashv \y_\G$ is pointwise in $\E$. For every $C \in \ob\C$, the domain of the counit at $C$ is $\int^{G \in \G} \C(G, C) \tensor G$. The canonical morphism $\coprod_{G \in \G} \C(G, C) \tensor G \to C$ factors through the counit $\int^{G \in \G} \C(G, C) \tensor G \to C$ via an $\E$-morphism (even a regular epimorphism; see \cite[3.68]{Kellystr}). It follows that the counit lies in $\E$ iff the canonical morphism $\coprod_{G \in \G} \C(G, C) \tensor G \to C$ lies in $\E$, which yields the result.            
\end{proof}

\begin{cor}
\label{enrichedGreflectsunions}
Let $\C$ be a cocomplete $\V$-factegory with an enriched $(\E, \M)$-generator $\G$. Then the $\V$-functors $\C(G, -) : \C \to \V$ ($G \in \G$) jointly reflect $\M$-unions, i.e. if $(m_i : C_i \to C)_{i \in I}$ is an $\M$-sink in $\C$ and $n : D \to C$ is an $\M$-subobject in $\C$ such that $\C(G, D) \cong \bigcup_i\C\left(G, C_i\right)$ as $\M$-subobjects of $\C(G, C)$ for each $G \in \G$, then $D \cong \bigcup_i C_i$ as $\M$-subobjects of $C$, and hence $n$ is a union of $(m_i : C_i \to C)_{i \in I}$.  
\end{cor}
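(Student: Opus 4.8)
The plan is to package the joint reflection of $\M$-unions by the family $\bigl(\C(G,-)\bigr)_{G \in \G}$ into a single reflection statement for the restricted Yoneda $\V$-functor $\y_\G : \C \to [\G^\op, \V]$, and then to combine the reflection result \ref{functorreflectsunions} with the pointwise description of unions in a presheaf $\V$-factegory furnished by \ref{functorfactegory}.

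First I would set up the target $\V$-factegory and the functor. Taking $\A = \G^\op$ and $\B = \V$ in \ref{functorfactegory} (and recalling from \ref{VitselfVfactegory} that $\V$ is a cocomplete $\V$-factegory), we find that $[\G^\op, \V]$ is a cocomplete $\V$-factegory, so it has all $\M$-unions, and moreover these are computed pointwise. As noted just before \ref{enrichedEMgeneratoralt}, $\y_\G$ is a right-class $\V$-functor with left adjoint $(-) \ast i$; and since $\G$ is an enriched $(\E,\M)$-generator, \ref{enrichedEMgeneratoralt} shows that $\y_\G$ is $\M$-conservative. Thus $\y_\G$ satisfies the hypotheses of \ref{functorreflectsunions}, whence $\y_\G$ reflects $\M$-unions.

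Next I would translate the hypothesis of the corollary into a statement about $\y_\G$. Because $\y_\G$ preserves the right class, the family $(\y_\G m_i)_{i \in I}$ is a sink in the right class of $[\G^\op, \V]$ and $\y_\G n$ is a right-class subobject of $\y_\G C$. By the pointwise formation of unions in $[\G^\op, \V]$, the subobject $\y_\G n$ is a union of $(\y_\G m_i)_{i \in I}$ if and only if, for every $G \in \G$, the $\M$-subobject $\C(G, n) = (\y_\G n)_G$ is a union of the $\M$-sink $\bigl(\C(G, m_i)\bigr)_{i \in I} = \bigl((\y_\G m_i)_G\bigr)_{i \in I}$, that is, precisely when $\C(G, D) \cong \bigcup_i \C(G, C_i)$ as $\M$-subobjects of $\C(G, C)$ for every $G$. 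This is exactly the hypothesis, so $\y_\G n$ is indeed a union of $(\y_\G m_i)_{i \in I}$ in $[\G^\op, \V]$. Since $\y_\G$ reflects $\M$-unions, it follows that $n$ is a union of $(m_i : C_i \to C)_{i \in I}$ in $\C$, i.e. $D \cong \bigcup_i C_i$ as $\M$-subobjects of $C$, as required.

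The one point demanding care is this middle step: the passage from the pointwise conditions in the hypothesis to the single statement that $\y_\G n$ is a union in $[\G^\op, \V]$ depends essentially on unions being formed pointwise in the presheaf $\V$-factegory, which is exactly the last assertion of \ref{functorfactegory}. Everything else is a routine invocation of the cited results.
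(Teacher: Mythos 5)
Your proof is correct and follows essentially the same route as the paper's: both deduce from \ref{enrichedEMgeneratoralt} and \ref{functorreflectsunions} that $\y_\G$ reflects $\M$-unions and then conclude via the pointwise formation of unions in $[\G^\op,\V]$. Your write-up simply makes explicit the translation step that the paper leaves implicit.
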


\begin{proof}
By \ref{enrichedEMgeneratoralt} and \ref{functorreflectsunions}, the restricted Yoneda $\V$-functor $\y_\G : \C \to [\G^\op, \V]$ (which is a right adjoint right-class $\V$-functor) reflects $\M$-unions, and the result follows since $\M$-unions in $[\G^\op,\V]$ are formed pointwise.
\end{proof}

\begin{egg}\label{densegen}
If $i:\G \hookrightarrow \C$ is a small, dense, full sub-$\V$-category of a cocomplete $\V$-factegory $\C$, then the restricted Yoneda $\V$-functor $\y_\G : \C \to [\G^\op, \V]$ is fully faithful, hence conservative, hence $\M$-conservative, so certainly $\G$ is an $(\E,\M)$-generator for $\C$.  In particular, if $\A$ is a small $\V$-category, then the Yoneda lemma entails that the set $\{\A(A,-) \mid A \in \ob\A\}$ is dense in $[\A,\V]$ and so is an $(\E,\M)$-generator for $[\A,\V]$, noting that the latter $\V$-category is a cocomplete $\V$-factegory by \ref{VitselfVfactegory} and \ref{functorfactegory}. \qed
\end{egg}

\noindent The following useful result generalizes \cite[4.2]{Kellytotality} to apply to $(\E,\M)$-generators:

\begin{prop}
\label{enrichedtoordinarygenerator}
Let $\C$ be a cocomplete $\V$-factegory. If $\V_0$ has an ordinary $(\E, \M)$-generator $\G$ and $\C$ has an enriched $(\E, \M)$-generator $\h$, then $\C_0$ has an ordinary $(\E, \M)$-generator $\G \tensor \h := \left\{G \tensor H \mid G \in \G, H \in \h\right\}$.
\end{prop}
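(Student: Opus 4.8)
The plan is to verify that $\G \tensor \h$ is an ordinary $(\E,\M)$-generator of $\C_0$ by means of the joint $\M$-conservativity criterion recalled in \ref{EMgen}, rather than by directly analyzing the canonical morphism $\coprod_{G,H} \C_0(G \tensor H, C) \cdot (G \tensor H) \to C$. Note first that $\C_0$ has small coproducts, since $\C$ is cocomplete, so that criterion applies; and $\C$ is tensored, being a cocomplete $\V$-category, so the objects $G \tensor H$ exist. Thus it suffices to show that the ordinary functors $\C_0(G \tensor H, -) : \C_0 \to \Set$ (for $G \in \G$, $H \in \h$) are jointly $\M$-conservative: given $m \in \M$ such that $\C_0(G \tensor H, m)$ is a bijection for all $G$ and $H$, we must prove $m$ is an isomorphism.

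The key computation is the natural isomorphism
\[
\C_0(G \tensor H, C) \;\cong\; \V_0(G, \C(H, C)),
\]
obtained by combining the defining adjunction of the tensor, $\C(G \tensor H, C) \cong [G, \C(H, C)]$, with the identity $\C_0(X, C) = \V_0(I, \C(X, C))$ and the isomorphism $\V_0(I, [G, W]) \cong \V_0(G, W)$ coming from the closed structure of $\V$. Under this isomorphism the map $\C_0(G \tensor H, m)$ corresponds to $\V_0(G, \C(H, m))$, so the hypothesis becomes: $\V_0(G, \C(H, m))$ is a bijection for every $G \in \G$ and $H \in \h$.

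From here I would chain the two generator hypotheses. Fix $H \in \h$. Because $m \in \M$ and the factorization system of $\C$ is compatible with that of $\V$ (\ref{compatible}), the representable $\V$-functor $\C(H, -)$ sends $m$ into the right class, so $\C(H, m) \in \M$ in $\V$. Since $\V_0(G, \C(H, m))$ is a bijection for every $G \in \G$ and $\G$ is an ordinary $(\E,\M)$-generator of $\V_0$, the joint $\M$-conservativity of \ref{EMgen} forces $\C(H, m)$ to be an isomorphism. As $H$ was arbitrary, $\C(H, m)$ is an isomorphism for every $H \in \h$. Finally, since $\h$ is an enriched $(\E,\M)$-generator of $\C$, the $\V$-functors $\C(H, -)$ ($H \in \h$) are jointly $\M$-conservative by \ref{enrichedEMgeneratoralt}, and $m \in \M$, so $m$ is an isomorphism, as required.

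The argument is essentially formal once set up this way; the only points demanding care are the bookkeeping in the hom-isomorphism $\C_0(G \tensor H, C) \cong \V_0(G, \C(H, C))$ and the observation that compatibility of the factorization systems is precisely what places $\C(H, m)$ in the right class of $\V$ — which is exactly what allows the $(\E,\M)$-generator property of $\V_0$, a statement only about $\M$-morphisms, to be brought to bear.
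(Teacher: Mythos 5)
Your proposal is correct and follows essentially the same route as the paper's own proof: both reduce to the joint $\M$-conservativity criterion, identify $\C_0(G \tensor H, m)$ with $\V_0(G, \C(H,m))$ via the tensor adjunction, use compatibility to place $\C(H,m)$ in $\M$ so that the ordinary generator $\G$ forces it to be an isomorphism, and then conclude via the enriched generator $\h$. The only difference is that you spell out the hom-set isomorphism in slightly more detail.
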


\begin{proof}
In order to employ the $\Set$-enriched version of \ref{enrichedEMgeneratoralt}, let $m : C \to C'$ be an $\M$-morphism such that $\C_0(G \tensor H, m) : \C_0(G \tensor H, C) \to \C_0(G \tensor H, C')$ is a bijection for all $G \in \G$ and $H \in \h$.  Then $\V_0(G, \C(H, m)) : \V_0(G, \C(H, C)) \to \V_0(G, \C(H, C'))$ is a bijection for all $G \in \G$ and $H \in \h$.  Since $\G$ is an ordinary $(\E, \M)$-generator for $\V_0$, it follows by the $\Set$-enriched version of \ref{enrichedEMgeneratoralt} and the fact that $\C(H, m) \in \M$ (by compatibility) that $\C(H, m) : \C(H, C) \to \C(H, C')$ is an isomorphism in $\V$ for all $H \in \h$. So because $\h$ is an enriched $(\E, \M)$-generator for $\C$, we finally deduce by \ref{enrichedEMgeneratoralt} that $m$ is an isomorphism, as desired. 
\end{proof}

\noindent We also have the following:

\begin{prop}
\label{ordinarytoenrichedgenerator}
Let $\C$ be a cocomplete $\V$-factegory. If $\h$ is an ordinary $(\E, \M)$-generator for $\C_0$, then $\h$ is also an enriched $(\E, \M)$-generator for $\C$.
\end{prop}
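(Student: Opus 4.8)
The plan is to reduce the enriched statement to the ordinary hypothesis by way of the characterization of $(\E,\M)$-generators in terms of joint $\M$-conservativity. By \ref{enrichedEMgeneratoralt}, in order to show that $\h$ is an enriched $(\E,\M)$-generator for $\C$ it suffices to verify that the $\V$-functors $\C(H,-) : \C \to \V$ ($H \in \h$) are jointly $\M$-conservative, i.e.\ that any $m \in \M$ for which every $\C(H,m)$ is an isomorphism in $\V$ is itself an isomorphism.

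So I would fix such a morphism $m : C \to C'$ in $\M$ and assume that each $\C(H,m) : \C(H,C) \to \C(H,C')$ is an isomorphism in $\V$. The key observation is that the underlying ordinary representable factors through the enriched one: recalling that $\C_0(H,C) = \V_0(I, \C(H,C))$ for every $H$ and $C$, we have $\C_0(H,m) = \V_0(I, \C(H,m))$. Since $\V_0(I,-)$ preserves isomorphisms, each $\C_0(H,m)$ is therefore a bijection.

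Finally, since $\h$ is by hypothesis an ordinary $(\E,\M)$-generator for $\C_0$, the functors $\C_0(H,-) : \C_0 \to \Set$ are jointly $\M$-conservative by the characterization recalled in \ref{EMgen}; as $m \in \M$ and each $\C_0(H,m)$ is a bijection, we conclude that $m$ is an isomorphism in $\C$, which is what was required.

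I do not anticipate any genuine obstacle here: the whole argument rests on the factorization $\C_0(H,-) = \V_0(I,-) \circ \C(H,-)$ relating the underlying and enriched representables, together with the triviality that $\V_0(I,-)$ preserves isomorphisms. The only point deserving any care is the simultaneous appeal to \ref{enrichedEMgeneratoralt} and to \ref{EMgen} in their respective $\V$-enriched and $\Set$-enriched forms, so as to ensure that ``enriched $(\E,\M)$-generator'' and ``ordinary $(\E,\M)$-generator'' are each being read as the appropriate joint $\M$-conservativity condition.
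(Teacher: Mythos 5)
Your proposal is correct and follows essentially the same route as the paper's proof: reduce via \ref{enrichedEMgeneratoralt} to joint $\M$-conservativity of the $\C(H,-)$, pass to the underlying hom-sets via $\C_0(H,-) = \V_0(I,\C(H,-))$, and conclude using the $\Set$-enriched form of the same characterization. The only difference is that you make explicit the factorization through $\V_0(I,-)$, which the paper leaves implicit.
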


\begin{proof}
If $m \in \M$ and $\C(H, m)$ is an isomorphism for every $H \in \h$, then $\C_0(H, m)$ is an isomorphism in $\Set$ for every $H \in \h$, so $m$ is an isomorphism by the $\Set$-enriched version of \ref{enrichedEMgeneratoralt}. Hence $\h$ is an enriched $(\E, \M)$-generator by \ref{enrichedEMgeneratoralt}. 
\end{proof}

\noindent Proposition \ref{ordinarytoenrichedgenerator} shows that being an enriched $(\E, \M)$-generator is weaker than being an ordinary $(\E, \M)$-generator; to see that it is \emph{strictly} weaker in general, note that by \ref{unitobjectgenerator} the unit object $I$ is always itself an enriched $(\E, \M)$-generator for any closed cocomplete factegory $\V$, while $I$ need not be an \emph{ordinary} $(\E, \M)$-generator (e.g. when $\V = \Cat$ equipped with $(\StrongEpi, \Mono)$, the terminal category is not an ordinary strong generator).

We have the following result for obtaining examples of enriched $(\E, \M)$-generators:

\begin{prop}
\label{boundingEMgenerator}
Let $U : \C \to \D$ be an $\M$-conservative right-class $\V$-functor with a left adjoint $F$. If $\G$ is an enriched $(\E, \M)$-generator for $\D$, then $F\G := \left\{FG \mid G \in \G\right\}$ is an enriched $(\E, \M)$-generator for $\C$.
\end{prop}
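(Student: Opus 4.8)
The plan is to verify the joint $\M$-conservativity characterization of enriched $(\E,\M)$-generators established in \ref{enrichedEMgeneratoralt}. Since $\G$ is small, $F\G = \{FG \mid G \in \G\}$ is a small set of objects of $\C$, hence determines a small full sub-$\V$-category of the cocomplete $\V$-factegory $\C$, so \ref{enrichedEMgeneratoralt} applies. By the equivalence of conditions (1) and (3) there, it therefore suffices to show that the representable $\V$-functors $\C(FG, -) : \C \to \V$ (for $G \in \G$) are jointly $\M$-conservative; that is, given an $\M$-morphism $m : C \to C'$ in $\C$ for which each $\C(FG, m)$ is an isomorphism, I must deduce that $m$ is an isomorphism.

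The key step is to transport this hypothesis across the adjunction $F \dashv U$. Because $F \dashv U$ is a $\V$-adjunction, for each $G \in \G$ we have $\V$-natural isomorphisms $\C(FG, C) \cong \D(G, UC)$ in $C \in \ob\C$, and naturality in $C$ identifies $\C(FG, m)$ with $\D(G, Um)$. Thus the assumption that each $\C(FG, m)$ is an isomorphism is equivalent to each $\D(G, Um)$ being an isomorphism. Now, since $U$ is a right-class $\V$-functor and $m \in \M$, we have $Um \in \M$ in $\D$. As $\G$ is an enriched $(\E,\M)$-generator for $\D$, the $\V$-functors $\D(G, -)$ ($G \in \G$) are jointly $\M$-conservative by \ref{enrichedEMgeneratoralt} again, so applying this to the $\M$-morphism $Um$ yields that $Um$ is an isomorphism. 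Finally, $U$ is $\M$-conservative and $m \in \M$ with $Um$ an isomorphism, so $m$ is an isomorphism, as required.

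I do not anticipate a serious obstacle: the argument is essentially a chaining of the $\V$-adjunction isomorphism with two applications of $\M$-conservativity, namely that of the generator $\G$ in $\D$ and that of the functor $U$ itself. The only points meriting care are the reduction to joint $\M$-conservativity via \ref{enrichedEMgeneratoralt} (which silently uses that both $\C$ and $\D$ are cocomplete $\V$-factegories), and the observation that $Um \in \M$, which is exactly where the right-class hypothesis on $U$ is used.
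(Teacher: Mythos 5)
Your proposal is correct and follows essentially the same route as the paper's own proof: reduce via the joint $\M$-conservativity characterization in \ref{enrichedEMgeneratoralt}, transport the hypothesis across the adjunction $F \dashv U$, use that $Um \in \M$ (right-class preservation) together with the generator property of $\G$ in $\D$ to conclude $Um$ is an isomorphism, and finish with the $\M$-conservativity of $U$. No issues.
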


\begin{proof}
Using \ref{enrichedEMgeneratoralt}, let $m : C \to C'$ be an $\M$-morphism in $\C$ with $\C(FG, m) : \C(FG, C) \to \C(FG, C')$ an isomorphism for every $G \in \G$, and let us show that $m$ is an isomorphism. Because $F \dashv U$, it follows for every $G \in \G$ that $\D(G, Um) : \D(G, UC) \to \D(G, UC')$ is an isomorphism. Since $U$ preserves the right class and $\G$ is an enriched $(\E, \M)$-generator for $\D$, it follows by \ref{enrichedEMgeneratoralt} that $Um$ is an isomorphism. But since $U$ is $\M$-conservative, it then follows that $m$ is an isomorphism, as desired.
\end{proof}

\subsection{Locally bounded \texorpdfstring{$\V$}{V}-categories}
\label{locbdVcategories}

Before we can define our notion of locally bounded $\V$-category, it remains to define the notion of an \emph{enriched} $\alpha$-bounded object (for a regular cardinal $\alpha$). We say that a $\V$-factegory has ($\alpha$-filtered) $\M$-unions if the underlying ordinary factegory has ($\alpha$-filtered) $\M$-unions, and that a right-class $\V$-functor between $\V$-factegories with ($\alpha$-filtered) $\M$-unions preserves ($\alpha$-filtered) $\M$-unions if the underlying ordinary functor does so. 

\begin{defn}
\label{enrichedboundedobject}
Let $C$ be an object of a $\V$-factegory $\C$ with $\M$-unions. Given a regular cardinal $\alpha$, $C$ is an \textbf{enriched} $\alpha$\textbf{-bounded object} of $\C$ if the $\V$-functor $\C(C, -) : \C \to \V$ preserves \mbox{$\alpha$-filtered} $\M$-unions.  $C$ is an \textbf{enriched bounded object} if $C$ is an enriched $\alpha$-bounded object for some $\alpha$.\qed
\end{defn}

\noindent Note that $\C(C, -)$ is a right-class $\V$-functor by \ref{reps_rclass_vfunc}.  If $C$ is an $\alpha$-bounded object of the ordinary category $\C_0$, meaning that the functor $\C_0(C, -) : \C_0 \to \Set$ preserves $\alpha$-filtered $\M$-unions, then we say that $C$ is an \emph{ordinary} $\alpha$-bounded object (of $\C_0$). 

We have the following initial examples of enriched ($\aleph_0$-)bounded objects:

\begin{lem}
\label{unitbounded}
The unit object $I \in \ob\V$ is an enriched $\aleph_0$-bounded object.
\end{lem}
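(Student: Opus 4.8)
The plan is to reduce the statement to the elementary observation that the $\V$-functor $\V(I,-):\V \to \V$ is $\V$-naturally isomorphic to the identity $\V$-functor $1_\V$. Indeed, since $\V$ is closed with unit object $I$, the hom object $\V(I,V) = [I,V]$ is naturally isomorphic to $V$: for any $X \in \ob\V$ we have $\V(X,[I,V]) \cong \V(X \tensor I, V) \cong \V(X,V)$ via the unit isomorphism, so $[I,-] \cong 1_\V$ by the ($\V$-enriched) Yoneda lemma. I would record this isomorphism as the single substantive input.

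Granting this, the argument is immediate: the identity $\V$-functor $1_\V$ preserves every $\M$-union, since it sends a union $m : \bigcup_i C_i \to C$ of an $\M$-sink $(m_i)_{i \in I}$ to the very same morphism, which is of course still a union. I would then note that the property of preserving $\M$-unions depends only on the underlying ordinary functor and is invariant under natural isomorphism: applying an invertible natural transformation to a union $m$ yields a morphism isomorphic to $Um$ over the codomain, which is again an $\M$-morphism (as $\M$ is closed under composition with isomorphisms) receiving the transported sink $\E$-tightly (as $\E$-tightness is likewise stable under isomorphism). Combining these observations with $\V(I,-) \cong 1_\V$, we conclude that $\V(I,-)$ preserves all $\M$-unions, and in particular the $\aleph_0$-filtered ones, so that $I$ is an enriched $\aleph_0$-bounded object by Definition \ref{enrichedboundedobject}.

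There is no real obstacle here; the only point requiring any care is the verification that ``being a union of an $\M$-sink'' is invariant under an invertible natural transformation, which I would dispatch using the closure of $\M$ under composition with isomorphisms together with the characterization of $\E$-tightness recalled in \ref{union}. Everything else is the standard identification $[I,-] \cong 1_\V$, after which the conclusion follows from the fact that the identity functor trivially preserves every union.
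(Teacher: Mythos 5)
Your proposal is correct and follows exactly the paper's own argument: the paper's proof of \ref{unitbounded} likewise reduces everything to the isomorphism $\V(I,-) \cong 1_\V$ (after noting via \ref{VitselfVfactegory} that $\V$ is a cocomplete $\V$-factegory) and declares the conclusion immediate. You have merely spelled out the routine verification that preservation of $\M$-unions is invariant under natural isomorphism, which the paper leaves implicit.
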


\begin{proof}
Noting that $\V$ is a cocomplete $\V$-factegory by \ref{VitselfVfactegory}, the result follows immediately from the fact that $\V(I,-) : \V \to \V$ is isomorphic to the identity $\V$-functor.
\end{proof}

\begin{lem}
\label{representablesbounded}
Let $\A$ be a small $\V$-category. Then every representable $\V$-functor from $\A$ to $\V$ is an enriched $\aleph_0$-bounded object of the presheaf $\V$-category $[\A, \V]$.
\end{lem}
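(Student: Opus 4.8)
The plan is to reduce the statement to the fact, already recorded in Proposition \ref{functorfactegory}, that unions in $[\A,\V]$ are formed pointwise. Fixing $A \in \ob\A$, the crucial observation is that, by the enriched Yoneda lemma, the corepresentable $\V$-functor $[\A,\V]\bigl(\A(A,-),-\bigr) : [\A,\V] \to \V$ is $\V$-naturally isomorphic to the evaluation $\V$-functor $\Ev_A : [\A,\V] \to \V$ that sends each $F : \A \to \V$ to $FA$; indeed, the Yoneda lemma supplies a $\V$-natural isomorphism $[\A,\V]\bigl(\A(A,-), F\bigr) \cong FA$.

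Since the property of preserving $\aleph_0$-filtered $\M$-unions is invariant under $\V$-natural isomorphism, it therefore suffices by this isomorphism to show that $\Ev_A$ preserves $\aleph_0$-filtered $\M$-unions. But this is immediate from the assertion in Proposition \ref{functorfactegory} that unions in $[\A,\V]$ are computed pointwise: given the union $m : \bigcup_i F_i \to F$ of an $\M'$-sink $(m_i : F_i \to F)_{i \in I}$ in $[\A,\V]$, its $A$-component $m_A : (\bigcup_i F_i)A \to FA$ is a union of the $\M$-sink $(m_{i,A} : F_i A \to FA)_{i \in I}$ in $\V$, which is exactly the statement (see Definition \ref{preservesunions}) that $\Ev_A$ preserves this union. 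Hence $\Ev_A$ preserves all $\M$-unions, and in particular all $\aleph_0$-filtered ones, so $\A(A,-)$ is an enriched $\aleph_0$-bounded object of $[\A,\V]$.

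I expect no real obstacle here: once the Yoneda isomorphism $[\A,\V]\bigl(\A(A,-),-\bigr) \cong \Ev_A$ is in hand, the result is a direct consequence of the pointwise formation of unions. The only minor point requiring care is to confirm that the phrase ``unions are formed pointwise'' does indeed unwind, via Definition \ref{preservesunions}, into the preservation of every $\M$-union by each evaluation $\V$-functor $\Ev_A$, which is immediate from the pointwise description of the union together with the characterization of preservation of unions as preservation of the corresponding join in $\Sub_\M$.
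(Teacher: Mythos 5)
Your proof is correct and follows exactly the paper's own argument: the enriched Yoneda lemma identifies $[\A,\V]\bigl(\A(A,-),-\bigr)$ with the evaluation $\V$-functor $\Ev_A$, which preserves all $\M$-unions because these are formed pointwise in $[\A,\V]$ by Proposition \ref{functorfactegory}. No differences worth noting.
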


\begin{proof}
By \ref{densegen}, $[\A, \V]$ is a cocomplete $\V$-factegory. Given a representable $\V$-functor $\y A = \A(A,-):\A \rightarrow \V$ with $A \in \ob\A$, the $\V$-functor $[\A, \V](\y A, -) : [\A, \V] \to \V$ is (by the enriched Yoneda lemma) isomorphic to the evaluation $\V$-functor $\Ev_A : [\A, \V] \to \V$, which preserves (filtered) $\M$-unions because they are formed pointwise in $[\A, \V]$, by \ref{functorfactegory}.  
\end{proof}

We now make the central definition of this paper:

\begin{defn}
\label{locallyboundedVcategory}
Let $\alpha$ be a regular cardinal. A \textbf{locally} $\alpha$\textbf{-bounded} $\V$\textbf{-category} is a cocomplete $\V$-factegory $\C$ equipped with an enriched $(\E, \M)$-generator $\G$ consisting of enriched $\alpha$-bounded objects. A $\V$-category is \textbf{locally bounded} if it is locally $\alpha$-bounded for some regular cardinal $\alpha$. \qed
\end{defn} 

\begin{para}
\label{lbv_remarks}
When $\V = \Set$, a locally $\alpha$-bounded $\V$-category is therefore just a locally $\alpha$-bounded ordinary category. Note that a locally $\alpha$-bounded $\V$-category is also a locally $\beta$-bounded $\V$-category for any regular cardinal $\beta > \alpha$ (because an enriched $\alpha$-bounded object is also an enriched $\beta$-bounded object). \qed
\end{para}

\noindent We first show that any locally bounded $\V$-category is \emph{total} and hence complete. Recall that a $\V$-category $\C$ is \emph{total} (see \cite[5.1]{Kellytotality}) if it admits the (possibly large) colimit $W \ast 1_\C$ for every $\V$-functor $W : \C^\op \to \V$, which is equivalent to saying (see \cite[5.2]{Kellytotality}) that the enriched Yoneda embedding $\y : \C \to [\C^\op, \V]$ admits a left adjoint (where $[\C^\op, \V]$ is regarded as a $\V'$-category for some universe extension $\V'$ of $\V$). 

\begin{prop}
\label{locbdcomplete}
Every locally bounded $\V$-category $\C$ is total and complete. 
\end{prop}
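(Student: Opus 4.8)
The plan is to establish totality first and to deduce completeness from it, since every total $\V$-category is complete (see \cite{Kellytotality}). As recalled before the statement, $\C$ is total precisely when the enriched Yoneda embedding $\y : \C \to [\C^\op, \V]$ admits a left adjoint, equivalently when the (possibly large) weighted colimit $W \ast 1_\C$ exists for every weight $W : \C^\op \to \V$; here $W \ast 1_\C$ is characterized by $\C(W \ast 1_\C, C) \cong [\C^\op, \V](W, \y C)$, so it is exactly the reflection of $W$ along the fully faithful $\y$. The entire difficulty is that this is a \emph{large} colimit and that $\C$ is not assumed $\E$-cowellpowered, so one cannot simply invoke an adjoint functor theorem; instead I would construct the reflection directly inside the cocomplete $\V$-factegory $\C$ from a solution set manufactured out of the bounded generator.

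First I would extract a canonical small approximation from the generator. Writing $i : \G \hookrightarrow \C$ for the inclusion, the restricted Yoneda $\V$-functor $\y_\G$ has left adjoint $(-) \ast i$, and since $\G$ is an enriched $(\E,\M)$-generator the counit of this adjunction is pointwise in $\E$ (by \ref{enrichedEMgeneratoralt} together with \ref{moreboundingequivalent}). Forming the genuinely small colimit $P := (W i^\op) \ast i = \int^{G \in \G} WG \tensor G \in \ob\C$, every natural transformation $\theta : W \to \y C$ restricts along $i$ and, via $(-) \ast i$ and the counit, yields a morphism $P \to C$; taking its $(\E, \M)$-factorization $P \twoheadrightarrow C_\theta \rightarrowtail C$ produces an $\M$-subobject $C_\theta$ of $C$ that is an $\E$-quotient of the \emph{fixed} object $P$. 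Using that $\G$ is a generator (so that $\y_\G$ reflects the order of $\M$-subobjects, by \ref{moreboundingequivalent}) and that $W$ is a colimit of representables, I would check that $\theta$ itself factors through $\y C_\theta \rightarrowtail \y C$, so that the objects $C_\theta$ form a weakly initial family for the reflection of $W$.

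The crucial step is then to cut this weakly initial family down to a genuine \emph{small} solution set, and this is exactly where enriched $\alpha$-boundedness of the objects of $\G$ is indispensable. Since each $G \in \G$ is enriched $\alpha$-bounded, maps $G \to C_\theta$ factor through $\alpha$-bounded $\M$-subobjects and the codomains $C_\theta$ are organized by their $\alpha$-filtered $\M$-union structure (cf. \ref{enrichedGreflectsunions} and the Sousa-type analysis underlying \ref{Sousathm}); morally, boundedness caps the ``arity'' of the reflection, forcing the relevant factorizations to live over a small $\alpha$-filtered family of $\M$-subobjects of a bounded object even though $\C$ itself need not be $\E$-cowellpowered. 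Granting such a small solution set $\{\sigma_j : W \to \y C_j\}$, I would then build $W \ast 1_\C$ inside $\C$ by forming the small colimit of the $C_j$ and imposing the induced identifications as a cointersection of $\E$-morphisms (both of which exist since $\C$ is a cocomplete $\V$-factegory), and verify that the result carries the universal property displayed above.

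The main obstacle is precisely this size control: without $\E$-cowellpoweredness the $\E$-quotients of $P$ may form a proper class, so the reflection cannot be assembled by a naive colimit over all of them, and the entire weight of the proof rests on showing that $\alpha$-boundedness together with $\alpha$-filtered $\M$-unions confines the needed factorizations to a small, $\alpha$-filtered family — and that the resulting colimit genuinely computes $W \ast 1_\C$. Once the left adjoint to $\y$ is in hand, $\C$ is total, and completeness follows because total $\V$-categories are complete (see \cite{Kellytotality}).
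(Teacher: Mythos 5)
Your overall strategy (prove totality by constructing a left adjoint to $\y : \C \to [\C^\op,\V]$, then deduce completeness) matches the paper's, and your opening move — forming the small colimit $P = (Wi^\op)\ast i$ and producing, for each $\theta : W \to \y C$, an $\E$-quotient $P \twoheadrightarrow C_\theta$ through which $\theta$ factors — is essentially the start of the argument the paper cites. But there is a genuine gap at what you yourself identify as the crucial step: the claim that enriched $\alpha$-boundedness of the generator cuts the weakly initial family $\{C_\theta\}$ down to a \emph{small} solution set. You give no actual argument for this ("morally, boundedness caps the arity\ldots"), and I do not believe it can be made to work without $\E$-cowellpoweredness: the $C_\theta$ are $\E$-quotients of the fixed object $P$, and absent cowellpoweredness these may form a proper class with no small cofinal subfamily. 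This is precisely why the paper's own solution-set-style results (the representability theorem \ref{representabilitythm}, via \ref{weaklyaccessibleprop} and Sousa's theorem \ref{Sousathm}) all \emph{assume} $\E$-cowellpoweredness in addition to local boundedness. Proposition \ref{locbdcomplete} makes no such assumption, so a solution-set argument is the wrong tool here.

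The paper instead quotes Day's totality criterion: a cocomplete $\V$-factegory with an enriched $(\E,\M)$-generator is total, the point being that the definition of cocomplete ($\V$-)factegory already includes arbitrary — even large — cointersections of $\E$-morphisms. One therefore never needs to shrink the family $\{C_\theta\}$ to a small one: the reflection of $W$ is obtained by taking the cointersection of the entire (possibly proper) class of relevant $\E$-quotients of $P$, which exists by hypothesis. In particular, boundedness of the generator plays no role in totality (and hence none in completeness, which then follows from totality together with the completeness of $\V_0$, as you say). To repair your proof, replace the solution-set step by this large cointersection; alternatively, add $\E$-cowellpoweredness as a hypothesis, but then you would be proving a weaker statement than the proposition asserts.
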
  

\begin{proof}
Since the $\V$-factegory $\C$ is cocomplete and has an enriched $(\E, \M)$-generator, it follows by (the remark following) \cite[Theorem 1]{Daytotality} that $\C$ is total.\footnote{Note that the proof of this result does not actually require $\V_0$ to be complete or have arbitrary intersections of monomorphisms, which are nevertheless blanket assumptions made in the introduction to \cite{Daytotality}.} Since $\V_0$ is complete, it then follows by (the proof of) \cite[5.6]{Kellytotality} that $\C$ is complete. 
\end{proof} 

\noindent We have the following easy result (cf. also \ref{VlocallyboundedVcat} below).

\begin{prop}
\label{easyVlocallybounded}
$\V$ is a locally $\aleph_0$-bounded $\V$-category with enriched $(\E, \M)$-generator $\{I\}$.
\end{prop}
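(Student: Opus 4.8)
The plan is to simply verify the three conditions in Definition \ref{locallyboundedVcategory} for the case $\C = \V$, $\alpha = \aleph_0$, and $\G = \{I\}$, each of which has already been established among the preceding results. First I would invoke Remark \ref{VitselfVfactegory}, which asserts precisely that $\V$ is a cocomplete $\V$-factegory; this supplies the underlying cocomplete $\V$-factegory structure required by the definition.

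Next, I would appeal to Lemma \ref{unitobjectgenerator} to conclude that the singleton $\{I\}$ is an enriched $(\E, \M)$-generator for $\V$, so that the generator required by the definition is in hand. Finally, Lemma \ref{unitbounded} shows that $I$ is an enriched $\aleph_0$-bounded object of $\V$, so that the chosen generator consists entirely of enriched $\aleph_0$-bounded objects. Assembling these three facts, $\V$ satisfies all the conditions of Definition \ref{locallyboundedVcategory} with $\alpha = \aleph_0$, and is therefore a locally $\aleph_0$-bounded $\V$-category with enriched $(\E, \M)$-generator $\{I\}$.

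Since the genuine content has already been carried out in the cited results, there is no real obstacle here; the proof is merely a matter of recording that the pieces fit together. The only point to watch is that a single regular cardinal, namely $\aleph_0$, governs both the boundedness of the generating object and the resulting local boundedness, and this is immediate because Lemma \ref{unitbounded} delivers $\aleph_0$-boundedness of $I$ on the nose, matching the index required by the definition of locally $\aleph_0$-bounded $\V$-category.
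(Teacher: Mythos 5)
Your proof is correct and is essentially identical to the paper's: both cite \ref{VitselfVfactegory} for the cocomplete $\V$-factegory structure, \ref{unitobjectgenerator} for $\{I\}$ being an enriched $(\E,\M)$-generator, and \ref{unitbounded} for the enriched $\aleph_0$-boundedness of $I$.
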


\begin{proof}
$\V$ is a cocomplete $\V$-factegory by \ref{VitselfVfactegory}, and the unit object $I \in \ob\V$ is an enriched $(\E, \M)$-generator by \ref{unitobjectgenerator} and an enriched $\aleph_0$-bounded object by \ref{unitbounded}.  
\end{proof}

\noindent We can also prove that, moreover, every presheaf $\V$-category is locally bounded:

\begin{prop}
\label{locallyboundedpresheafcat}
Let $\A$ be a small $\V$-category.  Then the presheaf $\V$-category $[\A, \V]$ is locally $\aleph_0$-bounded when equipped with its pointwise factorization system and its enriched $(\E, \M)$-generator $\left\{\A(A,-) \mid A \in \ob\A\right\}$. 
\end{prop}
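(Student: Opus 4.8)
The plan is to verify directly that the data in the statement satisfy the three requirements of Definition \ref{locallyboundedVcategory}, namely that $[\A,\V]$ is a cocomplete $\V$-factegory, that $\{\A(A,-) \mid A \in \ob\A\}$ is an enriched $(\E,\M)$-generator, and that each of its members is an enriched $\aleph_0$-bounded object. Pleasantly, all three ingredients have already been assembled in the preceding results, so the proof amounts to invoking them in turn; there is no genuine obstacle, and the real mathematical content resides in the earlier lemmas rather than here.

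First I would establish that $[\A,\V]$ is a cocomplete $\V$-factegory. By Remark \ref{VitselfVfactegory}, $\V$ is itself a cocomplete $\V$-factegory, and in particular it is cotensored. Since $\A$ is small, Proposition \ref{functorfactegory} applied with $\B = \V$ then shows that $[\A,\V]$, equipped with its pointwise factorization system, is again a cocomplete $\V$-factegory (with unions formed pointwise).

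Next I would record that $\G := \{\A(A,-) \mid A \in \ob\A\}$ is an enriched $(\E,\M)$-generator for $[\A,\V]$: this is precisely the content of Example \ref{densegen}, where the enriched Yoneda lemma is used to see that the representables are dense, hence form such a generator. Finally, to see that each object of $\G$ is enriched $\aleph_0$-bounded, I would cite Lemma \ref{representablesbounded}, which establishes exactly that every representable $\V$-functor $\A(A,-)$ is an enriched $\aleph_0$-bounded object of $[\A,\V]$, via the identification of $[\A,\V](\A(A,-),-)$ with the evaluation $\V$-functor $\Ev_A$ and the fact that $\M$-unions are computed pointwise.

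Combining these three observations, $[\A,\V]$ is a cocomplete $\V$-factegory equipped with an enriched $(\E,\M)$-generator consisting of enriched $\aleph_0$-bounded objects, which is exactly the definition of a locally $\aleph_0$-bounded $\V$-category. The only point worth flagging is to ensure the hypotheses of Proposition \ref{functorfactegory} are met, i.e. that $\V$ is cotensored and a cocomplete $\V$-factegory, which is guaranteed by the blanket assumption on $\V$ together with Remark \ref{VitselfVfactegory}.
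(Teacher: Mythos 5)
Your proposal is correct and follows essentially the same route as the paper's own proof: the paper cites \ref{densegen} (which itself rests on \ref{VitselfVfactegory} and \ref{functorfactegory}) for the cocomplete $\V$-factegory structure and the enriched $(\E,\M)$-generator, and \ref{representablesbounded} for the $\aleph_0$-boundedness of the representables. You have merely unfolded the references one level further, which changes nothing of substance.
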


\begin{proof}
By \ref{densegen}, $[\A, \V]$ is a cocomplete $\V$-factegory with an $(\E,\M)$-generator $\left\{\A(A,-) \mid A \in \ob\A\right\}$, which consists of enriched $\aleph_0$-bounded objects by \ref{representablesbounded}.  
\end{proof}

\noindent We henceforth regard presheaf $\V$-categories as being locally bounded with the canonical such structure defined in \ref{locallyboundedpresheafcat}.  

\subsection{Bounding adjunctions}
\label{boundingadjunctions}

We now develop the notion of a \emph{bounding adjunction} between $\V$-factegories with $\M$-unions, which we shall use to prove characterization results for locally bounded $\V$-categories and to obtain various further examples of such $\V$-categories.

\begin{defn}
\label{boundedVfunctor}
Let $U : \C \to \D$ be a right-class $\V$-functor between $\V$-factegories with $\M$-unions. Given a regular cardinal $\alpha$, we say that $U$ is $\alpha$\textbf{-bounded} if $U : \C \to \D$ preserves $\alpha$-filtered $\M$-unions.  $U$ is \textbf{bounded} if it is $\alpha$-bounded for some regular cardinal $\alpha$. \qed 
\end{defn}

\noindent Note that a composite of $\alpha$-bounded right-class $\V$-functors is again $\alpha$-bounded.

\begin{rmk}
\label{boundedrmk}
In \ref{boundedVfunctor} we have defined boundedness of a $\V$-functor $U$ only under the prior assumption that $U$ preserves the right class.  However, boundedness can be defined without assuming preservation of the right class as follows:  If $U : \C \to \D$ is a $\V$-functor between $\V$-factegories, then (even if $U$ does not preserve the right class) one can say that $U$ is $\alpha$\emph{-bounded} if $U$ sends each $\E$-tight $\alpha$-filtered $\M$-sink to an $\E$-tight sink; we also express the latter property by saying that $U$ \textit{preserves the $\E$-tightness of $\alpha$-filtered $\M$-sinks} (cf. \cite[2.3]{Kellytrans}).  It is then straightforward to show that if $U$ happens to preserve the right class, then $U$ is bounded in this latter sense iff it is bounded in the sense of our \ref{boundedVfunctor} (cf. \cite[2.3]{Kellytrans}). In particular, every $\alpha$-bounded $\V$-functor $U : \C \to \D$ in the sense of \ref{boundedVfunctor} preserves the $\E$-tightness of $\alpha$-filtered $\M$-sinks. \qed
\end{rmk}

\begin{prop}
\label{leftadjointpreservesbounded}
Let $U:\C \rightarrow \D$ be an $\alpha$-bounded right-class $\V$-functor between $\V$-factegories with $\M$-unions, and suppose that $U$ has a left adjoint $F:\D \rightarrow \C$.  Then $F$ preserves enriched $\alpha$-bounded objects.     
\end{prop}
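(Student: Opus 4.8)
The plan is to use the enriched adjunction isomorphism to factor the representable $\V$-functor $\C(FD,-)$ through $U$, and then chain two preservation properties. Fix an enriched $\alpha$-bounded object $D \in \ob\D$; we must show that $FD$ is enriched $\alpha$-bounded, i.e.\ that $\C(FD,-) : \C \to \V$ preserves $\alpha$-filtered $\M$-unions. Since $F \dashv U$ is a $\V$-adjunction, we have a $\V$-natural isomorphism $\C(FD,-) \cong \D(D,U-)$ of $\V$-functors $\C \to \V$, so it suffices to show that the composite $\C \xrightarrow{U} \D \xrightarrow{\D(D,-)} \V$ preserves $\alpha$-filtered $\M$-unions: preservation of a given $\M$-union is invariant under $\V$-natural isomorphism, since by \ref{union} a union is a join in the relevant preordered class of $\M$-subobjects, and by the remark following \ref{preservesunions} (applied in $\V$, which has $\M$-unions) preservation of the union $\bigcup_i C_i \to C$ amounts to the isomorphism $\C(FD,\bigcup_i C_i) \cong \bigcup_i \C(FD,C_i)$ in $\Sub_\M(\C(FD,C))$, which corresponds under the natural isomorphism exactly to $\D(D,U\bigcup_i C_i) \cong \bigcup_i \D(D,UC_i)$.

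So let $m : \bigcup_i C_i \to C$ be the union of an $\alpha$-filtered $\M$-sink $(m_i : C_i \to C)_{i \in I}$ in $\C$. First I would apply $U$: since $U$ is $\alpha$-bounded, $Um : U(\bigcup_i C_i) \to UC$ is a union of the $\M$-sink $(Um_i : UC_i \to UC)_{i \in I}$ in $\D$. To then invoke the enriched $\alpha$-boundedness of $D$ via $\D(D,-)$, I must check that $(Um_i)_i$ is again $\alpha$-filtered. The key observation is that the monotone map $U_C : \Sub_\M(C) \to \Sub_\M(UC)$ of \ref{uc} carries the $\alpha$-directed family $\{m_i \mid i \in I\}$ to the family $\{Um_i \mid i \in I\}$, and monotone maps preserve $\alpha$-directedness: given $J \subseteq I$ with $|J| < \alpha$, the $\alpha$-filteredness of $(m_i)_i$ yields some $i \in I$ through which each $m_j$ ($j \in J$) factors in $\Sub_\M(C)$, and applying the functor $U$ shows each $Um_j$ factors through $Um_i$ in $\Sub_\M(UC)$. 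Thus $(Um_i)_i$ is an $\alpha$-filtered $\M$-sink with union $Um$, so the enriched $\alpha$-boundedness of $D$ gives that $\D(D,Um)$ is a union of the $\M$-sink $(\D(D,Um_i))_i$ in $\V$. Tracing back through the natural isomorphism, $\C(FD,m)$ is a union of $(\C(FD,m_i))_i$, as required.

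The main obstacle is precisely this middle step. Both hypotheses — $\alpha$-boundedness of $U$ and enriched $\alpha$-boundedness of $D$ — are about preservation of $\alpha$-filtered $\M$-unions, so they can be composed along $\C \xrightarrow{U} \D \xrightarrow{\D(D,-)} \V$ only once we know that $U$ sends the $\alpha$-filtered $\M$-sink $(m_i)_i$ to another $\alpha$-filtered $\M$-sink (with union $Um$); this is what the monotonicity of $U_C$ secures. Everything else is formal: the isomorphism $\C(FD,-) \cong \D(D,U-)$ is the standard enriched adjunction isomorphism, and preservation of unions both composes along the two functors and transfers across the $\V$-natural isomorphism.
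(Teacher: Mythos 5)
Your proof is correct and follows essentially the same route as the paper's: transfer along the adjunction isomorphism $\C(FD,-)\cong\D(D,U-)$, apply the $\alpha$-boundedness of $U$, then the enriched $\alpha$-boundedness of $D$. Your explicit check that $(Um_i)_i$ remains an $\alpha$-filtered $\M$-sink (via monotonicity of $U_C$) is a detail the paper's one-line computation leaves implicit, and it is correctly handled.
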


\begin{proof}
Let $G \in \ob\D$ be an enriched $\alpha$-bounded object of $\D$, and let us show that $FG$ is an enriched $\alpha$-bounded object of $\C$. Given an $\alpha$-filtered $\M$-sink $(m_i : C_i \to C)_{i \in I}$ in $\C$, we compute that
\[ \C\left(FG, \bigcup_i C_i\right) \cong \D\left(G, U\left(\bigcup_i C_i\right)\right) \cong \D\left(G, \bigcup_i UC_i\right) \cong \bigcup_i \D(G, UC_i) \cong \bigcup_i \C(FG, C_i) \]
as $\M$-subobjects of $\C(FG, C)$.
\end{proof}

\begin{defn}
\label{boundingadjoint}
Let $\C$ and $\D$ be $\V$-factegories with $\M$-unions, and let $\alpha$ be a regular cardinal. A $\V$-functor $U : \C \to \D$ is an $\alpha$\textbf{-bounding right adjoint} if $U$ is an $\alpha$-bounded right-class $\V$-functor with a left adjoint $F$ whose counit $\varepsilon:FU \rightarrow 1_\C$ is pointwise in $\E$. We say that $U : \C \to \D$ is a \textbf{bounding right adjoint} if it is an $\alpha$-bounding right adjoint for some regular cardinal $\alpha$. \qed
\end{defn}

\begin{rmk} 
Every bounding right adjoint $U : \C \to \D$ is automatically $\V$-faithful (and hence $U_0 : \C_0 \to \D_0$ is faithful) by \cite[Proposition 0.3]{Dubucbook}, because the counit of the adjunction is pointwise in $\E$ and hence is pointwise $\V$-epimorphic by properness.  \qed
\end{rmk} 

By \ref{moreboundingequivalent} we obtain the following characterization of bounding right adjoints:

\begin{prop}
\label{charn_bounding_radjs}
A right adjoint right-class $\V$-functor $U : \C \to \D$ is an $\alpha$-bounding right adjoint iff it is $\alpha$-bounded and $\M$-conservative, iff it is $\alpha$-bounded and reflects the left class. \qed
\end{prop}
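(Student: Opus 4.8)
The plan is to observe that this proposition is essentially an immediate repackaging of \ref{boundingequivalent} and \ref{moreboundingequivalent}, so that no genuinely new work is required beyond carefully matching up definitions. The key point is that the hypothesis—that $U : \C \to \D$ is a right adjoint right-class $\V$-functor—is exactly the standing hypothesis of \ref{moreboundingequivalent}, namely that $U$ is a right-class $\V$-functor equipped with a left adjoint $F$.

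First I would unwind the definition \ref{boundingadjoint}: by definition, $U$ is an $\alpha$-bounding right adjoint precisely when $U$ is $\alpha$-bounded and the counit $\varepsilon : FU \to 1_\C$ of the adjunction $F \dashv U$ is pointwise in $\E$. Thus each of the three statements in the proposition has the form ``$U$ is $\alpha$-bounded, and $P$,'' where $P$ is respectively the property that the counit is pointwise in $\E$, that $U$ is $\M$-conservative, and that $U$ reflects the left class.

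Next I would invoke the established equivalences. By \ref{boundingequivalent}, $\M$-conservativity and reflection of the left class are equivalent for any right-class $\V$-functor; and by \ref{moreboundingequivalent}, since $U$ additionally has a left adjoint, these are in turn equivalent to the counit being pointwise in $\E$. Hence the three instances of $P$ above are pairwise equivalent. Conjoining each with the common hypothesis that $U$ is $\alpha$-bounded yields the three pairwise-equivalent statements of the proposition, which completes the proof.

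There is no real obstacle here: the entire substantive content has already been discharged in \ref{boundingequivalent} and \ref{moreboundingequivalent}, and the only point demanding care is confirming that the conjunct ``$\alpha$-bounded'' is shared, literally unchanged, across all three statements, so that the equivalence of the $P$-clauses transfers verbatim to the equivalence of the full statements.
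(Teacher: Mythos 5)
Your proposal is correct and matches the paper's own (implicit) justification: the paper states this proposition with no proof beyond the remark ``By \ref{moreboundingequivalent} we obtain the following characterization,'' which is precisely your argument of unwinding Definition \ref{boundingadjoint} and conjoining the shared $\alpha$-boundedness condition with the equivalences of \ref{boundingequivalent} and \ref{moreboundingequivalent}. Nothing further is needed.
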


\begin{theo}
\label{recogthm}
Let $\C$ be a cocomplete $\V$-factegory and $\G \subseteq \ob\C$ a set. Then $\C$ is a locally $\alpha$-bounded $\V$-category with enriched $(\E, \M)$-generator $\G$ iff the restricted Yoneda $\V$-functor $\y_\G : \C \to [\G^\op, \V]$ is $\alpha$-bounded and $\M$-conservative, iff $\y_\G : \C \to [\G^\op, \V]$ is an $\alpha$-bounding right adjoint.   
\end{theo}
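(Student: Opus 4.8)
The plan is to reduce the theorem to two results already in hand — the characterization of enriched $(\E,\M)$-generators in \ref{enrichedEMgeneratoralt} and the characterization of bounding right adjoints in \ref{charn_bounding_radjs} — so that the only genuinely new input is the observation that $\alpha$-boundedness of $\y_\G$ can be tested one representable at a time. First I would record that, by hypothesis $\C$ is a cocomplete $\V$-factegory, so $\C_0$ has small coproducts and hence $\C$ has $\M$-unions (as in \ref{union}), and $[\G^\op,\V]$ is a cocomplete $\V$-factegory with pointwise $\M$-unions by \ref{densegen} and \ref{functorfactegory}; thus all the boundedness notions below are meaningful.

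The second biconditional is immediate. Recall from the discussion preceding \ref{enrichedEMgeneratoralt} that $\y_\G : \C \to [\G^\op,\V]$ is a right-class $\V$-functor with left adjoint $(-)\ast i$, hence a right adjoint right-class $\V$-functor; so \ref{charn_bounding_radjs} applies verbatim and gives that ``$\y_\G$ is $\alpha$-bounded and $\M$-conservative'' is equivalent to ``$\y_\G$ is an $\alpha$-bounding right adjoint.'' This leaves only the first biconditional.

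For the first biconditional, I would unfold \ref{locallyboundedVcategory}: since $\C$ is already a cocomplete $\V$-factegory, the assertion ``$\C$ is locally $\alpha$-bounded with enriched $(\E,\M)$-generator $\G$'' is exactly the conjunction of (i) $\G$ is an enriched $(\E,\M)$-generator and (ii) every $G \in \G$ is an enriched $\alpha$-bounded object. By \ref{enrichedEMgeneratoralt}, condition (i) is precisely the $\M$-conservativity of $\y_\G$. Hence it remains only to show that condition (ii) holds iff $\y_\G$ is $\alpha$-bounded, after which assembling the equivalences proves the claim.

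The crux is the pointwise reduction. By the reformulation following \ref{preservesunions}, the right-class $\V$-functor $\y_\G$ preserves an $\alpha$-filtered $\M$-union $\bigcup_i C_i \to C$ iff $\y_\G(\bigcup_i C_i) \cong \bigcup_i \y_\G(C_i)$ as $\M$-subobjects of $\y_\G(C)$. Since $\M$-subobjects are compared, and $\M$-unions are formed, pointwise in $[\G^\op,\V]$ (\ref{functorfactegory}), this isomorphism holds iff it holds at each $G \in \G$; evaluating and using $\Ev_G \circ \y_\G = \C(G,-)$ turns it into $\C(G,\bigcup_i C_i) \cong \bigcup_i \C(G,C_i)$ as $\M$-subobjects of $\C(G,C)$, i.e. into preservation of that union by $\C(G,-)$. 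Ranging over all $\alpha$-filtered $\M$-unions and all $G$ shows that $\y_\G$ is $\alpha$-bounded iff each $\C(G,-)$ preserves $\alpha$-filtered $\M$-unions, which by \ref{enrichedboundedobject} is exactly condition (ii). The step needing the most care is precisely this pointwise detection — one must be sure that comparing $\M$-subobjects objectwise in $[\G^\op,\V]$ really computes the join, and that the $\M$-subobject produced by $\y_\G$ is the one to compare against the pointwise union — but this is handled cleanly by the join description of unions in \ref{preservesunions} together with \ref{functorfactegory}, so I anticipate no serious obstacle: the theorem is essentially a repackaging of \ref{enrichedEMgeneratoralt} and \ref{charn_bounding_radjs} once this observation is in place.
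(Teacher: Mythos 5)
Your proposal is correct and follows essentially the same route as the paper: the second biconditional via \ref{charn_bounding_radjs} applied to the right adjoint right-class $\V$-functor $\y_\G$, and the first via \ref{enrichedEMgeneratoralt} together with the pointwise nature of $\M$-unions in $[\G^\op,\V]$ to identify $\alpha$-boundedness of $\y_\G$ with enriched $\alpha$-boundedness of each $G \in \G$. You merely spell out in more detail the step the paper calls ``essentially immediate,'' and your elaboration is sound.
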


\begin{proof}
As noted earlier, $\y_\G$ preserves the right class. It is essentially immediate from the definitions and the pointwise nature of $\M$-unions in $[\G^\op, \V]$ that every $G \in \G$ is an enriched $\alpha$-bounded object iff the $\V$-functor $\y_\G : \C \to [\G^\op, \V]$ is $\alpha$-bounded. So the first equivalence follows by \ref{enrichedEMgeneratoralt}. The second equivalence follows by \ref{charn_bounding_radjs} and the fact that $\y_\G$ is a right adjoint right-class $\V$-functor (by the remarks preceding \ref{enrichedEMgeneratoralt}).  
\end{proof}

\noindent We now prove a central property of bounding right adjoints: under certain hypotheses, any bounding right adjoint into a locally bounded $\V$-category induces a locally bounded structure on its domain.  We state the following result, and most (if not all) subsequent definitions and results, in terms of a single regular cardinal. However, it should be clear that if the assumptions of a given result are instead satisfied with respect to several distinct regular cardinals, then these assumptions will still be satisfied with respect to the largest of these regular cardinals, in view of \ref{lbv_remarks}.

\begin{theo}
\label{boundingrightadjointthm}
Let $\D$ be a locally $\alpha$-bounded $\V$-category with enriched $(\E, \M)$-generator $\G$. If $\C$ is a cocomplete $\V$-factegory and $U : \C \to \D$ is an $\alpha$-bounding right adjoint with left adjoint $F$, then $\C$ is a locally $\alpha$-bounded $\V$-category with enriched $(\E, \M)$-generator $\left\{FG \mid G \in \G\right\}$.   
\end{theo}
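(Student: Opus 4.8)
The plan is to combine the two tools the paper has just assembled: Proposition~\ref{boundingEMgenerator}, which manufactures an enriched $(\E,\M)$-generator on the domain of an $\M$-conservative right-class $\V$-functor with a left adjoint, and Proposition~\ref{leftadjointpreservesbounded}, which says the left adjoint of an $\alpha$-bounded right-class $\V$-functor preserves enriched $\alpha$-bounded objects. Since $\C$ is assumed to be a cocomplete $\V$-factegory, the only thing that needs checking is that the proposed set $\{FG \mid G \in \G\}$ is an enriched $(\E,\M)$-generator whose members are enriched $\alpha$-bounded objects of $\C$.

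First I would unpack the hypothesis that $U$ is an $\alpha$-bounding right adjoint: by Definition~\ref{boundingadjoint} this means $U$ is an $\alpha$-bounded right-class $\V$-functor with left adjoint $F$ whose counit is pointwise in $\E$. By Proposition~\ref{charn_bounding_radjs} (or directly by \ref{moreboundingequivalent}), the pointwise-$\E$ counit condition is equivalent to $U$ being $\M$-conservative, so $U$ satisfies exactly the hypotheses of Proposition~\ref{boundingEMgenerator}. Applying that proposition to the enriched $(\E,\M)$-generator $\G$ of $\D$ immediately yields that $F\G=\{FG \mid G \in \G\}$ is an enriched $(\E,\M)$-generator for $\C$.

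Next I would address boundedness of the generators. Each $G \in \G$ is an enriched $\alpha$-bounded object of $\D$ by the hypothesis that $\G$ is a generator of the locally $\alpha$-bounded $\V$-category $\D$. Since $U$ is an $\alpha$-bounded right-class $\V$-functor with left adjoint $F$, Proposition~\ref{leftadjointpreservesbounded} tells us that $F$ preserves enriched $\alpha$-bounded objects, so each $FG$ is an enriched $\alpha$-bounded object of $\C$. Combining this with the previous paragraph, $\C$ is a cocomplete $\V$-factegory equipped with an enriched $(\E,\M)$-generator $\{FG \mid G \in \G\}$ consisting of enriched $\alpha$-bounded objects, which is exactly the content of Definition~\ref{locallyboundedVcategory} for $\C$ to be locally $\alpha$-bounded.

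I do not anticipate a serious obstacle here: the theorem is essentially a clean corollary assembled from \ref{boundingEMgenerator} and \ref{leftadjointpreservesbounded}, with the bookkeeping that the ``$\alpha$-bounding right adjoint'' hypothesis bundles together precisely the $\M$-conservativity and $\alpha$-boundedness needed by those two results. The one point requiring a moment of care is verifying that both $\C$ and $\D$ have $\M$-unions so that \ref{leftadjointpreservesbounded} and the notion of enriched $\alpha$-bounded object even make sense; this is covered since $\C$ is a cocomplete $\V$-factegory (hence has $\M$-unions, by the construction in \ref{union} applied to its underlying factegory) and $\D$, being locally $\alpha$-bounded, is likewise cocomplete and so has $\M$-unions.
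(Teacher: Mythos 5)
Your proposal is correct and follows essentially the same route as the paper's own proof: the paper likewise cites \ref{charn_bounding_radjs} to obtain $\M$-conservativity, then applies \ref{boundingEMgenerator} for the generator and \ref{leftadjointpreservesbounded} for the $\alpha$-boundedness of each $FG$. Your extra check that $\C$ and $\D$ have $\M$-unions is a reasonable piece of bookkeeping the paper leaves implicit.
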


\begin{proof}
\ref{boundingEMgenerator} shows that $\left\{FG \mid G \in \G\right\}$ is an enriched $(\E, \M)$-generator for $\C$ (since $U$ is $\M$-conservative by \ref{charn_bounding_radjs}), and \ref{leftadjointpreservesbounded} shows that $FG$ is an enriched $\alpha$-bounded object of $\C$ for every $G \in \G$.          
\end{proof}

\begin{rmk}
\label{reflectivelocallybounded}
Theorem \ref{boundingrightadjointthm} easily entails that certain reflective sub-$\V$-categories of locally bounded $\V$-categories are locally bounded.  Indeed, if $i:\C \rightarrow \D$ is a fully faithful, right adjoint, $\alpha$-bounded right-class $\V$-functor such that $\C$ has arbitrary cointersections of $\E_\C$-morphisms and $\D$ is a locally $\alpha$-bounded $\V$-category, then $\C$ is a locally $\alpha$-bounded $\V$-category.  Indeed, $\C$ is a cocomplete $\V$-category because $\C$ is reflective in the cocomplete $\V$-category $\D$, and the counit of the reflection is an isomorphism and hence certainly lies pointwise in $\E_\C$. Note that $i : \C \rightarrow \D$ is therefore an $\alpha$-bounding right adjoint. \qed     
\end{rmk}

From \ref{recogthm} and \ref{boundingrightadjointthm} we now obtain the following characterization theorem for locally bounded $\V$-categories, which is analogous to Kelly's definition/characterization in \cite[3.1]{Kellystr} of the locally finitely presentable $\V$-categories as the cocomplete $\V$-categories that admit a right adjoint, conservative, finitary $\V$-functor into a presheaf $\V$-category. In the following result, we essentially replace ``conservative" by ``$\M$-conservative" and ``finitary" by ``$\alpha$-bounded", requiring also preservation of the right class.   

\begin{theo}
\label{recogcor}
Let $\C$ be a cocomplete $\V$-factegory. Then $\C$ is a locally $\alpha$-bounded $\V$-category iff there exists a small $\V$-category $\A$ with an $\alpha$-bounding right adjoint $U : \C \to [\A, \V]$.   
\end{theo}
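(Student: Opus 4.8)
The plan is to prove the biconditional by assembling the two recognition theorems \ref{recogthm} and \ref{boundingrightadjointthm} together with the fact (\ref{locallyboundedpresheafcat}) that every presheaf $\V$-category is locally $\aleph_0$-bounded. Since $\C$ is assumed to be a cocomplete $\V$-factegory throughout, both directions reduce to invoking these prior results, and the only genuine content of the statement is that the ``test'' for local $\alpha$-boundedness may always be taken to be a bounding right adjoint into a \emph{presheaf} $\V$-category.

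For the forward direction, I would suppose $\C$ is locally $\alpha$-bounded, so by \ref{locallyboundedVcategory} it carries an enriched $(\E, \M)$-generator $\G \subseteq \ob\C$ consisting of enriched $\alpha$-bounded objects. Regarding $\G$ as a small full sub-$\V$-category of $\C$, I would simply set $\A := \G^\op$, which is a small $\V$-category, and take $U := \y_\G : \C \to [\G^\op, \V]$. Theorem \ref{recogthm} then immediately guarantees that $\y_\G$ is an $\alpha$-bounding right adjoint, producing the required data.

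For the converse, suppose we are given a small $\V$-category $\A$ together with an $\alpha$-bounding right adjoint $U : \C \to [\A, \V]$, with left adjoint $F$. By \ref{locallyboundedpresheafcat}, the presheaf $\V$-category $[\A, \V]$ is locally $\aleph_0$-bounded, with enriched $(\E, \M)$-generator $\{\A(A, -) \mid A \in \ob\A\}$; since $\aleph_0$ is the smallest regular cardinal, the remark in \ref{lbv_remarks} upgrades this to local $\alpha$-boundedness of $[\A, \V]$. Thus $\D := [\A, \V]$ is a locally $\alpha$-bounded $\V$-category and $U : \C \to \D$ is an $\alpha$-bounding right adjoint out of the cocomplete $\V$-factegory $\C$, so Theorem \ref{boundingrightadjointthm} applies directly and shows that $\C$ is locally $\alpha$-bounded, with enriched $(\E, \M)$-generator $\{F\A(A, -) \mid A \in \ob\A\}$.

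Since the statement is essentially a repackaging of \ref{recogthm} and \ref{boundingrightadjointthm}, I do not expect a substantive obstacle. The one point requiring a moment's care is the cardinal bookkeeping in the converse: \ref{locallyboundedpresheafcat} produces only local $\aleph_0$-boundedness of $[\A, \V]$, and one must explicitly upgrade this to local $\alpha$-boundedness via \ref{lbv_remarks} before \ref{boundingrightadjointthm} can be applied at the cardinal $\alpha$ matching the hypothesis on $U$.
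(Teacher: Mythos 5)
Your proof is correct and follows essentially the same route as the paper's: the forward direction is \ref{recogthm} applied to the restricted Yoneda embedding, and the converse combines \ref{locallyboundedpresheafcat} (upgraded from $\aleph_0$ to $\alpha$) with \ref{boundingrightadjointthm}. The cardinal bookkeeping you flag is exactly the one point the paper also makes explicit.
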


\begin{proof}
If $\C$ is a locally $\alpha$-bounded $\V$-category with enriched $(\E, \M)$-generator $\G$, then $\y_\G : \C \to [\G^\op, \V]$ is an $\alpha$-bounding right adjoint by \ref{recogthm}. Conversely, if $\A$ is a small $\V$-category and $U : \C \to [\A, \V]$ is an $\alpha$-bounding right adjoint, then since $[\A, \V]$ is a locally $\aleph_0$-bounded and hence locally $\alpha$-bounded $\V$-category by \ref{locallyboundedpresheafcat}, it follows from \ref{boundingrightadjointthm} that $\C$ is a locally $\alpha$-bounded $\V$-category. 
\end{proof}

\noindent To conclude this section, we show that every $\V$-category of $\V$-functors valued in a locally bounded $\V$-category is itself locally bounded. First, we require the following result:

\begin{prop}
\label{2functorbounding}
Given $\V$-factegories $\C$ and $\D$ with $\M$-unions, if $U : \C \to \D$ is an $\alpha$-bounded right-class $\V$-functor (resp. an $\alpha$-bounding right adjoint) then $[\A, U] : [\A, \C] \to [\A, \D]$ is an $\alpha$-bounded right-class $\V$-functor (resp. an $\alpha$-bounding right adjoint).
\end{prop}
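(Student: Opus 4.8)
The plan is to reduce everything to the pointwise nature of both the factorization system and of $\M$-unions in the functor $\V$-categories. By \ref{functorfactegory}, both $[\A,\C]$ and $[\A,\D]$ are $\V$-factegories when equipped with their pointwise factorization systems $(\E',\M')$, where a $\V$-natural transformation lies in $\M'$ (resp. $\E'$) iff each of its components lies in $\M_\C$ (resp. $\E_\C$), and similarly for $\D$. First I would establish that $\M$-unions in $[\A,\C]$ and $[\A,\D]$ are formed pointwise; granting this, the required properties of $[\A,U]$ follow readily. The main obstacle is exactly this pointwise formation of $\M$-unions, since the proposition assumes only $\M$-unions (not cocompleteness) in $\C$ and $\D$, so one cannot simply quote the pointwise-union clause of \ref{functorfactegory}.

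To establish pointwise $\M$-unions, given an $\M'$-sink $(\gamma_i : F_i \to F)_{i \in I}$ in $[\A,\C]$ I would set $GA := \bigcup_i F_iA$ for each $A \in \ob\A$, with union $\M_\C$-morphism $m_A : GA \to FA$ and associated $\E$-tight sink $(\delta_i^A : F_iA \to GA)_i$ satisfying $m_A \circ \delta_i^A = (\gamma_i)_A$. The crux is to equip $G$ with a $\V$-functor structure making $m : G \to F$ $\V$-natural. For this I would lift the composite $GA \xrightarrow{m_A} FA \to [\A(A,A'),FA']$ (the second map being the transpose of $F_{A,A'}$ under the cotensor adjunction) through the morphism $[\A(A,A'),m_{A'}]$, which lies in $\M_\C$ because $\M_\C$ is stable under cotensoring. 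By $\E$-tightness of $(\delta_i^A)_i$ it suffices to check that each precomposite with $\delta_i^A$ factors through $[\A(A,A'),m_{A'}]$, and this follows from the $\V$-naturality of $\gamma_i$ together with the identity $(\gamma_i)_{A'} = m_{A'} \circ \delta_i^{A'}$. Uniqueness of factorizations through the monomorphisms $[\A(A,A'),m_{A'}]$ then forces the $\V$-functor axioms for $G$ and the $\V$-naturality of $m$ and of the $\delta_i$; and since pointwise $\E$-tightness coincides with $\E'$-tightness (as $\M'$-subobjects and factorizations through them are detected pointwise), $m : G \to F$ is a union of $(\gamma_i)_i$ in $[\A,\C]$. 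The same argument applies to $[\A,\D]$.

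Granting pointwise $\M$-unions, I would verify that $[\A,U]$ is an $\alpha$-bounded right-class $\V$-functor. It preserves the right class because $\M'$ is detected pointwise and $U$ preserves $\M_\C$: if $\gamma \in \M'$ then each $([\A,U]\gamma)_A = U(\gamma_A) \in \M_\D$. It preserves $\alpha$-filtered $\M$-unions because, for an $\alpha$-filtered $\M'$-sink $(\gamma_i : F_i \to F)_i$, both the union in $[\A,\C]$ and the candidate union in $[\A,\D]$ are computed pointwise, and $U$ preserves $\alpha$-filtered $\M$-unions by hypothesis; concretely $[\A,U](\bigcup_i F_i)$ has value $U(\bigcup_i F_iA) \cong \bigcup_i U(F_iA)$ at each $A$, as $\M$-subobjects, which is exactly the pointwise union of $([\A,U]\gamma_i)_i$.

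Finally, for the bounding right adjoint case, I would produce the left adjoint $[\A,F] \dashv [\A,U]$ by postcomposition, using the hypothesis $F \dashv U$; this adjunction is formal. Its counit at a $\V$-functor $S : \A \to \C$ is the $\V$-natural transformation $FUS \to S$ with components $\varepsilon_{SA} : FU(SA) \to SA$, where $\varepsilon$ is the counit of $F \dashv U$. Since $\varepsilon$ is pointwise in $\E_\C$ by assumption, each such component lies in $\E_\C$, so the counit of $[\A,F] \dashv [\A,U]$ lies in $\E'$ at every object of $[\A,\C]$, i.e. is pointwise in $\E'$. Combined with the $\alpha$-bounded right-class conclusion above, this shows $[\A,U]$ is an $\alpha$-bounding right adjoint. (Alternatively, one could invoke \ref{charn_bounding_radjs}, checking $\M$-conservativity of $[\A,U]$ pointwise from that of $U$.)
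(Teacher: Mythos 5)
Your proof is correct and follows the same overall strategy as the paper's: reduce everything to the pointwise nature of the factorization system and of $\M$-unions in $[\A,\C]$ and $[\A,\D]$, and obtain the left adjoint and its counit by applying the $2$-functor $[\A,-]$. The paper's proof is a single sentence that cites \ref{functorfactegory} for the claim that $\M$-unions are formed pointwise; as you observe, the pointwise-union clause of \ref{functorfactegory} is stated only for a \emph{cocomplete} $\V$-factegory $\B$ (where unions come from pointwise coproducts and pointwise $(\E,\M)$-factorizations), whereas \ref{2functorbounding} assumes only that $\C$ and $\D$ have $\M$-unions. Your explicit construction of the pointwise union --- forming $GA=\bigcup_i F_iA$, lifting the transposed functoriality map $GA\to[\A(A,A'),FA']$ through the $\M_\C$-morphism $[\A(A,A'),m_{A'}]$ via $\E$-tightness of $(\delta_i^A)_i$, and then extracting the $\V$-functor axioms and the $\V$-naturality of $m$ and the $\delta_i$ by cancelling monomorphisms --- genuinely fills this gap, and the observation that pointwise $\E$-tight $\M$-sinks are $\E'$-tight (because $\M'$-subobjects and factorizations through them are detected pointwise) is exactly what is needed to conclude that the pointwise union is a union in $[\A,\C]$. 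One small caveat: your lifting argument uses cotensors in $\C$, a hypothesis not literally present in the statement of \ref{2functorbounding}; but this is the same implicit assumption already required for \ref{functorfactegory} to endow $[\A,\C]$ with its pointwise enriched factorization system in the first place, so you are not importing anything beyond what the paper itself relies on. The remaining steps (preservation of $\M'$, preservation of pointwise $\alpha$-filtered unions, and the counit of $[\A,F]\dashv[\A,U]$ being pointwise in $\E'$) are exactly as in the paper.
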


\begin{proof}
By \ref{functorfactegory}, $[\A, \C]$ and $[\A, \D]$ are $\V$-factegories, each with the pointwise factorization system and with $\M$-unions formed pointwise, from which the result follows readily, using the fact that $[\A,-]$ preserves right adjoints by 2-functoriality.
\end{proof}

\begin{prop}
\label{functorcategorylocbd}
Let $\C$ be a locally $\alpha$-bounded $\V$-category. If $\A$ is a small $\V$-category, then the functor $\V$-category $[\A, \C]$ is a locally $\alpha$-bounded $\V$-category.
\end{prop}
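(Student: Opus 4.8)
The plan is to exhibit $[\A,\C]$ as the domain of an $\alpha$-bounding right adjoint into a presheaf $\V$-category over a \emph{small} $\V$-category, and then conclude by the recognition theorem \ref{recogcor} (equivalently, \ref{boundingrightadjointthm} together with \ref{locallyboundedpresheafcat}). The argument is essentially a composition of the machinery already developed, applied to the functor $2$-functoriality of \ref{2functorbounding}.

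First I would check that $[\A,\C]$ is a cocomplete $\V$-factegory, so that the hypotheses of \ref{recogcor} are in force. Since $\C$ is locally $\alpha$-bounded, it is complete by \ref{locbdcomplete}, and in particular cotensored, because cotensors are small weighted limits. As $\C$ is also a cocomplete $\V$-factegory by the definition of local $\alpha$-boundedness, \ref{functorfactegory} applies to show that $[\A,\C]$ is a cocomplete $\V$-factegory when equipped with its pointwise factorization system.

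Next, by the recognition theorem \ref{recogcor} applied to $\C$, there is a small $\V$-category $\B$ together with an $\alpha$-bounding right adjoint $U : \C \to [\B,\V]$. Applying \ref{2functorbounding} to $U$, the $\V$-functor $[\A,U] : [\A,\C] \to [\A,[\B,\V]]$ is again an $\alpha$-bounding right adjoint. The enriched exponential law then furnishes an isomorphism of $\V$-categories $[\A,[\B,\V]] \cong [\A \tensor \B, \V]$, where $\A \tensor \B$ is the tensor product of the $\V$-categories $\A$ and $\B$, which is again small. Under this isomorphism the pointwise factorization system on $[\A,[\B,\V]]$ corresponds to the pointwise factorization system on $[\A \tensor \B, \V]$, so it is an isomorphism of $\V$-factegories, and $[\A \tensor \B,\V]$ carries the canonical presheaf structure of \ref{locallyboundedpresheafcat}. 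Composing, we obtain an $\alpha$-bounding right adjoint $[\A,\C] \to [\A \tensor \B,\V]$.

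Finally, since $[\A,\C]$ is a cocomplete $\V$-factegory admitting an $\alpha$-bounding right adjoint into the presheaf $\V$-category $[\A \tensor \B, \V]$ over the small $\V$-category $\A \tensor \B$, the recognition theorem \ref{recogcor} immediately yields that $[\A,\C]$ is a locally $\alpha$-bounded $\V$-category. I do not anticipate any genuine obstacle here; the only points requiring care are verifying that $\C$ is cotensored (so that \ref{functorfactegory} applies) and confirming that the exponential-law isomorphism respects the pointwise factorization systems, both of which are routine.
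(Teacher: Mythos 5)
Your proof is correct and follows essentially the same route as the paper: both establish that $[\A,\C]$ is a cocomplete $\V$-factegory via \ref{functorfactegory}, then apply \ref{2functorbounding} to an $\alpha$-bounding right adjoint $\C \to [\B,\V]$ (the paper takes $\B = \G^\op$ explicitly via \ref{recogthm}) and conclude from \ref{recogcor} using the isomorphism $[\A,[\B,\V]] \cong [\A \tensor \B,\V]$.
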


\begin{proof}
We know by \ref{functorfactegory} that $[\A, \C]$ is a cocomplete $\V$-factegory when equipped with the pointwise factorization system $(\E', \M')$. So it suffices by \ref{recogcor} to show that there is a small $\V$-category $\B$ with an $\alpha$-bounding right adjoint $U : [\A, \C] \to [\B, \V]$. Since $\C$ is locally $\alpha$-bounded with enriched $(\E, \M)$-generator $\G$, we know by \ref{recogthm} that $\y_\G : \C \to [\G^\op, \V]$ is an $\alpha$-bounding right adjoint. Then by \ref{2functorbounding}, it follows that $[\A, \y_\G] : [\A, \C] \to [\A, [\G^\op, \V]] \cong [\A \tensor \G^\op, \V]$ is an $\alpha$-bounding right adjoint with $\A \tensor \G^\op$ small, as desired.       
\end{proof}

\section{Locally bounded symmetric monoidal closed categories}
\label{locbdclosedsection}

In order to obtain various further results about locally bounded $\V$-categories, we shall need to assume that the base of enrichment $\V$ is a locally bounded symmetric monoidal closed category, in the following sense, which by \ref{Kellydefinition} below is equivalent to Kelly's sense of the term \cite[\S 6.1]{Kelly}.

\begin{defn}
\label{locbdclosed}
Given a regular cardinal $\alpha$, a \textbf{locally} $\alpha$\textbf{-bounded (symmetric monoidal) closed category} is a cocomplete closed factegory $\V$ equipped with an ordinary $(\E, \M)$-generator $\G$ consisting of ordinary $\alpha$-bounded objects, such that the unit object $I$ is $\alpha$-bounded and $G \tensor G'$ is $\alpha$-bounded for all $G, G' \in \G$.

A symmetric monoidal closed category $\V$ is \textbf{locally bounded (as a symmetric monoidal closed category)} if there is some regular cardinal $\alpha$ for which $\V$ is a locally $\alpha$-bounded closed category. \qed
\end{defn}

\noindent Note that if $\V$ is a locally $\alpha$-bounded closed category, then $\V$ is also a locally $\beta$-bounded closed category for any regular cardinal $\beta > \alpha$. 

\begin{rmk}
\label{Kellydefinition}
Although Kelly's original definition of locally bounded closed category (see \cite[6.1]{Kelly} and \cite{KellyLack}) omits the requirement of $\alpha$-boundedness of $I$ and of $G \otimes G'$ for all $G,G' \in \G$, a closed category $\V$ is locally bounded in the sense of our \ref{locbdclosed} iff it is locally bounded in Kelly's sense. Indeed, the `only if' implication is immediate, and if $\V$ is locally bounded in Kelly's sense with the $(\E, \M)$-generator $\G$ consisting of $\alpha$-bounded objects for some regular cardinal $\alpha$, then by \ref{every_obj_bdd} we know for every object $V \in \ob\V$ that there is some regular cardinal $\alpha_V \geq \alpha$ such that $V$ is $\alpha_V$-bounded. Since $\G$ is small, we can then find a regular cardinal $\beta \geq \alpha$ such that $I$ is $\beta$-bounded and every monoidal product of elements of $\G$ is $\beta$-bounded, so that $\V$ is locally ($\beta$-)bounded as a closed category in the sense of our \ref{locbdclosed}.  Our definition \ref{locbdclosed} entails that the $\alpha$-bounded objects are closed under the monoidal structure; see \ref{enrichedboundedprop} below.  We have augmented Kelly's definition of locally bounded closed category in this way because it enables a more convenient theoretical development, and because it more closely accords with Kelly's definition of locally $\alpha$-\emph{presentable} closed category in \cite[5.5]{Kellystr}, where the class of $\alpha$-presentable objects \emph{is} required to be closed under the monoidal structure. \qed  
\end{rmk}

\noindent We provide many examples of locally bounded closed categories in Section \ref{examples} (and also in \ref{symmonlimtheories1} and \ref{symmonlimtheories2}). We first analyze the relationship between enriched and ordinary boundedness and show that in a locally $\alpha$-bounded closed category, the monoidal product of \emph{any} two ordinary $\alpha$-bounded objects is $\alpha$-bounded (see \ref{enrichedboundedprop} below). We thus have the following sequence of results, analogous to Kelly's results \cite[5.1--5.3]{Kellystr} regarding (enriched) $\alpha$-presentable objects in cocomplete $\V$-categories over a closed category $\V$ with $\V_0$ locally $\alpha$-presentable.  

\begin{lem}
\label{enrichedboundedlemma}
Let $\V$ be a cocomplete closed factegory with an ordinary $(\E, \M)$-generator $\G$ consisting of ordinary $\alpha$-bounded objects, and let $\C$ be a tensored $\V$-factegory with $\M$-unions. The following are equivalent for every $C \in \ob\C$:
\begin{enumerate}[leftmargin=*]
\item $C$ is an enriched $\alpha$-bounded object of $\C$.
\item $V \tensor C$ is an ordinary $\alpha$-bounded object of $\C_0$ for every ordinary $\alpha$-bounded object $V$ of $\V_0$.

\item $G \tensor C$ is an ordinary $\alpha$-bounded object of $\C_0$ for every $G \in \G$. 
\end{enumerate}
\end{lem}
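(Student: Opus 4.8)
The plan is to prove the cycle $(1)\Rightarrow(2)\Rightarrow(3)\Rightarrow(1)$, using the tensor-hom adjunction $(-)\tensor C\dashv\C(C,-):\C\to\V$ as the workhorse throughout. This adjunction yields, for every $V\in\ob\V$ and $D\in\ob\C$, a natural isomorphism $\C_0(V\tensor C,D)\cong\V_0(V,\C(C,D))$, so that the underlying ordinary functor of $\C_0(V\tensor C,-)$ factors as $\V_0(V,-)$ composed with the underlying ordinary functor of $\C(C,-)$. The implication $(2)\Rightarrow(3)$ is then immediate, since each $G\in\G$ is by hypothesis an ordinary $\alpha$-bounded object of $\V_0$.

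For $(1)\Rightarrow(2)$ I would first record the elementary observation that any right-class $\V$-functor sends an $\alpha$-filtered $\M$-sink to an $\alpha$-filtered $\M$-sink, since the induced monotone map $U_D:\Sub_\M(D)\to\Sub_\M(UD)$ of \ref{uc} preserves the factoring relation and hence $\alpha$-directedness. Given an $\alpha$-filtered $\M$-union $\bigcup_i D_i\to D$ in $\C$ and an ordinary $\alpha$-bounded object $V$ of $\V_0$, I would then compute: by enriched $\alpha$-boundedness of $C$ we have $\C(C,\bigcup_i D_i)\cong\bigcup_i\C(C,D_i)$ as $\M$-subobjects of $\C(C,D)$, and this is an $\alpha$-filtered $\M$-union in $\V$ by the preceding observation; applying ordinary $\alpha$-boundedness of $V$ to this union and the adjunction isomorphism yields $\C_0(V\tensor C,\bigcup_i D_i)\cong\bigcup_i\C_0(V\tensor C,D_i)$. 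Naturality of the adjunction isomorphism ensures this holds as $\M$-subobjects of $\C_0(V\tensor C,D)$, whence $V\tensor C$ is ordinary $\alpha$-bounded.

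The hard part is $(3)\Rightarrow(1)$, which requires reflecting a union from $\Set$ back up into $\V$. Here I would use that $\G$ is an ordinary $(\E,\M)$-generator for $\V_0$, so by the $\Set$-enriched version of Corollary \ref{enrichedGreflectsunions} the functors $\V_0(G,-):\V_0\to\Set$ jointly reflect $\M$-unions. Thus, to prove that $\C(C,\bigcup_i D_i)\cong\bigcup_i\C(C,D_i)$ as $\M$-subobjects of $\C(C,D)$ for a given $\alpha$-filtered $\M$-union $\bigcup_i D_i\to D$ in $\C$ (noting that $\C(C,-)$ preserves $\M$, so these are genuine $\M$-subobjects), it suffices to verify the corresponding identity after applying each $\V_0(G,-)$. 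Using the adjunction isomorphism $\V_0(G,\C(C,-))\cong\C_0(G\tensor C,-)$, this reduces precisely to the assertion that $\C_0(G\tensor C,-)$ preserves the $\alpha$-filtered $\M$-union $\bigcup_i D_i\to D$, which is exactly hypothesis (3). I expect the main bookkeeping obstacle to be checking that all the isomorphisms respect the $\M$-subobject (join) structure in the relevant $\Sub_\M$ preorders, rather than merely holding objectwise; this follows from the naturality of the tensor-hom adjunction together with the characterization (noted after \ref{preservesunions}) of union-preservation as preservation of the join $\bigvee_i m_i$ by the induced monotone maps on $\M$-subobjects.
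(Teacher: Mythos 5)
Your proof is correct and follows essentially the same route as the paper's: the same adjunction computation for $(1)\Rightarrow(2)$, the trivial specialization for $(2)\Rightarrow(3)$, and the same appeal to the $\Set$-enriched version of \ref{enrichedGreflectsunions} (joint reflection of $\M$-unions by the $\V_0(G,-)$) to close the cycle at $(3)\Rightarrow(1)$. The only difference is that you make explicit the bookkeeping the paper leaves implicit, namely that right-class functors preserve the $\alpha$-filteredness of $\M$-sinks and that the adjunction isomorphisms respect the $\Sub_\M$ join structure.
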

\begin{proof}
If (1) holds, then for each $\alpha$-bounded object $V$ of $\V_0$ and each $\alpha$-filtered $\M$-sink $(m_i : D_i \to D)_{i \in I}$ in $\C$, we compute that
\[ \V_0\left(V, \C\left(C, \bigcup_i D_i\right)\right) \cong \V_0\left(V, \bigcup_i \C(C, D_i)\right) \cong \bigcup_i \V_0\left(V, \C(C, D_i)\right) \]
as subobjects of $\V_0(V,\C(C,D))$, so $\C_0\left(V \tensor C, \bigcup_i D_i\right) \cong \bigcup_i \C_0(V \tensor C, D_i)$ as subobjects of $\C_0(V \tensor C, D)$. This proves that (1) implies (2).

(2) trivially implies (3).  Suppose that (3) holds, and let $(m_i : D_i \to D)_{i \in I}$ be an $\alpha$-filtered $\M$-sink in $\C$.  For each $G \in \G$, we compute that
\[ \V_0\left(G,\C\left(C,\bigcup_i D_i\right)\right) \cong \C_0\left(G \tensor C, \bigcup_i D_i\right) \cong \bigcup_i \C_0\left(G \tensor C, D_i\right) \cong \bigcup_i \V_0(G, \C(C, D_i)) \]
as subobjects of $\V_0(G, \C(C, D))$.  Hence, by the $\Set$-enriched version of \ref{enrichedGreflectsunions} we deduce that $\C\left(C, \bigcup_i D_i\right) \cong \bigcup_i \C(C, D_i)$ as $\M$-subobjects of $\C(C, D)$, showing that (1) holds.
\end{proof} 

\begin{prop}
\label{enrichedboundedprop}
Let $\V$ be a cocomplete closed factegory with an ordinary $(\E, \M)$-generator $\G$ consisting of ordinary $\alpha$-bounded objects. The following are equivalent:
\begin{enumerate}[leftmargin=*]
\item Every ordinary $\alpha$-bounded object of $\V_0$ is an enriched $\alpha$-bounded object of $\V$.

\item The class of ordinary $\alpha$-bounded objects of $\V_0$ is closed under $\tensor$.

\item $G \tensor G'$ is an ordinary $\alpha$-bounded object of $\V_0$ for all $G, G' \in \G$.
\end{enumerate}

\noindent Thus, if $\V$ is a locally $\alpha$-bounded closed category, then the class of ordinary $\alpha$-bounded objects is closed under $\tensor$. 
\end{prop}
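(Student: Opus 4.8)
The plan is to apply Lemma \ref{enrichedboundedlemma} with $\C := \V$, which is legitimate since $\V$ is itself a cocomplete (hence $\M$-union-possessing) tensored $\V$-factegory by \ref{VitselfVfactegory}, the monoidal product serving as the tensor. For a fixed object $C \in \ob\V$, that lemma then asserts the equivalence of: (i) $C$ is an enriched $\alpha$-bounded object of $\V$; (ii) $V \tensor C$ is ordinary $\alpha$-bounded for every ordinary $\alpha$-bounded $V$; and (iii) $G \tensor C$ is ordinary $\alpha$-bounded for every $G \in \G$. I will establish the cycle $(1) \Rightarrow (2) \Rightarrow (3) \Rightarrow (1)$ using these three conditions, together with the symmetry $A \tensor B \cong B \tensor A$ and the obvious invariance of ordinary $\alpha$-boundedness under isomorphism. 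Implications $(1) \Rightarrow (2)$ and $(2) \Rightarrow (3)$ are routine: for the former, given ordinary $\alpha$-bounded $A, B$, hypothesis $(1)$ makes $A$ enriched $\alpha$-bounded, so $(i) \Rightarrow (ii)$ of the lemma (with $C := A$) gives that $B \tensor A$, hence $A \tensor B$, is ordinary $\alpha$-bounded; and $(2) \Rightarrow (3)$ is immediate since the members of $\G$ are ordinary $\alpha$-bounded by hypothesis.

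The one implication requiring care is $(3) \Rightarrow (1)$, which I will prove by a two-stage bootstrap invoking the lemma twice. First, condition $(3)$ says precisely that condition (iii) of the lemma holds for each $C := G$ with $G \in \G$ (reading off the ordered pair with first factor $G' \in \G$ and second factor $G$), so each $G \in \G$ is itself enriched $\alpha$-bounded; applying $(i) \Rightarrow (ii)$ to each such $G$ then shows $V \tensor G$ is ordinary $\alpha$-bounded for every ordinary $\alpha$-bounded $V$. Now, given an arbitrary ordinary $\alpha$-bounded object $C$, taking $V := C$ yields that $C \tensor G$, and hence $G \tensor C$ by symmetry, is ordinary $\alpha$-bounded for every $G \in \G$; this is exactly condition (iii) of the lemma for $C$, so a final application of $(iii) \Rightarrow (i)$ shows $C$ is enriched $\alpha$-bounded, giving $(1)$. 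The main (mild) obstacle is simply keeping straight which object plays the role of ``$C$'' in each invocation of the lemma, and observing that the quantifier in $(3)$ over \emph{all} pairs of generators is exactly what licenses the conclusion that each individual generator is enriched $\alpha$-bounded.

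Finally, the concluding assertion is then immediate: if $\V$ is a locally $\alpha$-bounded closed category, then its defining $(\E, \M)$-generator $\G$ satisfies that $G \tensor G'$ is ordinary $\alpha$-bounded for all $G, G' \in \G$, which is precisely condition $(3)$; the established equivalence then delivers condition $(2)$, namely that the class of ordinary $\alpha$-bounded objects is closed under $\tensor$.
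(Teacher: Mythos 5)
Your proposal is correct and follows essentially the same route as the paper: both reduce everything to Lemma \ref{enrichedboundedlemma} applied with $\C = \V$, and in particular the paper's proof of $(3) \Rightarrow (1)$ uses exactly your two-stage bootstrap (first deduce that each $G \in \G$ is enriched $\alpha$-bounded, then use symmetry of $\tensor$ to verify condition (iii) of the lemma for an arbitrary ordinary $\alpha$-bounded object). The concluding assertion is likewise handled identically.
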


\begin{proof}
(1) implies (2) because if $X, Y$ are ordinary $\alpha$-bounded objects of $\V_0$, then $Y$ is also an enriched $\alpha$-bounded object of $\V$ by (1), so that $X \tensor Y$ is an ordinary $\alpha$-bounded object of $\V_0$ by \ref{enrichedboundedlemma}. (2) trivially implies (3), so suppose (3) and let us show (1). By \ref{enrichedboundedlemma} we first obtain that every $G \in \G$ is an enriched $\alpha$-bounded object of $\V$. Now let $V$ be an ordinary $\alpha$-bounded object of $\V_0$, and let us show that $V$ is an enriched $\alpha$-bounded object of $\V$. By \ref{enrichedboundedlemma}, it suffices to show that $G \tensor V \cong V \tensor G$ is an ordinary $\alpha$-bounded object of $\V_0$ for every $G \in \G$, which now follows by \ref{enrichedboundedlemma} since every $G \in \G$ is an enriched $\alpha$-bounded object.   
\end{proof}

\begin{cor}
\label{enrichedboundedcor}
Let $\V$ be a cocomplete closed factegory with an ordinary $(\E, \M)$-generator consisting of ordinary $\alpha$-bounded objects. The following are equivalent:
\begin{enumerate}[leftmargin=*]
\item Every enriched $\alpha$-bounded object of $\V$ is an ordinary $\alpha$-bounded object of $\V_0$.

\item The unit object $I$ is an ordinary $\alpha$-bounded object of $\V_0$.

\item If $\C$ is any tensored $\V$-factegory with $\M$-unions, then every enriched $\alpha$-bounded object of $\C$ is an ordinary $\alpha$-bounded object of $\C_0$.
\end{enumerate}
\end{cor}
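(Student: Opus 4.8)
The plan is to prove the equivalences cyclically as $(1) \Rightarrow (2) \Rightarrow (3) \Rightarrow (1)$. The whole argument rests on two observations: first, that the unit object $I$ is always an enriched $\alpha$-bounded object of $\V$ (it is enriched $\aleph_0$-bounded by \ref{unitbounded}, hence enriched $\alpha$-bounded since boundedness is monotone in the cardinal, as noted in \ref{lbv_remarks}); and second, that in any tensored $\V$-factegory $\C$ the tensor action supplies a natural isomorphism $I \tensor C \cong C$ in $\C_0$.

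The two outer implications are immediate. For $(1) \Rightarrow (2)$, since $I$ is an enriched $\alpha$-bounded object of $\V$ by the first observation, condition $(1)$ forces $I$ to be an ordinary $\alpha$-bounded object of $\V_0$. For $(3) \Rightarrow (1)$, I would note that $\V$ is itself a tensored $\V$-factegory with $\M$-unions: it is a cocomplete $\V$-factegory by \ref{VitselfVfactegory}, it is tensored via its own monoidal structure, and $\V_0$ has $\M$-unions since it is cocomplete and hence has small coproducts (see \ref{union}). Applying $(3)$ to the case $\C = \V$ then yields exactly $(1)$.

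The substantive step is $(2) \Rightarrow (3)$, and here I would simply specialize \ref{enrichedboundedlemma}. Let $\C$ be an arbitrary tensored $\V$-factegory with $\M$-unions and let $C$ be an enriched $\alpha$-bounded object of $\C$; the standing hypotheses of the corollary are precisely those needed to invoke \ref{enrichedboundedlemma} for $\C$. Its implication $(1) \Rightarrow (2)$ gives that $V \tensor C$ is an ordinary $\alpha$-bounded object of $\C_0$ for every ordinary $\alpha$-bounded object $V$ of $\V_0$. Taking $V = I$, which is ordinary $\alpha$-bounded by hypothesis $(2)$, and using the unit isomorphism $I \tensor C \cong C$, we conclude that $C$ is an ordinary $\alpha$-bounded object of $\C_0$, as required.

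I do not expect a genuine obstacle: the content is entirely absorbed by \ref{enrichedboundedlemma}, and the only real idea is the specialization $V = I$ combined with $I \tensor C \cong C$. The points that merely need care are verifying that $\V$ qualifies as a tensored $\V$-factegory with $\M$-unions (used for $(3) \Rightarrow (1)$) and that the enriched $\aleph_0$-boundedness of $I$ upgrades to enriched $\alpha$-boundedness.
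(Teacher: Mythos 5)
Your proposal is correct and follows essentially the same route as the paper: the cyclic scheme $(1)\Rightarrow(2)\Rightarrow(3)\Rightarrow(1)$, with $(1)\Rightarrow(2)$ via the enriched $\alpha$-boundedness of $I$ from \ref{unitbounded}, $(2)\Rightarrow(3)$ by specializing \ref{enrichedboundedlemma} to $V=I$ and using $I\tensor C\cong C$, and $(3)\Rightarrow(1)$ by taking $\C=\V$ via \ref{VitselfVfactegory}. No gaps; your added care about upgrading $\aleph_0$-boundedness to $\alpha$-boundedness and checking that $\V$ qualifies as a tensored $\V$-factegory with $\M$-unions is exactly what the paper leaves implicit.
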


\begin{proof}
(1) implies (2) because $I$ is necessarily an enriched $\alpha$-bounded object of $\V$ by \ref{unitbounded}. If (2) holds and $C$ is an enriched $\alpha$-bounded object of a tensored $\V$-factegory $\C$ with $\M$-unions, then $I \tensor C \cong C$ is an ordinary $\alpha$-bounded object of $\C_0$ by \ref{enrichedboundedlemma}. And (3) implies (1) by \ref{VitselfVfactegory}.  
\end{proof}

\noindent From \ref{enrichedboundedprop} and \ref{enrichedboundedcor} we now obtain:

\begin{cor}
\label{ordinaryequalsenrichedbounded}
The ordinary $\alpha$-bounded objects coincide with the enriched $\alpha$-bounded objects in any locally $\alpha$-bounded closed category $\V$. \qed 
\end{cor}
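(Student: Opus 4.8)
The plan is to observe that the conclusion is an immediate consequence of the two preceding results \ref{enrichedboundedprop} and \ref{enrichedboundedcor}, once one checks that the two extra conditions built into Definition \ref{locbdclosed} of a locally $\alpha$-bounded closed category supply precisely the hypotheses needed to invoke the relevant equivalent conditions therein. First I would note that a locally $\alpha$-bounded closed category $\V$ is, by \ref{locbdclosed}, a cocomplete closed factegory equipped with an ordinary $(\E, \M)$-generator $\G$ consisting of ordinary $\alpha$-bounded objects, so that the blanket hypotheses of both \ref{enrichedboundedprop} and \ref{enrichedboundedcor} are satisfied.

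To establish that every ordinary $\alpha$-bounded object of $\V_0$ is an enriched $\alpha$-bounded object of $\V$, I would use that \ref{locbdclosed} requires $G \tensor G'$ to be ordinary $\alpha$-bounded for all $G, G' \in \G$; this is exactly condition (3) of \ref{enrichedboundedprop}, which is equivalent to its condition (1), giving the desired inclusion. For the reverse inclusion, that every enriched $\alpha$-bounded object of $\V$ is ordinary $\alpha$-bounded, I would use that \ref{locbdclosed} requires the unit object $I$ to be ordinary $\alpha$-bounded; this is exactly condition (2) of \ref{enrichedboundedcor}, which is equivalent to its condition (1), yielding the reverse inclusion. Combining the two inclusions gives the stated coincidence.

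There is essentially no obstacle here: the corollary is the formal payoff of \ref{enrichedboundedprop} and \ref{enrichedboundedcor}, and the only thing to verify is that the two supplementary boundedness conditions in \ref{locbdclosed}—on the unit $I$ and on the products $G \tensor G'$—match up with the hypotheses (2) and (3) that those two results single out. Indeed, this correspondence is precisely the rationale (recorded in \ref{Kellydefinition}) for augmenting Kelly's original definition with exactly these conditions, so the proof amounts to pointing out that the definition was engineered to make both halves of \ref{enrichedboundedprop} and \ref{enrichedboundedcor} applicable simultaneously.
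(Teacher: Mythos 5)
Your proposal is correct and matches the paper's own derivation exactly: the corollary is stated as an immediate consequence of \ref{enrichedboundedprop} and \ref{enrichedboundedcor}, with the two supplementary conditions in Definition \ref{locbdclosed} (on $I$ and on $G \tensor G'$) supplying precisely hypotheses (2) of \ref{enrichedboundedcor} and (3) of \ref{enrichedboundedprop}, respectively. Nothing further is needed.
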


\begin{cor}
\label{enrichedtensorbounded}
Let $\V$ be a locally $\alpha$-bounded closed category and let $\C$ be a tensored $\V$-factegory with $\M$-unions. If $V$ is an enriched $\alpha$-bounded object of $\V$ and $C$ is an enriched $\alpha$-bounded object of $\C$, then $V \tensor C$ is an enriched $\alpha$-bounded object of $\C$.
\end{cor}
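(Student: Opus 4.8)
The plan is to reduce everything to Lemma \ref{enrichedboundedlemma}, exploiting two facts special to the locally $\alpha$-bounded closed category $\V$: the coincidence of ordinary and enriched boundedness from \ref{ordinaryequalsenrichedbounded}, and the closure of the ordinary $\alpha$-bounded objects under $\tensor$ from \ref{enrichedboundedprop}. Since $\V$ is a locally $\alpha$-bounded closed category, it is in particular a cocomplete closed factegory equipped with an ordinary $(\E,\M)$-generator $\G \subseteq \ob\V$ consisting of ordinary $\alpha$-bounded objects, so the hypotheses of \ref{enrichedboundedlemma} are satisfied by our tensored $\V$-factegory $\C$ (which has $\M$-unions by assumption).

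First I would observe that, by \ref{ordinaryequalsenrichedbounded}, the enriched $\alpha$-bounded object $V$ of $\V$ is in fact an \emph{ordinary} $\alpha$-bounded object of $\V_0$. Next, to prove that $V \tensor C$ is an enriched $\alpha$-bounded object of $\C$, I would invoke the equivalence of conditions (1) and (3) in \ref{enrichedboundedlemma}: it suffices to show that $G \tensor (V \tensor C)$ is an ordinary $\alpha$-bounded object of $\C_0$ for every $G \in \G$.

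The key manipulation is the associativity isomorphism $G \tensor (V \tensor C) \cong (G \tensor V) \tensor C$ for the $\V$-action on the tensored $\V$-category $\C$, which follows routinely from the natural isomorphisms $\C\bigl((G \tensor V) \tensor C, -\bigr) \cong \V\bigl(G \tensor V, \C(C, -)\bigr) \cong \V\bigl(G, \V(V, \C(C, -))\bigr) \cong \V\bigl(G, \C(V \tensor C, -)\bigr) \cong \C\bigl(G \tensor (V \tensor C), -\bigr)$ together with the enriched Yoneda lemma. Hence it is enough to show that $(G \tensor V) \tensor C$ is ordinary $\alpha$-bounded in $\C_0$. Now $G$ is ordinary $\alpha$-bounded (being a member of $\G$) and $V$ is ordinary $\alpha$-bounded by the first step, so $G \tensor V$ is an ordinary $\alpha$-bounded object of $\V_0$ by \ref{enrichedboundedprop}. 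Finally, since $C$ is an enriched $\alpha$-bounded object of $\C$, the implication (1) $\Rightarrow$ (2) of \ref{enrichedboundedlemma}, applied to the ordinary $\alpha$-bounded object $G \tensor V$ of $\V_0$, yields that $(G \tensor V) \tensor C$ is ordinary $\alpha$-bounded in $\C_0$, which completes the argument.

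I do not expect any serious obstacle here; the proof is essentially a bookkeeping exercise in chaining together the already-established results. The only points requiring genuine care are the appeal to \ref{ordinaryequalsenrichedbounded} to transfer $V$ between the enriched and ordinary notions of boundedness (legitimate precisely because $\V$ is locally $\alpha$-bounded closed, so that the two notions agree in $\V$), and keeping straight which of conditions (1)--(3) of \ref{enrichedboundedlemma} is being used for the object $V \tensor C$ versus the object $C$.
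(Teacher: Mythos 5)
Your proof is correct and follows essentially the same route as the paper's: reduce via Lemma \ref{enrichedboundedlemma} to ordinary boundedness of $(G \tensor V) \tensor C$, transfer $V$ to an ordinary $\alpha$-bounded object via \ref{ordinaryequalsenrichedbounded}, and close under $\tensor$ via \ref{enrichedboundedprop}. The only (immaterial) difference is that you test against the generator $\G$ using condition (3) of the lemma where the paper tests against all ordinary $\alpha$-bounded objects using condition (2).
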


\begin{proof}
By \ref{enrichedboundedlemma}, it suffices to show that $X \tensor (V \tensor C) \cong (X \tensor V) \tensor C$ is an ordinary $\alpha$-bounded object of $\C_0$ for every ordinary $\alpha$-bounded object $X$ of $\V_0$. But $V$ is also an ordinary $\alpha$-bounded object of $\V_0$ by \ref{ordinaryequalsenrichedbounded}, so that $X \tensor V$ is an ordinary $\alpha$-bounded object of $\V_0$ by \ref{enrichedboundedprop}, whence the result follows by \ref{enrichedboundedlemma}. 
\end{proof}

\begin{rmk}
\label{VlocallyboundedVcat}
If $\V$ is a locally $\alpha$-bounded closed category with ordinary $(\E, \M)$-generator $\G$, then $\V$ is also a locally $\alpha$-bounded $\V$-category with enriched $(\E, \M)$-generator $\G$ by \ref{VitselfVfactegory}, \ref{ordinarytoenrichedgenerator}, and \ref{ordinaryequalsenrichedbounded} (note that $\V_0$ is complete by the $\Set$-enriched version of \ref{locbdcomplete}). In other words, if $\V$ is locally $\alpha$-bounded as a closed category, then $\V$ is also a locally $\alpha$-bounded $\V$-category \textit{with the same data}. However, the converse is certainly \emph{not} true in general. For example, \ref{easyVlocallybounded} shows that even if $\V$ is just a cocomplete closed factegory, then $\V$ is a locally $\aleph_0$\emph{-bounded} $\V$-category whose enriched $(\E, \M)$-generator is just the unit object $I$, while $I$ need not be an \emph{ordinary} $(\E, \M)$-generator.  For example, $\V = \Cat$ is a cocomplete closed factegory with $(\E, \M) = (\StrongEpi, \Mono)$, and the unit object of $\Cat$ is the terminal category, which is not an ordinary strong generator. \qed   
\end{rmk}

\subsection{Cocomplete quasitoposes with generators}

In this subsection and the next, we prove some general results that will supply many examples of locally bounded closed categories. The reader can safely skip ahead to Section \ref{examples} if they just wish to see a list of these examples.

We first exhibit a large class of examples of locally bounded \emph{cartesian closed} categories: the cocomplete quasitoposes with generators and arbitrary cointersections of epimorphisms. Recall (see \cite[A2.6.1]{Johnstone2}) that an \emph{(elementary) quasitopos} is a finitely complete and finitely cocomplete category that is locally cartesian closed and has a subobject classifier for \emph{strong} monomorphisms. 

It is shown in \cite[C2.2.13]{Johnstone2} that any cocomplete quasitopos with a \emph{strong} generator (a generating set in the terminology of \cite[A1.2]{Johnstone2}) is locally presentable. We shall shortly prove the following variation of this result, with weaker hypothesis and weaker conclusion: any cocomplete quasitopos with arbitrary wide cointersections and a generator (a separating set in the terminology of \cite[A1.2]{Johnstone2}) is locally \emph{bounded} as a cartesian closed category. In fact, we shall first show that any cocomplete factegory with an $(\E, \M)$-generator and suitable pullback stability properties is locally bounded. 

We first require the following lemma. If $\C$ is a factegory with pullbacks, we say that $\E$-morphisms are \emph{stable under pullback} in $\C$ if the pullback of an $\E$-morphism along any morphism is still an $\E$-morphism. Such factorization systems have been called \emph{stable} in the literature (see e.g. \cite{Kellyrelations}).   Every morphism $f : C \to D$ induces a pullback functor $f^* : \C/D \to \C/C$, and since $\M$ is stable under pullback we know that $f^*$ restricts to a functor $f^{-1} : \Sub_\M(D) \to \Sub_\M(C)$ whose value at each $\M$-subobject $m:B \rightarrow D$ we write as $f^{-1}(m):f^{-1}(B) \rightarrow C$.  Now supposing also that $\C$ has small coproducts, we say that small coproducts are \emph{stable under pullback} in $\C$ if every pullback functor $f^* : \C/D \to \C/C$ preserves small coproducts, noting that small coproducts are formed in the slice categories as in $\C$.  We also say that \emph{$\M$-unions are stable under pullback} in $\C$ if for every morphism $f : C \to D$ in $\C$ and every $\M$-sink $(m_i : D_i \to D)_{i \in I}$ with union $m : \bigcup_i D_i \to D$, the pullback $f^{-1}(m) : f^{-1}\left(\bigcup_i D_i\right) \to C$ is a union of the `pullback' $\M$-sink $\left(f^{-1}(m_i) : f^{-1}(D_i) \to C\right)_{i \in I}$, equivalently, $f^{-1}\left(\bigcup_i D_i\right) \cong \bigcup_i f^{-1}(D_i)$ as $\M$-subobjects of $C$.  By \ref{union}, $\M$-unions are stable under pullback iff every pullback functor $f^{-1} : \Sub_\M(D) \to \Sub_\M(C)$ preserves small joins.

\begin{lem}
\label{pullbackspreserveunions}
Let $\C$ be a factegory with pullbacks and small coproducts. If $\E$-morphisms and small coproducts are stable under pullback in $\C$, then so are $\M$-unions.
\end{lem}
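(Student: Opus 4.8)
The plan is to exploit the explicit construction of $\M$-unions via $(\E,\M)$-factorizations recalled in \ref{union}: the union $m : \bigcup_i D_i \to D$ of an $\M$-sink $(m_i : D_i \to D)_{i \in I}$ is precisely the $\M$-part of the $(\E,\M)$-factorization
\[ \coprod_i D_i \xrightarrow{e} \bigcup_i D_i \xrightarrow{m} D \]
of the canonical morphism $c := [m_i]_i : \coprod_i D_i \to D$. The strategy is then to pull this entire factorization back along an arbitrary morphism $f : C \to D$ and to verify that the result is again an $(\E,\M)$-factorization, this time of the canonical morphism associated with the pullback $\M$-sink over $C$. The uniqueness of factorizations will then identify $f^{-1}(\bigcup_i D_i)$ with $\bigcup_i f^{-1}(D_i)$ as $\M$-subobjects of $C$, which is exactly the required pullback-stability.

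First I would observe that, regarded as an object of the slice $\C/D$, the canonical morphism $c$ is nothing but the coproduct $\coprod_i m_i$ of the objects $m_i : D_i \to D$ in $\C/D$. Applying the pullback functor $f^* : \C/D \to \C/C$ and invoking the hypothesis that small coproducts are stable under pullback, we conclude that $f^*(c)$ is the coproduct $\coprod_i f^*(m_i) = \coprod_i f^{-1}(m_i)$ in $\C/C$, i.e.\ (up to canonical isomorphism over $C$) the canonical morphism $\coprod_i f^{-1}(D_i) \to C$ attached to the pullback $\M$-sink $(f^{-1}(m_i))_{i \in I}$.

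Next I would pull the factorization $c = m \circ e$ back along $f$. By the pullback pasting lemma, applying $f^*$ to the composite above yields a composite
\[ \coprod_i f^{-1}(D_i) \xrightarrow{f^*(e)} f^{-1}\Bigl(\bigcup_i D_i\Bigr) \xrightarrow{f^{-1}(m)} C, \]
in which $f^{-1}(m) \in \M$ since $\M$ is stable under pullback (the standard fact recalled just before the statement, which is what lets $f^{-1}$ restrict to $\M$-subobjects), and in which $f^*(e)$ is, by pasting, the pullback of $e$ along $f^{-1}(\bigcup_i D_i) \to \bigcup_i D_i$, hence lies in $\E$ by the hypothesis that $\E$-morphisms are stable under pullback. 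Thus this composite is an $(\E,\M)$-factorization of $f^*(c)$, which by the previous paragraph is the canonical morphism $\coprod_i f^{-1}(D_i) \to C$. By the construction of unions in \ref{union}, its $\M$-part is therefore the union of the pullback $\M$-sink, giving $f^{-1}(\bigcup_i D_i) \cong \bigcup_i f^{-1}(D_i)$ as $\M$-subobjects of $C$.

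The only real subtlety, and the step I expect to require the most care, is the pullback-pasting argument that exhibits $f^*(e)$ as a genuine pullback of $e$, so that its membership in $\E$ follows from stability; everything else is bookkeeping flowing from the two stability hypotheses together with the automatic pullback-stability of the right class $\M$.
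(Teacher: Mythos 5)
Your proof is correct and follows essentially the same route as the paper's: both arguments rest on the construction of unions as the $\M$-part of the $(\E,\M)$-factorization of the canonical morphism out of the coproduct, on the hypothesis that $f^*$ preserves coproducts, and on the fact (via $\E$-stability and the pullback cancellation/pasting lemma) that pulling back an $(\E,\M)$-factorization along $f$ yields an $(\E,\M)$-factorization. The paper merely packages this as the natural isomorphism $f^{-1} \circ K_D \cong K_C \circ f^*$ between composites of reflectors and pullback functors, whereas you unfold the same computation directly on the canonical morphism; the content is identical.
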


\begin{proof}
Given any morphism $f : C \to D$ in $\C$, we have the following square, where $J_C, J_D$ are the inclusions and $K_C, K_D$ are the respective left adjoints, as discussed in \ref{union}:
\[\begin{tikzcd}
	{\Sub_\M(D)} &&& {\Sub_\M(C)} \\
	\\
	{\C/D} &&& {\C/C}
	\arrow["{f^{-1}}", from=1-1, to=1-4]
	\arrow["{J_D}"', shift right=1, from=1-1, to=3-1]
	\arrow["{f^*}"', from=3-1, to=3-4]
	\arrow["{J_C}"', shift right=3, from=1-4, to=3-4]
	\arrow["{K_D}"', shift right=3, from=3-1, to=1-1]
	\arrow["{K_C}"', shift right=1, from=3-4, to=1-4]
\end{tikzcd}\]
$f^{-1}$ is a restriction of $f^*$, so $J_C \circ f^{-1} = f^* \circ J_D$. Using the pullback-stability of $\E$, the uniqueness (up to isomorphism) of $(\E, \M)$-factorizations, and a well-known pullback cancellation property, it is then straightforward to show that $f^{-1} \circ K_D \cong K_C \circ f^*$.  Also, small joins in $\Sub_\M(C)$ are formed (as noted in \ref{union}) by first taking the coproduct in $\C \slash C$ and then applying the reflector $K_C$, and similarly for $\Sub_\M(D)$.  Using these observations and the assumption that $f^*$ preserves small coproducts, it follows readily that $f^{-1} : \Sub_\M(D) \to \Sub_\M(C)$ preserves small joins. 
\end{proof}  

\noindent We can now prove the following result:

\begin{prop}
\label{prequasitoposprop}
Let $\C$ be a cocomplete factegory with an $(\E, \M)$-generator, and suppose that $\E$-morphisms and small coproducts are stable under pullback in $\C$. Then $\C$ is a locally bounded category.
\end{prop}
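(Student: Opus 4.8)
The plan is to exhibit a single regular cardinal $\alpha$ for which the given $(\E,\M)$-generator $\G$ consists of $\alpha$-bounded objects, so that $\C$ is locally $\alpha$-bounded by \ref{locallyboundedordinarycategory} (recall that $\C$ has all $\M$-unions, being a cocomplete factegory, by \ref{union}). By \ref{pullbackspreserveunions} the hypotheses guarantee that $\M$-unions are stable under pullback, and this is the crucial ingredient. First I would reduce the boundedness of a generator $G$ to a statement about its own $\M$-subobjects: given any $\alpha$-filtered $\M$-sink $(m_i : D_i \to D)_{i \in I}$, write $U = \bigcup_i D_i$ with $\E$-tight sink $e_i : D_i \to U$ (each $e_i \in \M$ by properness), and let $f : G \to U$ be arbitrary. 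Pulling the $\E$-tight $\alpha$-filtered $\M$-sink $(e_i : D_i \to U)_i$ back along $f$ yields, by pullback-stability of $\M$-unions, an $\alpha$-filtered $\M$-sink $(f^{-1}(e_i) : f^{-1}(D_i) \to G)_i$ whose union is $f^{-1}(U) = G$ itself, that is, $\bigcup_i f^{-1}(D_i) = G$ in $\Sub_\M(G)$. Thus it suffices to show that some $f^{-1}(D_i)$ already equals $G$ as an $\M$-subobject, for then the defining pullback square shows that $f$ factors through the corresponding $e_i$, and hence through $D_i$; by \ref{alphaboundedobject} this makes $G$ an $\alpha$-bounded object.

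The hard part will be to control the size of the poset $\Sub_\M(G)$, since the hypotheses do not presuppose $\M$-wellpoweredness. The key lemma I would prove is that $\Sub_\M(C)$ is in fact small for every $C \in \ob\C$. Given an $\M$-subobject $m : A \to C$, associate to it the family of subsets $S_{G'}(m) = \{x \in \C(G', C) \mid x \text{ factors through } m\} \subseteq \C(G', C)$ indexed by $G' \in \G$. Since $m$ is a monomorphism, composition with $m$ gives a bijection $\C(G', A) \cong S_{G'}(m)$, so the canonical morphism $\coprod_{G' \in \G} \C(G', A) \cdot G' \to A$, which lies in $\E$ because $\G$ is a generator (\ref{EMgen}), may be reindexed as a morphism $\coprod_{G' \in \G} S_{G'}(m) \cdot G' \to A$ in $\E$; composing with $m$ exhibits the canonical morphism $\coprod_{G' \in \G} S_{G'}(m) \cdot G' \to C$ (built from the elements of the $S_{G'}(m)$ viewed as morphisms into $C$) as having an $(\E,\M)$-factorization with $\M$-part $m$. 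By uniqueness of factorizations, $m$ is determined up to isomorphism in $\Sub_\M(C)$ by the family $(S_{G'}(m))_{G' \in \G}$, so $m \mapsto (S_{G'}(m))_{G'}$ embeds the isomorphism classes of $\Sub_\M(C)$ into the small set $\prod_{G' \in \G} \Pow(\C(G', C))$; hence $\Sub_\M(C)$ is small.

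With smallness in hand, I would choose (using the availability of arbitrarily large regular cardinals recalled in Section \ref{background}) a regular cardinal $\alpha$ exceeding $|\Sub_\M(G)|$ for every $G \in \G$, which is possible as $\G$ is small. The proof is then completed by a purely order-theoretic observation: in any poset $P$ with fewer than $\alpha$ elements, every $\alpha$-directed subset $S$ possesses a maximum, equal to its join, since $|S| \le |P| < \alpha$ and so applying $\alpha$-directedness to the whole of $S$ produces an upper bound for $S$ lying in $S$. Applying this to the $\alpha$-directed family $\{f^{-1}(D_i)\}_i \subseteq \Sub_\M(G)$, whose join is $1_G$ as computed above, I conclude that $1_G$ itself is a member of the family, i.e. $f^{-1}(D_{i_0}) = G$ for some $i_0$. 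This establishes the reduced claim, so every $G \in \G$ is $\alpha$-bounded and $\C$ is locally $\alpha$-bounded, as desired.
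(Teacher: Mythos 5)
Your proof is correct, and its opening move is exactly the paper's: reduce to showing the objects of the $(\E,\M)$-generator are $\alpha$-bounded for a single $\alpha$, and get there via \ref{pullbackspreserveunions}, which upgrades the hypotheses to pullback-stability of $\M$-unions. Where you diverge is that the paper then simply cites \cite[3.1.2]{FreydKelly} to conclude that every generator object is bounded, whereas you unwind that citation into a self-contained argument: (i) pull an $\E$-tight $\alpha$-filtered $\M$-sink back along $f : G \to \bigcup_i D_i$ to realize $G$ as the union of the sink $\bigl(f^{-1}(D_i)\bigr)_i$ in $\Sub_\M(G)$; (ii) prove $\M$-wellpoweredness from the generator (your $S_{G'}(m)$ argument is precisely the content of \cite[2.5.2]{FreydKelly}, which the paper itself invokes later in the proof of \ref{orthoreflective}); and (iii) choose $\alpha$ beyond the size of each $\Sub_\M(G)$, so that an $\alpha$-directed family in a poset of size $<\alpha$ attains its join, forcing some $f^{-1}(D_{i_0}) \cong G$ and hence a factorization of $f$ through $D_{i_0}$. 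All three steps are sound (the pulled-back family is $\alpha$-directed because $f^{-1}$ is monotone, and $f^{-1}(e_{i_0})$ being an isomorphism makes $f$ factor through $e_{i_0}$ via the pullback square). The trade-off is the expected one: the paper's proof is two lines modulo an external reference, while yours is longer but makes the role of each hypothesis (generator $\Rightarrow$ wellpoweredness; pullback-stability $\Rightarrow$ transfer of the union to $\Sub_\M(G)$; smallness of $\G$ $\Rightarrow$ a uniform $\alpha$) completely explicit.
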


\begin{proof}
Note first that $\C$ is complete by \cite[2.2]{KellyLack} and thus has pullbacks. It remains to show that there is a regular cardinal $\alpha$ for which each object of the (small) $(\E, \M)$-generator $\G$ is $\alpha$-bounded, and for this it suffices to show that each object of $\G$ is bounded. By \ref{pullbackspreserveunions}, we know that $\M$-unions are stable under pullback, which then entails the result by \cite[3.1.2]{FreydKelly}.
\end{proof}

\noindent Recall that a \emph{locally cartesian closed category} can be defined as a category with pullbacks in which each pullback functor $f^*$ has a right adjoint, or equivalently as a category $\C$ whose slice categories $\C/C$ are all cartesian closed (so if $\C$ has a terminal object, then $\C$ is itself cartesian closed).   

\begin{cor}
\label{loccartesianclosedcor}
If $\C$ is a cocomplete and locally cartesian closed category with a generator and arbitrary cointersections of epimorphisms, then $\C$ is a locally bounded cartesian closed category. In particular, any cocomplete quasitopos with a generator and arbitrary cointersections of epimorphisms is a locally bounded cartesian closed category. 
\end{cor}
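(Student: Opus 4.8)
The plan is to derive the first statement from Proposition~\ref{prequasitoposprop} by endowing $\C$ with an appropriate proper factorization system, and then to upgrade the resulting \emph{ordinary} local boundedness to local boundedness as a cartesian closed category; the quasitopos case is then immediate, since every quasitopos is locally cartesian closed \cite[A2.6.1]{Johnstone2}.

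First I would equip $\C$ with the factorization system $(\Epi, \M)$ whose left class is the class of all epimorphisms and whose right class $\M = \Epi^{\perp}$ consists of the morphisms right orthogonal to every epimorphism. Since $\C$ is cocomplete and possesses arbitrary (large) cointersections of epimorphisms, the standard large-cointersection construction of factorization systems (cf.\ \cite[\S 2]{FreydKelly}) yields $(\Epi,\M)$-factorizations: the left factor of $f:A\to B$ is the cointersection of the (possibly large) class of all epimorphisms out of $A$ through which $f$ factors, and the maximality of this cointersection forces the induced second factor to lie in $\Epi^{\perp}$. This factorization system is proper, as $\Epi^{\perp}\subseteq\Mono$ by a short orthogonality argument: if $mg=mh$ and $c=\operatorname{coeq}(g,h)$, then $m$ factors as $m=m'\circ c$ through the epimorphism $c$, and applying the orthogonality $c\perp m$ to the square $m\cdot 1_A=m'\cdot c$ yields a map $d$ with $dc=1_A$, so that $c$ is a split monomorphism and hence (being epic) an isomorphism, giving $g=h$. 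Consequently $\C$ is a cocomplete factegory in the sense of \ref{factegory}, its cointersections of $\E$-morphisms being precisely the assumed cointersections of epimorphisms.

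I would then check the remaining hypotheses of \ref{prequasitoposprop}. The given generator is an $(\Epi,\M)$-generator, since for $\E=\Epi$ this coincides with the usual notion (\ref{EMgen}). For the pullback-stability requirements the crucial observation is that local cartesian closedness makes each pullback functor $f^{*}:\C/D\to\C/C$ a \emph{left} adjoint (with right adjoint the dependent product $\Pi_{f}$), so $f^{*}$ preserves all colimits. In particular it preserves small coproducts, giving their stability under pullback, and it preserves epimorphisms; since a morphism of a slice is epic exactly when its underlying morphism is, this says precisely that $\Epi$-morphisms are stable under pullback. Proposition~\ref{prequasitoposprop} now shows that $\C$ is a locally bounded ordinary category.

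Finally I would promote this to the closed setting. As in the proof of \ref{prequasitoposprop}, $\C$ is complete \cite[2.2]{KellyLack}, so it has a terminal object and is cartesian closed; with $\tensor=\times$ and $I=1$, each functor $V\times(-)$ has right adjoint $(-)^{V}$ and hence preserves epimorphisms, so $\Epi$ is stable under tensoring and $(\Epi,\M)$ is an enriched proper factorization system exhibiting $\C$ as a cocomplete closed factegory (\ref{closedfactegory}). To meet the boundedness clauses of \ref{locbdclosed} I would appeal to \ref{every_obj_bdd}: as $\C$ is locally bounded, \emph{every} object is bounded, so the unit $1$ and every product $G\times G'$ with $G,G'\in\G$ is $\alpha_{X}$-bounded for some regular cardinal $\alpha_{X}$. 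Since $\{1\}\cup\G\cup\{G\times G'\mid G,G'\in\G\}$ is a small set, I would pick a single regular cardinal $\alpha$ exceeding all these $\alpha_{X}$ together with the cardinal for which the objects of $\G$ are bounded; then $\G$ consists of $\alpha$-bounded objects, $1$ is $\alpha$-bounded, and each $G\times G'$ is $\alpha$-bounded, so $\C$ is locally $\alpha$-bounded as a cartesian closed category. The main obstacle here is the very first step---verifying that $(\Epi,\Epi^{\perp})$ genuinely is a proper factorization system from cocompleteness and large cointersections alone---whereas the pullback-stability that drives \ref{prequasitoposprop} and the tensor-boundedness needed for the closed structure both follow cleanly, the former from local cartesian closedness and the latter from \ref{every_obj_bdd}.
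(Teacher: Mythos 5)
Your proposal is correct and follows essentially the same route as the paper: you build the proper factorization system $(\Epi,\Epi^{\perp})=(\Epi,\StrongMono)$ from cocompleteness and large cointersections (the paper simply cites the dual of \cite[2.3.4]{FreydKelly} for this), verify the hypotheses of \ref{prequasitoposprop} via pullback functors being left adjoints, and obtain the enrichment of the factorization system from $V\times(-)$ being a left adjoint. Your explicit handling of the boundedness of $1$ and $G\times G'$ via \ref{every_obj_bdd} is exactly what the paper delegates to \ref{Kellydefinition}, so the two arguments coincide in substance.
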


\begin{proof}
The assumptions entail by the dual of \cite[2.3.4]{FreydKelly} that $\C$ admits the $(\Epi, \StrongMono)$ proper factorization system, so that $\C$ is then a cocomplete factegory with an $(\Epi, \StrongMono)$-generator. To show that $\C$ is a locally bounded category, it remains by \ref{prequasitoposprop} to show that epimorphisms and small coproducts are stable under pullback, which easily follows from the pullback functors being left adjoints.    

$\C$ is cartesian closed because $\C$ is locally cartesian closed and has a terminal object by \ref{locbdcomplete}. To prove that $\C$ is locally bounded as a cartesian closed category, it remains by \ref{Kellydefinition} to show that the proper factorization system $(\Epi, \StrongMono)$ on $\C$ is $\C$-enriched, which follows because each product functor $C \times (-) : \C \to \C$ is a left adjoint (by cartesian closedness) and hence preserves epimorphisms. 
\end{proof}

\subsection{Topological categories}
\label{topologicalsection}

It is known (see e.g. \cite[2.3]{Sousa}) that any topological category over $\Set$ is locally $\aleph_0$-bounded.  In this section, we begin by discussing how every topological functor $U:\C \rightarrow \Set$ is an $\aleph_0$-bounding right adjoint, and we discuss the resulting locally $\aleph_0$-bounded structure on $\C$; we then establish wide classes of examples of locally bounded \textit{closed} categories that are topological over $\Set$.

Recall that a functor $U : \C \to \D$ between categories is \emph{topological} if every $U$-structured source in $\D$ has a $U$-initial lift; see e.g. \cite[21.1]{AHS} for an explicit definition, which we shall not employ directly. A category $\C$ is \emph{topological} over a category $\D$ if there is a topological functor $U : \C \to \D$. Any topological functor is in particular faithful (see \cite[21.3]{AHS}), and has a left adjoint functor $F : \D \to \C$ that sends each set $X$ to the \textit{discrete} object on $X$ (see \cite[21.12]{AHS}). 

\begin{prop}
\label{topologicalsetprop}
Every topological functor $U:\C \rightarrow \Set$ is an $\aleph_0$-bounding right adjoint when $\C$ is equipped with the factorization system $(\Epi, \StrongMono)$.  Consequently, every category $\C$ topological over $\Set$ is locally $\aleph_0$-bounded when equipped with the latter factorization system and the generator consisting of just the discrete object on a singleton set.
\end{prop}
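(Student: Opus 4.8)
The plan is to fix the base $\V = \Set$ and verify, for a topological functor $U : \C \to \Set$, the four requirements of Definition \ref{boundingadjoint} with respect to the factorization systems $(\Epi, \StrongMono)$ on $\C$ and $(\Epi, \Mono)$ on $\Set$; the ``Consequently'' clause will then drop out of Theorem \ref{boundingrightadjointthm}. Throughout I would lean on the standard facts (see \cite{AHS}) that a topological $U$ is faithful, admits a left adjoint $F$ (the discrete functor, for which $UF = 1_\Set$ with identity unit $\eta$) and also admits a right adjoint (the indiscrete functor); consequently $U$ preserves both limits and colimits. I would take as known (cf. \cite{AHS, Sousa}) that $\C$ is complete and cocomplete and that $(\Epi, \StrongMono)$ is a proper factorization system making $\C$ a cocomplete factegory in the sense of \ref{factegory}, noting that compatibility with $(\Epi, \Mono)$ is automatic in the $\Set$-enriched case and that $\C$ then has all $\M$-unions.

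The easy conditions come first. Since $U$ is a right adjoint it preserves monomorphisms, and strong monos are monos, so $U$ sends $\StrongMono$ into $\Mono$; that is, $U$ is a right-class functor (cf. \ref{preservesrightclass}). For the counit $\varepsilon : FU \to 1_\C$, the triangle identity together with $UF = 1_\Set$ and $\eta = \id$ forces $U\varepsilon_C = \id_{UC}$ for every $C$, which is an epimorphism in $\Set$; as a faithful functor reflects epimorphisms, each $\varepsilon_C$ is epic, so the counit is pointwise in $\E = \Epi$.

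The main work, and the step I expect to be the principal obstacle, is showing that $U$ is $\aleph_0$-bounded, i.e.\ that it preserves $\aleph_0$-filtered $\M$-unions. I would in fact establish the stronger statement that $U$ preserves the entire $(\Epi, \StrongMono)$-factorization of every morphism. Given $f : A \to C$, factor $Uf$ in $\Set$ as a surjection $p : UA \twoheadrightarrow S$ followed by the inclusion $j : S \hookrightarrow UC$ of its image, and let $m : B \to C$ be the $U$-initial lift of $j$ (so $UB = S$ and $Um = j$); recalling that $U$-initial injections are precisely the strong monos in a topological category (see \cite{AHS}), we have $m \in \StrongMono$. By $U$-initiality of $m$, the map $p$ lifts to a morphism $e : A \to B$ with $Ue = p$ and $m \circ e = f$, and since $Ue = p$ is surjective and $U$ reflects epimorphisms, $e \in \Epi$. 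Thus $f = m \circ e$ is the $(\Epi, \StrongMono)$-factorization, and $U$ carries it to the image factorization $j \circ p$ of $Uf$; in particular $U$ sends the $\Epi$-part to a surjection, which is exactly the delicate point, since it does not follow merely from $U$ being a right adjoint. Combining this with the facts that $U$ preserves coproducts and that, by \ref{union}, a union $\bigcup_i C_i$ is the $\StrongMono$-part of the canonical map $\coprod_i C_i \to C$, I conclude that $U\bigl(\bigcup_i C_i\bigr) \cong \bigcup_i UC_i$ as $\M$-subobjects of $UC$ for every $\M$-sink; hence $U$ preserves all $\M$-unions and is a fortiori $\aleph_0$-bounded.

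With all four conditions in hand, $U$ is an $\aleph_0$-bounding right adjoint. For the second assertion, recall from \ref{easyVlocallybounded} that $\Set$, viewed as a $\Set$-category, is locally $\aleph_0$-bounded with enriched $(\E, \M)$-generator $\{1\}$, where $1$ is the singleton (the unit object $I$). Since $\C$ is a cocomplete factegory and $U : \C \to \Set$ is an $\aleph_0$-bounding right adjoint with left adjoint $F$, Theorem \ref{boundingrightadjointthm} yields that $\C$ is a locally $\aleph_0$-bounded $\V$-category with enriched $(\E, \M)$-generator $\{F(1)\}$, which is precisely the discrete object on a singleton set, as claimed.
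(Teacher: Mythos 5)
Your proposal is correct and follows essentially the same route as the paper: identify epimorphisms and strong monomorphisms concretely (surjections and $U$-initial injections), observe that $(\Epi,\StrongMono)$-factorizations and hence $\M$-unions are computed over $\Set$ via initial lifts so that $U$ preserves all $\M$-unions, and conclude with Theorem \ref{boundingrightadjointthm} using that $\Set$ is locally $\aleph_0$-bounded with generator $1$. The only cosmetic difference is that you verify the counit condition of Definition \ref{boundingadjoint} directly from the triangle identity, whereas the paper notes that $U$ reflects epimorphisms and invokes the equivalence \ref{charn_bounding_radjs}; by \ref{moreboundingequivalent} these are the same check.
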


\begin{proof}
Suppose $U:\C \rightarrow \Set$ is topological, and identify each morphism in $\C$ with its underlying function.  Then it is well known that the epimorphisms in $\C$ are the surjective morphisms, the strong monomorphisms in $\C$ are the $U$-initial injective morphisms (also called \textit{embeddings}), and these classes constitute a factorization system $(\E,\M)$ under which $\C$ is a cocomplete factegory (see \cite[\S 21]{AHS}).  Hence $U$ is a right-class functor and reflects epimorphisms.  Also, a sink in $\C$ is $\E$-tight iff it is jointly surjective, and it follows that $\M$-unions in $\C$ can be formed by taking the union of the underlying sink of monomorphisms in $\Set$ and equipping it with the $U$-initial structure, so $U$ preserves all $\M$-unions and, in particular, is $\aleph_0$-bounded.  The result now follows, by \ref{charn_bounding_radjs} and \ref{boundingrightadjointthm}, since $\Set$ is locally $\aleph_0$-bounded with generator $1$.
\end{proof}

\noindent We can now prove the following result, which will yield several examples of locally bounded closed categories (see Section \ref{examples}):

\begin{prop}
\label{topologicalclosed}
Let $\V$ be a symmetric monoidal closed category with a topological functor $U : \V_0 \to \Set$, and let $F$ denote the left adjoint to $U$.
\begin{enumerate}[leftmargin=*]

\item $\V$ is locally bounded as a closed category with $(\Epi, \StrongMono)$-generator $F1$. 

\item If $I \cong F1$, or equivalently if $U \cong \V_0(I, -)$, then $\V$ is locally $\aleph_0$-bounded as a closed category with $(\Epi, \StrongMono)$-generator $I$. 
\end{enumerate}
\end{prop}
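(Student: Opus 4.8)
The plan is to reduce Proposition \ref{topologicalclosed} to the already-established results on topological functors, principally \ref{topologicalsetprop} together with the characterization of locally bounded closed categories in \ref{Kellydefinition} and the boundedness transfer machinery of Section \ref{boundingadjunctions}. The key observation is that \ref{topologicalsetprop} already gives us that $\V_0$, equipped with $(\Epi, \StrongMono)$, is a cocomplete factegory in which the discrete object $F1$ is an $\aleph_0$-bounded object generating $\V_0$ via an $(\Epi,\StrongMono)$-generator. So the entire task is to verify the \emph{closed-category} requirements of Definition \ref{locbdclosed}, namely that the factorization system is $\V$-enriched (stability of $\Epi$ under tensoring) and that the relevant monoidal products of generating objects are bounded.

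For part (1), I would first note that the underlying category $\V_0$ is cocomplete and locally cartesian-free in the needed sense: by \ref{topologicalsetprop}, $(\Epi,\StrongMono)$ makes $\V_0$ a cocomplete factegory, and every object is bounded. To promote this to a locally bounded \emph{closed} category I must check that $\Epi$ is stable under the monoidal product, i.e.\ that each tensor functor $V \tensor (-) : \V_0 \to \V_0$ preserves epimorphisms. This holds because $\V$ is closed, so each $V \tensor (-)$ is a left adjoint and hence preserves all colimits, in particular epimorphisms (which are coequalizers of their kernel pairs, or more directly, left adjoints preserve epimorphisms since they preserve pushouts). This gives that $(\Epi,\StrongMono)$ is an enriched factorization system. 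Since every object of $\V_0$ is bounded by \ref{every_obj_bdd}, and $\{F1\}$ is a generator, the set $\G = \{F1\}$ consists of bounded objects; applying \ref{Kellydefinition} (which shows Kelly's definition agrees with \ref{locbdclosed}, absorbing the boundedness of $I$ and of $G \tensor G'$ into a suitably large cardinal) yields that $\V$ is locally bounded as a closed category with $(\Epi,\StrongMono)$-generator $F1$.

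For part (2), under the hypothesis $I \cong F1$ I would first record the equivalence $I \cong F1 \iff U \cong \V_0(I,-)$: since $F \dashv U$, we have $\V_0(F1, -) \cong \Set(1, U-) \cong U$, so $U \cong \V_0(F1,-)$ always, and thus $I \cong F1$ precisely when $U \cong \V_0(I,-)$. Now with $I \cong F1$, the generator of part (1) is (isomorphic to) the unit object $I$ itself, which is $\aleph_0$-bounded by \ref{topologicalsetprop} (as $F1$ is). The only remaining closed-category obligation specific to the sharper cardinal $\aleph_0$ is that $I \tensor I \cong I$ be $\aleph_0$-bounded, which is immediate since $I$ is $\aleph_0$-bounded; and the enrichment of $(\Epi,\StrongMono)$ is exactly as in part (1). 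Hence all conditions of Definition \ref{locbdclosed} hold with $\alpha = \aleph_0$ and $\G = \{I\}$, giving the locally $\aleph_0$-bounded closed structure.

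The main obstacle I anticipate is purely the enrichment condition—verifying that $\Epi$ is stable under tensoring—since everything else is either inherited directly from \ref{topologicalsetprop} or follows formally from the adjunction $F \dashv U$. This obstacle is mild because closedness of $\V$ makes each $V \tensor (-)$ a left adjoint, so preservation of epimorphisms is automatic; there is no need to analyze the topological structure further for this point. The only subtlety worth stating carefully is the cardinal bookkeeping in part (1): whereas part (2) achieves the clean value $\aleph_0$, part (1) only asserts local boundedness at \emph{some} regular cardinal, obtained by invoking \ref{Kellydefinition} to pass from the mere boundedness of every object (via \ref{every_obj_bdd}) to a single $\beta$ for which $F1$, the unit $I$, and the product $F1 \tensor F1$ are all simultaneously $\beta$-bounded.
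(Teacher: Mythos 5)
Your proposal is correct and follows essentially the same route as the paper's proof: invoke \ref{topologicalsetprop} for the cocomplete factegory structure and the $\aleph_0$-boundedness of $F1$, verify enrichment of $(\Epi,\StrongMono)$ via the left adjoints $V \tensor (-)$, appeal to \ref{Kellydefinition} for part (1), and use $U \cong \V_0(F1,-)$ together with $I \tensor I \cong I$ for part (2). The only blemish is the parenthetical claim that epimorphisms are coequalizers of their kernel pairs (true only for regular epimorphisms), but your alternative justification --- left adjoints preserve epimorphisms because they preserve pushouts --- is the correct one and is what the argument actually uses.
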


\begin{proof}
We know that $\V_0$ is a locally $\aleph_0$-bounded ordinary category with $(\Epi, \StrongMono)$-generator $F1$ (which is $\aleph_0$-bounded) by \ref{topologicalsetprop}. To show that $\V$ is locally bounded as a closed category (see \ref{Kellydefinition}), it remains to show that the factorization system $(\Epi, \StrongMono)$ on $\V$ is enriched, which is true because $X \tensor (-) : \V \to \V$ (being a left adjoint) preserves epimorphisms for every $X \in \ob\V$. This proves (1).

The equivalence of the two conditions in the hypothesis of (2) follows from the fact that $U \cong \V_0(F1, -)$. Now supposing $I \cong F1$, then since $F1$ is $\aleph_0$-bounded and $I \tensor I \cong I$, we deduce (2), using (1).  
\end{proof}

\subsection{Examples of locally bounded closed categories}
\label{examples}

\begin{egg}
\label{exa_lp}
All of the symmetric monoidal closed categories of \cite[Section 1.1]{Kelly} are locally bounded closed categories, as shown on \cite[Page 115]{Kelly}. This includes any symmetric monoidal closed category $\V$ such that $\V_0$ is locally presentable, e.g. the categories of small categories, small groupoids, partially ordered sets, abelian groups, differential graded modules over a commutative ring, the two-element preorder $\mathbf{2}$, and any Grothendieck quasitopos (see \cite[C2.2.13]{Johnstone2}).  Further examples discussed in \cite[Page 115]{Kelly} include the poset $([0, \infty]^\op, +, 0)$ of non-negative extended real numbers (with the reverse ordering), the category $\mathsf{Ban}$ of Banach spaces and linear maps of norm $\leq 1$, and the topological examples $\mathsf{CGTop}, \mathsf{CGTop}_{\ast}, \mathsf{HCGTop}, \mathsf{QTop}$ of, respectively, compactly generated topological spaces and pointed such, compactly generated Hausdorff spaces, and quasitopological spaces. \qed
\end{egg}

\begin{egg}
Generalizing the posetal examples $\mathbf{2}$ and $[0,\infty]^\op$ above, let $\V$ be a \textit{commutative unital quantale} (see e.g. \cite[II.1.10]{Monoidaltop}), or  equivalently a symmetric monoidal closed category that is posetal and cocomplete. Then $\V$ is locally $\aleph_0$-bounded as a closed category.  Indeed, this is straightforwardly verified by equipping $\V$ with the trivial (yet here proper) factorization system $(\mathsf{All},\Iso)$ in which $\textsf{All}$ consists of all morphisms and $\Iso$ consists of the isomorphisms (which are the identity morphisms, noting that every object of $\V$ is then $\aleph_0$-bounded).  But $\V$ need not be locally $\aleph_0$-\textit{presentable}:  For example, the quantale $([0,\infty]^\op,+,0)$ is locally $\aleph_0$-bounded as a closed category, but $[0,\infty]^\op$ is not an algebraic lattice and so is not locally $\aleph_0$-presentable, by \cite[1.10]{LPAC}. \qed
\end{egg}

\begin{egg}
If $\V$ is a locally bounded closed category, then it is shown in \cite[5.6]{KellyLack} that $\V\text{-}\Cat$ is also a locally bounded closed category. \qed
\end{egg}

\begin{egg}
A \emph{concrete quasitopos} \cite{Dubucquasitopoi} is a category of \emph{quasispaces} (also called concrete sheaves \cite{Baezsmooth}) on a (possibly large) concrete site.  As described on \cite[Page 243]{Dubucquasitopoi}, some prominent examples of concrete quasitoposes are the categories of bornological spaces and quasitopological spaces, and categories of convergence spaces such as filter spaces, limit spaces, Choquet or pseudotopological spaces, and subsequential spaces. The categories of Chen spaces, diffeological spaces, and simplicial complexes are also concrete quasitoposes (shown in \cite{Baezsmooth}), as is the category of quasi-Borel spaces (shown in \cite{Heunenprobability}). As remarked on \cite[Page 245]{Dubucquasitopoi}, a concrete quasitopos is in particular an elementary quasitopos that is topological over $\Set$, and so we actually have two ways of showing that every concrete quasitopos $\C$ is a locally bounded cartesian closed category.  Firstly, it is well known that any topological category over $\Set$ is cocomplete and has a generator and wide cointersections of epimorphisms (as discussed in Section \ref{topologicalsection}), so $\C$ satisfies the hypotheses of the second statement of \ref{loccartesianclosedcor}.  Secondly, the associated topological functor $U : \C \to \Set$ is represented by the terminal object \cite[\S 1]{Dubucquasitopoi}, so that $\C$ is a locally $\aleph_0$\emph{-bounded} cartesian closed category by \ref{topologicalclosed}. \qed
\end{egg}

\begin{egg}
Every topological category $\C$ over $\Set$ carries a canonical symmetric monoidal closed structure whose unit object is the discrete object on a singleton set \cite[Section 3]{Wischnewsky}, \cite[Section 2.2]{Sato}.  By \ref{topologicalclosed}, it thus follows that \emph{any} topological category over $\Set$ is a locally $\aleph_0$-bounded closed category with respect to this canonical symmetric monoidal closed structure. This includes (e.g.) the category $\Top$ of topological spaces and continuous maps and the category $\mathsf{Meas}$ of measurable spaces and measurable maps (see \cite[Section 2.1]{Sato}). In the case of $\Top$, the tensor product $X \tensor Y$ of spaces $X$ and $Y$ is obtained by equipping the product of the underlying sets with the \textit{topology of separate continuity}, while the internal hom $[X, Y]$ is obtained by equipping the set of all continuous maps $X \to Y$ with the \textit{topology of pointwise convergence}; see, e.g., \cite[7.1.6]{Borceux2}. \qed      
\end{egg}

\begin{egg}
The full subcategory $\Top_{\mathcal{C}}$ of $\Top$ consisting of the $\mathcal{C}$\emph{-generated spaces} for a \emph{productive class} $\mathcal{C}$ of topological spaces \cite{EscardoCCC} is a locally $\aleph_0$-bounded cartesian closed category.  Indeed, $\Top_{\mathcal{C}}$ is concretely coreflective by the remarks following \cite[3.1]{EscardoCCC} and hence topological over $\Set$ by \cite[21.33]{AHS}, and $\Top_{\mathcal{C}}$ is cartesian closed by \cite[3.6]{EscardoCCC}; the terminal object of $\Top_\mathcal{C}$ is the usual one-point space (which is discrete and hence $\mathcal{C}$-generated), so \ref{topologicalclosed}(2) applies.  Examples of $\Top_\mathcal{C}$ include the categories of compactly generated spaces, core compactly generated spaces, locally compactly generated spaces, and sequentially generated spaces (see \cite[3.3]{EscardoCCC}). \qed
\end{egg}

\begin{egg}
In Theorem \ref{Daylocbd} below, we show that categories of models of small \textit{symmetric monoidal limit theories} in locally bounded closed categories $\V$ are themselves locally bounded closed categories.  This provides a further source of locally bounded closed categories whose objects are structures internal to any of the above locally bounded closed categories; see \ref{symmonlimtheories1} and \ref{symmonlimtheories2}, for example. \qed  
\end{egg}

\section{Enriched versus ordinary local boundedness}
\label{enrichedvsordinary}

Given a locally bounded closed category $\V$, we now study the relation between ordinary and enriched local boundedness of $\V$-categories.  Firstly, enriched implies ordinary local boundedness, but the generator changes:

\begin{theo}
\label{enrichedtoordinarylocallybounded}
Let $\V$ be a locally $\alpha$-bounded closed category with ordinary $(\E, \M)$-generator $\G$, and let $\C$ be a locally $\alpha$-bounded $\V$-category with enriched $(\E, \M)$-generator $\h$. Then $\C_0$ is a locally $\alpha$-bounded ordinary category with ordinary $(\E, \M)$-generator
$$\G \tensor \h := \left\{ G \tensor H \mid G \in \G, H \in \h\right\}.$$ 
\end{theo}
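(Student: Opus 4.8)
The plan is to verify that $\G \tensor \h$ satisfies the two defining conditions of Definition~\ref{locallyboundedordinarycategory}: namely, that $\C_0$ is a cocomplete factegory (which is immediate, since $\C$ is a cocomplete $\V$-factegory, so $\C_0$ is a cocomplete ordinary factegory), and that $\G \tensor \h$ is an ordinary $(\E,\M)$-generator of $\C_0$ consisting of ordinary $\alpha$-bounded objects. Thus there are exactly two substantive things to check: first, that each $G \tensor H$ (for $G \in \G$, $H \in \h$) is an ordinary $\alpha$-bounded object of $\C_0$; and second, that the set $\G \tensor \h$ is jointly $\M$-conservative in the ordinary sense, i.e.\ is an ordinary $(\E,\M)$-generator.

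For the generator condition, I would simply invoke \ref{enrichedtoordinarygenerator}: since $\V_0$ has the ordinary $(\E,\M)$-generator $\G$ and $\C$ has the enriched $(\E,\M)$-generator $\h$, that proposition directly yields that $\C_0$ has the ordinary $(\E,\M)$-generator $\G \tensor \h$. This step is essentially immediate once the right earlier result is cited.

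The boundedness of each $G \tensor H$ is where the real work lies, and it is here that I expect the main (though still routine) obstacle. Fix $G \in \G$ and $H \in \h$. The object $H$ is an enriched $\alpha$-bounded object of $\C$, so the $\V$-functor $\C(H,-):\C \to \V$ preserves $\alpha$-filtered $\M$-unions. The object $G$ is an ordinary $\alpha$-bounded object of $\V_0$. The goal is to show $\C_0(G \tensor H, -) \cong \V_0(G, \C(H,-))$ preserves $\alpha$-filtered $\M$-unions in $\C_0$. Given an $\alpha$-filtered $\M$-sink $(m_i : D_i \to D)_{i \in I}$ with union $\bigcup_i D_i$, I would compute
\[
\C_0\Bigl(G \tensor H, \bigcup_i D_i\Bigr) \cong \V_0\Bigl(G, \C\Bigl(H, \bigcup_i D_i\Bigr)\Bigr) \cong \V_0\Bigl(G, \bigcup_i \C(H, D_i)\Bigr) \cong \bigcup_i \V_0\bigl(G, \C(H, D_i)\bigr) \cong \bigcup_i \C_0(G \tensor H, D_i)
\]
as $\M$-subobjects of $\C_0(G \tensor H, D)$, where the first and last isomorphisms are the tensor--hom adjunction, the second uses that $H$ is enriched $\alpha$-bounded, and the third uses that $G$ is ordinary $\alpha$-bounded in $\V_0$ (applied to the $\alpha$-filtered $\M$-sink $(\C(H, D_i) \to \C(H, D))_i$, which is indeed $\alpha$-filtered since $\C(H,-)$ preserves the relevant order-theoretic structure). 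Indeed, this is precisely the content of the equivalence (1)$\Leftrightarrow$(2) in Lemma~\ref{enrichedboundedlemma} applied to the $\V$-factegory $\C$: since $H$ is an enriched $\alpha$-bounded object and $G$ is an ordinary $\alpha$-bounded object of $\V_0$, that lemma gives directly that $G \tensor H$ is an ordinary $\alpha$-bounded object of $\C_0$.

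In fact, once one observes that both hypotheses of \ref{enrichedboundedlemma} are in place, the cleanest route is to cite that lemma rather than reproduce the chain of isomorphisms: the implication (1)$\Rightarrow$(2) of \ref{enrichedboundedlemma}, with $C = H$ and $V = G$, yields the ordinary $\alpha$-boundedness of $G \tensor H$ immediately. The only subtlety to double-check is that $\C$ is a \emph{tensored} $\V$-factegory with $\M$-unions, as required by \ref{enrichedboundedlemma} and \ref{enrichedtoordinarygenerator}; but this holds because a locally $\alpha$-bounded $\V$-category is cocomplete (hence tensored) and total, and thus complete by \ref{locbdcomplete}, so in particular it has $\M$-unions. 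Assembling these two verified conditions then gives, by Definition~\ref{locallyboundedordinarycategory}, that $\C_0$ is a locally $\alpha$-bounded ordinary category with ordinary $(\E,\M)$-generator $\G \tensor \h$, completing the proof.
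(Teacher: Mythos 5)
Your proof is correct and follows essentially the same route as the paper's, which likewise disposes of cocompleteness immediately and then cites \ref{enrichedtoordinarygenerator} for the generator condition and \ref{enrichedboundedlemma} for the ordinary $\alpha$-boundedness of each $G \tensor H$. The only tiny quibble is your justification for the existence of $\M$-unions: these come from the cocompleteness of the factegory (small coproducts plus $(\E,\M)$-factorizations, as in \ref{union}), not from completeness, but the needed fact is immediate either way.
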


\begin{proof}
That $\C_0$ is cocomplete and has arbitrary cointersections of $\E$-morphisms follows because $\C$ has these properties. It follows from \ref{enrichedboundedlemma} that $G \tensor H$ is an ordinary $\alpha$-bounded object of $\C_0$ for all $G \in \G$ and $H \in \h$, and from \ref{enrichedtoordinarygenerator} that $\G \tensor \h$ is an ordinary $(\E, \M)$-generator for $\C_0$.   
\end{proof}

\noindent Toward a result in the opposite direction, we first prove the following lemma:

\begin{lem}
\label{enrichedunionpreservation}
Let $\V$ be a locally bounded closed category. If $\C$ is a tensored $\V$-factegory such that $\C_0$ is locally bounded, then every $C \in \ob\C$ is an enriched bounded object.     
\end{lem}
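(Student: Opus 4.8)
The plan is to reduce the enriched boundedness of $C$ to the \emph{ordinary} boundedness of the tensors $G \tensor C$, as $G$ ranges over an ordinary $(\E,\M)$-generator of $\V_0$, and then to invoke the fact that in a locally bounded ordinary category every object is bounded. Throughout I read the hypothesis ``$\C_0$ is locally bounded'' as referring to the ordinary factorization system underlying the $\V$-factegory structure of $\C$; this is the reading under which \ref{enrichedboundedlemma} applies to $\C$, and it is the intended one.

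First I would set up the hypotheses of \ref{enrichedboundedlemma}. Since $\V$ is locally bounded as a closed category, fix a regular cardinal $\alpha$ and an ordinary $(\E,\M)$-generator $\G$ of $\V_0$ consisting of ordinary $\alpha$-bounded objects for which $\V$ is locally $\alpha$-bounded. I would also observe that $\C$ has $\M$-unions: because $\C_0$ is locally bounded it is in particular a cocomplete factegory, hence has small coproducts, and any factegory with small coproducts has all $\M$-unions by \ref{union}. Thus $\V$ is a cocomplete closed factegory with the required generator, and $\C$ is a tensored $\V$-factegory with $\M$-unions, so \ref{enrichedboundedlemma} is available for $C$.

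Next I would carry out the cardinal bookkeeping. For each $G \in \G$ the tensor $G \tensor C$ exists (as $\C$ is tensored) and is an object of the locally bounded ordinary category $\C_0$, so by \ref{every_obj_bdd} it is ordinary $\alpha_G$-bounded for some regular cardinal $\alpha_G$. Since $\G$ is small, the collection $\{\alpha_G \mid G \in \G\} \cup \{\alpha\}$ is a set of regular cardinals, so (as recalled in Section \ref{background}) there is a regular cardinal $\beta$ exceeding every element of it. Then $\V$ is locally $\beta$-bounded as a closed category with the \emph{same} generator $\G$, which now consists of ordinary $\beta$-bounded objects, and each $G \tensor C$ is ordinary $\beta$-bounded since $\alpha_G \leq \beta$. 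Applying \ref{enrichedboundedlemma} at the cardinal $\beta$, condition (3) holds, whence condition (1) holds, i.e. $C$ is an enriched $\beta$-bounded object of $\C$; in particular $C$ is an enriched bounded object, as desired.

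There is no deep obstacle here: the entire argument rests on \ref{enrichedboundedlemma}, which is exactly the device that converts the single enriched condition on $C$ into the $\G$-indexed family of ordinary conditions on the $G \tensor C$. The only point needing a little care is producing a \emph{single} regular cardinal $\beta$ that works simultaneously for all $G \in \G$ and also dominates $\alpha$, which is where the smallness of $\G$ and the existence of regular cardinals above an arbitrary set of them are used; and one must confirm at the outset that $\C_0$ is locally bounded with respect to the factorization system underlying the $\V$-factegory structure, so that \ref{enrichedboundedlemma} genuinely applies.
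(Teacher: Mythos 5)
Your proof is correct and follows essentially the same route as the paper's: reduce via Lemma \ref{enrichedboundedlemma} to ordinary boundedness of the tensors $G \tensor C$ for $G$ in an ordinary $(\E,\M)$-generator of $\V_0$, apply \ref{every_obj_bdd} in the locally bounded category $\C_0$, and use smallness of $\G$ to extract a single regular cardinal. Your extra care in choosing $\beta$ to dominate both $\alpha$ and all the $\alpha_G$ (so that the hypotheses of \ref{enrichedboundedlemma} hold at $\beta$) is a slightly more explicit rendering of the same cardinal bookkeeping the paper performs.
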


\begin{proof}
Let $C \in \ob\C$, and let $\G$ be the ordinary $(\E, \M)$-generator of $\V_0$. Since $\C_0$ is locally bounded and $\G$ is small, it follows by \ref{every_obj_bdd} that there is a regular cardinal $\alpha$ such that $G \tensor C$ is an ordinary $\alpha$-bounded object of $\C_0$ for every $G \in \G$, so that $C$ is an enriched $\alpha$-bounded object of $\C$ by \ref{enrichedboundedlemma}.      
\end{proof}

\noindent Local boundedness of the ordinary category underlying a cocomplete $\V$-factegory entails its local boundedness as a $\V$-category, but the regular cardinal $\alpha$ changes:

\begin{theo}
\label{ordinarylocallyboundedtoenriched}
Let $\V$ be a locally bounded closed category. If $\C$ is a cocomplete $\V$-factegory for which $\C_0$ is locally bounded with ordinary $(\E, \M)$-generator $\h$, then $\C$ is a locally bounded $\V$-category with enriched $(\E, \M)$-generator $\h$. 
\end{theo}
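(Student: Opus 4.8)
The plan is to verify the two nontrivial requirements in Definition \ref{locallyboundedVcategory}: that $\h$ is an enriched $(\E, \M)$-generator for $\C$, and that each object of $\h$ is an enriched $\alpha$-bounded object for a \emph{single} regular cardinal $\alpha$. The cocompleteness of the $\V$-factegory $\C$ is given by hypothesis, so no further work is needed on that point.

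For the generator requirement, I would simply invoke Proposition \ref{ordinarytoenrichedgenerator}: since $\h$ is by assumption an ordinary $(\E, \M)$-generator for $\C_0$, it is automatically an enriched $(\E, \M)$-generator for $\C$. This disposes of the first requirement at once.

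For the boundedness, the key preliminary observation is that a cocomplete $\V$-category admits all small weighted colimits and is in particular tensored, so $\C$ is a tensored $\V$-factegory. Hence Lemma \ref{enrichedunionpreservation} applies (its hypotheses being exactly that $\V$ is a locally bounded closed category, that $\C$ is a tensored $\V$-factegory, and that $\C_0$ is locally bounded), and it tells us that every object of $\C$ — in particular every $H \in \h$ — is an enriched bounded object; that is, each $H$ is enriched $\alpha_H$-bounded for some regular cardinal $\alpha_H$. Since $\h$ is a (small) set, I would then choose a regular cardinal $\alpha$ larger than every $\alpha_H$, using the fact recalled in Section \ref{background} that any set of regular cardinals is bounded above by a regular cardinal; by \ref{lbv_remarks}, each $H$ is then enriched $\alpha$-bounded. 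Thus $\h$ is an enriched $(\E, \M)$-generator for the cocomplete $\V$-factegory $\C$ consisting of enriched $\alpha$-bounded objects, so $\C$ is a locally $\alpha$-bounded $\V$-category, as desired.

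There is no genuine obstacle: the result is an easy assembly of \ref{ordinarytoenrichedgenerator} and \ref{enrichedunionpreservation}. The only points requiring attention are confirming that the cocompleteness of $\C$ supplies the tensors needed to invoke \ref{enrichedunionpreservation}, and observing that the smallness of $\h$ permits replacing the family of cardinals $\alpha_H$ by a single $\alpha$ — exactly the reason the conclusion asserts only local boundedness rather than local $\alpha$-boundedness for the same $\alpha$ as $\V$.
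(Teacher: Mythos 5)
Your proposal is correct and follows exactly the paper's own argument: the paper's one-line proof likewise combines Proposition \ref{ordinarytoenrichedgenerator} for the generator claim with Lemma \ref{enrichedunionpreservation} and the smallness of $\h$ to extract a single regular cardinal $\alpha$. Your additional remark that cocompleteness supplies the tensors needed to apply \ref{enrichedunionpreservation} is a detail the paper leaves implicit, and it is accurate.
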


\begin{proof}
We deduce from \ref{ordinarytoenrichedgenerator}, \ref{enrichedunionpreservation}, and the smallness of $\h$ that $\h$ is an \emph{enriched} $(\E, \M)$-generator consisting of enriched $\alpha$-bounded objects for some $\alpha$.  
\end{proof}

\noindent We can now prove an enrichment of Freyd and Kelly's result \cite[3.1.2]{FreydKelly} that every object of a locally bounded ordinary category is an ordinary bounded object: 

\begin{theo}
\label{locallyboundedcardinal}
Let $\V$ be a locally bounded closed category, and let $\C$ be a locally bounded \mbox{$\V$-category}. Then every $C \in \ob\C$ is an enriched bounded object.  
\end{theo}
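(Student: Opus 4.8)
The plan is to reduce the enriched statement to results already in hand, via the underlying ordinary category. First I would arrange a single regular cardinal to witness both hypotheses at once: since $\C$ is locally bounded as a $\V$-category and $\V$ is locally bounded as a closed category, the remarks in \ref{lbv_remarks} and following \ref{locbdclosed} let me choose one regular cardinal $\alpha$ for which $\C$ is a locally $\alpha$-bounded $\V$-category and, simultaneously, $\V$ is a locally $\alpha$-bounded closed category. This is needed because the transfer result I intend to invoke is phrased for a common $\alpha$.

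Next I would pass to the ordinary level. Applying Theorem \ref{enrichedtoordinarylocallybounded} to the locally $\alpha$-bounded $\V$-category $\C$ over the locally $\alpha$-bounded closed category $\V$ shows that the underlying ordinary category $\C_0$ is a locally $\alpha$-bounded ordinary category (with the generator $\G \tensor \h$, though its precise form is irrelevant here). This is the ``enriched implies ordinary'' direction, and it converts our enriched hypothesis into an ordinary local boundedness hypothesis on $\C_0$.

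Finally I would feed this into Lemma \ref{enrichedunionpreservation}. Since $\C$ is a cocomplete $\V$-factegory, it is in particular tensored and has $\M$-unions; thus $\C$ is a tensored $\V$-factegory whose underlying ordinary category $\C_0$ is locally bounded, which is exactly the hypothesis of \ref{enrichedunionpreservation}. That lemma then yields directly that every $C \in \ob\C$ is an enriched bounded object, which is the desired conclusion.

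There is no real obstacle here, as the substantive work has been packaged into the earlier results: the only point requiring a moment's care is that the regular cardinal witnessing enriched boundedness of a given object $C$ need not be $\alpha$ and may depend on $C$, but this is harmless since ``enriched bounded'' requires only \emph{some} regular cardinal. Indeed, the proof of \ref{enrichedunionpreservation} rests on the Freyd--Kelly fact \ref{every_obj_bdd} that every object of a locally bounded ordinary category is bounded, together with the equivalence of enriched and ordinary boundedness in \ref{enrichedboundedlemma}, so the per-object choice of cardinal is already accounted for there.
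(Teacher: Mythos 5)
Your proposal is correct and follows exactly the paper's own argument: pass to $\C_0$ via Theorem \ref{enrichedtoordinarylocallybounded} and then apply Lemma \ref{enrichedunionpreservation}. The remarks about arranging a common cardinal and about the per-object cardinal are accurate but just make explicit what the paper leaves implicit.
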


\begin{proof}
We deduce from \ref{enrichedtoordinarylocallybounded} that $\C_0$ is locally bounded, and then \ref{enrichedunionpreservation} yields the result.
\end{proof}

\noindent The following corollary provides a characterization of locally bounded $\V$-categories in terms of an enriched generalization of the notion of \textit{bounded category with a generator} in the sense of \cite{FreydKelly}:

\begin{cor}
Let $\V$ be a locally bounded closed category.  Then a cocomplete $\V$-factegory $\C$ is locally bounded if and only if $\C$ has an enriched $(\E,\M)$-generator and every $C \in \ob\C$ is an enriched bounded object. \qed
\end{cor}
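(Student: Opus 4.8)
The plan is to prove the two implications separately, noting that essentially all the substantive content has already been established, so the corollary is really a matter of assembling Definition \ref{locallyboundedVcategory} together with Theorem \ref{locallyboundedcardinal}.

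For the forward direction, I would assume that $\C$ is locally bounded. Then by Definition \ref{locallyboundedVcategory}, $\C$ is locally $\alpha$-bounded for some regular cardinal $\alpha$, which by definition means that $\C$ is equipped with an enriched $(\E, \M)$-generator (consisting of enriched $\alpha$-bounded objects); in particular, $\C$ has an enriched $(\E,\M)$-generator, giving the first of the two stated conditions. The second condition, that every $C \in \ob\C$ is an enriched bounded object, is precisely the conclusion of Theorem \ref{locallyboundedcardinal}, which applies since $\V$ is a locally bounded closed category and $\C$ is a locally bounded $\V$-category. So this direction is immediate.

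For the backward direction, I would assume that $\C$ is a cocomplete $\V$-factegory possessing an enriched $(\E, \M)$-generator $\G$ and such that every object of $\C$ is an enriched bounded object. The goal is to exhibit a single regular cardinal $\alpha$ witnessing local $\alpha$-boundedness. Since $\G$ is small and each $G \in \G$ is an enriched bounded object, each $G$ is enriched $\alpha_G$-bounded for some regular cardinal $\alpha_G$; using the fact recalled in Section \ref{background} that there is always a regular cardinal larger than every element of a given set of regular cardinals, I would choose a regular cardinal $\alpha$ exceeding every $\alpha_G$ ($G \in \G$). By the monotonicity of boundedness in the cardinal noted in \ref{lbv_remarks} (an enriched $\alpha_G$-bounded object is also enriched $\beta$-bounded for $\beta > \alpha_G$), every $G \in \G$ is then enriched $\alpha$-bounded. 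Hence $\G$ is an enriched $(\E, \M)$-generator for the cocomplete $\V$-factegory $\C$ consisting of enriched $\alpha$-bounded objects, so $\C$ is locally $\alpha$-bounded by Definition \ref{locallyboundedVcategory}, and therefore locally bounded.

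There is no serious obstacle here: the corollary is a straightforward repackaging of the preceding results, with the forward direction resting entirely on Theorem \ref{locallyboundedcardinal}. The only point requiring a small amount of care is in the backward direction, namely passing from a pointwise boundedness hypothesis (each object is bounded for \emph{some} cardinal) to a single uniform cardinal governing the whole generator; this is handled cleanly by the smallness of $\G$ together with the existence of a sufficiently large regular cardinal.
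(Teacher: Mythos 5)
Your proof is correct and matches the argument the paper intends (the corollary is stated with no written proof, as an immediate consequence of Definition \ref{locallyboundedVcategory} and Theorem \ref{locallyboundedcardinal}): the forward direction is exactly \ref{locallyboundedcardinal} plus the definition, and the backward direction is the standard uniformization of the bounding cardinals over the small generator. Nothing is missing.
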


\section{Local boundedness versus local presentability of enriched categories}
\label{boundedpresentable}

We know by e.g. \cite[3.2.3]{FreydKelly} that every locally presentable ordinary category is locally bounded. We now extend this result to the enriched context in the case where $\V$ is locally presentable. Recall from \cite[5.5]{Kellystr} that $\V$ is \emph{locally} $\alpha$\emph{-presentable as a closed category} if $\V_0$ is locally $\alpha$-presentable and the class of $\alpha$-presentable objects in $\V_0$ is closed under the monoidal product and contains the unit object.  A $\V$-category $\C$ is \emph{locally} $\alpha$\emph{-presentable} if it is cocomplete and has an enriched strong generator of enriched $\alpha$-presentable objects (see \cite[3.1, 7.4]{Kellystr}).  In referring to results in \cite{Kellystr} that are stated only for the case where $\alpha = \aleph_0$, we tacitly employ the generalizations of these results to an arbitrary $\alpha$, which are valid by \cite[7.4]{Kellystr}.

We recall that if $\C$ is a tensored and cotensored $\V$-category, then the $\V$-monomorphisms in $\C$ coincide with the monomorphisms in $\C_0$, and the enriched strong epimorphisms in $\C$ coincide with the ordinary strong epimorphisms in $\C_0$ (see \cite[6.8]{enrichedfact}).

\begin{prop}
\label{locprestolocbd}
Let $\V$ be a locally $\alpha$-presentable closed category. If $\C$ is a locally $\alpha$-presentable $\V$-category, then $\C$ is a locally $\alpha$-bounded $\V$-category with respect to $(\StrongEpi, \Mono)$.  
\end{prop}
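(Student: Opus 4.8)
The plan is to verify directly the three ingredients in Definition \ref{locallyboundedVcategory}, taking $\C$ with the factorization system $(\StrongEpi, \Mono)$. As a preliminary I would fix the ambient factorization data: equipping $\V$ with $(\StrongEpi, \Mono)$ makes it a locally $\alpha$-bounded closed category, since $\V_0$ is locally $\alpha$-presentable and hence locally $\alpha$-bounded by \cite[3.2.3]{FreydKelly}, with $(\StrongEpi, \Mono)$-generator the $\alpha$-presentable objects; each such object is $\alpha$-bounded because $\alpha$-filtered $\M$-unions are computed as $\alpha$-filtered colimits in a locally presentable category, and the unit and the monoidal products of generators are $\alpha$-presentable, hence $\alpha$-bounded. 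This puts us in the setting of Section \ref{locbdclosedsection} and makes the notion of $\M$-union in $\C$ available.

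First I would establish the factegory structure. A locally $\alpha$-presentable $\V$-category $\C$ is cocomplete and tensored, and by Kelly's theory of locally presentable $\V$-categories \cite{Kellystr} its underlying ordinary category $\C_0$ is locally $\alpha$-presentable; in particular $\C_0$ carries the proper factorization system $(\StrongEpi, \Mono)$ and is $\StrongEpi$-cowellpowered. To see that $(\StrongEpi, \Mono)$ is enriched I would invoke \cite[5.7]{enrichedfact}: since $\C$ is tensored it suffices that $\StrongEpi$ be stable under tensoring, and each $\V$-functor $V \tensor (-) : \C \to \C$ preserves colimits and hence regular epimorphisms, which coincide with the strong epimorphisms in the locally presentable $\C_0$. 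For compatibility with $(\StrongEpi, \Mono)$ on $\V$ (Definition \ref{compatible}), I would note that each underlying functor $\C(C, -) : \C_0 \to \V_0$ is right adjoint to $(-) \tensor C$ and so preserves monomorphisms, sending $\Mono$-morphisms into the right class of $\V$. Thus $\C$ is a $\V$-factegory; it is \emph{cocomplete} as a $\V$-factegory because it is $\V$-cocomplete and, being $\StrongEpi$-cowellpowered and conically cocomplete, has arbitrary conical cointersections of $\StrongEpi$-morphisms.

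Next I would identify the generator and check boundedness. The enriched strong generator $\h$ of $\C$ is by definition an enriched $(\StrongEpi, \Mono)$-generator, equivalently (by \ref{enrichedEMgeneratoralt}) a set for which the $\V$-functors $\C(H, -)$ ($H \in \h$) are jointly $\Mono$-conservative. It then remains to check that each enriched $\alpha$-presentable object $C \in \h$ is enriched $\alpha$-bounded, i.e. that $\C(C, -) : \C \to \V$ preserves $\alpha$-filtered $\M$-unions. This is the crux. Given an $\alpha$-filtered $\M$-sink $(m_i : D_i \to D)_{i \in I}$, the union $\bigcup_i D_i \to D$ in the locally presentable $\C$ is the $\alpha$-filtered conical colimit of the $\alpha$-directed family of $\Mono$-subobjects $D_i$; since $C$ is enriched $\alpha$-presentable, $\C(C, -)$ preserves this colimit, and because its image is the corresponding $\alpha$-filtered colimit of the monomorphisms $\C(C, m_i)$ in the locally presentable $\V$, that image is exactly $\bigcup_i \C(C, D_i)$. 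Hence $\C(C, \bigcup_i D_i) \cong \bigcup_i \C(C, D_i)$ as $\M$-subobjects of $\C(C, D)$, so $C$ is enriched $\alpha$-bounded, and Definition \ref{locallyboundedVcategory} yields the result.

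I expect the main obstacle to be this last step, namely justifying carefully that $\alpha$-filtered $\M$-unions coincide with $\alpha$-filtered colimits, so that enriched $\alpha$-presentability delivers enriched $\alpha$-boundedness; this rests on the fact that in a locally presentable category $\alpha$-filtered colimits of monomorphisms are again monomorphisms and compute the union of the associated $\alpha$-directed family of subobjects, applied both in $\C$ and in $\V$. A secondary point requiring care is confirming that the underlying category $\C_0$ is locally $\alpha$-presentable, so that $(\StrongEpi, \Mono)$ is available as a factorization system and $\C$ is $\StrongEpi$-cowellpowered; I would take this from Kelly's development of locally presentable $\V$-categories.
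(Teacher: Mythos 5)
Your overall strategy is sound and reaches the right conclusion, but it takes a genuinely different route from the paper's. You verify Definition \ref{locallyboundedVcategory} directly: you set up the $\V$-factegory structure on $\C$, observe that the enriched strong generator $\h$ is in particular an enriched $(\StrongEpi,\Mono)$-generator, and then show each enriched $\alpha$-presentable $H \in \h$ is enriched $\alpha$-bounded. The paper instead invokes its characterization theorem \ref{recogcor}: after establishing the same factegory structure, it takes the conservative, $\alpha$-filtered-colimit-preserving right adjoint $U : \C \to [\A,\V]$ supplied by Kelly's characterization of locally $\alpha$-presentable $\V$-categories and checks that $U$ is an $\alpha$-bounding right adjoint. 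Both arguments pivot on the same key fact, which you correctly isolate as the crux: in a locally $\alpha$-presentable category, $\alpha$-filtered unions of monomorphisms are computed as $\alpha$-filtered colimits (\cite[1.62--1.63]{LPAC}), applied in $\C_0$ and in $\V_0$ (resp.\ $[\A,\V]_0$). Your route has the mild advantage of exhibiting the \emph{same} generator $\h$ as the witnessing $(\E,\M)$-generator, whereas the paper's route via \ref{recogthm}/\ref{boundingrightadjointthm} is shorter once the machinery of bounding right adjoints is in place.

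There is one concretely false step in your justification of the enrichment of $(\StrongEpi,\Mono)$ on $\C$: strong epimorphisms do \emph{not} coincide with regular epimorphisms in a general locally presentable category ($\Cat$ is a standard counterexample, where regular epimorphisms are not closed under composition), so you cannot deduce stability of $\StrongEpi$ under tensoring from preservation of regular epimorphisms. The conclusion you need is nonetheless true and has a one-line proof that bypasses this: $V \tensor (-) : \C \to \C$ is left adjoint to $[V,-]$, and $[V,-]$ preserves monomorphisms (being a right adjoint), so $V \tensor (-)$ preserves strong epimorphisms by the orthogonality transposition of Lemma \ref{preservesrightclasslem} (equivalently \ref{compatiblelemma}). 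This is in substance how the paper argues, phrased dually: it checks that $\Mono$ is stable under cotensoring, citing \cite[2.11]{enrichedfact}, which by \cite[5.7]{enrichedfact} is equivalent to enrichment since $\C$ is cotensored (completeness of $\C$ coming from \cite[7.2]{Kellystr}). With that repair, your proof goes through.
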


\begin{proof}
Since $\V_0$ is locally presentable, it admits the proper (and enriched) factorization system \linebreak $(\StrongEpi, \Mono)$ by \cite[1.61]{LPAC}, so that $\V$ is a cocomplete closed factegory (since $\V_0$ is cocomplete and cowellpowered by \cite[1.58]{LPAC}). By \ref{recogcor}, it suffices to show that $\C$ is a cocomplete $\V$-factegory for which there is a small $\V$-category $\A$ and an $\alpha$-bounding right adjoint $U : \C \to [\A, \V]$. By \cite[7.5]{Kellystr}, $\C_0$ is locally $\alpha$-presentable and hence also admits the proper factorization system $(\StrongEpi, \Mono)$. This factorization system is enriched because cotensoring preserves monomorphisms by \cite[2.11]{enrichedfact} (note that $\C$ is complete by \cite[7.2]{Kellystr}), and it is compatible with $(\StrongEpi, \Mono)$ on $\V$ because each $\C(C, -) : \C \to \V$ preserves monomorphisms. Since $\C$ is a cocomplete $\V$-category and $\C_0$ is cowellpowered by \cite[1.58]{LPAC}, it follows that $\C$ is a cocomplete $\V$-factegory. 

Since $\C$ is locally $\alpha$-presentable, we know by \cite[3.1]{Kellystr} that there is a small $\V$-category $\A$ with a conservative left adjoint $\V$-functor $U : \C \to [\A, \V]$ that preserves conical $\alpha$-filtered colimits. Since the right adjoint $U$ preserves monomorphisms, it remains by \ref{charn_bounding_radjs} to show that $U$ is $\alpha$-bounded, i.e. preserves $\alpha$-filtered unions of monomorphisms. But this is true because $U$ preserves $\alpha$-filtered colimits, and $\alpha$-filtered unions of monomorphisms are examples of $\alpha$-filtered colimits in the locally $\alpha$-presentable categories $\C_0$ and $[\A, \V]_0$ by \cite[1.63]{LPAC}, noting that $[\A,\V]_0$ is locally $\alpha$-presentable by \cite[3.1, 7.5]{Kelly}.        
\end{proof}

\noindent Note that we could have also invoked \ref{ordinarylocallyboundedtoenriched} to (more easily) deduce that $\C$ is a locally bounded $\V$-category, but then we would not have been able to maintain the same cardinal bound.   

We now want to characterize when a locally bounded ($\V$-)category is locally \emph{presentable}. We first show the following result; recall that an object $C$ of a cocomplete category $\C$ is said to be $\alpha$\emph{-generated} if $\C(C, -) : \C \to \Set$ preserves the colimit of every $\alpha$-directed diagram of monomorphisms \cite[1.67]{LPAC}. 

\begin{prop}
\label{boundedtogenerated}
Let $\C$ be a cocomplete category with $(\StrongEpi, \Mono)$-factorizations. If the colimit cocone of every $\alpha$-directed diagram of monomorphisms in $\C$ consists of monomorphisms, then every $\alpha$-bounded object of $\C$ is also $\alpha$-generated.  
\end{prop}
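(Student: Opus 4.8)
The plan is to exploit the stated hypothesis in order to identify the colimit of an $\alpha$-directed diagram of monomorphisms with an $\alpha$-filtered $\M$-union (where here $\M = \Mono$), so that the assumed $\alpha$-boundedness of $C$ can be brought to bear via the concrete reformulation in \ref{alphaboundedobject}. Concretely, I would fix an $\alpha$-bounded object $C$ and an $\alpha$-directed diagram $D : I \to \C$ with monomorphic transition maps, write $(s_i : D_i \to L)_{i \in I}$ for its colimit cocone, and first check that this cocone is an $\alpha$-filtered $\M$-sink whose union is $L$. Indeed, the hypothesis gives that each $s_i$ is a monomorphism and hence lies in $\M$; by \ref{union} every colimit cocone is $\E$-tight, and an $\E$-tight $\M$-sink exhibits its codomain as a union, so $1_L$ is a union of $(s_i)$, i.e. $L \cong \bigcup_i D_i$ as $\M$-subobjects of $L$. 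Since $I$ is $\alpha$-directed and $i \mapsto s_i$ is monotone into $\Sub_\M(L)$ (because $s_i$ factors through $s_j$ whenever $i \le j$), this $\M$-sink is moreover $\alpha$-filtered.

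With this reduction in hand, surjectivity of the canonical comparison map $\colim_i \C(C, D_i) \to \C(C, L)$ would follow immediately from $\alpha$-boundedness: by the concrete reformulation in \ref{alphaboundedobject}, every morphism $f : C \to L = \bigcup_i D_i$ factors as $f = s_i \circ g$ through some $D_i$, which is exactly the statement that $f$ lies in the image of the comparison map.

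For injectivity I would argue separately, using the monomorphism hypothesis together with filteredness rather than boundedness. Given $g : C \to D_i$ and $h : C \to D_j$ with $s_i \circ g = s_j \circ h$, I would pick $k \ge i, j$ (possible since $\alpha$-directed posets are in particular filtered) and compare the composites into $D_k$: both $D(i \le k) \circ g$ and $D(j \le k) \circ h$ become equal after postcomposition with the monomorphism $s_k$, hence are already equal, so $g$ and $h$ are identified in the filtered colimit. Combining the two directions shows the comparison map is a bijection, and therefore that $\C(C, -)$ preserves every $\alpha$-directed colimit of monomorphisms, i.e. that $C$ is $\alpha$-generated. The only genuinely non-formal point—and hence the main thing to get right—is the first step, namely recognizing that under the hypothesis an $\alpha$-directed colimit of monomorphisms \emph{is} an $\alpha$-filtered $\M$-union; after that, $\alpha$-boundedness yields surjectivity essentially for free, while injectivity is precisely where the monomorphism assumption (and not boundedness) does the work.
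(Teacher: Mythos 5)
Your proof is correct. The paper dispatches this proposition in two lines by citing Freyd and Kelly's result \cite[3.2]{FreydKelly} that every $\alpha$-bounded object has \emph{Barr rank} $\leq \alpha$ --- i.e.\ $\C(C,-)$ preserves the colimit of any $\alpha$-directed diagram admitting a cocone of monomorphisms --- and then observing that the hypothesis supplies such a cocone, namely the colimit cocone itself. You instead give a self-contained argument that in effect reproves the relevant special case of that citation: you use the hypothesis to recognize the colimit cocone as an $\E$-tight $\alpha$-filtered $\M$-sink, hence as an $\alpha$-filtered union, so that $\alpha$-boundedness yields surjectivity of the comparison map $\colim_i \C(C,D_i) \to \C(C,L)$, while injectivity comes from the colimit injections being monomorphisms together with $\alpha$-directedness. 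All the individual steps check out against the paper's machinery: colimit cocones of small diagrams are $\E$-tight by \ref{union}, an $\E$-tight $\M$-sink exhibits its codomain as a union, and the sink $(s_i)$ is $\alpha$-filtered precisely because the indexing poset is $\alpha$-directed and $s_i$ factors through $s_j$ whenever $i \le j$. The trade-off is the usual one: the paper's route is shorter and connects the result to the established (and strictly stronger) notion of Barr rank, which applies to any $\alpha$-directed diagram merely admitting \emph{some} cocone of monomorphisms, whereas your argument is elementary, avoids the external citation, and correctly isolates where each hypothesis is used (boundedness for surjectivity, the monomorphism hypothesis for injectivity).
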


\begin{proof}
Let $C \in \ob\C$ be $\alpha$-bounded. It is shown in \cite[3.2]{FreydKelly} that $C$ then has \emph{Barr rank} $\leq \alpha$, meaning that if $D : J \to \C$ is an $\alpha$-directed diagram on which there exists a cocone of monomorphisms, then $\C(C, -) : \C \to \Set$ preserves the colimit of $D$ (cf. also \cite[2.6]{Wolffmonads}). But if $D : J \to \C$ is an $\alpha$-directed diagram of monomorphisms, then its colimit cocone consists of monomorphisms by assumption, so that $\C(C, -) : \C \to \Set$ preserves the colimit of $D$ and thus $C$ is $\alpha$-generated.   
\end{proof}

\noindent We now have the following theorem characterizing locally presentable categories among locally bounded categories with respect to $(\StrongEpi, \Mono)$:

\begin{theo}
\label{boundedtopresentable}
A category $\C$ is locally presentable iff $\C$ is $\StrongEpi$-cowellpowered and there is a regular cardinal $\alpha$ such that $\C$ is locally $\alpha$-bounded with respect to $(\StrongEpi, \Mono)$ and the colimit cocone of every $\alpha$-directed diagram of monomorphisms in $\C$ consists of monomorphisms.  
\end{theo}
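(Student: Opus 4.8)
The plan is to prove the biconditional by treating the two implications separately, leaning on the recognition results above together with standard facts about locally presentable categories from \cite{LPAC}. For the forward implication, suppose $\C$ is locally $\alpha$-presentable for some regular cardinal $\alpha$. Then $\C$ is co-wellpowered, hence $\StrongEpi$-cowellpowered, by \cite[1.58]{LPAC}, and $\C$ is locally $\alpha$-bounded with respect to $(\StrongEpi, \Mono)$ by the $\Set$-enriched instance of \ref{locprestolocbd}. It remains to verify the mono-colimit condition. I would deduce this from the standard fact that a locally $\alpha$-presentable category embeds fully faithfully into a presheaf category $[\mathcal{A}, \Set]$ as a full subcategory closed under $\alpha$-filtered colimits, via a functor that preserves and reflects monomorphisms and preserves $\alpha$-filtered colimits; since colimits in $[\mathcal{A}, \Set]$ are pointwise and $\alpha$-directed colimits of injections in $\Set$ have injective colimit cocones, the condition transfers back to $\C$ (cf. \cite[1.62]{LPAC}).

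For the converse, assume $\C$ is $\StrongEpi$-cowellpowered and locally $\alpha$-bounded with respect to $(\StrongEpi, \Mono)$, with the stated mono-colimit condition. Being a cocomplete, $\StrongEpi$-cowellpowered factegory, $\C$ falls under Sousa's theorem \ref{Sousathm}, which yields a set $\h$ of $\alpha$-bounded objects such that every $C \in \ob\C$ is an $\alpha$-filtered $\Mono$-union of objects of $\h$; and by \ref{boundedtogenerated}, the mono-colimit condition forces every object of $\h$ to be $\alpha$-generated.

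The technical heart is to upgrade each such $\alpha$-filtered $\Mono$-union to a genuine $\alpha$-directed colimit. Given an $\alpha$-filtered $\Mono$-sink $(m_i : H_i \to C)_{i \in I}$ witnessing $C$ as a union, the transition maps of the associated $\alpha$-directed diagram of subobjects are monic, so by the mono-colimit condition its colimit $L = \colim_i H_i$ has monic colimit injections $\ell_i : H_i \to L$, with an induced $u : L \to C$ satisfying $u\ell_i = m_i$. The canonical map $\coprod_i H_i \to L$ is a (regular, hence strong) epimorphism, and $\coprod_i H_i \to C$ is a strong epimorphism since the sink is $\E$-tight; by the cancellation property of the left class, $u$ is a strong epimorphism. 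To see that $u$ is also monic, I would test against the $(\StrongEpi, \Mono)$-generator: since each generating object $G$ is $\alpha$-generated, $\C(G, -)$ preserves the colimit $L$, and a short diagram chase in $\Set$ (using that each $\C(G, m_i)$ is injective) shows $\C(G, u)$ is injective; as the generator is jointly faithful, $u$ is monic. Hence $u$ is an isomorphism and $C \cong \colim_i H_i$ is an $\alpha$-directed colimit of objects of $\h$.

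Consequently every object of $\C$ is an $\alpha$-filtered colimit of objects from the set $\h$ of $\alpha$-generated objects, with $\C$ cocomplete and $\StrongEpi$-cowellpowered. I would then conclude that $\C$ is locally presentable by invoking the characterization of locally presentable categories as exactly the $\StrongEpi$-cowellpowered cocomplete categories possessing a strong generator of $\alpha$-generated objects (equivalently, in which every object is an $\alpha$-directed colimit of objects from a set of $\alpha$-generated objects) --- that is, the co-wellpowered locally $\alpha$-generated categories. I expect this final step to be the main obstacle: one must pass from a generator of merely $\alpha$-\emph{generated} objects to genuine presentability, and it is precisely here that $\StrongEpi$-cowellpoweredness is essential, since it bounds the strong quotients and thereby supplies the requisite accessibility. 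By contrast, the forward-direction transfer of the mono-colimit condition and the identification of $\Mono$-unions with $\alpha$-directed colimits are routine given \ref{Sousathm} and \ref{boundedtogenerated}.
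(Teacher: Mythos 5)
Your proof is correct, and the forward direction coincides with the paper's (which cites \ref{locprestolocbd}, \cite[1.58]{LPAC} for cowellpoweredness, and \cite[1.62]{LPAC} for the mono-colimit condition directly, where you instead re-derive the latter via a presheaf embedding). For the converse, however, you take a noticeably longer route than necessary. The paper observes that the hypothesis of local $\alpha$-boundedness with respect to $(\StrongEpi,\Mono)$ \emph{already} hands you a strong generator consisting of $\alpha$-bounded objects (an $(\StrongEpi,\Mono)$-generator is precisely a strong generator), which are $\alpha$-generated by \ref{boundedtogenerated}; it then concludes immediately from Gabriel--Ulmer's characterization of locally $\alpha$-generated categories as the cocomplete, co-wellpowered categories with a strong generator of $\alpha$-generated objects \cite[1.72]{LPAC}, together with \cite[1.70]{LPAC}. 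Your detour through Sousa's theorem \ref{Sousathm} and the subsequent upgrading of $\alpha$-filtered $\Mono$-unions to genuine $\alpha$-directed colimits (via the strong-epi/mono argument testing against the generator, which is sound) is aimed at verifying the other, colimit-based formulation of local $\alpha$-generation; it works, and it isolates a mildly interesting intermediate fact under the mono-colimit hypothesis, but it is dispensable here since both formulations are covered by the same cluster of results in \cite{LPAC}. Your closing worry about passing from $\alpha$-generated to genuinely presentable is well placed, and is resolved exactly by the cited theorem that locally generated categories are locally presentable, which is indeed where the cowellpoweredness hypothesis earns its keep.
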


\begin{proof}
If $\C$ is locally $\alpha$-presentable, then $\C$ is locally $\alpha$-bounded with respect to $(\StrongEpi, \Mono)$ by \ref{locprestolocbd} (with $\V = \Set$), and $\C$ satisfies the colimit cocone property by \cite[1.62]{LPAC} and is cowellpowered by \cite[1.58]{LPAC}. Conversely, if $\C$ satisfies the stated properties, then $\C$ is in particular cocomplete and has a strong generator whose objects are $\alpha$-bounded and hence $\alpha$-generated by \ref{boundedtogenerated}. It now follows by Gabriel-Ulmer's definition of locally generated categories (see \cite[1.72]{LPAC}) that $\C$ is locally $\alpha$-generated, so that $\C$ is locally presentable by \cite[1.70]{LPAC}. 
\end{proof}

\begin{rmk}
\label{necessarycolimitcocone}
For $\C$ to be locally presentable, it is \emph{not} in general sufficient for $\C$ to just be ($\StrongEpi$-cowellpowered and) locally bounded with respect to $(\StrongEpi, \Mono)$, since Freyd and Kelly give in \cite[5.2.3]{FreydKelly} an example (which they attribute to John Isbell) of a category that is locally bounded with respect to $(\StrongEpi, \Mono)$ but is \emph{not} locally presentable (at least assuming the non-existence of measurable cardinals). Hence, the colimit cocone property must be imposed. \qed 
\end{rmk}

\noindent We can now prove an enrichment of \ref{boundedtopresentable} if the base $\V$ is locally presentable:

\begin{theo}
\label{enrichedboundedtopresentable}
Let $\V$ be a locally presentable closed category, and let $\C$ be a $\V$-category.  The following are equivalent: (1) $\C$ is locally presentable; (2) $\C$ is $\StrongEpi$-cowellpowered, and there is some regular cardinal $\alpha$ such that $\C$ is locally $\alpha$-bounded with respect to $(\StrongEpi, \Mono)$ and the colimit cocone of every $\alpha$-directed diagram of monomorphisms in $\C_0$ consists of monomorphisms. 
\end{theo}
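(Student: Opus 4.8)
The plan is to prove both implications by passing to the underlying ordinary category $\C_0$ and invoking the ordinary characterization \ref{boundedtopresentable}, together with the enriched/ordinary comparison results of this section. A preliminary observation, used throughout, is that any locally bounded $\V$-category is complete by \ref{locbdcomplete} and cocomplete by definition, hence tensored and cotensored; so by the remarks preceding \ref{locprestolocbd} (i.e.\ \cite[6.8]{enrichedfact}) the enriched strong epimorphisms in $\C$ coincide with the ordinary strong epimorphisms in $\C_0$ and the $\V$-monomorphisms with the monomorphisms. In particular $\C$ is $\StrongEpi$-cowellpowered iff $\C_0$ is, and the factorization system $(\StrongEpi, \Mono)$ on $\C$ restricts to the ordinary $(\StrongEpi, \Mono)$ on $\C_0$.

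For $(1) \Rightarrow (2)$, suppose $\C$ is locally $\alpha$-presentable. Then $\C$ is locally $\alpha$-bounded with respect to $(\StrongEpi, \Mono)$ by \ref{locprestolocbd}, and $\C_0$ is locally $\alpha$-presentable by \cite[7.5]{Kellystr}. Hence $\C_0$ is cowellpowered by \cite[1.58]{LPAC}, so $\C$ is $\StrongEpi$-cowellpowered by the opening remarks, and the colimit cocone of every $\alpha$-directed diagram of monomorphisms in $\C_0$ consists of monomorphisms by \cite[1.62]{LPAC}. This establishes $(2)$.

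For $(2) \Rightarrow (1)$, I first enlarge the given $\alpha$ to a regular cardinal $\beta \geq \alpha$ for which $\V$ is also locally $\beta$-bounded as a closed category (possible by \ref{exa_lp}); the hypotheses of $(2)$ persist with $\beta$ in place of $\alpha$, since local $\beta$-boundedness holds by \ref{lbv_remarks} and the colimit cocone property holds because every $\beta$-directed diagram is a fortiori $\alpha$-directed. By \ref{enrichedtoordinarylocallybounded}, $\C_0$ is then locally $\beta$-bounded as an ordinary category with respect to $(\StrongEpi, \Mono)$; it is $\StrongEpi$-cowellpowered by the opening remarks and satisfies the colimit cocone property by assumption, so $\C_0$ is locally presentable by \ref{boundedtopresentable}. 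It remains to upgrade the enriched $(\StrongEpi, \Mono)$-generator $\h$ of $\C$, which is in particular an enriched strong generator since the left class is $\StrongEpi$, from a generator of enriched $\beta$-bounded objects to a generator of enriched \emph{presentable} objects. Since $\C_0$ is locally presentable, every object of $\C_0$ is an ordinary presentable object; in particular, writing $\G$ for a small generator of $\V_0$, each $G \tensor H$ with $G \in \G$ and $H \in \h$ is ordinary presentable. By smallness of $\G$ and $\h$ we may then choose a regular cardinal $\gamma \geq \beta$ for which $\V$ is locally $\gamma$-presentable as a closed category, the objects of $\G$ are $\gamma$-presentable generators of $\V_0$, and every such $G \tensor H$ is ordinary $\gamma$-presentable; the enriched characterization of presentable objects (the analogue for presentability of \ref{enrichedboundedlemma}, namely \cite[5.1--5.3]{Kellystr}) then shows that each $H \in \h$ is an enriched $\gamma$-presentable object. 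Thus $\C$ is a cocomplete $\V$-category with an enriched strong generator of enriched $\gamma$-presentable objects, and so is locally $\gamma$-presentable by \cite[3.1, 7.4]{Kellystr}.

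The main obstacle is this final upgrade step: one must pass correctly from ordinary presentability of the objects $G \tensor H$ to enriched $\gamma$-presentability of the generators $H$, which relies on Kelly's enriched characterization of presentable objects, and one must confirm that the enriched $(\StrongEpi, \Mono)$-generator furnished by local boundedness is genuinely an enriched strong generator in the sense required by the definition of local $\gamma$-presentability. The remaining work is the bookkeeping needed to ensure that a single cardinal $\gamma$ can be chosen to serve simultaneously for $\V$, for all the objects $G \tensor H$, and for the application of \cite[3.1, 7.4]{Kellystr}.
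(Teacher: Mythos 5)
Your proof is correct and takes essentially the same route as the paper's: both directions reduce to the underlying ordinary category via \ref{locprestolocbd}, \ref{enrichedtoordinarylocallybounded} and \ref{boundedtopresentable}, and the converse then upgrades a strong generator to one consisting of enriched presentable objects by combining the fact that every object of a locally presentable category is presentable with Kelly's criterion \cite[5.1]{Kellystr} applied to the tensors $G \tensor H$, exactly as in the paper. The only cosmetic difference is that you reuse the enriched $(\StrongEpi,\Mono)$-generator supplied by local boundedness, whereas the paper extracts an ordinary strong generator from the local presentability of $\C_0$ and promotes it via \ref{ordinarytoenrichedgenerator}; both yield the required enriched strong generator.
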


\begin{proof}
Each of the properties in (2) is stable under passing to a higher cardinal $\beta > \alpha$, and we now tacitly use this throughout.  Suppose (1).  Then $\C_0$ is locally presentable by \cite[7.5]{Kellystr}, so that $\C$ is $\StrongEpi$-cowellpowered and satisfies the colimit cocone property by \ref{boundedtopresentable}. Moreover, it follows by \ref{locprestolocbd} that $\C$ is a locally bounded $\V$-category with respect to $(\StrongEpi, \Mono)$. 

Conversely, suppose (2).  Since $\V$ is a locally bounded closed category by \ref{exa_lp}, it follows by \ref{enrichedtoordinarylocallybounded} that $\C_0$ is locally bounded with respect to $(\StrongEpi, \Mono)$, which then entails by \ref{boundedtopresentable} that $\C_0$ is locally presentable, and hence has an ordinary strong generator $\h$. Since $\C$ is a cocomplete $\V$-factegory, it follows by \ref{ordinarytoenrichedgenerator} that $\h$ is also an \emph{enriched} strong generator for $\C$.  Now let $H \in \h$.   $\V_0$ has a strong generator $\G$ of ordinary $\alpha$-presentable objects for some $\alpha$, and for each $G \in \G$ the tensor $G \tensor H$ is an ordinary presentable object of $\C_0$ by \cite[p. 22]{LPAC}. Since $\G$ is small, there is a regular cardinal $\beta_H \geq \alpha$ such that $G \tensor H$ is an ordinary $\beta_H$-presentable object of $\C_0$ for every $G \in \G$. So then $H$ is an enriched $\beta_H$-presentable object of $\C$ by  \cite[5.1]{Kellystr}. Since $\h$ is small, there is then a regular cardinal $\beta$ such that every $H \in \h$ is an enriched $\beta$-presentable object, which (since $\C$ is cocomplete) entails that $\C$ is locally ($\beta$-)presentable.     
\end{proof}

We conclude this subsection by considering the relationship between locally $\alpha$-bounded $\V$-categories and the $\M$-locally $\alpha$-generated $\V$-categories of \cite{Libertienriched}. If $(\E, \M)$ is an enriched factorization system on a cocomplete $\V$-category $\C$ and $\alpha$ is a regular cardinal, then $(\E, \M)$ is said to be $\alpha$\emph{-convenient} \cite[4.3]{Libertienriched} if $\C$ is $\E$-cowellpowered and for every $\alpha$-directed diagram $D : I \to \C_0$ of $\M$-morphisms, every colimit cocone for $D$ consists of $\M$-morphisms, and the factorizing morphism from $\colim \ D$ to the vertex of any cocone consisting of $\M$-morphisms itself lies in $\M$. An object $C \in \ob\C$ is an \emph{enriched} $\alpha$\emph{-generated object w.r.t.} $\M$ \cite[4.1]{Libertienriched} if $\C(C, -) : \C \to \V$ preserves conical $\alpha$-directed colimits of $\M$-morphisms. Finally, a cocomplete $\V$-category $\C$ with an $\alpha$-convenient enriched factorization system $(\E, \M)$ is $\M$\emph{-locally} $\alpha$\emph{-generated} \cite[4.4]{Libertienriched} if $\C$ has a set $\G$ of enriched $\alpha$-generated objects w.r.t. $\M$ such that every object of $\C$ is a conical $\alpha$-directed colimit of objects from $\G$ and morphisms from $\M$. If $\V$ is locally $\alpha$-presentable as a closed category, then every $\M$-locally $\alpha$-generated $\V$-category is in fact locally presentable by \cite[4.14]{Libertienriched}, and hence is complete. We now show the following result, which requires $(\E, \M)$ to be \emph{proper}:

\begin{prop}
\label{Libertienrichedprop}
Let $\V$ be locally $\alpha$-presentable as a closed category, and let $\C$ be a cocomplete $\V$-category with an $\alpha$-convenient enriched proper factorization system $(\E, \M)$. If $\C$ is $\M$-locally $\alpha$-generated, then $\C$ is locally $\alpha$-bounded with respect to $(\E, \M)$. 
\end{prop}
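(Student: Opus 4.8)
The plan is to verify the two ingredients of Definition~\ref{locallyboundedVcategory}: that $\C$ is a cocomplete $\V$-factegory, and that the set $\G$ of enriched $\alpha$-generated objects (w.r.t.\ $\M$) witnessing $\M$-local $\alpha$-generatedness is an enriched $(\E,\M)$-generator of enriched $\alpha$-bounded objects. We equip $\V$ with the proper factorization system $(\StrongEpi,\Mono)$, as in \ref{locprestolocbd}. Then $\C$ is a cocomplete $\V$-factegory: compatibility of $(\E,\M)$ with $(\StrongEpi,\Mono)$ is automatic, since properness makes every $\M$-morphism a $\V$-monomorphism, hence sent by each $\C(C,-)$ to a monomorphism in $\V$; and the required cointersections of $\E$-morphisms exist because $\C$ is a cocomplete $\V$-category that is $\E$-cowellpowered (by $\alpha$-convenience), so any large family of $\E$-morphisms with common domain reduces to a small wide pushout. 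Being cocomplete, $\C$ is tensored and (by \ref{union}) has all $\M$-unions.

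The crux is an identification that translates Liberti's directed-colimit language into union language: \emph{for an $\alpha$-convenient proper factorization system, the $\alpha$-filtered $\M$-unions coincide with the conical $\alpha$-directed colimits of $\M$-morphisms.} Given an $\alpha$-filtered $\M$-sink $(m_i:C_i\to C)_{i\in I}$, the $m_i$ form an $\alpha$-directed sub-preorder of $\Sub_\M(C)$ whose comparison maps $C_j\to C_i$ lie in $\M$ by properness, yielding a conical $\alpha$-directed diagram $D$ of $\M$-morphisms. By $\alpha$-convenience its colimit cocone $(t_i:C_i\to\colim D)$ consists of $\M$-morphisms and the induced map $u:\colim D\to C$ lies in $\M$; moreover $(t_i)$ is $\E$-tight, being the colimit cocone of a small diagram (\ref{union}). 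Hence $(m_i)$ factors through the $\M$-subobject $u$ via the $\E$-tight sink $(t_i)$, so $u$ is the union $\bigcup_i C_i\to C$. Conversely, by the same reasoning a conical $\alpha$-directed colimit of $\M$-morphisms, with its colimit cocone, exhibits the colimit as an $\alpha$-filtered $\M$-union of the diagram's vertices.

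To see that $\G$ is an enriched $(\E,\M)$-generator (\ref{enrichedEMgenerator}), fix $C\in\ob\C$. By $\M$-local $\alpha$-generatedness $C$ is a conical $\alpha$-directed colimit of objects $G_i\in\G$ along $\M$-morphisms, so by the identification above its colimit cocone $(u_i:G_i\to C)$ is an $\E$-tight $\M$-sink; that is, $C$ is an $\alpha$-filtered $\M$-union of objects of $\G$. Each $u_i$ factors through the canonical morphism $p:\coprod_{G\in\G}\C(G,C)\tensor G\to C$, via the name $I\to\C(G_i,C)$ of $u_i$ followed by the counit $\C(G_i,C)\tensor G_i\to C$. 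Thus the $\E$-tight sink $(u_i)$ factors through $p$; taking an $(\E,\M)$-factorization $p=m'\circ e'$, each $u_i$ factors through $m'$, so by $\E$-tightness $1_C$ factors through $m'$, making the $\M$-morphism $m'$ a retraction and hence an isomorphism. Therefore $p\in\E$, i.e.\ the canonical sink is $\E$-tight and $\G$ is an enriched $(\E,\M)$-generator.

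It remains to show each $G\in\G$ is an enriched $\alpha$-bounded object (\ref{enrichedboundedobject}), i.e.\ that $\C(G,-)$ preserves $\alpha$-filtered $\M$-unions. Given such a union $m:\bigcup_i C_i\to C$ of an $\M$-sink $(m_i)_{i\in I}$, the identification presents $\bigcup_i C_i$ as $\colim_i C_i$ with colimit cocone $(t_i)$ in $\M$ and $m$ the induced map. Since $G$ is enriched $\alpha$-generated w.r.t.\ $\M$, the $\V$-functor $\C(G,-)$ preserves this conical colimit, giving $\C(G,\bigcup_i C_i)\cong\colim_i\C(G,C_i)$ with cocone $\C(G,t_i)$; these and the induced map $\C(G,m)$ lie in $\Mono$ by compatibility, while $(\C(G,t_i))$ is $\E$-tight as a colimit cocone of a small diagram in the cocomplete $\V$. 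Hence $\C(G,m)$ exhibits $\C(G,\bigcup_i C_i)$ as $\bigcup_i\C(G,C_i)$, so $\C(G,-)$ preserves the union. This completes the proof that $\C$ is locally $\alpha$-bounded. The main obstacle is the identification of the second paragraph, where both clauses of $\alpha$-convenience---mono colimit cocones and $\M$-valued factorizing maps---are exactly what is needed to pass between unions and directed colimits.
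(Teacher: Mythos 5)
Your proof is correct, and its two central components coincide with the paper's: the observation that, under $\alpha$-convenience of a \emph{proper} $(\E,\M)$, every $\alpha$-filtered $\M$-union is computed as a conical $\alpha$-directed colimit of $\M$-morphisms (with the colimit cocone landing in $\M$ and the comparison to $C$ in $\M$), and the consequent argument that an enriched $\alpha$-generated object w.r.t.\ $\M$ is enriched $\alpha$-bounded because $\C(G,-)$ carries such a colimit presentation of the union to a colimit presentation of the union of the image sink. Where you genuinely diverge is the generator step. The paper invokes \cite[4.17]{Libertienriched} to obtain an ordinary strong generator of $\C_0$ consisting of enriched $\alpha$-generated objects, upgrades it to an enriched strong generator via \ref{ordinarytoenrichedgenerator}, and then uses $\StrongEpi \subseteq \E$ (properness) to conclude it is an enriched $(\E,\M)$-generator. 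You instead work directly with the set $\G$ witnessing $\M$-local $\alpha$-generatedness: each object is an $\E$-tight $\alpha$-filtered $\M$-union of objects of $\G$, each leg factors through the canonical morphism $\coprod_{G}\C(G,C)\tensor G \to C$, and an $(\E,\M)$-factorization plus $\E$-tightness forces the $\M$-part to be a split-epi mono, hence invertible. This is essentially the easy direction of Sousa's characterization \ref{Sousathm} run in the enriched setting; it is more self-contained (no appeal to \cite{Libertienriched} beyond the definitions) and has the mild additional virtue of exhibiting the generator as the \emph{same} set $\G$ from the hypothesis, whereas the paper's generator is the one produced by Liberti's theorem. Both arguments are sound and yield the same cardinal bound.
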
 

\begin{proof}
We first deduce as in the proof of \ref{locprestolocbd} that $\V$ is a closed cocomplete factegory with respect to $(\StrongEpi, \Mono)$. The properness of $(\E, \M)$ entails that $(\E, \M)$ is compatible with $(\StrongEpi, \Mono)$, so that $\C$ is a cocomplete $\V$-factegory because $\C$ is cocomplete and $\E$-cowellpowered. By \cite[4.17]{Libertienriched}, we deduce that $\C_0$ has an ordinary strong generator $\G$ consisting of enriched $\alpha$-generated objects w.r.t. $\M$. So then $\G$ is an enriched strong generator for $\C$ by \ref{ordinarytoenrichedgenerator}, and hence is an enriched $(\E, \M)$-generator because $\E$ contains all strong epimorphisms (by properness).  So it remains to show that $\G$ consists of enriched $\alpha$-bounded objects, for which it suffices to show that every enriched $\alpha$-generated object w.r.t. $\M$ is an enriched $\alpha$-bounded object. To show this, we first prove that $\alpha$-filtered $\M$-unions in $\C$ can be defined in terms of $\alpha$-directed colimits of $\M$-morphisms, in the following sense. Let $(m_i : C_i \to C)_{i \in I}$ be an $\alpha$-filtered $\M$-sink in $\C$. Then $(m_i)_i$ induces an $\alpha$-directed diagram of $\M$-morphisms $D : I \to \C_0$ by setting $i \leq j$ ($i, j \in I$) iff $m_i$ factors (uniquely) through $m_j$. Let $(s_i : C_i \to \colim \ D)_{i \in I}$ be a colimit cocone for this diagram. By $\alpha$-convenience of $(\E, \M)$, we know that each $s_i$ ($i \in I$) lies in $\M$, and that the factorizing morphism $\colim \ D \xrightarrow{m} C$ induced by the $\M$-cocone $(m_i)_i$ lies in $\M$. Then because the original $\M$-sink $(m_i)_i$ factors through $m$ via the $\E$-tight colimit sink $(s_i : C_i \to \colim \ D)_{i \in I}$, it follows that $\colim \ D \xrightarrow{m} C$ is a union of the $\M$-sink $(m_i)_i$. 

Now if $X \in \ob\C$ is an enriched $\alpha$-generated object and $(m_i : C_i \to C)_{i \in I}$ is an $\alpha$-filtered $\M$-sink with union $\colim_i \ C_i \xrightarrow{m} C$ (as just shown), then \[ \colim_i \ \C(X, C_i) \cong \C(X, \colim_i \ C_i) \xrightarrow{\C(X, m)} \C(X, C) \] is a union of the sink of monomorphisms $(\C(X, m_i) : \C(X, C_i) \to \C(X, C))_i$, because this sink factors through the displayed monomorphism via the $\E$-tight colimit sink $\left(\C(X, C_i) \to \colim_i \ \C(X, C_i)\right)_i$.         
\end{proof}  

\begin{rmk}
If we do not assume properness of $(\E, \M)$ in \ref{Libertienrichedprop}, then we cannot \emph{a priori} maintain the same cardinal bound or factorization system in the conclusion. What we \emph{do} have is that if $\V$ is locally $\alpha$-presentable as a closed category and $\C$ is a cocomplete $\V$-category with an $\alpha$-convenient (but not necessarily proper) enriched factorization system $(\E, \M)$, then $\C$ is locally bounded with respect to $(\StrongEpi, \Mono)$ if $\C$ is $\M$-locally $\alpha$-generated. Indeed, we deduce from \cite[4.14]{Libertienriched} that $\C$ is locally $\beta$-presentable for some $\beta \geq \alpha$, so that $\C$ is then locally $\beta$-bounded with respect to $(\StrongEpi, \Mono)$ by \ref{locprestolocbd}. \qed 
\end{rmk}

\section{Adjoint functor and representability theorems}
\label{representability}

It is well known that locally \emph{presentable} categories satisfy useful adjoint functor theorems: namely, a functor between locally presentable categories has a left adjoint iff it preserves small limits and also preserves $\alpha$-filtered colimits (i.e. has rank $\alpha$) for some regular cardinal $\alpha$; see e.g. \cite[1.66]{LPAC}. It is also well known that such categories satisfy a useful representability theorem: a $\Set$-valued functor on a locally presentable (even accessible) category is representable iff it preserves small limits and has rank (by \cite[4.88]{Kelly} and \cite[5.3.7, 5.5.5]{Borceux2}). We now wish to show that locally bounded categories also satisfy useful adjoint functor and representability theorems. Recall from \cite[Page 79]{Kelly} that a functor $P : \C \to \Set$ from an ordinary category $\C$ is \emph{weakly accessible} if there is a small set $\h \subseteq \ob\C$ such that for any $C \in \ob\C$ and $x \in PC$, there exist $H \in \h$, $y \in PH$, and $f : H \to C$ with $(Pf)(y) = x$. 

\begin{prop}
\label{weaklyaccessibleprop}
Let $\C$ be a locally bounded and $\E$-cowellpowered category, and let $P : \C \to \Set$ be a bounded right-class functor.  Then $P$ is weakly accessible.
\end{prop}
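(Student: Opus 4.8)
The plan is to reduce everything to Sousa's characterization (Theorem \ref{Sousathm}) together with the defining property of a bounded right-class functor, and in fact the set witnessing weak accessibility can be taken to be exactly the set produced by Sousa's theorem. First I would fix a single regular cardinal $\alpha$ that serves both hypotheses simultaneously: since $\C$ is locally bounded it is locally $\alpha_1$-bounded for some $\alpha_1$, and since $P$ is bounded it is $\alpha_2$-bounded for some $\alpha_2$; taking $\alpha = \max(\alpha_1, \alpha_2)$ (which is again a regular cardinal) ensures that $\C$ is locally $\alpha$-bounded and that $P$ preserves $\alpha$-filtered $\M$-unions, using that both properties persist under passage to a larger regular cardinal (an $\alpha$-filtered $\M$-sink is in particular $\alpha_2$-filtered, so its union is one that $P$ already preserves).

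Next, since $\C$ is cocomplete, $\E$-cowellpowered, and locally $\alpha$-bounded, Theorem \ref{Sousathm} supplies a small set $\h$ of $\alpha$-bounded objects such that every $C \in \ob\C$ is an $\alpha$-filtered $\M$-union of objects of $\h$; that is, each $C$ is a union of some $\alpha$-filtered $\M$-sink $(m_i : H_i \to C)_{i \in I}$ with every $H_i \in \h$. I claim that this same $\h$ witnesses the weak accessibility of $P$.

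To verify the claim, I would fix $C \in \ob\C$ and $x \in PC$ and take a decomposition $(m_i : H_i \to C)_{i \in I}$ as above. Saying that $C$ is the union of this $\M$-sink means, by the remark preceding Theorem \ref{Sousathm}, that the identity $1_C$ is a union of $(m_i)_i$. Because $P$ is a right-class functor, each $Pm_i : PH_i \to PC$ is a monomorphism in $\Set$, so $(Pm_i)_i$ is an $\M$-sink; and because $P$ preserves $\alpha$-filtered $\M$-unions, the identity $1_{PC} = P1_C$ is a union of $(Pm_i)_i$ in $\Set$. By the same remark this says precisely that the sink $(Pm_i)_i$ is $\E$-tight in $\Set$, i.e. the functions $Pm_i$ are jointly surjective onto $PC$. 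Hence there exist $i \in I$ and $y \in PH_i$ with $(Pm_i)(y) = x$; setting $H = H_i \in \h$ and $f = m_i : H_i \to C$ yields exactly the data demanded by the definition of weak accessibility.

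I do not expect a serious obstacle here, since the argument is a direct application of its two main inputs; the only points requiring care are the bookkeeping of cardinals in the first step and the translation, in the third step, of ``$P$ preserves the $\alpha$-filtered $\M$-union $1_C = \bigcup_i H_i$'' into joint surjectivity of the $Pm_i$. It is precisely this translation where both halves of the hypothesis are genuinely used: the right-class property makes the $Pm_i$ monic (so that they form an $\M$-sink whose union makes sense), while boundedness guarantees the union is preserved.
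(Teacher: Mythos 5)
Your proposal is correct and follows essentially the same route as the paper: fix a common regular cardinal, invoke Sousa's Theorem \ref{Sousathm} to express each object as an $\alpha$-filtered $\M$-union of objects from a small set $\h$, and use boundedness of $P$ to conclude that the images of the sink are jointly surjective. The only cosmetic difference is that you apply the definition of preservation of unions directly to the union $1_C = \bigcup_i H_i$, whereas the paper routes the same step through Remark \ref{boundedrmk} (preservation of $\E$-tightness of $\alpha$-filtered $\M$-sinks); the two formulations are interchangeable here.
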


\begin{proof}
Let $\alpha$ be a regular cardinal for which $\C$ is locally $\alpha$-bounded and $P$ is $\alpha$-bounded. Since $\C$ is $\E$-cowellpowered, it follows by \ref{Sousathm} that there is a small set $\h \subseteq \ob\C$ of $\alpha$-bounded objects with the property that every object of $\C$ is an $\alpha$-filtered union of $\M$-subobjects with domains in $\h$. Now let $C \in \ob\C$ and $x \in PC$. We know that there is an $\alpha$-filtered $\M$-sink $(m_i : H_i \to C)_{i \in I}$ with each $H_i \in \h$ ($i \in I$) such that $C$ is a union of $(m_i)_i$, so that $(m_i)_i$ is $\E$-tight (see the remarks preceding \ref{Sousathm}). Since $P$ is $\alpha$-bounded and hence preserves the $\E$-tightness of $\alpha$-filtered $\M$-sinks by \ref{boundedrmk}, it follows that the functions $Pm_i : PH_i \to PC$ ($i \in I$) are jointly surjective. So because $x \in PC$, there are $i \in I$ and $y \in PH_i$ with $(Pm_i)(y) = x$, as desired. 
\end{proof}

\noindent We now have the following useful (ordinary) representability theorem:

\begin{theo}
\label{representabilitythm}
Let $\C$ be a locally bounded and $\E$-cowellpowered category, and let $P : \C \to \Set$ be a right-class functor. Then $P$ is representable iff $P$ is bounded and preserves small limits. 
\end{theo}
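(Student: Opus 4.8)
The plan is to prove the two directions separately, with the ``if'' direction carrying essentially all the weight.

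For the ``only if'' direction, suppose $P \cong \C(R, -)$ for some $R \in \ob\C$. Representable functors preserve all small limits, so $P$ preserves small limits. Moreover $P$ is bounded: by \ref{every_obj_bdd} every object of a locally bounded category is bounded, so $\C(R,-)$ preserves $\alpha$-filtered $\M$-unions for some regular cardinal $\alpha$, i.e.\ $P$ is $\alpha$-bounded. (That $P$ is right-class is the blanket hypothesis, and holds automatically by properness since $\Set$ carries its usual $(\Epi,\Mono)$ system.)

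For the ``if'' direction, suppose $P$ is a bounded right-class functor preserving small limits. I would pass to the category of elements $\mathsf{El}(P)$, whose objects are pairs $(C, x)$ with $C \in \ob\C$ and $x \in PC$ and whose morphisms $(C,x) \to (C',x')$ are morphisms $f : C \to C'$ in $\C$ with $(Pf)(x) = x'$; recall that $P$ is representable precisely when $\mathsf{El}(P)$ has an initial object, since an initial object of $\mathsf{El}(P)$ is exactly a universal element for $P$. The strategy is then to produce this initial object via Freyd's construction, for which I need three ingredients: that $\mathsf{El}(P)$ is complete, that it is locally small, and that it has a \emph{small} weakly initial set.

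Local smallness of $\mathsf{El}(P)$ is inherited from that of $\C$. For completeness I would use that $\C$ is complete (the $\Set$-enriched case of \ref{locbdcomplete}) together with preservation of small limits by $P$: the projection $\mathsf{El}(P) \to \C$ then creates small limits, so $\mathsf{El}(P)$ is complete. For the weakly initial set I would invoke \ref{weaklyaccessibleprop}: since $\C$ is locally bounded and $\E$-cowellpowered and $P$ is a bounded right-class functor, $P$ is weakly accessible, so there is a small set $\h \subseteq \ob\C$ such that every $(C,x) \in \mathsf{El}(P)$ admits a morphism $(H,y) \to (C,x)$ with $H \in \h$ and $y \in PH$; hence the small set $\{(H,y) \mid H \in \h,\ y \in PH\}$ is weakly initial in $\mathsf{El}(P)$. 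With all three ingredients in hand, I would run Freyd's argument: the product $W$ of the weakly initial set is a single weakly initial object, and the wide equalizer $E \to W$ of the (small, by local smallness) set of all endomorphisms of $W$ is initial, the equalizer step supplying uniqueness and weak initiality supplying existence. The resulting initial object $(R,u)$ of $\mathsf{El}(P)$ is a universal element, whence $P \cong \C(R,-)$ is representable.

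The main obstacle is the ``if'' direction, and within it the substantive point is assembling the hypotheses of Freyd's theorem: the real content is that weak accessibility, already established in \ref{weaklyaccessibleprop}, furnishes a \emph{small} weakly initial set in $\mathsf{El}(P)$, while preservation of small limits together with completeness of $\C$ makes $\mathsf{El}(P)$ complete and the projection create limits. Once these are verified, the passage to an initial object and back to representability is routine.
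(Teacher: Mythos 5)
Your proof is correct and follows essentially the same route as the paper: both directions hinge on the same ingredients, with boundedness of the representing object coming from \ref{every_obj_bdd} (the paper cites \ref{locallyboundedcardinal}, which reduces to the same thing for $\V=\Set$) and the converse reducing to weak accessibility via \ref{weaklyaccessibleprop}. The only difference is that the paper then simply cites Kelly's representability criterion \cite[4.88]{Kelly}, whereas you unfold that citation into the standard Freyd initial-object argument on the category of elements --- a correct expansion, not a different approach.
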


\begin{proof}
Note that $\C$ is complete by \ref{locbdcomplete}. If $P \cong \C(C, -)$ for some $C \in \ob\C$, then $P$ certainly preserves small limits and $P$ is bounded because $C$ is bounded by \ref{locallyboundedcardinal}. Conversely, if $P$ is bounded and preserves small limits, then $P$ is weakly accessible by \ref{weaklyaccessibleprop}, so that $P$ is representable by \cite[4.88]{Kelly}.   
\end{proof}

\noindent We can enrich \ref{representabilitythm} as follows:

\begin{theo}
\label{enrichedrepresentabilitythm}
Let $\V$ be a locally bounded closed category, let $\C$ be a locally bounded and $\E$-cowellpowered $\V$-category, and let $P : \C \to \V$ be a right-class $\V$-functor.  Then $P$ is representable iff $P$ is bounded and preserves small limits.
\end{theo}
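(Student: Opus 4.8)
The plan is to prove the two directions separately, reducing the substantive direction to the ordinary representability theorem \ref{representabilitythm} applied to the underlying $\Set$-valued functor $\bar{P} := \V_0(I, P(-)) : \C_0 \to \Set$, and then upgrading the resulting ordinary representation to an enriched one by exploiting cotensors. For the easy direction, suppose $P \cong \C(C,-)$. Then $P$ preserves all small weighted limits, since representable $\V$-functors do; $P$ is a right-class $\V$-functor by \ref{reps_rclass_vfunc}; and $P$ is bounded because $C$ is an enriched bounded object by \ref{locallyboundedcardinal} (using that $\V$ is locally bounded closed).

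For the hard direction, assume $P$ is bounded and preserves small limits. First I would record that $\C$ is complete by \ref{locbdcomplete}, hence cotensored, and that $\C_0$ is a locally bounded (and $\E$-cowellpowered) ordinary category by \ref{enrichedtoordinarylocallybounded}. Fix a regular cardinal $\alpha$ for which $\C$ is locally $\alpha$-bounded, $P$ is $\alpha$-bounded, and $\V$ is locally $\alpha$-bounded closed, so that in particular $I$ is an \emph{ordinary} $\alpha$-bounded object of $\V_0$. I would then verify that $\bar{P}$ is a right-class functor (as $P$ preserves the right class and $\V_0(I,-)$ preserves monomorphisms), preserves small limits (as both $P$ and $\V_0(I,-)$ do), and is $\alpha$-bounded: for an $\alpha$-filtered $\M$-union $\bigcup_i C_i$ in $\C$, boundedness of $P$ gives $P(\bigcup_i C_i) \cong \bigcup_i P C_i$ in $\V$, and ordinary $\alpha$-boundedness of $I$ gives that $\V_0(I,-)$ preserves this union. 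By \ref{representabilitythm}, $\bar{P}$ is representable, so there is an object $C$ and a natural bijection $\V_0(I, PX) \cong \C_0(C, X)$. Letting $\eta \in \V_0(I, PC) = (PC)_0$ correspond to $1_C$, the enriched Yoneda lemma produces a $\V$-natural transformation $\phi : \C(C,-) \Rightarrow P$ whose underlying ordinary transformation $\V_0(I, \phi_{(-)})$ is exactly the above bijection.

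It remains to show each $\phi_X$ is an isomorphism in $\V$, for which it suffices to show that $\V_0(V, \phi_X)$ is a bijection for every $V \in \ob\V$, since the representables $\V_0(V,-)$ jointly reflect isomorphisms. The key device here is the cotensor $[V, X]$ in $\C$: both $\C(C,-)$ and $P$ preserve it (the latter because cotensors are small weighted limits and $P$ preserves small limits), and $\V$-naturality of $\phi$ makes the canonical comparison isomorphisms compatible with $\phi$. This yields a commuting square identifying $\phi_{[V,X]}$, transported along $\C(C,[V,X]) \cong [V, \C(C,X)]$ and $P[V,X] \cong [V, PX]$, with $[V, \phi_X]$. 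Applying $\V_0(I,-)$ and using the natural isomorphism $\V_0(I, [V,-]) \cong \V_0(V,-)$, I would conclude that $\V_0(V, \phi_X)$ is identified with the bijection $\V_0(I, \phi_{[V,X]})$ and hence is itself a bijection. As $V$ and $X$ were arbitrary, $\phi$ is a $\V$-natural isomorphism, so $P \cong \C(C,-)$ is representable.

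I expect the main obstacle to be precisely this final passage from representability of the underlying functor $\bar{P}$ to genuine enriched representability. The naive hope that $\V_0(I, \phi_X)$ being bijective forces $\phi_X$ to be an isomorphism fails, because $I$ need not be a (strong) generator of $\V_0$ (for instance $\V = \Cat$, where $\V_0(I,-)$ is the underlying-set-of-objects functor). Circumventing this requires using cotensors to access $\V_0(V, \phi_X)$ for all $V$ simultaneously, and the delicate point is the verification that the $\V$-natural transformation $\phi$ is compatible with the cotensor comparison isomorphisms — a standard but essential consequence of $\V$-naturality together with preservation of the relevant weighted limit by both $\C(C,-)$ and $P$.
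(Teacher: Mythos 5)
Your proposal is correct and follows essentially the same route as the paper: both directions are handled identically, with the converse reduced to the ordinary representability theorem \ref{representabilitythm} applied to $\V_0(I,P_0-):\C_0\to\Set$ and then upgraded to enriched representability via cotensors. The only difference is that your final cotensor argument reproves by hand exactly what the paper obtains by citing Kelly's Theorem 4.85 (a cotensor-preserving $\V$-functor into $\V$ on a cotensored $\V$-category is representable iff its underlying $\Set$-valued functor is), and you correctly identify why the naive use of $\V_0(I,-)$ alone would not suffice.
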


\begin{proof}
If $P \cong \C(C, -)$ for some $C \in \ob\C$, then $P$ certainly preserves small limits and is bounded because $C$ is an enriched bounded object by \ref{locallyboundedcardinal}. Conversely, if $P$ is bounded and preserves small limits, then $P_0 : \C_0 \to \V_0$ is also bounded and preserves small limits, and $V = \V_0(I, -) : \V_0 \to \Set$ has these properties as well (since $I$ is an ordinary bounded object of $\V_0$). So the composite functor $\C_0 \xrightarrow{P_0} \V_0 \xrightarrow{V} \Set$ is bounded and preserves small limits, and hence is representable by \ref{representabilitythm} (since $\C_0$ is locally bounded by \ref{enrichedtoordinarylocallybounded}). Since $\C$ is cotensored by \ref{locbdcomplete} and $P$ preserves cotensors by assumption, it then follows by \cite[4.85]{Kelly} that $P$ is representable, as desired.  
\end{proof}

\noindent We now wish to use the representability theorem \ref{enrichedrepresentabilitythm} to deduce adjoint functor theorems for locally bounded enriched categories. We first require the following:

\begin{prop}
\label{rightadjointhasrank}
Let $\C$ and $\D$ be locally bounded $\V$-categories over a locally bounded closed category $\V$, and let $U : \C \to \D$ be a right adjoint right-class $\V$-functor.  Then $U$ is bounded.
\end{prop}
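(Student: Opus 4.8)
The plan is to produce a single regular cardinal $\alpha$ for which $U$ preserves all $\alpha$-filtered $\M$-unions, by transporting the problem across the adjunction $F \dashv U$ and reducing it to the fact (from \ref{locallyboundedcardinal}) that every object of the locally bounded $\V$-category $\C$ is an enriched bounded object. The crucial observation is that for any object $G$ of $\D$, the $\V$-functor $\D(G, U-) : \C \to \V$ is $\V$-naturally isomorphic to $\C(FG, -)$, and the latter preserves sufficiently filtered $\M$-unions because $FG$ is a bounded object of $\C$.

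First I would fix an enriched $(\E, \M)$-generator $\G$ for $\D$ (which exists since $\D$ is locally bounded). For each $G \in \G$, the object $FG$ lies in $\C$, so by \ref{locallyboundedcardinal} it is an enriched bounded object, say an enriched $\alpha_G$-bounded object. Since $\G$ is small, the remarks in Section \ref{background} supply a regular cardinal $\alpha$ with $\alpha \geq \alpha_G$ for every $G \in \G$; then each $FG$ is enriched $\alpha$-bounded by \ref{lbv_remarks}, so that $\C(FG, -) : \C \to \V$ preserves $\alpha$-filtered $\M$-unions.

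Next I would verify that $U$ preserves every $\alpha$-filtered $\M$-union. Let $m : \bigcup_i C_i \to C$ be the union of an $\alpha$-filtered $\M$-sink $(m_i : C_i \to C)_{i \in I}$ in $\C$. Since $U$ is a right-class $\V$-functor, $Um$ and each $Um_i$ lie in $\M$, so $(Um_i : UC_i \to UC)_{i \in I}$ is an $\M$-sink in $\D$ and $Um$ is an $\M$-subobject of $UC$; the goal is to show $Um$ is a union of $(Um_i)_i$. By \ref{enrichedGreflectsunions} applied in $\D$, it suffices to check that for each $G \in \G$ we have $\D(G, U(\bigcup_i C_i)) \cong \bigcup_i \D(G, UC_i)$ as $\M$-subobjects of $\D(G, UC)$. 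But the $\V$-natural isomorphism $\D(G, U-) \cong \C(FG, -)$ carries the $\M$-sink $(\D(G, Um_i))_i$ and the subobject $\D(G, Um)$ to $(\C(FG, m_i))_i$ and $\C(FG, m)$ respectively, and since $FG$ is enriched $\alpha$-bounded, $\C(FG, m)$ is a union of $(\C(FG, m_i))_i$; transporting back along the isomorphism yields the required identity of $\M$-subobjects.

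The step requiring the most care is the bookkeeping in this last paragraph: one must confirm that the adjunction isomorphism is genuinely natural in the $\C$-variable, so that it identifies the two $\M$-sinks (and their common codomain) compatibly, and hence really does transport the union computed by $\C(FG, -)$ to a union computed by $\D(G, U-)$. This is routine but is where the argument actually lives; everything else is a matter of choosing $\alpha$ large enough and invoking the reflection property \ref{enrichedGreflectsunions}. No genuine obstacle arises, since $\D$ is guaranteed an enriched $(\E, \M)$-generator by its local boundedness and $U$ preserves the right class by hypothesis.
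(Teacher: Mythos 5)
Your proof is correct and follows essentially the same route as the paper's: choose a regular cardinal $\alpha$ bounding the degrees of boundedness of the objects $FG$ for $G$ in the generator of $\D$ (using \ref{locallyboundedcardinal} and smallness of the generator), then reduce preservation of $\alpha$-filtered $\M$-unions to the generator via \ref{enrichedGreflectsunions} and transport along the adjunction isomorphism $\D(G,U-) \cong \C(FG,-)$. The paper carries out the same chain of isomorphisms of $\M$-subobjects, so no further comparison is needed.
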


\begin{proof}
Let $F : \D \to \C$ be the left adjoint of $U$. We must find a regular cardinal $\gamma$ for which $U$ preserves $\gamma$-filtered $\M$-unions. For every $H$ in the enriched $(\E, \M)$-generator $\h$ of $\D$, we know by \ref{locallyboundedcardinal} that $FH \in \ob\C$ is an enriched $\beta_H$-bounded object for some regular cardinal $\beta_H$. Now let $\gamma$ be a regular cardinal greater than each $\beta_H$ (which is possible because $\h$ is small), and let us prove that $U$ is $\gamma$-bounded. So let $(m_i : C_i \to C)_{i \in I}$ be a $\gamma$-filtered $\M$-sink in $\C$, and let us show that $U\left(\bigcup_i C_i\right) \cong \bigcup_i UC_i$ as $\M$-subobjects of $UC$. By \ref{enrichedGreflectsunions}, it suffices to show for every $H \in \h$ that $\D\left(H, U\left(\bigcup_i C_i\right)\right) \cong\bigcup_i \D(H, UC_i)$ as $\M$-subobjects of $\D(H, UC)$. But because $FH$ is $\gamma$-bounded, we have isomorphisms of $\M$-subobjects
\[ \D\left(H, U\left(\bigcup_i C_i\right)\right) \cong \C\left(FH, \bigcup_i C_i\right) \cong \bigcup_i \C(FH, C_i) \cong \bigcup_i \D(H, UC_i). \]      
\end{proof}

\noindent We now have the following useful adjoint functor theorem for locally bounded enriched categories:

\begin{theo}
\label{enrichedadjfunctorthm}
Let $\V$ be a locally bounded closed category, let $\C$ and $\D$ be locally bounded $\V$-categories with $\C$ being $\E$-cowellpowered, and let $U : \C \to \D$ be a right-class $\V$-functor.  Then $U$ has a left adjoint iff $U$ is bounded and preserves small limits.  
\end{theo}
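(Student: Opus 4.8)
The plan is to dispatch the forward (``only if'') direction immediately, and to prove the substantive converse by showing that for each object $D \in \ob\D$ the $\V$-functor $\D(D, U-) : \C \to \V$ is representable; by the standard theory of enriched adjunctions (see \cite[\S 1.11]{Kelly}), $U$ has a left adjoint precisely when each such $\V$-functor is representable. For the ``only if'' direction: if $U$ is a right adjoint, then it preserves all small limits, and it is bounded by \ref{rightadjointhasrank} (which applies since $\C$ and $\D$ are locally bounded over a locally bounded closed category and $U$ is a right adjoint right-class $\V$-functor).

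For the converse, suppose $U$ is bounded and preserves small limits, and fix $D \in \ob\D$. The plan is to set $P := \D(D, U-) : \C \to \V$ and to apply the enriched representability theorem \ref{enrichedrepresentabilitythm}, which is available because $\V$ is a locally bounded closed category and $\C$ is a locally bounded, $\E$-cowellpowered $\V$-category. It then suffices to verify that $P$ is a right-class $\V$-functor that is bounded and preserves small limits.

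Each of these three properties follows by combining a property of $U$ with a property of $\D(D, -) : \D \to \V$, since $P$ is the composite $\D(D, -) \circ U$. First, $\D(D, -)$ is a right-class $\V$-functor by \ref{reps_rclass_vfunc}, so $P$ is right-class as a composite of such. Second, $\D(D, -)$ preserves all small limits (being representable) and $U$ preserves small limits by hypothesis, so $P$ does too. Third, for boundedness I would invoke \ref{locallyboundedcardinal} to conclude that $D$ is an enriched bounded object of $\D$, which by \ref{enrichedboundedobject} means exactly that $\D(D, -)$ is bounded; together with the boundedness of $U$ and the fact that an $\alpha$-bounded right-class $\V$-functor is automatically $\beta$-bounded for every $\beta \geq \alpha$, I may pick a single regular cardinal $\gamma$ for which both $U$ and $\D(D, -)$ are $\gamma$-bounded, so that $P$ is $\gamma$-bounded by the remark following \ref{boundedVfunctor}. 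Applying \ref{enrichedrepresentabilitythm} then yields a representation $\C(FD, -) \cong \D(D, U-)$ for each $D \in \ob\D$, whence $U$ admits a left adjoint.

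I expect the only genuine obstacle to be the boundedness of $P$: this is precisely the point at which the key input \ref{locallyboundedcardinal} (every object of a locally bounded $\V$-category is enriched bounded) is needed, so that the representable $\D(D, -)$ has a rank and can be composed with the bounded $U$ to produce a functor satisfying the hypotheses of the representability theorem. The remaining ingredients are the routine closure of right-class $\V$-functors and small-limit-preserving $\V$-functors under composition, and the passage from pointwise representability to the existence of the left adjoint.
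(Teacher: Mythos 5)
Your proposal is correct and follows essentially the same route as the paper's own proof: both directions are handled identically, with the converse reducing to representability of each $\D(D,U-)$ via \ref{enrichedrepresentabilitythm}, using \ref{locallyboundedcardinal} to get boundedness of $\D(D,-)$ and composing with the bounded right-class $U$. No gaps.
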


\begin{proof}
If $U$ has a left adjoint, then $U$ certainly preserves small limits and is also bounded by \ref{rightadjointhasrank}. Conversely, suppose $U$ is bounded and preserves small limits.  To show that $U$ has a left adjoint, it is equivalent to show for every $D \in \ob\D$ that the $\V$-functor $\D(D, U-) : \C \to \V$ is representable.  But $\D(D,-):\D \rightarrow \V$ is a bounded right-class $\V$-functor by \ref{locallyboundedcardinal}, so the composite functor $\D(D, U-)$ is a bounded right-class $\V$-functor since $U$ is so.  Hence, since $\D(D, U-)$ also preserves small limits (since $U$ and $\D(D, -)$ both do), we deduce from \ref{enrichedrepresentabilitythm} that $\D(D, U-)$ is representable.  
\end{proof}

\noindent In special case where $\V = \Set$, note that the proof of the preceding theorem shows that, under the hypotheses of the theorem, if $U$ is bounded then the functor $\D(D, U-) : \C \to \Set$ is bounded for each $D \in \ob\D$, so by \ref{weaklyaccessibleprop} each $\D(D,U-)$ is weakly accessible, which means precisely that $U$ satisfies Freyd's \textit{solution set condition} \cite[Chapter 3, Exercise J]{Freydabelian}.

We also have the following useful result for obtaining right adjoints:

\begin{prop}
\label{otherenrichedadjfunctorthm}
Let $\V$ be a closed cocomplete factegory such that $\V_0$ is complete and has an $(\E, \M)$-generator, let $\C$ be a cocomplete $\V$-factegory with enriched $(\E, \M)$-generator and $\D$ an arbitrary $\V$-category, and let $F : \C \to \D$ be a $\V$-functor. Then $F$ has a right adjoint iff $F$ preserves small colimits.  
\end{prop}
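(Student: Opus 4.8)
The plan is to derive the result from the totality of $\C$, reusing the argument already employed to prove \ref{locbdcomplete}. The ``only if'' direction is immediate: a $\V$-functor with a right adjoint is a left adjoint and hence preserves all colimits, in particular all small ones. So the content lies entirely in the converse.

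For the converse, the first step is to observe that $\C$ is total. The proof of \ref{locbdcomplete} shows that any cocomplete $\V$-factegory equipped with an enriched $(\E,\M)$-generator is total, via Day's totality theorem \cite[Theorem 1]{Daytotality}; that argument uses only the cocompleteness of $\C$, the existence of the enriched $(\E,\M)$-generator, and the standing hypotheses that $\V_0$ is complete and possesses an $(\E,\M)$-generator, none of which mention boundedness. Consequently the enriched Yoneda embedding $\y : \C \to [\C^\op,\V]$ admits a left adjoint $L$, where $[\C^\op,\V]$ is taken in a universe extension of $\V$ as in the discussion preceding \ref{locbdcomplete}.

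The second step is to reduce the existence of a right adjoint for $F$ to a representability question. It suffices to show, for each $D \in \ob\D$, that the $\V$-functor $\D(F-,D) : \C^\op \to \V$ is representable; the representing objects then assemble $\V$-naturally into a right adjoint $G$ with $\D(FC,D) \cong \C(C,GD)$. Since $F$ preserves small colimits, $\D(F-,D)$ carries small colimits in $\C$ to small limits in $\V$, i.e.\ it is a small-continuous presheaf on $\C$. The third step is to invoke the adjoint functor theorem for total $\V$-categories: a small-cocontinuous $\V$-functor out of a total $\V$-category into an arbitrary $\V$-category has a right adjoint, equivalently, every small-continuous presheaf on a total $\V$-category is representable (see \cite{Kellytotality}; this is the characteristic feature of totally cocomplete categories in the sense of Street--Walters). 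Applying this to $\D(F-,D)$ completes the construction of $G$.

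I expect the crux to be the third step. This is precisely where totality substitutes for the solution-set or co-well-poweredness hypotheses that a classical (special) adjoint functor theorem would require, so the proof stands or falls on correctly invoking this consequence of totality; if a self-contained treatment is preferred, one would instead argue directly from the reflection $L \dashv \y$, using the co-Yoneda presentation of $\D(F-,D)$ as a colimit of representables together with small-continuity to show that the reflection unit at $\D(F-,D)$ is invertible. By comparison, verifying that $\C$ is total and checking the $\V$-naturality of the resulting bijections are routine given the groundwork already laid in \ref{locbdcomplete}.
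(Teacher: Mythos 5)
Your reduction to the representability of each presheaf $\D(F-,D)$, and your observation that $\C$ is total by Day's theorem exactly as in \ref{locbdcomplete}, are both fine; the problem is that the theorem you invoke at the crux is not a theorem. Totality of $\C$ does \emph{not} imply that every small-continuous presheaf $\C^\op \to \V$ is representable, nor that every small-cocontinuous $\V$-functor out of $\C$ has a right adjoint. What totality actually yields (Street--Walters, and \S 6 of \cite{Kellytotality}) is that $F$ has a right adjoint provided $F$ preserves the possibly \emph{large} colimits $W \ast 1_\C$ for the weights $W = \D(F-,D)$; preservation of small colimits alone does not suffice. A standard counterexample already occurs for posets: the large poset $\mathbf{Ord}+\{\infty\}$ (the ordinals with a top element adjoined) is total, and the monotone map to $\mathbf{2}$ sending every ordinal to $0$ and $\infty$ to $1$ preserves all small sups yet has no right adjoint, since that would require a greatest ordinal; equivalently, the corresponding presheaf is small-continuous but not representable. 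Your proposed fallback fails for the same reason: the co-Yoneda presentation $P \cong P \ast \y$ exhibits $P$ as a \emph{large} colimit of representables, so small-continuity of $P$ gives no control over the reflection unit $P \to \y L P$.

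The missing ingredient is precisely the $(\E,\M)$-generator, which your argument uses only to establish totality and then discards. The paper instead passes to the underlying ordinary category: by \ref{enrichedtoordinarygenerator}, $\C_0$ is a cocomplete factegory with an ordinary $(\E,\M)$-generator, so the adjoint functor theorem \cite[2.1]{KellyLack} for such categories applies to give a right adjoint to $F_0$; the leverage there comes from running the generator-plus-cointersections-of-$\E$-morphisms argument inside the comma categories $(F_0 \downarrow D)$, not from totality per se. The enriched right adjoint is then obtained from the ordinary one via the dual of \cite[4.85]{Kelly}, using that $\C$ is tensored and that $F$, being small-cocontinuous, preserves tensors. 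Any repair of your argument has to bring the generator back into play at the representability step; totality alone cannot carry it.
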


\begin{proof}
For the less obvious direction, given that $\C_0$ is a cocomplete factegory with ordinary $(\E, \M)$-generator by \ref{enrichedtoordinarygenerator}, it follows by \cite[2.1]{KellyLack} that $F_0 : \C_0 \to \D_0$ has a right adjoint. Since $\C$ is tensored and $F$ preserves tensors, we then deduce from the dual of \cite[4.85]{Kelly} that $F$ has a right adjoint.  
\end{proof}

\section{Commutation of \texorpdfstring{$\alpha$}{alpha}-bounded-small limits and \texorpdfstring{$\alpha$}{alpha}-filtered unions}
\label{commutation}

In this section, we establish results about commutation of suitably small limits and suitably filtered unions in locally bounded (enriched) categories, in analogy with results about commutation of suitably small limits and suitably filtered colimits in locally presentable (enriched) categories (see e.g. \cite[1.59]{LPAC} and \cite[4.9]{Kellystr}).

\begin{assumption}
For the remainder of the paper, we suppose that $\V$ is a locally $\alpha$-bounded closed category (which is therefore complete by \ref{locbdcomplete}). \qed
\end{assumption}

\noindent We first define the notion of an $\alpha$\emph{-bounded-small weight} enriched in $\V$, which is analogous to Kelly's definition \cite[4.1]{Kellystr} of a finite weight (or finite indexing type) enriched in a locally finitely presentable closed category.

\begin{defn}
\label{alphasmallweight}
A small \mbox{$\V$-category} $\B$ is \textbf{$\alpha$-bounded-small} if the cardinality of
$\ob\B$ is less than\footnote{\cite[4.1]{Kellystr} just requires less than $\alpha$ \textit{isomorphism classes} of objects, so that the category $\B$ need not be small in the strict sense but rather \textit{essentially small}; here we instead employ strict notions of smallness and $\alpha$-smallness.} $\alpha$ and, for all $B,B' \in \ob\B$, the hom-object $\B(B, B')$ is an enriched $\alpha$-bounded object of $\V$.  A weight $W : \B \to \V$ is $\alpha$\textbf{-bounded-small} if $\B$ is an $\alpha$-bounded-small $\V$-category and, for all $B \in \ob\B$, the object $WB$ is an enriched $\alpha$-bounded object of $\V$. \qed 
\end{defn}

\begin{rmk}
\label{boundedsmallrmk}
By invoking \ref{locallyboundedcardinal} and the fact that every small set of regular cardinals is bounded above by some regular cardinal, it follows that every small weight $W : \B \to \V$ is $\beta$-bounded-small for some $\beta \geq \alpha$. \qed 
\end{rmk}

\begin{rmk}
\label{alphaboundedrmk}
One might wonder why we have chosen to use the term ``$\alpha$-bounded-small weight" rather than ``$\alpha$-bounded weight". While every $\alpha$-bounded-small weight $W : \B \to \V$ will be an enriched $\alpha$-bounded object of the presheaf $\V$-category $[\B, \V]$ (see \ref{alphasmallweightcor} below), the converse need not be true. For example, if $\B$ is a small $\V$-category with $\geq \alpha$ objects and $W : \B \to \V$ is representable, then $W$ will be an enriched $\alpha$-bounded object of $[\B, \V]$ by \ref{representablesbounded} without being an $\alpha$-bounded-small weight. So we have chosen the term \textit{$\alpha$-bounded-small} to avoid the false implication that the weights considered in \ref{alphasmallweight} are exactly the enriched $\alpha$-bounded objects of presheaf $\V$-categories. \qed   
\end{rmk}

Our main objective of this section is to show that $\alpha$-bounded-small limits commute with $\alpha$-filtered $\M$-unions in every locally $\alpha$-bounded $\V$-category over the locally $\alpha$-bounded closed category $\V$ (see \ref{maincommutationresult}). We first give a precise definition of this notion of commutation:

\begin{defn}
\label{commutationdefn}
Let $\C$ be a $\V$-factegory with $\M$-unions, and let $W : \B \to \V$ be a small weight for which $\C$ has $W$-limits. The $W$-limit $\V$-functor $\{W, -\} : [\B, \C] \to \C$ preserves the right class by \cite[4.5]{enrichedfact}. We say that $W$\textbf{-limits commute with} $\alpha$\textbf{-filtered} $\M$\textbf{-unions in} $\C$ if $\{W, -\} : [\B, \C] \to \C$ preserves $\alpha$-filtered $\M$-unions (or equivalently, is $\alpha$-bounded). \qed
\end{defn} 

\noindent We have the following initial class of examples of $\alpha$-bounded-small weights:

\begin{prop}
\label{cotensorsboundedsmall}
The weights for $\alpha$-bounded cotensors (i.e. cotensors by enriched $\alpha$-bounded objects of $\V$) are $\alpha$-bounded-small.
\end{prop}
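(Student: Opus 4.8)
The plan is to recognize the weight for a cotensor explicitly and then verify the two clauses of Definition \ref{alphasmallweight} directly. Recall that for an object $V \in \ob\V$, the cotensor $[V, -]$ in a $\V$-category $\C$ is precisely the weighted limit $\{W_V, -\}$, where $W_V : \I \to \V$ is the $\V$-functor out of the unit $\V$-category $\I$ --- the $\V$-category with a single object $\ast$ and endomorphism hom-object $\I(\ast, \ast) = I$ --- sending $\ast$ to $V$. Thus the weight for the cotensor by $V$ is $W_V$, and the content of the statement is that $W_V$ is $\alpha$-bounded-small whenever $V$ is an enriched $\alpha$-bounded object of $\V$.

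First I would check that the indexing $\V$-category $\I$ is itself $\alpha$-bounded-small in the sense of \ref{alphasmallweight}. Its object-class has a single element, so $|\ob\I| = 1 < \alpha$ since $\alpha$ is an infinite (regular) cardinal. Moreover, its only hom-object is the unit $I = \I(\ast, \ast)$, which is an enriched $\aleph_0$-bounded object of $\V$ by \ref{unitbounded} and hence an enriched $\alpha$-bounded object (an $\aleph_0$-bounded object being $\beta$-bounded for every regular $\beta \geq \aleph_0$, as noted in \ref{lbv_remarks}). This establishes the first clause.

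It then remains to verify the second clause, namely that $W_V(\ast) = V$ is an enriched $\alpha$-bounded object of $\V$; but this is exactly the standing hypothesis that we are cotensoring by an enriched $\alpha$-bounded object. Hence $W_V$ is $\alpha$-bounded-small. I do not anticipate a genuine obstacle here: the whole argument is an unwinding of definitions, the only points requiring care being the identification of the cotensor weight as a $\V$-functor out of the unit $\V$-category and the observation, supplied by \ref{unitbounded} together with \ref{lbv_remarks}, that the unit object $I$ is enriched $\alpha$-bounded for every regular cardinal $\alpha$.
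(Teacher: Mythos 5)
Your proof is correct and coincides with the paper's own argument: both identify the cotensor weight as the $\V$-functor out of the unit $\V$-category sending its single object to $V$, and then verify the two clauses of Definition \ref{alphasmallweight} using \ref{unitbounded} for the hom-object $I$ and the hypothesis on $V$ for the value of the weight. No issues.
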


\begin{proof}
If $V \in \ob\V$ is an enriched $\alpha$-bounded object, then the corresponding weight $[V] : \II \to \V$ is $\alpha$-bounded-small (where $\II$ is the unit $\V$-category) because $\ob\II = \{\ast\}$ is finite, the unique hom-object $\II(\ast, \ast) = I$ is an enriched $\alpha$-bounded object of $\V$ by \ref{unitbounded}, and the object $[V](\ast) = V$ is an enriched $\alpha$-bounded object of $\V$ by assumption. 
\end{proof}

\noindent In Theorem \ref{maincommutationresult} we show that $\alpha$-bounded-small limits commute with $\alpha$-filtered unions in any locally $\alpha$-bounded $\V$-category over the locally $\alpha$-bounded $\V$.  We begin with the following special case:

\begin{prop}
\label{unions_commute_alpha_bdd_cot}
Let $\C$ be a locally $\alpha$-bounded $\V$-category.  Then $\alpha$-bounded cotensors commute with $\alpha$-filtered $\M$-unions in $\C$.  Equivalently, for every enriched $\alpha$-bounded object $V \in \ob\V$, the cotensor $\V$-functor $[V, -] : \C \to \C$ is $\alpha$-bounded. 
\end{prop}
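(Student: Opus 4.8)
The plan is to reduce the claim to the corresponding statement inside $\V$ by testing with the enriched $(\E,\M)$-generator $\G$ of $\C$ and exploiting that the representables $\C(G,-)$ ($G\in\G$) jointly reflect $\M$-unions. First I would record that $[V,-]:\C\to\C$ is well-defined, since $\C$ is complete by \ref{locbdcomplete} and hence has all cotensors, and that it is a right-class $\V$-functor: because $\C$ is cotensored and $(\E_\C,\M_\C)$ is an enriched factorization system, $\M$ is stable under cotensoring, so $[V,-]$ preserves the right class. Thus it is meaningful to ask whether $[V,-]$ is $\alpha$-bounded.

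The key identity is the cotensor adjunction: for each $G\in\G$ there is a $\V$-natural isomorphism $\C(G,[V,-])\cong\V(V,\C(G,-))=\V(V,-)\circ\C(G,-)$ of $\V$-functors $\C\to\V$. The crucial observation is that $\V(V,-):\V\to\V$ is precisely the representable $\V$-functor $[V,-]$ on $V$, so the hypothesis that $V$ is an enriched $\alpha$-bounded object of $\V$ says exactly that $\V(V,-)$ preserves $\alpha$-filtered $\M$-unions; and $\C(G,-)$ preserves $\alpha$-filtered $\M$-unions because $G$ is an enriched $\alpha$-bounded object of $\C$. Both are right-class $\V$-functors, by \ref{reps_rclass_vfunc} (using \ref{VitselfVfactegory} for $\V(V,-)$), so their composite is an $\alpha$-bounded right-class $\V$-functor by the remark following \ref{boundedVfunctor}. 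Hence $\C(G,[V,-])$ is $\alpha$-bounded for every $G\in\G$.

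Finally I would conclude using \ref{enrichedGreflectsunions}. Given an $\alpha$-filtered $\M$-sink $(m_i:C_i\to C)_{i\in I}$ with union $\mu:\bigcup_iC_i\to C$, applying $[V,-]$ yields the $\M$-sink $([V,m_i]:[V,C_i]\to[V,C])_{i\in I}$ and the $\M$-subobject $[V,\mu]:[V,\bigcup_iC_i]\to[V,C]$ (both again using stability of $\M$ under cotensoring). For each $G\in\G$, the $\alpha$-boundedness of $\C(G,[V,-])$ just established gives $\C(G,[V,\bigcup_iC_i])\cong\bigcup_i\C(G,[V,C_i])$ as $\M$-subobjects of $\C(G,[V,C])$, so the joint reflection of $\M$-unions in \ref{enrichedGreflectsunions} yields $[V,\bigcup_iC_i]\cong\bigcup_i[V,C_i]$ as $\M$-subobjects of $[V,C]$; that is, $[V,-]$ preserves the union. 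The equivalence asserted in the proposition is then immediate from \ref{commutationdefn}, since the weight for the cotensor by $V$ is $[V]:\II\to\V$ with $\{[V],-\}=[V,-]$.

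The step requiring the most care is the reconciliation of the two readings of $[V,-]$ — as the cotensor $\V$-functor on $\C$ and, after transport through the representables, as the representable $\V$-functor $\V(V,-)$ on $\V$ — together with checking that the cotensor adjunction isomorphism is genuinely $\V$-natural in the covariant variable, so that preservation of $\alpha$-filtered $\M$-unions transports across it. Everything else is a routine assembly of \ref{enrichedGreflectsunions}, the definition of enriched $\alpha$-boundedness, and the stability of $\M$ under cotensoring.
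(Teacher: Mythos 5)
Your proof is correct and follows essentially the same route as the paper: reduce to the generator via \ref{enrichedGreflectsunions}, then use the cotensor adjunction $\C(H,[V,-])\cong\V(V,\C(H,-))$ together with the enriched $\alpha$-boundedness of $H$ in $\C$ and of $V$ in $\V$. The only cosmetic difference is that you phrase the key step as ``a composite of $\alpha$-bounded right-class $\V$-functors is $\alpha$-bounded,'' whereas the paper writes out the same chain of isomorphisms of $\M$-subobjects explicitly.
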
  

\begin{proof}
Note that $\C$ is indeed cotensored by \ref{locbdcomplete}, and that $[V, -] : \C \to \C$ preserves the right class since $\M$-morphisms are stable under cotensors (Section \ref{definitionsbasicresults}). Let $(m_i : C_i \to C)_{i \in I}$ be an $\alpha$-filtered $\M$-sink in $\C$. By \ref{enrichedGreflectsunions}, it suffices to show for every $H \in \h$ (the enriched $(\E, \M)$-generator of $\C$) that $\C\left(H, \left[G, \bigcup_i C_i\right]\right) \cong \bigcup_i \C(H, [G, C_i])$ as $\M$-subobjects of $\C(H, [G, C])$. Since $V$ is an enriched $\alpha$-bounded object of $\V$ and $H$ is an enriched $\alpha$-bounded object of $\C$, we have isomorphisms of $\M$-subobjects
\[ \V\left(G, \C\left(H, \bigcup_i C_i\right)\right) \cong \V\left(G, \bigcup_i \C(H, C_i)\right) \cong \bigcup_i \V(G, \C(H, C_i)), \] so that the desired result holds by the cotensor adjunction. 
\end{proof}

\noindent Next we establish another class of examples of $\alpha$-bounded-small limits, namely \textit{$\alpha$-small conical limits}, but even before doing so we show that $\alpha$-small conical limits commute with $\alpha$-filtered $\M$-unions in every locally $\alpha$-bounded $\V$-category over the locally $\alpha$-bounded closed category $\V$ (see \ref{unionscommutelocbd}). For this we require the following two lemmas.  

\begin{lem}
\label{limitsunionscommuteSet}
$\alpha$-small conical limits commute with $\alpha$-filtered unions of monomorphisms in $\Set$.
\end{lem}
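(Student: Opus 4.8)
The plan is to compute both sides explicitly as $\M$-subobjects (i.e. subsets) of a single limit and check that they coincide, using the $\alpha$-filteredness of the index together with the smallness of the diagram shape. Throughout, $\Set$ carries its $(\Epi, \Mono)$ factorization system, so an $\alpha$-filtered $\M$-union is simply an $\alpha$-directed union of subsets; moreover, by \ref{functorfactegory} both colimits and $\M$-unions in a functor category $[\B, \Set]$ are formed pointwise, which is the fact that lets me reduce everything to the behaviour of the component sets.

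Concretely, fix an $\alpha$-small category $\B$ and write $\lim_\B : [\B, \Set] \to \Set$ for the conical limit functor, which is a right-class functor since it preserves monomorphisms (by \cite[4.5]{enrichedfact}). Given an $\alpha$-filtered $\M$-sink $(m_i : F_i \to F)_{i \in I}$ in $[\B, \Set]$ with union $\bigcup_i F_i \to F$, I would show that $\lim_\B\!\left(\bigcup_i F_i\right) = \bigcup_i \lim_\B F_i$ as $\M$-subobjects of $\lim_\B F$. The inclusion $\bigcup_i \lim_\B F_i \subseteq \lim_\B\!\left(\bigcup_i F_i\right)$ is immediate, since $F_i \subseteq \bigcup_i F_i$ pointwise for each $i$ and $\lim_\B$ is monotone on $\M$-subobjects.

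The reverse inclusion is where the hypotheses are used. I would take a compatible family $(x_b)_{b \in \ob\B} \in \lim_\B\!\left(\bigcup_i F_i\right) \subseteq \lim_\B F$, so that each $x_b$ lies in $\left(\bigcup_i F_i\right)(b) = \bigcup_i F_i(b)$, and choose $i_b \in I$ with $x_b \in F_{i_b}(b)$. Because $|\ob\B| < \alpha$ and the sink is $\alpha$-filtered (so that $\{m_i\}$ is $\alpha$-directed in $\Sub_\M(F)$), there is a single $i \in I$ dominating all the $i_b$, whence $x_b \in F_i(b)$ for every $b$. The crux is then that the family $(x_b)$, which is compatible when viewed in $F$, is automatically compatible when viewed in $F_i$: for each $f : b \to b'$ in $\B$, naturality of $m_i$ gives $(m_i)_{b'}\!\left(F_i(f)(x_b)\right) = F(f)(x_b) = x_{b'} = (m_i)_{b'}(x_{b'})$, and since $(m_i)_{b'}$ is injective we conclude $F_i(f)(x_b) = x_{b'}$ in $F_i(b')$. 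Hence $(x_b) \in \lim_\B F_i \subseteq \bigcup_i \lim_\B F_i$, completing the equality of subobjects and so the desired commutation.

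I expect the only real obstacle to be this bookkeeping: reindexing the $\M$-sink as an $\alpha$-directed family and verifying that pointwise membership in $F_i$ upgrades to membership in $\lim_\B F_i$, which is exactly where $\alpha$-smallness of $\B$, $\alpha$-filteredness of the sink, and injectivity of the $m_i$ all enter. Alternatively, one could avoid the direct computation entirely by noting that $\bigcup_i F_i$ is the $\alpha$-filtered colimit of the $\alpha$-directed diagram induced by the sink, invoking the classical commutation of $\alpha$-small conical limits with $\alpha$-filtered colimits in $\Set$ (\cite[1.59]{LPAC}), and then identifying the resulting colimit $\colim_i \lim_\B F_i$ with the union $\bigcup_i \lim_\B F_i$, using that each $\lim_\B m_i$ is a monomorphism; but the elementary argument above is self-contained and makes the role of each hypothesis transparent.
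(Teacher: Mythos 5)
Your argument is correct, but your primary proof is not the route the paper takes; the paper's proof is the two-line reduction you relegate to your closing paragraph: since $\Set$ is locally $\alpha$-presentable, $\alpha$-small conical limits commute with $\alpha$-filtered colimits in $\Set$, and by \cite[1.63]{LPAC} an $\alpha$-filtered union of monomorphisms in $\Set$ is a particular $\alpha$-filtered colimit. What that route buys is brevity and no element bookkeeping; what your element-chasing argument buys is a self-contained proof that makes visible exactly where each hypothesis enters --- $\alpha$-filteredness of the sink to find a single index $i$ dominating the fewer than $\alpha$ choices $i_b$, and injectivity of the components $(m_i)_{b'}$ to upgrade compatibility of the family $(x_b)$ from $F$ to $F_i$. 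Your direct argument is sound as written: the pointwise formation of unions in $[\B,\Set]$ reduces everything to subsets, one inclusion is monotonicity of the limit functor on $\M$-subobjects together with the union being a join, and the reverse inclusion is exactly the domination-plus-injectivity step you describe. It even yields a marginal sharpening the citation route does not: you only use that $\B$ has fewer than $\alpha$ \emph{objects}, since the injectivity of $m_i$ supplies the compatibility of $(x_b)$ in $F_i$ for free, whereas the general commutation of $\alpha$-small limits with $\alpha$-filtered colimits needs to dominate fewer than $\alpha$ morphisms as well. Either argument is acceptable; if you keep the elementary one, state explicitly at the outset that you are identifying each $F_i$ with its image in $F$ and each $\lim F_i$ with its image in $\lim F$, so that ``union'' unambiguously means set-theoretic union of images.
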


\begin{proof}
Since $\Set$ is locally $\alpha$-presentable, it follows that $\alpha$-small limits commute with $\alpha$-filtered colimits in $\Set$. But it also follows by \cite[1.63]{LPAC} that $\alpha$-filtered unions of monomorphisms in $\Set$ are certain $\alpha$-filtered colimits, which yields the result.  
\end{proof}

\begin{lem}
\label{commutationlemma}
If $\alpha$-small conical limits commute with $\alpha$-filtered $\M$-unions in $\V$, then they do so in every locally $\alpha$-bounded $\V$-category. 
\end{lem}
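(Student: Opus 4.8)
The plan is to reduce the commutation statement in an arbitrary locally $\alpha$-bounded $\V$-category $\C$ to the assumed commutation statement in $\V$, by testing everything against the representable $\V$-functors $\C(H,-)$ for $H$ in the enriched $(\E,\M)$-generator $\h$ of $\C$. Fix an $\alpha$-small conical weight $W : \B \to \V$ (so $\B$ is the free $\V$-category on an $\alpha$-small ordinary category and $W$ is constant at $I$). Since $\C$ is complete and cotensored by \ref{locbdcomplete}, the $W$-limit $\V$-functor $\{W,-\} : [\B,\C] \to \C$ is defined, and by \ref{functorfactegory} both $[\B,\C]$ and $[\B,\V]$ are cocomplete $\V$-factegories in which $\M$-unions are computed pointwise. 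I must show that $\{W,-\}$ preserves $\alpha$-filtered $\M$-unions.

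Accordingly, let $(\mu_i : D_i \to D)_{i\in I}$ be an $\alpha$-filtered $\M$-sink in $[\B,\C]$ with union $\bigcup_i D_i \to D$; the goal is that $\{W,\bigcup_i D_i\} \cong \bigcup_i \{W,D_i\}$ as $\M$-subobjects of $\{W,D\}$. By \ref{enrichedGreflectsunions} it suffices to establish, for each $H \in \h$, the isomorphism $\C(H,\{W,\bigcup_i D_i\}) \cong \bigcup_i \C(H,\{W,D_i\})$ of $\M$-subobjects of $\C(H,\{W,D\})$. The two ingredients are: (i) each representable $\C(H,-) : \C \to \V$ preserves $W$-limits, yielding a natural isomorphism $\C(H,\{W,E\}) \cong \{W, \C(H,E-)\}$ for $E \in \ob[\B,\C]$, where $\C(H,E-) : \B \to \V$ denotes the composite $\V$-functor; and (ii) since $H$ is an enriched $\alpha$-bounded object, $\C(H,-)$ preserves $\alpha$-filtered $\M$-unions. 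Because unions are pointwise in $[\B,\C]$ and in $[\B,\V]$, and because the pointwise $\M$-sink $(\mu_{i,B})_{i}$ is again $\alpha$-filtered for each $B \in \ob\B$ (the upper bounds in $\Sub_\M(D)$ map to upper bounds in $\Sub_\M(DB)$ under the monotone evaluation), ingredient (ii) gives $\C(H,(\bigcup_i D_i)-) \cong \bigcup_i \C(H,D_i-)$ as $\M$-subobjects of $\C(H,D-)$ in $[\B,\V]$.

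It then remains to apply the hypothesis to the weight $W$ in $\V$: since $W$-limits commute with $\alpha$-filtered $\M$-unions in $\V$, the $\V$-functor $\{W,-\} : [\B,\V] \to \V$ sends the union of the preceding display to the union of the images, so that $\{W,\bigcup_i \C(H,D_i-)\} \cong \bigcup_i \{W,\C(H,D_i-)\}$ as $\M$-subobjects of $\{W,\C(H,D-)\}$. Chaining these isomorphisms with the natural isomorphism of ingredient (i), applied at $\bigcup_i D_i$, at each $D_i$, and at $D$, produces the required isomorphism of $\M$-subobjects of $\C(H,\{W,D\})$, completing the verification via \ref{enrichedGreflectsunions}.

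The step I expect to require the most care is not any single isomorphism but their compatibility \emph{as $\M$-subobjects}: each displayed isomorphism must be checked to lie in the appropriate preordered class $\Sub_\M(\C(H,\{W,D\}))$ rather than merely to be an abstract isomorphism in $\V$, since this is what licenses the final appeal to \ref{enrichedGreflectsunions}. This rests on the naturality in $E$ of the isomorphism $\C(H,\{W,E\}) \cong \{W,\C(H,E-)\}$ together with the pointwise description of unions in the functor $\V$-factegories, so I would carry the relevant comparison maps into the common codomains $\{W,D\}$ and $\{W,\C(H,D-)\}$ throughout to ensure that every isomorphism is one of $\M$-subobjects over that codomain. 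I note finally that the argument uses nothing special about conical $W$ beyond the hypothesis being available for the weight in question, so the same reduction applies verbatim to any small weight whose limits commute with $\alpha$-filtered $\M$-unions in $\V$.
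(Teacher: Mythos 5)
Your proposal is correct and follows essentially the same route as the paper's proof: reduce to the generator via \ref{enrichedGreflectsunions} and then chain the four isomorphisms of $\M$-subobjects (representables preserve conical limits, $H$ is enriched $\alpha$-bounded, the hypothesis on $\V$, and representables preserve conical limits again), with unions computed pointwise in the functor $\V$-categories. Your closing remarks on tracking the isomorphisms as $\M$-subobjects and on the argument not being special to conical weights are sound but add nothing beyond what the paper's argument already implicitly handles.
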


\begin{proof}
Let $\C$ be a locally $\alpha$-bounded $\V$-category (which is complete by \ref{locbdcomplete}) and let $\A$ be an $\alpha$-small ordinary category and $(m_i : F_i \to F)_{i \in I}$ an $\alpha$-filtered $\M$-sink in $[\A_\V, \C]$ (where $\A_\V$ is the free $\V$-category on $\A$), and let us show that $\limit\left(\bigcup_i F_i\right) \cong \bigcup_i \limit F_i$ as $\M$-subobjects of $\limit \ F$. By \ref{enrichedGreflectsunions}, it suffices to show for every $H \in \h$ (the enriched $(\E, \M)$-generator of $\C$) that $\C\left(H, \limit\left(\bigcup_i F_i\right)\right) \cong \bigcup_i \C(H, \limit F_i)$ as $\M$-subobjects of $\C(H, \limit F)$. We have the following isomorphisms of $\M$-subobjects, as required:
\[ \C\left(H, \limit\left(\bigcup_i F_i\right)\right) \cong \limit \ \C\left(H, \bigcup_i F_i-\right) \cong \limit\bigcup_i \C(H, F_i-) \cong \bigcup_i \limit \ \C(H, F_i-) \cong \bigcup_i \C(H, \limit F_i). \]      
The first isomorphism exists because $\C(H, -) : \C \to \V$ preserves conical limits and unions in $[\A_\V, \C]$ are formed pointwise, the second because $H$ is an enriched $\alpha$-bounded object of $\C$, the third by assumption on $\V$, and the last again because $\C(H, -)$ preserves conical limits.     
\end{proof}

\begin{cor}
\label{as_limits_comm_w_afilt_unions_in_lbcats}
$\alpha$-small conical limits commute with $\alpha$-filtered $\M$-unions in every locally $\alpha$-bounded ordinary category.
\end{cor}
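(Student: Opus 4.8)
The plan is to recognize this corollary as simply the special case $\V = \Set$ of Lemma \ref{commutationlemma}. First I would verify that $\Set$ is itself a locally $\alpha$-bounded closed category, so that it may legitimately play the role of the ambient base of enrichment in the preceding two lemmas: indeed $\Set$ is locally finitely presentable and hence locally $\aleph_0$-bounded, so a fortiori locally $\alpha$-bounded for any regular cardinal $\alpha$, and it carries its usual proper enriched factorization system $(\Epi, \Mono)$, making it a cocomplete closed factegory with $\Set$ complete. Thus all the standing hypotheses under which Lemmas \ref{limitsunionscommuteSet} and \ref{commutationlemma} were established are met with $\Set$ in place of $\V$.

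Next I would observe that, by \ref{lbv_remarks}, a locally $\alpha$-bounded $\Set$-category is precisely a locally $\alpha$-bounded ordinary category, and that the $\M$-unions in the factegory $(\Set, \Epi, \Mono)$ are exactly unions of monomorphisms. Consequently the hypothesis that Lemma \ref{commutationlemma} requires of the base in the case $\V = \Set$ — namely that $\alpha$-small conical limits commute with $\alpha$-filtered $\M$-unions in $\Set$ — is precisely the content of Lemma \ref{limitsunionscommuteSet}.

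Finally I would invoke Lemma \ref{commutationlemma} with $\V = \Set$ to conclude that $\alpha$-small conical limits commute with $\alpha$-filtered $\M$-unions in every locally $\alpha$-bounded $\Set$-category, that is, in every locally $\alpha$-bounded ordinary category, as desired. The main point to watch here is not mathematical depth but bookkeeping: the paper's standing Assumption fixes a particular base $\V$, whereas this corollary concerns ordinary categories. The one thing to check with care is therefore that the earlier machinery — the notions of $\V$-factegory and $\M$-union, together with Lemmas \ref{limitsunionscommuteSet} and \ref{commutationlemma} — may be re-instantiated with $\Set$ as the base of enrichment; this is unproblematic since, as noted above, $\Set$ satisfies every hypothesis those results impose on $\V$.
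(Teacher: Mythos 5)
Your proposal is correct and matches the paper's proof exactly: the corollary is obtained by instantiating Lemma \ref{commutationlemma} at $\V=\Set$ and feeding it Lemma \ref{limitsunionscommuteSet} as the required hypothesis on the base. The extra bookkeeping you do (checking that $\Set$ with $(\Epi,\Mono)$ satisfies the standing assumptions on $\V$) is left implicit in the paper but is exactly the right point to verify.
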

\begin{proof}
This follows from \ref{limitsunionscommuteSet} and \ref{commutationlemma} (in the $\Set$-enriched case).
\end{proof}

\begin{theo}
\label{unionscommutelocbd}
If $\C$ is a locally $\alpha$-bounded $\V$-category, then $\alpha$-small conical limits and \mbox{$\alpha$-bounded} cotensors commute with $\alpha$-filtered $\M$-unions in $\C$.
\end{theo}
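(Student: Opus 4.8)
The plan is to treat the two assertions of the theorem separately, as one of them is already in hand. Indeed, the claim that $\alpha$-bounded cotensors commute with $\alpha$-filtered $\M$-unions in $\C$ is exactly the statement of \ref{unions_commute_alpha_bdd_cot}, so no further work is needed for that half.

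It remains to show that $\alpha$-small conical limits commute with $\alpha$-filtered $\M$-unions in $\C$. For this I would invoke \ref{commutationlemma}, which reduces the assertion for an arbitrary locally $\alpha$-bounded $\V$-category $\C$ to the single case $\C = \V$: it suffices to verify the hypothesis that $\alpha$-small conical limits commute with $\alpha$-filtered $\M$-unions in $\V$ itself. To check this hypothesis I would descend to the underlying ordinary category $\V_0$. The key observation is that, for $\C = \V$, conical limits are computed as ordinary limits in $\V_0$, while $\M$-unions in the $\V$-factegory $\V$ are by definition the $\M$-unions of $\V_0$; furthermore, for an $\alpha$-small ordinary category $\A$, the underlying ordinary category of $[\A_\V, \V]$ is the ordinary functor category $[\A, \V_0]$, in which $\M$-unions are formed pointwise by \ref{functorfactegory}, and the underlying ordinary functor of the enriched limit $\V$-functor is the ordinary limit functor $[\A, \V_0] \to \V_0$. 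Since (by the convention preceding \ref{enrichedboundedobject}) preservation of $\M$-unions by a $\V$-functor means preservation by its underlying ordinary functor, the enriched commutation statement for $\V$ is therefore literally the ordinary commutation statement in $\V_0$.

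Finally, since $\V$ is a locally $\alpha$-bounded closed category, its underlying category $\V_0$ is in particular a locally $\alpha$-bounded ordinary category in the sense of \ref{locallyboundedordinarycategory}, so \ref{as_limits_comm_w_afilt_unions_in_lbcats} applied to $\V_0$ yields that $\alpha$-small conical limits commute with $\alpha$-filtered $\M$-unions in $\V_0$ --- precisely the hypothesis required by \ref{commutationlemma}. Combining this with the cotensor half gives the theorem. I expect the only real subtlety to lie in the reduction carried out in the previous paragraph, namely in justifying the identifications of conical limits, of the pointwise $\M$-unions in $[\A_\V, \V]_0 \cong [\A, \V_0]$, and of the underlying functor of the enriched limit functor, so that \ref{as_limits_comm_w_afilt_unions_in_lbcats} may legitimately be applied to $\V_0$; once these routine identifications are in place, the result is immediate from the cited lemmas.
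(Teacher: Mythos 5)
Your proposal is correct and follows essentially the same route as the paper: the cotensor half is quoted from \ref{unions_commute_alpha_bdd_cot}, and the conical half is obtained by combining \ref{as_limits_comm_w_afilt_unions_in_lbcats} (applied to $\V_0$, which is a locally $\alpha$-bounded ordinary category) with the transfer lemma \ref{commutationlemma}. The identifications you flag as the ``only real subtlety'' are indeed routine and are left implicit in the paper's one-line proof.
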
  

\begin{proof}
By \ref{as_limits_comm_w_afilt_unions_in_lbcats}, $\alpha$-small conical limits commute with $\alpha$-filtered $\M$-unions in $\V$, and the result follows by \ref{commutationlemma} and  \ref{unions_commute_alpha_bdd_cot}.
\end{proof}

\noindent Before we can show that the weights for $\alpha$-small conical limits are $\alpha$-bounded-small, we also require the following important lemma:

\begin{lem}
\label{boundedconicalcolimits}
Let $\C$ be a cocomplete $\V$-factegory. The full sub-$\V$-category of $\C$ consisting of the enriched $\alpha$-bounded objects is closed under $\alpha$-small conical colimits. Explicitly, if $F : \A \to \C_0$ is a functor with $\A$ an $\alpha$-small ordinary category and each $FA$ ($A \in \ob\A$) is an enriched $\alpha$-bounded object of $\C$, then the conical colimit $\colim \ F$ is an enriched $\alpha$-bounded object of $\C$.  
\end{lem}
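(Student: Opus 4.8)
The plan is to show that $\C(\colim\, F, -) : \C \to \V$ preserves $\alpha$-filtered $\M$-unions, which by \ref{enrichedboundedobject} is exactly the assertion that $\colim\, F$ is an enriched $\alpha$-bounded object. I would establish this by exhibiting $\C(\colim\, F, -)$ as a composite of two right-class $\V$-functors that each preserve $\alpha$-filtered $\M$-unions, and then invoke the fact (noted just after \ref{boundedVfunctor}) that a composite of $\alpha$-bounded right-class $\V$-functors is again $\alpha$-bounded. Note that $\colim\, F$ exists because $\C$ is a cocomplete $\V$-factegory, and $\C$ has $\M$-unions since it has small coproducts.

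The factorization comes from the defining universal property of a conical colimit. Writing $\B := (\A_\V)^\op$, which is an $\alpha$-small $\V$-category since $\A$ is $\alpha$-small, we have for each $D \in \ob\C$ a natural isomorphism $\C(\colim\, F, D) \cong \limit\, \C(F-, D)$, the right-hand side being the $\alpha$-small conical limit in $\V$ of the diagram $\C(F-, D) : \B \to \V$. This exhibits $\C(\colim\, F, -)$ as the composite $\C \xrightarrow{G} [\B, \V] \xrightarrow{\limit} \V$, where $G$ sends $D$ to $\C(F-, D)$ and $\limit$ is the $\alpha$-small conical limit $\V$-functor.

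First I would check that $G$ is a right-class $\V$-functor preserving $\alpha$-filtered $\M$-unions. Since by \ref{functorfactegory} both the right class and $\M$-unions in $[\B, \V]$ are computed pointwise, it suffices to observe that each component $\C(FA, -) : \C \to \V$ preserves the right class (by compatibility of the factorization systems) and preserves $\alpha$-filtered $\M$-unions, the latter precisely because each $FA$ is an enriched $\alpha$-bounded object by hypothesis. Next, $\limit : [\B, \V] \to \V$ preserves the right class by \cite[4.5]{enrichedfact}, and it preserves $\alpha$-filtered $\M$-unions because, by \ref{VlocallyboundedVcat}, $\V$ is itself a locally $\alpha$-bounded $\V$-category, so that \ref{unionscommutelocbd} applies to give that $\alpha$-small conical limits commute with $\alpha$-filtered $\M$-unions in $\V$; this is exactly the statement (in the sense of \ref{commutationdefn}) that $\limit$ is $\alpha$-bounded. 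Composing, $\C(\colim\, F, -) \cong \limit \circ G$ is an $\alpha$-bounded right-class $\V$-functor, which yields the result.

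The main obstacle I anticipate is bookkeeping rather than a deep difficulty: one must get the variance right, so that $\C(F-, D)$ is genuinely a $\V$-functor on $\B = (\A_\V)^\op$ and $\B$ is indeed $\alpha$-small, and one must legitimately apply the commutation theorem \ref{unionscommutelocbd} to $\V$ regarded as a locally $\alpha$-bounded $\V$-category over itself. Care is also needed in justifying the pointwise preservation claims for $G$, namely that a $\V$-functor into $[\B, \V]$ whose every component is right-class and $\alpha$-bounded is itself right-class and $\alpha$-bounded; this rests squarely on the pointwise formation of $\M$-subobjects and $\M$-unions from \ref{functorfactegory}.
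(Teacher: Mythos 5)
Your proof is correct and follows essentially the same route as the paper's: both rewrite $\C(\colim_A FA,-)$ as the $\alpha$-small conical limit $\limit_A \C(FA,-)$, use the hypothesis that each $FA$ is enriched $\alpha$-bounded together with the pointwise formation of $\M$-unions, and then invoke \ref{unionscommutelocbd} (via \ref{VlocallyboundedVcat}) to commute the limit past the $\alpha$-filtered union in $\V$. The paper simply presents this as a chain of isomorphisms of $\M$-subobjects rather than as a factorization into two $\alpha$-bounded right-class $\V$-functors.
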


\begin{proof}
For each $\alpha$-filtered $\M$-sink $(m_i : C_i \to C)_{i \in I}$ in $\C$, we compute that
\begin{align*}
&\ \ \ \C\left(\colim_A \ FA, \bigcup_i C_i\right) \\
&\cong \limit_A \ \C\left(FA, \bigcup_i C_i\right) && \\
&\cong \limit_A \ \bigcup_i \C(FA, C_i) &&(\text{since $FA$ is an enriched } \alpha\text{-bounded object}) \\
&\cong \bigcup_i \limit_A \ \C(FA, C_i) &&(\text{by \ref{unionscommutelocbd}}) \\
&\cong \bigcup_i \C(\colim_A \ FA, C_i) &&     
\end{align*}
as $\M$-subobjects of $\C(\colim_A \ FA, C)$.
\end{proof}

\noindent We now have the following additional example of $\alpha$-bounded-small weights:

\begin{prop}
\label{alphasmalllem}
The weights for $\alpha$-small conical limits are $\alpha$-bounded-small. 
\end{prop}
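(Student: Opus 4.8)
The plan is to unwind the definition of the weight for an $\alpha$-small conical limit and then verify the three conditions of \ref{alphasmallweight} directly. Recall that the conical limit over an $\alpha$-small ordinary category $\A$ is the weighted limit whose weight is the $\V$-functor $\Delta I : \A_\V \to \V$ constant at the unit object $I$, where $\A_\V$ denotes the free $\V$-category on $\A$. So I would set $\B := \A_\V$ and $W := \Delta I$, and show that $W$ is $\alpha$-bounded-small. Since $\A$ is $\alpha$-small, I take this to mean that $\ob\A$ has cardinality less than $\alpha$ and each hom-set $\A(B,B')$ has cardinality less than $\alpha$.

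Two of the three conditions are immediate. First, $\ob\B = \ob\A_\V = \ob\A$ has cardinality less than $\alpha$, giving the required bound on the objects of $\B$. Second, for the object-values of the weight, each $WB = I$ is an enriched $\alpha$-bounded object of $\V$: indeed $I$ is enriched $\aleph_0$-bounded by \ref{unitbounded}, and hence enriched $\alpha$-bounded since $\aleph_0 \leq \alpha$ (every regular cardinal being infinite).

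The one step that requires an actual argument is the $\alpha$-boundedness of the hom-objects $\B(B,B')$. Here I would use that the hom-objects of a free $\V$-category are copowers of the unit, namely $\A_\V(B,B') = \A(B,B') \cdot I = \coprod_{\A(B,B')} I$. Since $\A$ is $\alpha$-small, the indexing set $\A(B,B')$ has cardinality less than $\alpha$, so $\A_\V(B,B')$ is a coproduct of fewer than $\alpha$ copies of $I$. This coproduct is an $\alpha$-small conical colimit of enriched $\alpha$-bounded objects (each summand being $I$, which is $\alpha$-bounded as above), so by \ref{boundedconicalcolimits} — applied to $\V$ regarded as a cocomplete $\V$-factegory via \ref{VitselfVfactegory} — it is itself an enriched $\alpha$-bounded object of $\V$.

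The only genuine insight, and the step I would flag as the main obstacle, is recognizing that the free $\V$-category hom-objects are precisely small copowers of $I$, and that \ref{boundedconicalcolimits} is exactly the tool that promotes the $\alpha$-boundedness of $I$ to the $\alpha$-boundedness of these copowers. Once this is in place, the three conditions of \ref{alphasmallweight} follow by direct appeal to \ref{unitbounded} and the definition of the free $\V$-category, completing the proof.
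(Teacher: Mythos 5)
Your proof is correct and matches the paper's argument essentially verbatim: both identify the conical weight as $\Delta_I : \A_\V \to \V$, use \ref{unitbounded} for the values $I$, and recognize the hom-objects of the free $\V$-category as $\alpha$-small copowers $\A(B,B') \cdot I$, which are enriched $\alpha$-bounded by \ref{boundedconicalcolimits}. No gaps.
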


\begin{proof}
Let $\A$ be an $\alpha$-small ordinary category, and write $\Delta_I : \A_\V \to \V$ for the associated conical weight.  Then $\ob\A_\V = \ob\A$ is $\alpha$-small, and $\Delta_I A = I$ is an enriched $\alpha$-bounded object for every $A \in \ob\A$ by \ref{unitbounded}.  For all $A, B \in \ob\A$, the hom-object $\A_\V(A, B)$ is an $\alpha$-small copower $\A(A, B) \cdot I$ of the enriched $\alpha$-bounded object $I$ of $\V$ and so is an enriched $\alpha$-bounded object by \ref{boundedconicalcolimits}.
\end{proof}

\begin{lem}
\label{lem_clos_abdd_objs}
Let $\A$ be a full sub-$\V$-category of a cocomplete $\V$-factegory $\C$.  Then the closure of $\A$ in $\C$ under $\alpha$-bounded-small colimits coincides with the closure of $\A$ in $\C$ under $\alpha$-small conical colimits and $\alpha$-bounded tensors.
\end{lem}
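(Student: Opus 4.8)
The plan is to prove the two inclusions separately; write $\A^{bs}$ for the closure of $\A$ in $\C$ under $\alpha$-bounded-small colimits and $\A^{ct}$ for its closure under $\alpha$-small conical colimits and $\alpha$-bounded tensors. The inclusion $\A^{ct} \subseteq \A^{bs}$ is immediate once I record that both generating classes of colimit on the right are $\alpha$-bounded-small: by \ref{alphasmalllem} the weight for an $\alpha$-small conical colimit is $\alpha$-bounded-small, and by \ref{cotensorsboundedsmall} the weight $[V] : \II \to \V$ for a tensor by an enriched $\alpha$-bounded object $V$ is $\alpha$-bounded-small (the weight being the same $\V$-functor for tensors as for cotensors). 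Hence $\A^{bs}$ is in particular closed under $\alpha$-small conical colimits and $\alpha$-bounded tensors; as it contains $\A$, it contains the smaller closure $\A^{ct}$.

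For the reverse inclusion it suffices to show that $\A^{ct}$ is itself closed under $\alpha$-bounded-small colimits, since $\A \subseteq \A^{ct}$. So I would fix an $\alpha$-bounded-small weight $W : \B \to \V$ together with a diagram $D$ valued in $\A^{ct}$, and examine the colimit $W \ast D$, which exists because $\C$ is cocomplete. The essential tool is the standard construction of a weighted colimit as a coequalizer of coproducts of tensors (Kelly \cite[3.68]{Kellystr}): $W \ast D \cong \int^{B} WB \tensor DB$ is realized as the coequalizer of the canonical pair
$$\coprod_{B, B' \in \ob\B} \B(B, B') \tensor WB' \tensor DB \;\rightrightarrows\; \coprod_{B \in \ob\B} WB \tensor DB$$
(up to the variance conventions in force). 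I would then check that every ingredient of this presentation remains inside $\A^{ct}$.

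This verification is a layer-by-layer bookkeeping. Each $DB$ lies in $\A^{ct}$ by hypothesis. Since $W$ is $\alpha$-bounded-small, each $WB$ is an enriched $\alpha$-bounded object of $\V$, so each $WB \tensor DB$ is an $\alpha$-bounded tensor and hence lies in $\A^{ct}$; likewise, since $\B$ is $\alpha$-bounded-small each hom-object $\B(B, B')$ is enriched $\alpha$-bounded, so $\B(B, B') \tensor WB' \tensor DB$ is obtained from $DB$ by two successive $\alpha$-bounded tensors and again lies in $\A^{ct}$. Because $|\ob\B| < \alpha$ and $\alpha$ is regular (hence infinite), the index set $\ob\B \times \ob\B$ also has cardinality $< \alpha$, so both coproducts are $\alpha$-small conical colimits of objects of $\A^{ct}$ and thus lie in $\A^{ct}$; finally the coequalizer is a finite, and so $\alpha$-small, conical colimit. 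Therefore $W \ast D \in \A^{ct}$, which gives $\A^{bs} \subseteq \A^{ct}$ and completes the proof.

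I expect the main obstacle to be marshalling the coend presentation with the correct variances and confirming that it genuinely involves only $\alpha$-small coproducts and tensors by the enriched $\alpha$-bounded objects $\B(B,B')$ and $WB$ — in particular that the upper coproduct is indexed by a set of cardinality $< \alpha$, which is precisely where the regularity of $\alpha$ enters. Once the presentation is pinned down, the remaining verifications are routine, since $\A^{ct}$ absorbs each layer by its very definition.
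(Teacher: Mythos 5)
Your proof is correct and follows essentially the same route as the paper: the easy inclusion via \ref{cotensorsboundedsmall} and \ref{alphasmalllem}, and the reverse inclusion by presenting $W \ast D$ as a coequalizer of $\alpha$-small coproducts of iterated $\alpha$-bounded tensors using Kelly's coend formula. The layer-by-layer bookkeeping you describe (including the use of regularity of $\alpha$ for the index set $\ob\B \times \ob\B$) is exactly what the paper's argument relies on.
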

\begin{proof}
Let $W:\B^\op \rightarrow \V$ be an $\alpha$-bounded-small weight, and let $D:\B \rightarrow \C$ a $\V$-functor taking its values in $\A$.  Then $W \ast D = \int^{B \in \B} \ WB \tensor DB$, and by \cite[3.68]{Kelly} this coend is a conical coequalizer of the form
\[\begin{tikzcd}
	{\coprod_{B, B' \in \B} \ \B(B, B') \otimes \left(WB' \otimes DB\right)} && {\coprod_{B \in \B} \ WB \otimes DB} && {\int^{B \in \B} \ WB \otimes DB}.
	\arrow[shift left=1, from=1-1, to=1-3]
	\arrow[shift right=1, from=1-1, to=1-3]
	\arrow[two heads, from=1-3, to=1-5]
\end{tikzcd}\] 
Since $W$ is $\alpha$-bounded-small, it follows by \ref{enrichedtensorbounded} that $W * D$ lies in the closure of $\A$ in $\C$ under $\alpha$-small conical colimits and $\alpha$-bounded tensors.  On the other hand, $\alpha$-small conical colimits and $\alpha$-bounded tensors are examples of $\alpha$-bounded-small colimits by \ref{cotensorsboundedsmall} and \ref{alphasmalllem}, and the result follows.
\end{proof}

\noindent It now follows that enriched $\alpha$-bounded objects are closed under $\alpha$-bounded-small colimits (analogously to \cite[4.14]{Kellystr}):

\begin{prop}
\label{closurealphasmallcolimits}
Let $\C$ be a cocomplete $\V$-factegory. The full sub-$\V$-category $\C_\alpha$ of $\C$ consisting of the enriched $\alpha$-bounded objects is closed under $\alpha$-bounded-small colimits.
\end{prop}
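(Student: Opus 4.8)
The plan is to reduce the claim immediately to the two preceding lemmas, applied to the specific full sub-$\V$-category $\A = \C_\alpha$. The key observation is that \ref{lem_clos_abdd_objs} already decomposes the operation of ``taking $\alpha$-bounded-small colimits'' into two more primitive operations—$\alpha$-small conical colimits and $\alpha$-bounded tensors—whose effect on enriched $\alpha$-bounded objects has been analyzed separately. So no genuinely new computation is needed; the work is purely organizational.

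First I would note that the hypotheses of the relevant results are met. Since $\C$ is a cocomplete $\V$-factegory, it admits all small colimits and in particular all tensors, so it is tensored; moreover it has small coproducts, so by \ref{union} it has $\M$-unions. Thus both \ref{boundedconicalcolimits} and \ref{enrichedtensorbounded} are applicable to $\C$ (recalling the standing assumption that $\V$ is locally $\alpha$-bounded as a closed category).

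Next I would verify that $\C_\alpha$ is closed under the two primitive operations. Closure under $\alpha$-small conical colimits is exactly the content of \ref{boundedconicalcolimits}. For $\alpha$-bounded tensors, an $\alpha$-bounded tensor of objects of $\C_\alpha$ has the form $V \tensor C$ with $V$ an enriched $\alpha$-bounded object of $\V$ and $C \in \C_\alpha$, and \ref{enrichedtensorbounded} says precisely that such a tensor is again an enriched $\alpha$-bounded object, i.e.\ lies in $\C_\alpha$.

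Finally I would assemble these. Applying \ref{lem_clos_abdd_objs} with $\A = \C_\alpha$, the closure of $\C_\alpha$ in $\C$ under $\alpha$-bounded-small colimits coincides with the closure of $\C_\alpha$ under $\alpha$-small conical colimits and $\alpha$-bounded tensors. But by the previous paragraph $\C_\alpha$ is already closed under both of these operations, so the latter closure is just $\C_\alpha$ itself; hence the closure of $\C_\alpha$ under $\alpha$-bounded-small colimits is $\C_\alpha$, which is exactly the assertion that $\C_\alpha$ is closed under $\alpha$-bounded-small colimits. There is no serious obstacle here: the substantive content lives in \ref{lem_clos_abdd_objs} (the coend/coequalizer reduction) and in \ref{enrichedtensorbounded} (stability of enriched $\alpha$-boundedness under $\alpha$-bounded tensoring, which itself rests on \ref{ordinaryequalsenrichedbounded} and \ref{enrichedboundedprop}), and the present proof simply combines them.
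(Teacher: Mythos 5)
Your proof is correct and follows exactly the paper's own argument: the paper likewise combines \ref{lem_clos_abdd_objs} (reducing $\alpha$-bounded-small colimits to $\alpha$-small conical colimits and $\alpha$-bounded tensors) with \ref{boundedconicalcolimits} and \ref{enrichedtensorbounded} to conclude that $\C_\alpha$ is closed under both primitive operations and hence under all $\alpha$-bounded-small colimits.
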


\begin{proof}
By \ref{enrichedtensorbounded} and \ref{boundedconicalcolimits}, $\C_\alpha$ is closed under $\alpha$-small conical colimits and $\alpha$-bounded tensors, so by \ref{lem_clos_abdd_objs} it is closed under $\alpha$-bounded-small colimits.
\end{proof}

\begin{cor}
\label{alphasmallweightcor}
Every $\alpha$-bounded-small weight $W : \B \to \V$ is an enriched $\alpha$-bounded object of $[\B, \V]$.
\end{cor}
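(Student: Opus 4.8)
The plan is to realise $W$ as an $\alpha$-bounded-small weighted colimit of representable $\V$-functors in $[\B,\V]$ and then invoke the closure result \ref{closurealphasmallcolimits}. The starting point is the density (co-Yoneda) presentation of $W$ as a colimit of representables,
$$W \;\cong\; \int^{B \in \B} WB \tensor \B(B,-),$$
which I would read as the weighted colimit $W \ast \y$, where $\y : \B^\op \to [\B,\V]$ is the Yoneda embedding $B \mapsto \y B = \B(B,-)$ and $W : \B = (\B^\op)^\op \to \V$ plays the role of the weight on $\B^\op$. Indeed, with the convention of \ref{lem_clos_abdd_objs} (weight on $\B^\op$, diagram $\y$ on $\B^\op$) we have $W \ast \y = \int^{B} WB \tensor \B(B,-) \cong W$.

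First I would check that this is genuinely an $\alpha$-bounded-small colimit. For the weight, I must verify that $\B^\op$ is an $\alpha$-bounded-small $\V$-category: its object-set $\ob\B^\op = \ob\B$ has cardinality less than $\alpha$, and each hom-object $\B^\op(B,B') = \B(B',B)$ is an enriched $\alpha$-bounded object of $\V$ since $\B$ is $\alpha$-bounded-small; moreover $WB$ is enriched $\alpha$-bounded for every $B$ by hypothesis, so $W$ is indeed an $\alpha$-bounded-small weight on $\B^\op$. For the diagram, I would note that each value $\y B = \B(B,-)$ is a representable $\V$-functor, hence an enriched $\aleph_0$-bounded — in particular enriched $\alpha$-bounded — object of $[\B,\V]$ by \ref{representablesbounded}; thus $\y$ takes its values in the full sub-$\V$-category $[\B,\V]_\alpha$ of enriched $\alpha$-bounded objects.

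With these two facts in hand the conclusion is immediate: $[\B,\V]$ is a cocomplete $\V$-factegory (by \ref{densegen} and \ref{functorfactegory}; cf. \ref{locallyboundedpresheafcat}), so by \ref{closurealphasmallcolimits} its full sub-$\V$-category of enriched $\alpha$-bounded objects is closed under $\alpha$-bounded-small colimits. Since $W \cong W \ast \y$ is such a colimit of the enriched $\alpha$-bounded objects $\B(B,-)$, it follows that $W$ is itself an enriched $\alpha$-bounded object of $[\B,\V]$.

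I do not expect a serious obstacle, since the substantive work has already been carried out in \ref{closurealphasmallcolimits}; the only point requiring care is the bookkeeping of variance — ensuring that the density formula is read as a colimit weighted by $W$ over $\B^\op$, and confirming that passing from $\B$ to $\B^\op$ preserves $\alpha$-bounded-smallness, which it does because the object-set is unchanged and the hom-objects are merely permuted.
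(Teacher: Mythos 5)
Your proposal is correct and is essentially the paper's own proof: both write $W \cong W \ast \y$ via the Yoneda embedding $\y : \B^\op \to [\B,\V]$, note that the representables are enriched $\alpha$-bounded objects by \ref{representablesbounded}, and conclude by the closure of enriched $\alpha$-bounded objects under $\alpha$-bounded-small colimits (\ref{closurealphasmallcolimits}). Your extra care with the variance bookkeeping is sound but does not change the argument.
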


\begin{proof}
If $\y : \B^\op \to [\B, \V]$ is the enriched Yoneda embedding, then we have $W \cong W \ast \y$ by \cite[3.17]{Kelly}, whence the result follows by \ref{closurealphasmallcolimits}, since $[\B, \V]$ is a cocomplete $\V$-factegory by \ref{functorfactegory} and every representable $\y B = \B(B, -)$ ($B \in \ob\B$) is an enriched $\alpha$-bounded object of $[\B, \V]$ by \ref{representablesbounded}.     
\end{proof}

\noindent We now wish to provide an equivalent characterization of the existence and preservation of $\alpha$-bounded-small limits, similar to Kelly's analogous result \cite[4.3]{Kellystr} for finite weighted limits. Recall from e.g. \cite[2.8]{KS} that the \emph{saturation} $\Phi^*$ of a class of small weights $\Phi$ is defined as follows: a small weight $W$ belongs to $\Phi^*$ iff every $\Phi$-complete $\V$-category is $W$-complete and every $\Phi$-continuous $\V$-functor between $\Phi$-complete $\V$-categories is $W$-continuous.

\begin{theo}
\label{alphasmallweightalt}
The saturation of the class of $\alpha$-bounded-small weights is equal to the saturation of the class of weights for $\alpha$-small conical limits and $\alpha$-bounded cotensors.

Therefore, a $\V$-category $\C$ has $\alpha$-bounded-small limits iff $\C$ has $\alpha$-small conical limits and $\alpha$-bounded cotensors, and a $\V$-functor $F : \C \to \D$ between $\V$-categories with $\alpha$-bounded-small limits preserves such limits iff $F$ preserves $\alpha$-small conical limits and $\alpha$-bounded cotensors. 
\end{theo}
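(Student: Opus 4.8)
The plan is to prove the equality of saturations by establishing the two inclusions $\Psi \subseteq \Phi^*$ and $\Phi \subseteq \Psi^*$, where I write $\Phi$ for the class of $\alpha$-bounded-small weights and $\Psi$ for the class of weights for $\alpha$-small conical limits and $\alpha$-bounded cotensors. Since saturation is monotone and idempotent (standard properties from the theory of \cite{KS}, and immediate from the defining property recalled before the theorem), these two inclusions give $\Phi^* = \Psi^*$: indeed $\Phi \subseteq \Psi^*$ yields $\Phi^* \subseteq \Psi^{**} = \Psi^*$, and $\Psi \subseteq \Phi^*$ yields $\Psi^* \subseteq \Phi^{**} = \Phi^*$. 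The inclusion $\Psi \subseteq \Phi^*$ is immediate, since $\Psi \subseteq \Phi$ by \ref{cotensorsboundedsmall} and \ref{alphasmalllem}, and $\Phi \subseteq \Phi^*$.

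The substantive step is $\Phi \subseteq \Psi^*$. By the definition of saturation, I must show that an arbitrary $\alpha$-bounded-small weight $W : \B \to \V$ lies in $\Psi^*$, i.e.\ that every $\Psi$-complete $\V$-category $\C$ is $W$-complete and that every $\Psi$-continuous $\V$-functor between $\Psi$-complete $\V$-categories is $W$-continuous. The idea is to build $W$-limits explicitly from $\Psi$-limits, dually to the coend presentation used in the proof of \ref{lem_clos_abdd_objs}. For any $D : \B \to \C$ the $W$-limit is the end $\{W, D\} = \int_{B} [WB, DB]$, computed as a conical equalizer
\[ \{W, D\} \to \prod_{B} [WB, DB] \rightrightarrows \prod_{B, B'} \bigl[\B(B, B'), [WB, DB']\bigr]. \]
Here the two products are indexed by $\ob\B$ and $\ob\B \times \ob\B$, which are $\alpha$-small since $|\ob\B| < \alpha$ and $\alpha$ is regular; the equalizer is a finite, hence $\alpha$-small, conical limit; and each cotensor appearing is by $WB$ or by $\B(B, B')$, both of which are enriched $\alpha$-bounded objects of $\V$ by the hypotheses that $W$ is an $\alpha$-bounded-small weight and $\B$ is an $\alpha$-bounded-small $\V$-category (the nested cotensor $\bigl[\B(B,B'), [WB, DB']\bigr]$ is a composite of two $\alpha$-bounded cotensors). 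Thus $\{W, D\}$ is constructed purely from $\alpha$-small conical limits and $\alpha$-bounded cotensors, so any $\Psi$-complete $\C$ admits it; and any $\Psi$-continuous $\V$-functor preserves each product, equalizer, and cotensor in this canonical construction and hence preserves $\{W, D\}$. This establishes $W \in \Psi^*$, and therefore $\Phi \subseteq \Psi^*$.

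The final ``Therefore'' clause follows formally from the defining property of saturation together with $\Phi^* = \Psi^*$. A $\V$-category is $\Phi$-complete iff it is $\Phi^*$-complete iff it is $\Psi^*$-complete iff it is $\Psi$-complete; unwinding the meaning of $\Phi$- and $\Psi$-completeness gives that $\C$ has $\alpha$-bounded-small limits iff it has $\alpha$-small conical limits and $\alpha$-bounded cotensors. Likewise, for $\C$ and $\D$ satisfying these equivalent completeness conditions, a $\V$-functor $F : \C \to \D$ is $\Phi$-continuous iff it is $\Phi^*$-continuous iff it is $\Psi^*$-continuous iff it is $\Psi$-continuous, which is exactly the asserted equivalence of preservation conditions.

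I expect the main obstacle to be the verification in the second paragraph that every ingredient of the end/equalizer presentation of $\{W, D\}$ is genuinely a $\Psi$-limit — in particular that the indexing sets remain $\alpha$-small (using regularity of $\alpha$) and that the nested cotensors are by $\alpha$-bounded objects — together with the careful argument that a $\Psi$-continuous $\V$-functor preserves this canonical construction and thereby the $W$-limit itself.
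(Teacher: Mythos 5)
Your proposal is correct, but it reaches the conclusion by a different route than the paper. The paper does not verify the definition of saturation directly: it invokes the Albert--Kelly theorem (\cite[Theorem 5.1]{AKclosure}, cf.\ \cite[3.8]{KS}) that a weight $W : \A^\op \to \V$ lies in $\Phi^*$ iff $W$ lies in the closure of the representables under $\Phi$-colimits in $[\A^\op,\V]$, and then appeals to Lemma \ref{lem_clos_abdd_objs}, which shows that the closure of a full subcategory under $\alpha$-bounded-small colimits coincides with its closure under $\alpha$-small conical colimits and $\alpha$-bounded tensors; that lemma is proved via the coequalizer-of-coproducts presentation of the coend $W \ast D$. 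Your argument is the precise dual of that computation --- the equalizer-of-products presentation of the end $\{W,D\} = \int_B [WB,DB]$ --- but carried out directly in an arbitrary $\Psi$-complete $\V$-category rather than in a presheaf category, so you never need the saturation-as-closure theorem. The computational core is identical (regularity of $\alpha$ controls the indexing sets, $\alpha$-bounded-smallness of $W$ and of the hom-objects of $\B$ controls the cotensors, and the nested cotensor is handled either as a composite or via closure of $\alpha$-bounded objects under $\tensor$ as in \ref{enrichedboundedprop}), and your one extra obligation --- that a $\Psi$-continuous $\V$-functor preserves the canonical comparison and hence the limit cylinder itself, not merely the limit object --- is the standard argument from \cite[\S 3.10]{Kelly}. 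What the paper's route buys is brevity and reuse: Lemma \ref{lem_clos_abdd_objs} is also needed for \ref{closurealphasmallcolimits} and \ref{alphasmallweightcor}, so the one coend computation serves three results. What your route buys is self-containedness: it avoids the external closure theorem and makes the ``Therefore'' clause an immediate unwinding of definitions. Both are sound.
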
 

\begin{proof}
The second assertion follows from the first by the definition of saturation.  Let $\Phi_\alpha$ be the class of $\alpha$-bounded-small weights, and let $\Psi_\alpha$ be the class of $\alpha$-small conical weights and weights for $\alpha$-bounded cotensors.  Given a small $\V$-category $\A$, if $\Phi$ is any class of small weights, then by \cite[Theorem 5.1]{AKclosure} (also see \cite[3.8]{KS}) a weight $W:\A^\op \rightarrow \V$ lies in the saturation $\Phi^*$ iff $W$ lies in the closure $\Phi(\A)$ of the representables under $\Phi$-colimits in $[\A^\op,\V]$.  But by \ref{lem_clos_abdd_objs}, $\Phi_\alpha(\A) = \Psi_\alpha(\A)$ for every small $\V$-category $\A$, so $\Phi_\alpha^* = \Psi_\alpha^*$.
\end{proof}

\noindent We now show our main (and final) result of this section:

\begin{theo}
\label{maincommutationresult}
Let $\C$ be a locally $\alpha$-bounded $\V$-category. Then $\alpha$-bounded-small limits commute with $\alpha$-filtered $\M$-unions in $\C$, in the sense that if $W : \B \to \V$ is an $\alpha$-bounded-small weight, then the $W$-limit $\V$-functor $\{W, -\} : [\B, \C] \to \C$ preserves $\alpha$-filtered $\M$-unions.
\end{theo}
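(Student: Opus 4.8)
The plan is to reduce the statement to the two special cases already established in \ref{unionscommutelocbd}, namely that $\alpha$-small conical limits and $\alpha$-bounded cotensors each commute with $\alpha$-filtered $\M$-unions in $\C$, by exhibiting the $W$-weighted limit as a conical limit of cotensors of exactly these two kinds. Since $\C$ is locally $\alpha$-bounded it is complete by \ref{locbdcomplete}, so for any $\V$-functor $D : \B \to \C$ the limit $\{W, D\}$ exists and is computed by the usual end formula $\{W, D\} \cong \int_{B \in \B} [WB, DB]$ in terms of the cotensors of $\C$ (see \cite{Kelly}). As $\ob\B$ is small, this end is the equalizer of a pair of maps
\[ \prod_{B \in \ob\B} [WB, DB] \rightrightarrows \prod_{(B, B') \in \ob\B \times \ob\B} [\B(B, B') \tensor WB, DB'] \]
between conical products, the two maps being induced by the actions of $D$ and of $W$ on hom-objects.

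First I would record the boundedness and cardinality facts that put every entry of this equalizer into the required form. Since $W$ is $\alpha$-bounded-small, each $WB$ is an enriched $\alpha$-bounded object of $\V$ and each $\B(B, B')$ is an enriched $\alpha$-bounded object of $\V$; hence each $\B(B, B') \tensor WB$ is an enriched $\alpha$-bounded object of $\V$ by \ref{enrichedtensorbounded}. Thus every cotensor $[WB, -]$ and $[\B(B, B') \tensor WB, -]$ appearing above is an $\alpha$-bounded cotensor. Moreover, since $\alpha$ is regular and $|\ob\B| < \alpha$, also $|\ob\B \times \ob\B| < \alpha$, so both indexing sets are $\alpha$-small and the equalizer is an $\alpha$-small conical limit. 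Packaging this, I would exhibit $\{W, D\}$ as a conical limit $\lim_{\mathcal{E}} \Gamma(D)$ over a fixed $\alpha$-small ordinary shape $\mathcal{E}$ (the evident category whose conical limit computes the above equalizer of products), where $\Gamma(D) : \mathcal{E} \to \C$ sends each object to an $\alpha$-bounded cotensor of a value of $D$.

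With this in hand the argument runs as follows. Let $(m_i : F_i \to F)_{i \in I}$ be an $\alpha$-filtered $\M$-sink in $[\B, \C]$, whose union $\bigcup_i F_i$ is formed pointwise by \ref{functorfactegory}. At each object of $\mathcal{E}$ the value of $\Gamma(\bigcup_i F_i)$ is an $\alpha$-bounded cotensor of a pointwise union $\bigcup_i F_i B$ (or $\bigcup_i F_i B'$), so by \ref{unionscommutelocbd} (that $\alpha$-bounded cotensors commute with $\alpha$-filtered $\M$-unions in $\C$) we obtain $\Gamma(\bigcup_i F_i) \cong \bigcup_i \Gamma(F_i)$ as a pointwise $\M$-union in $[\mathcal{E}, \C]$. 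Applying the other half of \ref{unionscommutelocbd} (that $\alpha$-small conical limits commute with $\alpha$-filtered $\M$-unions in $\C$) to the $\alpha$-small shape $\mathcal{E}$, I then compute
\[ \{W, \bigcup_i F_i\} \cong \lim_{\mathcal{E}} \Gamma(\bigcup_i F_i) \cong \lim_{\mathcal{E}} \bigcup_i \Gamma(F_i) \cong \bigcup_i \lim_{\mathcal{E}} \Gamma(F_i) \cong \bigcup_i \{W, F_i\} \]
as $\M$-subobjects of $\{W, F\}$, which is precisely the assertion that $\{W, -\}$ preserves $\alpha$-filtered $\M$-unions (\ref{commutationdefn}).

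The main obstacle I anticipate is the bookkeeping in the middle step: one must verify that the induced sink $(\Gamma(F_i) \to \Gamma(F))_i$ is again an $\alpha$-filtered $\M$-sink and that its pointwise union in $[\mathcal{E}, \C]$ genuinely reproduces $\Gamma(\bigcup_i F_i)$, object by object of $\mathcal{E}$. This is exactly where \ref{enrichedtensorbounded} (to guarantee the $\alpha$-boundedness of $\B(B, B') \tensor WB$) and the regularity of $\alpha$ (to keep $\mathcal{E}$ $\alpha$-small) are essential, and where the pointwise description of unions in functor $\V$-categories must be invoked with care so that the two applications of \ref{unionscommutelocbd} go through verbatim. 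Once these points are checked, the remainder of the proof is formal.
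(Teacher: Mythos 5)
Your argument is correct, but it takes a genuinely different route from the paper's. You decompose the weighted limit internally to $\C$: writing $\{W,D\} \cong \int_{B}[WB,DB]$ as an equalizer of two maps between $\alpha$-small products of $\alpha$-bounded cotensors, you realize $\{W,-\}$ (up to natural isomorphism) as an $\alpha$-small conical limit of a diagram of $\alpha$-bounded cotensors of evaluations, and then apply the two halves of \ref{unionscommutelocbd} in succession; the inputs you need --- that $\B(B,B')\tensor WB$ is $\alpha$-bounded (\ref{enrichedtensorbounded}), that $|\ob\B\times\ob\B|<\alpha$, that cotensors and the conical limit functor preserve $\M$, and that unions in the functor $\V$-categories are pointwise --- all check out, and factorizations through pointwise $\M$-subobjects are automatically natural, so the ``bookkeeping'' you flag does go through. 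The paper instead argues externally via the enriched $(\E,\M)$-generator $\h$ of $\C$: it homs into $\V$ with $\C(H,-)$, uses that $H$ is an enriched $\alpha$-bounded object of $\C$ and that $W$ is an enriched $\alpha$-bounded object of $[\B,\V]$ (\ref{alphasmallweightcor}, itself resting on \ref{closurealphasmallcolimits}), and then reflects the union back along $\y_\h$ using \ref{enrichedGreflectsunions}. Your approach buys a self-contained, explicit computation in $\C$ that only needs \ref{unionscommutelocbd} and \ref{enrichedtensorbounded}, and in effect re-proves the direction of \ref{lem_clos_abdd_objs}/\ref{alphasmallweightalt} that reduces $\alpha$-bounded-small limits to conical limits and cotensors; the paper's approach buys a shorter four-line calculation by leveraging the already-established boundedness of $W$ as an object of the presheaf $\V$-category, and fits the uniform ``test against the generator, then reflect'' pattern used throughout Sections \ref{commutation}--\ref{sketches}. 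One cosmetic correction: the bound $|\ob\B\times\ob\B|<\alpha$ follows from $\kappa\cdot\kappa=\kappa$ for infinite $\kappa$ together with $\alpha\geq\aleph_0$, not really from regularity of $\alpha$.
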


\begin{proof}
Note that $\C$ does indeed have $W$-limits by \ref{locbdcomplete}. Let $(m_i : D_i \to D)_{i \in I}$ be an $\alpha$-filtered $\M$-sink in $[\B, \C]$. Then for each object $H$ of the enriched $(\E, \M)$-generator $\h$ of $\C$,
\begin{align*}
&\ \ \ \C\left(H, \left\{W, \bigcup_i D_i\right\}\right) \\
&\cong [\B, \V]\left(W, \C\left(H, \bigcup_i D_i-\right)\right) && \\
&\cong [\B, \V]\left(W, \bigcup_i \C(H, D_i-)\right) &&(H \text{ is an enriched } \alpha\text{-bounded object of } \C) \\
&\cong \bigcup_i [\B, \V]\left(W, \C(H, D_i-)\right) &&(W \text{ is an enriched } \alpha\text{-bounded object of } [\B, \V] \text{ by } \ref{alphasmallweightcor}) \\
&\cong \bigcup_i \C\left(H, \{W, D_i\}\right) &&   
\end{align*}
as $\M$-subobjects of $\C(H, \{W, D\})$, and hence $\left\{W, \bigcup_i D_i\right\} \cong \bigcup_i \left\{W, D_i\right\}$ as $\M$-subobjects of $\{W, D\}$, by \ref{enrichedGreflectsunions}.
\end{proof}

\section{Reflectivity and local boundedness of enriched orthogonal subcategories}
\label{orth_subcats}

In this section, we extend to the enriched context the classic results of Freyd and Kelly \cite[4.1.3, 4.2.2]{FreydKelly} on the reflectivity and local boundedness of orthogonal subcategories of locally bounded categories.  Indeed, we show in Theorem \ref{orthoreflective} that certain enriched orthogonal subcategories of arbitrary locally bounded $\V$-categories are reflective, and are locally bounded under an additional cowellpoweredness assumption.  The reflectivity theorem of Freyd and Kelly \cite[4.1.3]{FreydKelly} had been proved under a cowellpoweredness assumption, while as discussed just before Theorem 6.5 in \cite{Kelly}, Kelly later showed that the latter assumption can be omitted from the reflectivity result, and our Theorem \ref{orthoreflective} also enriches this refined result of Kelly.  Kelly also showed in \cite[Theorem 6.5]{Kelly} that if $\V$ is a locally bounded closed category, then certain enriched orthogonal subcategories of \textit{presheaf} $\V$-categories are reflective, and our Theorem \ref{orthoreflective} also generalizes this by replacing presheaf $\V$-categories with arbitrary locally bounded $\V$-categories.

If $\theta : M \to N$ is a morphism in a $\V$-category $\B$, then $\theta$ is $\V$-\emph{orthogonal} to an object $B \in \ob\B$, which we also write as $\theta \perp_\V B$, if the $\V$-morphism $\B(\theta, B) : \B(N, B) \to \B(M, B)$ is an isomorphism. If $\B$ is tensored, then it is remarked on \cite[Page 117]{Kelly} that $\theta$ is $\V$-orthogonal to $B$ iff for every $V \in \ob\V$ the morphism $V \tensor \theta$ is orthogonal to $B$ in the ordinary sense, i.e. $\Set$-orthogonal to $B$. If $\Theta$ is a class of morphisms in $\B$, then we say that $\Theta$ is $\V$-orthogonal to an object $B \in \ob\B$, which we also write as $\Theta \perp_\V B$, if $\theta \perp_\V B$ for every $\theta \in \Theta$. We write $\Theta^{\perp_\V}$ for the full sub-$\V$-category of $\B$ consisting of the objects to which $\Theta$ is $\V$-orthogonal, and we write $\Theta^\perp$ for the full sub-$\V$-category of objects of $\B$ to which $\Theta$ is orthogonal in the ordinary sense. So if $\B$ is tensored, then we have $\Theta^{\perp_\V} = \left\{V \tensor \theta : V \in \ob\V, \theta \in \Theta\right\}^\perp$.  We call $\Theta^{\perp_\V}$ the \textbf{$\V$-orthogonal sub-$\V$-category} of $\B$ described by $\Theta$.

We now recall the seminal results of Freyd and Kelly for orthogonal subcategories of locally bounded ordinary categories, which we both use and generalize in (the proof of) our enriched result \ref{orthoreflective}; \ref{FreydKellytheorems} comes from (the proofs of) \cite[4.1.3, 4.2.2]{FreydKelly}, while \ref{Kellytheorem} is the result quoted before \cite[6.5]{Kelly}. Note that while \ref{Kellytheorem} dispenses with the cowellpoweredness assumption of \ref{FreydKellytheorems}, it stops short of proving the local boundedness of the given orthogonal subcategory as in \ref{FreydKellytheorems}. 

\begin{theo}[Freyd, Kelly \cite{FreydKelly}]
\label{FreydKellytheorems}
Let $\B$ be an $\E$-cowellpowered locally bounded category, let $\Theta$ be a class of morphisms in $\B$ with $\Theta = \Phi \cup \Psi$ where $\Phi$ is small and $\Psi \subseteq \E$, and let $\C = \Theta^\perp$ be the orthogonal subcategory of $\B$ described by $\Theta$.  Then $\C$ is a reflective subcategory of $\B$ and has a proper factorization system $(\E_\C, \M_\C)$ with $\M_\C = \M \cap \mor \C$, which makes $\C$ into an $\E_\C$-cowellpowered locally bounded category.  Also, if $\beta$ is a regular cardinal for which the domain of every morphism in $\Phi$ is $\beta$-bounded, then the inclusion $\C \hookrightarrow \B$ is $\beta$-bounded. \qed
\end{theo}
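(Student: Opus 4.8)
The plan is to produce the reflection of $\B$ onto $\C = \Theta^\perp$ by a transfinite construction, and then to read off the factorization system, the local boundedness, the $\E_\C$-cowellpoweredness, and the boundedness of the inclusion from properties of that construction. First I would attach to each $B \in \ob\B$ a transfinite chain $B = B_0 \to B_1 \to \cdots$ whose successor steps simultaneously repair all current failures of orthogonality against $\Theta$: for \emph{existence}, push $B_\xi$ out along the coproduct of all spans $N \xleftarrow{\theta} M \xrightarrow{f} B_\xi$ with $\theta \in \Theta$; for \emph{uniqueness}, coequalize all pairs $g,g' : N \to B_\xi$ with $\theta \in \Theta$ and $g\theta = g'\theta$. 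Limit stages are colimits. Once the chain stabilizes, the resulting object lies in $\C$ and the transfinite composite $B \to LB$ is the reflection unit, so $\C$ is reflective.

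The crux is the convergence of this chain, and I expect it to be the main obstacle, as it is exactly where the three hypotheses ($\Phi$ small, $\Psi \subseteq \E$, $\E$-cowellpoweredness) are jointly used. For the small part $\Phi$: since each domain $M$ of a $\phi \in \Phi$ is bounded, every morphism out of $M$ into a sufficiently filtered colimit factors through an earlier stage, so past a fixed ordinal no new $\Phi$-defects appear. For the large part $\Psi \subseteq \E$: the repair steps attached to $\Psi$ are pushouts and coproducts of $\E$-morphisms, hence again $\E$-morphisms (the left class is stable under these), so they exhibit $LB$ as an $\E$-quotient of $B$; $\E$-cowellpoweredness then forbids a proper class of genuinely distinct such quotients and forces stabilization. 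Controlling the small class by boundedness and the large subclass of $\E$ by cowellpoweredness at once is the delicate point.

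For the factorization system, I would exploit that $\C$, being reflective in the complete category $\B$ (complete by \ref{locbdcomplete} in the $\Set$-enriched case), is itself complete and closed in $\B$ under limits, in particular under intersections of $\M$-subobjects, which remain in $\M$. Setting $\M_\C = \M \cap \mor\C$, I factor a morphism $f : X \to Y$ of $\C$ through the intersection $W \to Y$ of all $\M_\C$-subobjects of $Y$ through which $f$ factors: this $W$ lies in $\C$ and $W \to Y$ in $\M_\C$, the induced $X \to W$ lies in the left class $\E_\C = {}^\perp\M_\C$ determined by minimality of $W$, and properness of $(\E,\M)$ transfers to $(\E_\C,\M_\C)$. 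The $\E_\C$-cowellpoweredness of $\C$ follows from the $\E$-cowellpoweredness of $\B$, since the $\B$-factorization of an $\E_\C$-quotient of $X$ produces an $\E$-quotient of $X$.

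Finally I would treat the last clause and then deduce local boundedness. The key lemma is that $\C$ is closed under $\beta$-filtered $\M$-unions formed in $\B$, where $\beta$ bounds the domains of $\Phi$. Given such a $\beta$-filtered $\M$-sink of objects of $\C$ with $\B$-union $U \to C$ (an $\M$-subobject of some $C \in \C$), I check $U \in \Theta^\perp$ directly: $\Psi$-orthogonality holds because $\Psi^\perp$ is closed under $\M$-subobjects (using $\Psi \subseteq \E$ and $\E \perp \M$) and $U \to C$ is in $\M$; for $\phi \in \Phi$, any map from the $\beta$-bounded domain $M$ into the $\beta$-filtered union $U$ factors through some member $C_i \in \C$, where it extends along $\phi$ since $C_i \in \Phi^\perp$ (existence), while uniqueness follows because $U \to C$ is monic and $C \in \Phi^\perp$. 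Hence the inclusion $i : \C \hookrightarrow \B$ preserves $\beta$-filtered $\M$-unions, i.e. is $\beta$-bounded. Taking $\beta$ additionally $\geq \alpha$ with the $(\E,\M)$-generator $\G$ of $\B$ consisting of $\beta$-bounded objects, the reflections $\{LG \mid G \in \G\}$ form an $(\E_\C,\M_\C)$-generator of $\C$ (by $\M_\C$-conservativity, via $\C(LG,-) \cong \B(G, i-)$), and each $LG$ is $\beta$-bounded in $\C$ because both $i$ and $\B(G,-)$ preserve $\beta$-filtered $\M$-unions; thus $\C$ is locally $\beta$-bounded.
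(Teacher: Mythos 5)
First, a remark on the comparison: the paper does not actually prove this statement---it is imported from Freyd and Kelly with a bare citation to (the proofs of) \cite[4.1.3, 4.2.2]{FreydKelly}---so the only meaningful comparison is with their original argument, whose broad architecture (transfinite reflection construction; factorization on $\C$ via intersections of $\M$-subobjects; closure of $\Theta^\perp$ under $\beta$-filtered $\M$-unions for the last clause) your proposal does follow. Your treatment of the final clause is essentially right: showing $\Theta^\perp$ is closed in $\B$ under $\beta$-filtered $\M$-unions, using $\E\perp\M$ to handle $\Psi$ and $\beta$-boundedness of the domains to handle $\Phi$, is exactly the correct lemma, and the passage from there to the local boundedness of $\C$ is sound.

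The genuine gap is in the convergence of the transfinite chain, which you rightly call the crux but then dispatch with a false premise. You assert that ``since each domain $M$ of a $\phi\in\Phi$ is bounded, every morphism out of $M$ into a sufficiently filtered colimit factors through an earlier stage.'' Boundedness here (Definition \ref{alphaboundedobject}) only says that $\B(M,-)$ preserves $\beta$-filtered $\M$-\emph{unions}; it gives no control over maps into colimits of arbitrary $\beta$-filtered chains. The connecting maps $B_\xi\to B_{\xi+1}$ of your chain are pushouts of coproducts of $\Theta$-maps followed by coequalizers and are not in $\M$, so at a limit ordinal $\lambda$ of cofinality at least $\beta$ a map $M\to B_\lambda$ is only guaranteed to factor through the $\M$-image $I_\xi$ of some $B_\xi\to B_\lambda$ (these images do form a $\beta$-filtered $\M$-union equal to $B_\lambda$, the colimit cocone being $\E$-tight), and \emph{not} through $B_\xi$ itself. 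Since your construction repairs defects of the stages $B_\xi$ but not of their images $I_\xi$, and $B_\xi\to I_\xi$ lies in $\E$ so the map does not lift, you cannot conclude that the defect has already been repaired; closing exactly this gap is the technical heart of Freyd and Kelly's convergence theorem (and of Kelly's general transfinite constructions in \cite{Kellytrans}), where $\E$-cowellpoweredness and preservation of $\E$-tightness are used together in an essentially different way. A second, smaller gap: to get $\E_\C$-cowellpoweredness you assign to each $\E_\C$-quotient $X\to Q$ the $\E$-part of its $(\E,\M)$-factorization in $\B$, but the resulting $Z$ is an $\M$-\emph{sub}object of $Q$, so nothing in your argument shows that this assignment is injective on isomorphism classes of quotients, and the desired bound does not follow as stated.
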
 

\begin{theo}[Kelly \cite{Kelly}]
\label{Kellytheorem}
Let $\Theta = \{\theta : M_\theta \to N_\theta\}$ be a class of morphisms in a locally bounded category $\B$ such that the class $\{N_\theta \mid \theta \in \Theta, \theta \notin \E\}$ is essentially small, and let $\C = \Theta^\perp$ be the orthogonal subcategory of $\B$ described by $\Theta$. Then $\C$ is a reflective subcategory of $\B$. \qed  
\end{theo}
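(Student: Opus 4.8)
The plan is to construct, for each object $B\in\B$, a reflection $B\to RB$ into $\C=\Theta^\perp$, which is what reflectivity amounts to. Note first that $\Theta^\perp$ is closed under all limits of $\B$: orthogonality to a fixed $\theta:M\to N$ is preserved by limits, since each hom-functor turns a limit cone into a limit of the bijections $\B(N,B_j)\to\B(M,B_j)$. Moreover $\B$ is complete, indeed total, by the $\Set$-enriched case of \ref{locbdcomplete}, so the inclusion $\C\hookrightarrow\B$ preserves limits; by the general adjoint functor theorem it then suffices to produce a small solution set at each $B$, which the construction below will furnish. I would split $\Theta=\Psi\sqcup\Phi$ with $\Psi:=\Theta\cap\E$ and $\Phi:=\Theta\setminus\E$; by hypothesis the codomains of the $\Phi$-morphisms form an essentially small class, so I fix a small set $\mathcal N$ of representatives.

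I would then run the transfinite orthogonal-reflection (small-object) construction. Set $B_0=B$; at a successor stage form $B_{i+1}$ by a pushout that, for each $\theta:M_\theta\to N_\theta$ in $\Theta$ and each $f:M_\theta\to B_i$, adjoins a morphism $N_\theta\to B_{i+1}$ extending $f$ along $\theta$ (forcing existence of fillers), together with coequalizers identifying competing fillers (forcing uniqueness); at limit stages take the conical colimit. Properness is what lets the two halves cohere: every $\E$-morphism is an epimorphism, so for $\theta\in\Psi\subseteq\E$ the uniqueness clause is automatic, and by \ref{union} every colimit cocone arising at a limit stage is $\E$-tight. Passing to the colimit $B_\gamma$ of the resulting chain for a suitable ordinal $\gamma$ yields the candidate reflection $B\to B_\gamma$.

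The crux — and exactly the point at which this result improves on \ref{FreydKellytheorems} by discarding the $\E$-cowellpoweredness hypothesis — is to control the size of the construction and prove that it stabilizes. I would choose $\gamma$ to be a regular cardinal for which $B$ and every object of the small set $\mathcal N$ is $\gamma$-bounded, which is possible because $\mathcal N$ is small and every object of a locally bounded category is bounded, by \ref{every_obj_bdd}. For $\theta\in\Phi$ the data adjoined at each stage are copies of the bounded codomains in $\mathcal N$, while the potentially large family $\Psi$ contributes only along epimorphisms; since $B$ and each $N\in\mathcal N$ are $\gamma$-bounded and the cocone at stage $\gamma$ is $\E$-tight (\ref{union}), every morphism out of these objects into $B_\gamma$ factors through some earlier stage $B_i$. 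This is precisely what forces each outstanding orthogonality square to be filled by stage $\gamma$, so that $B_\gamma\in\Theta^\perp$ and $B\to B_\gamma$ is the desired reflection. The part I expect to be hardest is ensuring that the uncontrolled left-class family $\Psi$, whose codomains are not assumed essentially small, does no harm: this must rest on the $\E$-morphisms being epimorphic and on the $\E$-tightness supplied by \ref{union}, rather than on any counting of $\E$-quotients of $B$, which is exactly how the cowellpoweredness assumption of \ref{FreydKellytheorems} is circumvented.
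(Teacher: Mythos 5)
The paper does not prove this statement: it is quoted from Kelly (the result stated just before \cite[6.5]{Kelly}), so your attempt can only be judged on its own terms. Your overall strategy --- transfinite iteration controlled by boundedness, with the $\E$-part of $\Theta$ treated separately --- is the right one, but the write-up has two genuine gaps, and they sit exactly at the two points where the real proof does its work.

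First, the successor step is not well-defined as described. You index your pushout/coequalizer over \emph{all} pairs $(\theta,f)$ with $\theta\in\Theta$; but $\Phi=\Theta\setminus\E$ may be a proper class even though its class of \emph{codomains} is essentially small (take, say, all morphisms $M\to 1$), so this colimit is large and its existence does not follow from cocompleteness. The standard repair, which you omit, is the reduction of Freyd--Kelly \cite[4.1.2]{FreydKelly} used in the paper's proof of \ref{orthoreflective}: factor each $\theta\notin\E$ as an $\E$-morphism followed by an $\M$-morphism; then $\Theta^\perp$ is unchanged if you replace $\Phi$ by the resulting $\M$-parts (an essentially \emph{small} class, because their codomains are essentially small and $\B$ is $\M$-wellpowered by \cite[2.5.2]{FreydKelly}) and absorb the $\E$-parts into $\Psi$. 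After this reduction the non-$\E$ contribution at each stage is a genuinely small colimit, and the proper-class $\E$-part is handled not by ``a pushout'' but by an arbitrary cointersection of $\E$-quotients of $B_i$, whose existence is precisely the cointersection clause in Definition \ref{factegory}; you gesture at epimorphy and $\E$-tightness here, but the cointersection axiom is the actual mechanism that replaces $\E$-cowellpoweredness.

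Second, the convergence argument is invalid as stated. Choosing $\gamma$ so that $B$ and the objects of $\mathcal N$ are $\gamma$-bounded gives you that $\B(N,-)$ preserves $\gamma$-filtered \emph{$\M$-unions}; the colimit of your chain $B_0\to B_1\to\cdots$ is not such a union, because the connecting morphisms are pushouts of the $\theta$'s and are in general not $\M$-morphisms. What $\E$-tightness of the colimit cocone (\ref{union}) actually buys, after $(\E,\M)$-factorizing each leg $s_i:B_i\to B_\gamma$, is that $B_\gamma$ is the $\gamma$-filtered union of the \emph{images} of the $B_i$, so a map $N\to B_\gamma$ factors through the image of some $B_i$ --- which is not the same as factoring through $B_i$ itself. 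Bridging that gap (showing the lifting problems posed relative to the images are already solved, or that the $\E$-parts of the legs eventually stabilize) is the delicate core of Kelly's argument in \cite{Kellytrans}, and your proposal passes over it in one sentence. As written, the proof does not establish that $B_\gamma$ lies in $\Theta^\perp$.
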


To generalize these results to the setting of locally bounded enriched categories, we employ the following technical lemma that we shall also apply later in Section \ref{sketches}. This lemma is a generalization of an argument employed by Kelly in the proof of \cite[6.5]{Kelly}, as well as a generalization and enrichment of Freyd and Kelly's \cite[5.1.1]{FreydKelly}. 

\begin{lem}
\label{bifunctorlemma}
Let $\X, \Y, \Z$ be $\V$-factegories with $\X$ and $\Z$ cocomplete, and suppose that $\X$ has an enriched $(\E, \M)$-generator $\G$ and that $\Z$ has a terminal object.  Let $\Theta$ be a class of morphisms in $\Y$, and let $\ast : \X \tensor \Y \to \Z$ be a $\V$-functor such that each $(-) \ast Y : \X \to \Z$ $(Y \in \ob\Y)$ preserves colimits and preserves the left class.  Let $\Delta$ be the class of $\Z$-morphisms $X \ast \theta$ for $X \in \ob\X$ and $\theta \in \Theta$, and let $\Delta_1$ be the class of all $\Z$-morphisms $G \ast \theta$ with $G \in \G$ and $\theta \in \Theta$. Then there is a class of $\Z$-morphisms $\Omega \subseteq \E$ such that $\Delta^{\perp_\V} = \left(\Delta_1 \cup \Omega\right)^{\perp_\V}$.   
\end{lem}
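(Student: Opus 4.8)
The plan is to handle the proper-class-indexed family $\Delta$ by presenting each object $X$ of $\X$ through the generator $\G$ and absorbing into $\Omega$ the discrepancy caused by the fact that $\G$ generates but is not dense. The basic device is to encode $\V$-orthogonality to an object as $\V$-orthogonality to a morphism: writing $t_Z : Z \to \mathbf 1$ for the unique map to the ($\V$-)terminal object of $\Z$, one has $Z \perp_\V f$ iff $f \perp_\V t_Z$, so that the class $Z^{\perp_\V} := \{f \in \mor\Z \mid Z \perp_\V f\}$ is the left class of the enriched prefactorization system $\V$-orthogonally generated by $t_Z$ (cf. \cite[3.1, 5.2]{enrichedfact}); this is the one place the terminal object is used. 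It follows that $Z^{\perp_\V}$ contains the isomorphisms, is closed under composition, under tensoring, and under colimits in the arrow $\V$-category (in particular under coends and under cobase change), and satisfies the cancellation property that $g \circ f \in Z^{\perp_\V}$ together with $f \in Z^{\perp_\V}$ imply $g \in Z^{\perp_\V}$. Each of these can alternatively be read off directly from the characterization $Z \perp_\V f \iff \Z(f, Z)$ is invertible, using that the representable $\Z(-, Z) : \Z^\op \to \V$ sends colimits to limits and that $\Z(g \circ f, Z) = \Z(f, Z) \circ \Z(g, Z)$.

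Next I would bring in the generator. For each $X \in \ob\X$ put $LX = \int^{G \in \G} \X(G, X) \tensor G$; since $\G$ is an enriched $(\E, \M)$-generator, the counit $e_X : LX \to X$ of the adjunction $(-) \ast i \dashv \y_\G$ lies in $\E$ by (the proof of) \ref{enrichedEMgeneratoralt}. Fix $\theta : M \to N$ in $\Theta$. Because $(-) \ast M$ and $(-) \ast N$ preserve colimits and the left class, applying them to $e_X$ produces $\E$-morphisms $e_X \ast M$ and $e_X \ast N$ fitting into the naturality square $(e_X \ast N) \circ (LX \ast \theta) = (X \ast \theta) \circ (e_X \ast M)$, and moreover identifies $LX \ast \theta$ with the coend $\int^{G} \X(G, X) \tensor (G \ast \theta)$, that is, with a colimit built by tensoring and coends from the morphisms $G \ast \theta$ ($G \in \G$). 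By the closure properties above, $Z \perp_\V \Delta_1$ therefore forces $Z \perp_\V LX \ast \theta$ for every $X$; and since $LX \in \ob\X$, the morphism $LX \ast \theta$ also belongs to $\Delta$.

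I would then define $\Omega$ by a pushout. Forming the pushout $P$ of $X \ast M \xleftarrow{e_X \ast M} LX \ast M \xrightarrow{LX \ast \theta} LX \ast N$ yields a cobase change $\theta' : X \ast M \to P$ of $LX \ast \theta$ and a cobase change $i_2 : LX \ast N \to P$ of $e_X \ast M$, and the naturality square induces a comparison $p_{X, \theta} : P \to X \ast N$ satisfying $p_{X, \theta} \circ \theta' = X \ast \theta$ and $p_{X, \theta} \circ i_2 = e_X \ast N$. As $e_X \ast N \in \E$ and $\E$ has the cancellation property $g \circ f \in \E \Rightarrow g \in \E$, the map $p_{X, \theta}$ lies in $\E$, so $\Omega := \{p_{X, \theta} \mid X \in \ob\X, \theta \in \Theta\}$ is a (legitimate) class contained in $\E$. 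Both inclusions are now quick. If $Z \perp_\V \Delta_1 \cup \Omega$, then $Z \perp_\V LX \ast \theta$ gives $Z \perp_\V \theta'$ (cobase change), and $Z \perp_\V p_{X, \theta}$, so $Z \perp_\V X \ast \theta = p_{X, \theta} \circ \theta'$ by closure under composition; hence $(\Delta_1 \cup \Omega)^{\perp_\V} \subseteq \Delta^{\perp_\V}$. Conversely, if $Z \perp_\V \Delta$ then $Z \perp_\V \Delta_1$ trivially, and from $Z \perp_\V LX \ast \theta$ (a member of $\Delta$) we get $Z \perp_\V \theta'$, whence the cancellation property applied to $X \ast \theta = p_{X, \theta} \circ \theta'$ — with $X \ast \theta \in \Delta$ — yields $Z \perp_\V p_{X, \theta}$; thus $Z \perp_\V \Omega$ and $\Delta^{\perp_\V} \subseteq (\Delta_1 \cup \Omega)^{\perp_\V}$.

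The hard part is really the design of $\Omega$, which must simultaneously be \emph{strong enough} for $\Delta_1 \cup \Omega$ to recover all of $\Delta$ and \emph{weak enough} to be implied by $\Delta$ alone. One cannot simply place the maps $e_X \ast M$ and $e_X \ast N$ into $\Omega$: they are needed for the first inclusion, but orthogonality to $\Delta$ does not in general force them into $Z^{\perp_\V}$ (it only forces $\Z(e_X \ast M, Z)$ and $\Z(e_X \ast N, Z)$ to be invertible \emph{together}), so the second inclusion would fail. The pushout comparison $p_{X, \theta}$ is exactly the morphism that threads this needle: the single factorization $X \ast \theta = p_{X, \theta} \circ \theta'$ sandwiches it between a cobase change of $LX \ast \theta \in \Delta$ and the map $X \ast \theta \in \Delta$, so cancellation forces $p_{X, \theta} \in Z^{\perp_\V}$. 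The remaining work is routine: checking that the colimit- and left-class-preservation hypotheses on each $(-) \ast Y$ genuinely yield the coend presentation of $LX \ast \theta$ and the $\E$-membership of $e_X \ast M$, $e_X \ast N$, and $p_{X, \theta}$.
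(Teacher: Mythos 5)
Your proof is correct and follows essentially the same route as the paper's: encode $\V$-orthogonality to $Z$ as the left class of the enriched prefactorization system generated by $Z \to \mathbf{1}$, present $X$ via the generator, push out, and use the cancellation property of that left class on the factorization $X \ast \theta = p_{X,\theta} \circ \theta'$. The only (immaterial) differences are that the paper uses the coproduct $\coprod_{G} \X(G,X) \tensor G$ with its canonical $\E$-morphism rather than the coend $LX$ with the counit $e_X$, and that your observation that $LX \ast \theta$ itself lies in $\Delta$ gives a small shortcut in the converse inclusion where the paper reuses its Claim.
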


\begin{proof}
Let us write $\theta : M_\theta \to N_\theta$ for each $\theta \in \Theta$, and for each $X \in \ob\X$, let $RX := \coprod_{G \in \G} \X(G, X) \tensor G$ and $\kappa_X : RX \to X$ be the canonical morphism, which lies in $\E$ (because $\G$ is an enriched $(\E, \M)$-generator).  Consider the following diagram in $\Z$, where the inner square is a pushout:
\[\begin{tikzcd}
	{RX \ast M_\theta} && {RX \ast N_\theta} \\
	\\
	{X \ast M_\theta} && {Y_{X, \theta}} \\
	&&& {X \ast N_\theta}
	\arrow["{RX \ast \theta}", from=1-1, to=1-3]
	\arrow["{\kappa_X \ast M_\theta}"', from=1-1, to=3-1]
	\arrow["{s_{X, \theta}}", from=1-3, to=3-3]
	\arrow["{r_{X, \theta}}"', from=3-1, to=3-3]
	\arrow["{p_{X, \theta}}"{description}, dashed, from=3-3, to=4-4]
	\arrow["{\kappa_X \ast N_\theta}"{description}, from=1-3, to=4-4]
	\arrow["{X \ast \theta}"{description}, from=3-1, to=4-4]
\end{tikzcd}\]
Now $p_{X, \theta} \circ s_{X, \theta} = \kappa_X \ast N_\theta \in \E$ since $\kappa_X \in \E$, so $p_{X, \theta} \in \E$ by properness.  Hence, letting $\Omega := \left\{ p_{X, \theta} \mid X \in \ob\X, \theta \in \Theta\right\}$, we have $\Omega \subseteq \E$. We shall require the following:

\medskip

\noindent\textit{Claim.} If $Z \in \ob\Z$ and $Z \in \Delta_1^{\perp_\V}$, then $r_{X, \theta} \perp_\V Z$ for all $X \in \ob\X$ and $\theta \in \Theta$.

\medskip

\noindent\textit{Proof of Claim.}
We have $RX \ast M_\theta = \left(\coprod_{G \in \G} \X(G, X) \tensor G\right) \ast M_\theta \cong \coprod_{G \in \G} \X(G, X) \tensor (G \ast M_\theta)$ because $(-) \ast M_\theta : \X \to \Z$ preserves colimits, and similarly $RX \ast N_\theta \cong \coprod_{G \in \G} \X(G, X) \tensor (G \ast N_\theta)$, so that $RX \ast \theta$ is (isomorphic to) the morphism 
\[ \coprod_{G \in \G} \X(G, X) \tensor (G \ast M_\theta) \xrightarrow{\coprod_{G \in \G} \X(G, X) \tensor (G \ast \theta)} \coprod_{G \in \G} \X(G, X) \tensor (G \ast N_\theta). \] Now, the class $Z^{\uparrow_\V}$ of morphisms of $\Z$ that are $\V$-orthogonal to $Z$ is the left class of an enriched prefactorization system (\S \ref{definitionsbasicresults}) by \cite[3.5]{enrichedfact}, since $Z^{\uparrow_\V}$ is the class of morphisms that are $\V$-orthogonal to the unique morphism from $Z$ to the (conical) terminal object of $\Z$. Since $Z \in \Delta_1^{\perp_\V}$, we know for each $G \in \G$ that $G \ast \theta \perp_\V Z$, i.e. that $G \ast \theta \in Z^{\uparrow_\V}$, which then entails by \cite[4.6]{enrichedfact} that $\X(G, X) \tensor (G \ast \theta) \in Z^{\uparrow_\V}$ for each $G \in \G$. By the dual of \cite[4.4]{enrichedfact}, it then follows that $RX \ast \theta \perp_\V Z$, in view of the above representation of $RX \ast \theta$. Since $r_{X, \theta}$ is the pushout of $RX \ast \theta$ along $\kappa_X \ast M_\theta$, the same result then entails that $r_{X, \theta} \perp_\V Z$, and the Claim is thus proved.

\medskip

We now prove that $\Delta^{\perp_\V} = \left(\Delta_1 \cup \Omega\right)^{\perp_\V}$.  Let $Z \in \ob\Z$.   Firstly, if $Z \in \Delta^{\perp_\V}$, then certainly $Z \in \Delta_1^{\perp_\V}$, since $\Delta_1 \subseteq \Delta$, so for every $X \in \ob\X$ and $\theta \in \Theta$ we know that $r_{X, \theta} \perp_\V Z$ by the Claim, but $p_{X, \theta} \circ r_{X, \theta} = X \ast \theta$, and $X \ast \theta \perp_\V Z$ since $Z \in \Delta^{\perp_\V}$, so $p_{X, \theta} \perp_\V Z$ by the dual of \cite[4.4]{enrichedfact}, showing that $Z \in \left(\Delta_1 \cup \Omega\right)^{\perp_\V}$.  Conversely, if $Z \in \left(\Delta_1 \cup \Omega\right)^{\perp_\V}$ then $Z \in \Delta_1^{\perp_\V}$, so $r_{X, \theta} \perp_\V Z$ by the Claim, but $Z \in \Omega^{\perp_\V}$ and hence $p_{X, \theta} \perp_\V Z$, so $X \ast \theta = p_{X, \theta} \circ r_{X, \theta}$ is $\V$-orthogonal to $Z$ since $Z^{\uparrow_\V}$ is closed under composition.   
\end{proof}

We now prove our main result of this section, which is an enrichment of Freyd and Kelly's results \ref{FreydKellytheorems} and \ref{Kellytheorem}, as well as a generalization and extension of Kelly's result \cite[6.5]{Kelly} from \emph{presheaf} $\V$-categories to \emph{arbitrary} locally bounded $\V$-categories.

\begin{theo}
\label{orthoreflective}
Let $\B$ be a locally $\alpha$-bounded $\V$-category, and let $\Theta = \{\theta : M_\theta \to N_\theta\}$ be a class of morphisms in $\B$ such that the class $\{N_\theta : \theta \in \Theta, \ \theta \notin \E\}$ is essentially small. Let $\C = \Theta^{\perp_\V}$ be the $\V$-orthogonal sub-$\V$-category of $\B$ described by $\Theta$.
\begin{enumerate}
\item $\C$ is a reflective sub-$\V$-category of $\B$.

\item If $\B$ is $\E$-cowellpowered, then $\C$ is itself a locally bounded $\V$-category that is $\E_\C$-cowellpowered, and the inclusion $\C \hookrightarrow \B$ is a bounding right adjoint. If furthermore $M_\theta$ is an enriched $\alpha$-bounded object of $\B$ for each $\theta \in \Theta$ with $\theta \notin \E$, then $\C$ is a locally $\alpha$-bounded $\V$-category, and $\C \hookrightarrow \B$ is an $\alpha$-bounding right adjoint.
\end{enumerate} 
\end{theo}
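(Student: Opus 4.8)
The plan is to prove the reflectivity statement (1) by realizing $\C_0$ as an \emph{ordinary} orthogonal subcategory of $\B_0$ whose non-$\E$ morphisms have essentially small codomains and then invoking Kelly's ordinary theorem \ref{Kellytheorem}, and to prove (2) by exhibiting the inclusion $\C \hookrightarrow \B$ as a bounding right adjoint and applying \ref{reflectivelocallybounded}. First I note that $\B$ is tensored and cotensored, being cocomplete and (by \ref{locbdcomplete}) complete, so that $\C_0 = \Theta^{\perp_\V}$ is the ordinary orthogonal subcategory $\{V \tensor \theta \mid V \in \ob\V,\ \theta \in \Theta\}^\perp$ of $\B_0$.

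For (1) I would apply the instance of Lemma \ref{bifunctorlemma} with base $\Set$, taking $\X := \V_0$ (an ordinary cocomplete factegory with ordinary $(\E,\M)$-generator $\G$), $\Y := \Z := \B_0$, and $\ast := \tensor \colon \V_0 \times \B_0 \to \B_0$; the hypotheses hold since each $(-)\tensor Y$ is a left adjoint (hence preserves colimits) and preserves the left class by \ref{compatiblelemma}, while $\B_0$ has a terminal object by \ref{locbdcomplete}. The lemma produces $\Omega \subseteq \E$ with $\C_0 = \{V \tensor \theta\}^\perp = (\Delta_1 \cup \Omega)^\perp$ (ordinary orthogonality), where $\Delta_1 = \{G \tensor \theta \mid G \in \G,\ \theta \in \Theta\}$. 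The decisive gain is that the non-$\E$ members of $\Delta_1 \cup \Omega$ have codomains among $\{G \tensor N_\theta \mid G \in \G,\ \theta \notin \E\}$, which is essentially small since $\G$ is small and $\{N_\theta \mid \theta \notin \E\}$ is essentially small. As $\B_0$ is locally bounded by \ref{enrichedtoordinarylocallybounded}, Kelly's theorem \ref{Kellytheorem} makes $\C_0$ reflective in $\B_0$; and since $\B$ is cotensored and $\C = \Theta^{\perp_\V}$ is closed under cotensors (because $\B(\theta, [V,Z]) \cong [V, \B(\theta, Z)]$), a standard enriched reflection argument upgrades this to a $\V$-reflection, proving (1).

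For (2), assume $\B$ is $\E$-cowellpowered. I would first observe that any locally bounded category is $\M$-wellpowered, as its $(\E,\M)$-generator $\G$ yields an order embedding $\Sub_\M(C) \hookrightarrow \prod_{G \in \G} \Sub(\C(G,C))$ into a small product; thus $\B_0$ is $\M$-wellpowered. The reduction to Freyd--Kelly form of \ref{FreydKellytheorems} proceeds via properness: writing $\theta = m_\theta \circ e_\theta$ for the $(\E,\M)$-factorization, one has $\theta \perp_\V Z$ iff $e_\theta \perp_\V Z$ and $m_\theta \perp_\V Z$, because $\B(e_\theta, Z) \in \M$, so invertibility of the composite $\B(\theta,Z) = \B(e_\theta, Z)\circ\B(m_\theta, Z)$ forces both factors to be invertible. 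Hence $\C = \Theta^{\perp_\V} = \Theta'^{\perp_\V}$, where $\Theta'$ replaces each $\theta \notin \E$ by $\{e_\theta, m_\theta\}$; its non-$\E$ part $\{m_\theta \mid \theta \notin \E\}$ consists of $\M$-subobjects of the essentially small family $\{N_\theta\}$, hence is essentially small by $\M$-wellpoweredness. Applying the $\Set$-instance of Lemma \ref{bifunctorlemma} to $\Theta'$ as before gives $\C_0 = (\Phi \cup \Psi)^\perp$ with $\Phi = \{G \tensor m_\theta\}$ small (up to a skeleton) and $\Psi \subseteq \E$, so \ref{FreydKellytheorems} applies to $\B_0$: $\C_0$ is reflective, $\E_\C$-cowellpowered with $\M_\C = \M \cap \mor\C$, and locally bounded, and the inclusion is $\beta$-bounded once the domains $G \tensor \mathrm{im}_\theta$ of $\Phi$ are $\beta$-bounded. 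Since $\C$ is closed under cotensors and $\M_\C$ is stable under cotensoring and compatible with $(\E,\M)$, $\C$ is a cocomplete $\V$-factegory and $i\colon \C \to \B$ is a fully faithful right adjoint right-class $\V$-functor whose underlying ordinary functor is $\beta$-bounded; so $i$ is $\beta$-bounded as a $\V$-functor, and \ref{reflectivelocallybounded} yields that $\C$ is locally bounded with $i$ a bounding right adjoint. For the sharp statement, if each $M_\theta$ ($\theta \notin \E$) is enriched $\alpha$-bounded, then $\mathrm{im}_\theta$ is enriched $\alpha$-bounded (an $\E$-quotient of an $\alpha$-bounded object is $\alpha$-bounded, by an orthogonality argument), whence $G \tensor \mathrm{im}_\theta$ is enriched $\alpha$-bounded by \ref{enrichedtensorbounded} and \ref{ordinaryequalsenrichedbounded}; thus $i$ is $\alpha$-bounded and $\C$ is locally $\alpha$-bounded with $i$ an $\alpha$-bounding right adjoint.

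The hard part will be the boundedness of the inclusion in (2), and specifically the fact that only the domains $M_\theta$, and not the codomains $N_\theta$, need to be $\alpha$-bounded. The conceptual content is that, after the factorization reduction, the test maps are the $\M$-monos $m_\theta$: for an $\alpha$-filtered $\M$-union $W = \bigcup_i C_i \hookrightarrow C$ with all $C_i, C \in \C$, the morphism $\B(m_\theta, W)$ is automatically monic (since $\B(m_\theta, C)$ is invertible and $W \hookrightarrow C$ lies in $\M$) and split epic (since the domain $\mathrm{im}_\theta$ is $\alpha$-bounded), and a split epic mono is invertible; the $\E$-part of $\Theta$ is handled separately by the closure of $\{\psi\}^{\perp_\V}$ under $\M$-subobjects for $\psi \in \E$. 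This delicate interplay between properness, $\M$-subobject closure, and boundedness is exactly what underlies the $\beta$-boundedness supplied by \ref{FreydKellytheorems} and must be tracked carefully; a secondary obstacle is checking that $\C$ inherits the correct cocomplete $\V$-factegory structure and $\E_\C$-cowellpoweredness, for which the $\E$-cowellpoweredness of $\B$ is essential.
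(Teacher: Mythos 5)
Your proposal is correct and follows essentially the same route as the paper: reduce to ordinary orthogonality in $\B_0$ via the $\Set$-instance of \ref{bifunctorlemma} applied to $\tensor : \V_0 \times \B_0 \to \B_0$, split off the $\E$-part and shrink the remainder to a small class using $(\E,\M)$-factorizations and the $\M$-wellpoweredness of $\B_0$, invoke \ref{Kellytheorem} resp.\ \ref{FreydKellytheorems}, and lift back to the enriched setting via closure under cotensors and \ref{reflectivelocallybounded}. The only substantive differences are that you factorize $\theta$ before tensoring with $G$ (the paper factorizes $G \tensor \theta$ afterwards, which lets it quote Sousa's ordinary result that $\E$-quotients of $\alpha$-bounded objects are $\alpha$-bounded rather than the enriched analogue you need for $\mathrm{im}_\theta$, though that analogue does follow via \ref{enrichedboundedlemma}), and one small slip: $\B(e_\theta, Z)$ need not lie in $\M$, but it is a monomorphism by properness (since $e_\theta$ is a $\V$-epimorphism), which is all your split-epi-plus-mono argument requires.
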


\begin{proof}
By \cite[3.5]{Completion}, $\C$ is closed under cotensors in $\B$, and $\B$ is cotensored by \ref{locbdcomplete}, so to show that the inclusion $\V$-functor $i : \C \hookrightarrow \B$ has a left adjoint, it suffices by \cite[4.85]{Kelly} to show that $\C_0$ is reflective in $\B_0$.

Let $\G$ be the ordinary $(\E, \M)$-generator of $\V_0$.  By the $\Set$-enriched version of \ref{bifunctorlemma} with $\X = \V_0, \Y = \B_0, \Z = \B_0$ and $\ast = \tensor$ (where each $(-) \tensor B : \V_0 \to \B_0$ ($B \in \ob\B$) preserves the left class by \ref{compatiblelemma} and certainly preserves colimits), there is a class of $\B$-morphisms $\Omega \subseteq \E$ such that \[ \C = \Theta^{\perp_\V} = \left\{X \tensor \theta \mid X \in \ob\V, \theta \in \Theta\right\}^\perp = \left(\Delta \cup \Omega\right)^\perp \] (note the difference in superscripts), where $\Delta := \left\{G \tensor \theta \mid G \in \G, \theta \in \Theta\right\}$. Now let \linebreak $\Lambda := \left\{G \tensor \theta \mid G \in \G, \theta \in \Theta, \theta \notin \E\right\}$, and let $\Omega_1 := \Omega \cup \left\{G \tensor \theta : G \in \G, \theta \in \Theta \cap \E\right\}$.  We still have $\Omega_1 \subseteq \E$ because $\E$ is stable under tensoring (since $(\E, \M)$ is enriched). Then we clearly have $\C = \left(\Delta \cup \Omega\right)^\perp = \left(\Lambda \cup \Omega_1\right)^\perp = \Lambda^\perp \cap \Omega_1^\perp$. Now let $\Lambda'$ be the class of $\M$-morphisms obtained from the $(\E, \M)$-factorizations of the morphisms in $\Lambda$, let $\Lambda''$ be the class of $\E$-morphisms so obtained, and let $\Omega_2 := \Omega_1 \cup \Lambda''$, so that we still have $\Omega_2 \subseteq \E$. By \cite[4.1.2]{FreydKelly} we have $\Lambda^\perp = \left(\Lambda' \cup \Lambda''\right)^\perp$, and hence we obtain
$$
\begin{array}{lclclcl}
\C & = & \Lambda^\perp \cap \Omega_1^\perp & = & \left(\Lambda' \cup \Lambda''\right)^\perp \cap \Omega_1^\perp & = &  \left(\Lambda'\right)^\perp \cap \left(\Lambda''\right)^\perp \cap \Omega_1^\perp\\
   & = & \left(\Lambda'\right)^\perp \cap \left(\Lambda'' \cup \Omega_1\right)^\perp & = & \left(\Lambda'\right)^\perp \cap \Omega_2^\perp & = & \left(\Lambda' \cup \Omega_2\right)^\perp.
\end{array}
$$
Since $\G$ is small and by hypothesis we may assume that $\{N_\theta : \theta \in \Theta, \ \theta \notin \E\}$ is small, but $\B_0$ is $\M$-wellpowered by \cite[2.5.2]{FreydKelly}, so it follows that $\Lambda'$ is small.

By \ref{enrichedtoordinarylocallybounded}, $\B_0$ is a locally $\alpha$-bounded ordinary category, so it now follows by \ref{Kellytheorem} that $\C_0 =\left(\Lambda' \cup \Omega_2\right)^\perp \hookrightarrow \B_0$ is a reflective subcategory, which entails that $\C$ is reflective in $\B$, proving (1).          

Now assume in addition that $\B$ is $\E$-cowellpowered, and let us prove that $\C$ is a locally bounded and $\E$-cowellpowered $\V$-category. Since $\B$ is cocomplete (and complete, by \ref{locbdcomplete}), it follows by reflectivity that the $\V$-category $\C$ is cocomplete (and complete). By \ref{FreydKellytheorems} and the fact that $\C_0 = \left(\Lambda' \cup \Omega_2\right)^\perp$ with $\Lambda'$ small and $\Omega_2 \subseteq \E$, we have an ordinary proper factorization system $(\E_\C, \M_\C)$ on $\C_0$ with $\M_\C = \M_\B \cap \mor \C_0$ such that $\C$ is $\E_\C$-cowellpowered. It follows readily that $\C$ is a $\V$-factegory since $\B$ is so.

Therefore $\C$ is a cocomplete $\V$-factegory and $i : \C \hookrightarrow \B$ is a fully faithful, right adjoint right-class $\V$-functor.  Let $\beta$ be the smallest regular cardinal $\beta \geq \alpha$ such that the domain of every morphism in $\Lambda'$ is an ordinary $\beta$-bounded object of $\B_0$, which is possible because $\B_0$ is locally bounded (see \ref{locallyboundedcardinal}) and $\Lambda'$ is small. It then follows by \ref{FreydKellytheorems} that $i:\C = \left(\Lambda' \cup \Omega_2\right)^\perp \hookrightarrow \B$ is $\beta$-bounded.  Hence by \ref{reflectivelocallybounded}, $i:\C \hookrightarrow \B$ is a $\beta$-bounding right adjoint and $\C$ is a locally $\beta$-bounded $\V$-category.

If we also know that $M_\theta$ is an enriched $\alpha$-bounded object of $\B$ for each $\theta \in \Theta$ with $\theta \notin \E$, then we can take $\beta = \alpha$. To prove this, it suffices to show that the domain of every morphism in $\Lambda'$ is an ordinary $\alpha$-bounded object of $\B_0$. But each morphism $m \in \Lambda'$ is the $\M$-component of an $(\E, \M)$-factorization
$$G \tensor \theta = \left(G \tensor M_\theta \xrightarrow{e} B \xrightarrow{m} G \tensor N_\theta\right)$$
for some $G \in \G$ and $\theta \in \Theta, \theta \notin \E$.  Now $G$ is an ordinary $\alpha$-bounded object of $\V_0$ and $M_\theta$ is an enriched $\alpha$-bounded object of $\B$, so $G \tensor M_\theta$ is an ordinary $\alpha$-bounded object of $\B_0$ by \ref{enrichedboundedlemma}. It now follows by \cite[2.5]{Sousa} that $B$ is an ordinary $\alpha$-bounded object of $\B_0$ as desired.
\end{proof}

\section{Reflectivity and local boundedness of \texorpdfstring{$\V$}{V}-categories of models for theories}
\label{sketches}

We begin this section by using Theorem \ref{orthoreflective} to show that the $\V$-category of models of an enriched limit sketch in an arbitrary locally bounded $\V$-category $\C$ is reflective in the corresponding $\C$-valued functor $\V$-category and is itself locally bounded and $\E$-cowellpowered if $\C$ is $\E$-cowellpowered.

With the terminology of \cite[6.3]{Kelly}, an \emph{enriched limit sketch} is a pair $(\A, \sketch)$ consisting of a small $\V$-category $\A$ and a class $\sketch$ of cylinders (i.e. $\V$-natural transformations) $\varphi_\gamma : F_\gamma \to \A(A_\gamma, D_\gamma-)$ for $\gamma \in \Gamma$, where $\Gamma$ is a (not necessarily small) class, $F_\gamma : \K_\gamma \to \V$ and $D_\gamma : \K_\gamma \to \A$ are $\V$-functors with $\K_\gamma$ small, and $A_\gamma \in \ob\A$. If $\C$ is a $\V$-category, then a $\V$-functor $M : \A \to \C$ is a \emph{model} of the sketch $(\A, \sketch)$ in $\C$, or a $\Psi$\emph{-model} in $\C$, if for every $\gamma \in \Gamma$ the composite cylinder
\[ F_\gamma \xrightarrow{\varphi_\gamma} \A(A_\gamma, D_\gamma-) \xrightarrow{M} \C(MA_\gamma, MD_\gamma-) \] presents $MA_\gamma$ as a limit $\{F_\gamma,MD_\gamma\}$. We let $\sketch\Mod(\A, \C)$ be the full sub-$\V$-category of the functor $\V$-category $[\A, \C]$ on the models of $(\A, \sketch)$ in $\C$. We now show that $\sketch\Mod(\A, \C)$ can be represented as an enriched orthogonal subcategory of $[\A, \C]$; this was previously shown for $\C = \V$ by Kelly in \cite[6.11]{Kelly}, and in the unenriched general case by Freyd and Kelly in \cite[1.3.1]{FreydKelly}.

\begin{para}\label{param_tensor}
Given a small $\V$-category $\A$ and a tensored $\V$-category $\C$, we write
$$\tensorbar\;:\;[\A,\V] \otimes \C \longrightarrow [\A,\C]$$
for the $\V$-functor defined by $(F \tensorbar C)A = FA \tensor C$, $\V$-naturally in $F \in [\A,\V], C \in \C, A \in \A$.  For each object $C$ of $\C$, the $\V$-functor $(-) \tensorbar C$ may be obtained also by applying the 2-functor $[\A,-]:\V\CAT \rightarrow \V\CAT$ to the $\V$-functor $(-) \tensor C : \V \to \C$, so that $(-) \tensorbar C = [\A, (-) \tensor C]:[\A, \V] \rightarrow [\A, \C]$ and hence
\begin{equation}\label{eq:tensorbar_c}(-) \tensorbar C \dashv [\A, \C(C, -)]\;:\;[\A,\C] \longrightarrow [\A,\V]\end{equation}
because $(-) \tensor C \dashv \C(C, -)$ and the $2$-functor $[\A, -]$ preserves adjunctions.  Therefore
$$[\A,\C](F \tensorbar C,D) \;\cong\; [\A,\V](F,\C(C,D-)) \;\cong\; \C(C,\{F,D\})$$
$\V$-naturally in $F \in [\A,\V]$, $C \in \C$, $D \in [\A,\C]$.  In particular,
\begin{equation}\label{eq:ftensorbar_ladj}F \tensorbar (-) \dashv \{F, -\} : [\A, \C] \longrightarrow \C\end{equation}
for each $F : \A \to \V$. \qed
\end{para}

\begin{lem}
\label{presheaftensorbounded}
Let $\C$ be a tensored $\V$-factegory with $\M$-unions and $\A$ a small $\V$-category. If $F : \A \to \V$ is an enriched $\alpha$-bounded object of $[\A, \V]$ and $C \in \ob\C$ is an enriched $\alpha$-bounded object of $\C$, then $F \tensorbar C : \A \to \C$ is an enriched $\alpha$-bounded object of $[\A, \C]$.  
\end{lem}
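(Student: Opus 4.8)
The plan is to reduce the claim to a composite of $\alpha$-bounded right-class $\V$-functors, via the adjunction recorded in \ref{param_tensor}. First I would observe that $[\A, \C]$ is itself a $\V$-factegory with $\M$-unions formed pointwise, so that asking whether $F \tensorbar C$ is an enriched $\alpha$-bounded object is meaningful: since $\C$ is tensored, so is $[\A, \C]$ (with tensors computed pointwise), and the pointwise factorization system is enriched because $\E_\C$ is stable under tensoring pointwise, while compatibility with $(\E, \M)$ and the pointwise formation of $\M$-unions follow just as in \ref{functorfactegory}. By \ref{enrichedboundedobject}, it then remains to show that $[\A, \C](F \tensorbar C, -) : [\A, \C] \to \V$ preserves $\alpha$-filtered $\M$-unions.

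The key observation is the $\V$-natural isomorphism from \ref{param_tensor}, namely $[\A, \C](F \tensorbar C, D) \cong [\A, \V](F, \C(C, D-))$, $\V$-naturally in $D \in [\A, \C]$. Writing $[\A, \C(C, -)] : [\A, \C] \to [\A, \V]$ for the right adjoint of $(-) \tensorbar C$ from \eqref{eq:tensorbar_c} (so that $[\A, \C(C, -)](D) = \C(C, D-)$), this isomorphism identifies $[\A, \C](F \tensorbar C, -)$, as a $\V$-functor $[\A, \C] \to \V$, with the composite $[\A, \V](F, -) \circ [\A, \C(C, -)]$.

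It therefore suffices to show that this composite preserves $\alpha$-filtered $\M$-unions. Since $C$ is an enriched $\alpha$-bounded object of $\C$, the representable $\C(C, -) : \C \to \V$ is an $\alpha$-bounded right-class $\V$-functor (it is right-class by \ref{reps_rclass_vfunc}), so by \ref{2functorbounding} the $\V$-functor $[\A, \C(C, -)]$ is an $\alpha$-bounded right-class $\V$-functor. Likewise, since $F$ is an enriched $\alpha$-bounded object of $[\A, \V]$, the representable $[\A, \V](F, -) : [\A, \V] \to \V$ is an $\alpha$-bounded right-class $\V$-functor. As a composite of $\alpha$-bounded right-class $\V$-functors is again $\alpha$-bounded (the remark following \ref{boundedVfunctor}), the composite preserves $\alpha$-filtered $\M$-unions; being isomorphic to $[\A, \C](F \tensorbar C, -)$, the latter does too, and hence $F \tensorbar C$ is an enriched $\alpha$-bounded object of $[\A, \C]$.

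I expect the only real subtlety to be the bookkeeping around the factegory structure on $[\A, \C]$ when $\C$ is merely tensored rather than cotensored, so that \ref{functorfactegory} does not apply verbatim and the enrichment of the pointwise factorization system must instead be seen from the tensored criterion, together with confirming that the isomorphism of \ref{param_tensor} really does identify $[\A, \C](F \tensorbar C, -)$ with the stated composite \emph{as $\V$-functors}. Once these points are settled, the argument is a direct application of \ref{2functorbounding} and the closure of $\alpha$-bounded right-class $\V$-functors under composition, with no further computation required.
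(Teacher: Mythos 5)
Your proof is correct and takes essentially the same route as the paper's: the paper likewise applies \ref{2functorbounding} to conclude that $[\A, \C(C,-)]$ is an $\alpha$-bounded right-class $\V$-functor and then invokes the adjunction \eqref{eq:tensorbar_c} together with \ref{leftadjointpreservesbounded}, whose proof is exactly your composite-of-representables computation. Your added care about the factegory structure on $[\A,\C]$ when $\C$ is merely tensored (so that \ref{functorfactegory} does not apply verbatim) is a reasonable point that the paper leaves implicit.
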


\begin{proof}
Since $\C(C, -) : \C \to \V$ is an $\alpha$-bounded right-class $\V$-functor, we find that $[\A, \C(C, -)]:[\A,\C] \rightarrow [\A,\V]$ is an $\alpha$-bounded right-class $\V$-functor by \ref{2functorbounding}, so the result follows by \eqref{eq:tensorbar_c} and \ref{leftadjointpreservesbounded}.
\end{proof}

\begin{rmk}
\label{Kellysketch}
Given an enriched limit sketch $(\A, \Psi)$ as above, Kelly showed in \cite[6.11]{Kelly} that $\Psi\Mod(\A, \V)$ is the $\V$-orthogonal sub-$\V$-category $\Theta_\Psi^{\perp_\V}$ of $[\A,\V]$ described by a class of morphisms $\Theta_\Psi$ in $[\A, \V]$ defined as follows. For each $\gamma \in \Gamma$, we obtain a  $\V$-functor $\K_\gamma^\op \xrightarrow{D_\gamma^\op} \A^\op \xrightarrow{\y} [\A, \V]$ with a colimit $F_\gamma \ast \y D_\gamma^\op \in [\A, \V]$. Writing
$$\theta_\gamma : F_\gamma \ast \y D_\gamma^\op \longrightarrow \y A_\gamma$$
to denote the canonical comparison morphism obtained by applying $\y$ to the cylinder $\varphi_\gamma$, we take
$$\Theta_\Psi \;:=\; \left\{ \theta_\gamma \ \mid \gamma \in \Gamma\right\}\;.$$ \qed
\end{rmk} 

\begin{prop}
\label{sketchorthogonality}
Let $(\A, \sketch)$ be an enriched limit sketch and $\C$ a tensored $\V$-category. If $\Theta_\Psi$ is the class of morphisms in $[\A, \V]$ defined in \ref{Kellysketch}, then $\sketch\Mod(\A, \C)$ is the $\V$-orthogonal sub-$\V$-category $\Delta^{\perp_\V}$ described by the class $\Delta = \left\{\theta \tensorbar C \mid \theta \in \Theta_\Psi, C \in \ob\C\right\}$.
\end{prop}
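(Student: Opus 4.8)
The plan is to reduce the claim to Kelly's identification of $\sketch$-models in $\V$ with the $\V$-orthogonality class $\Theta_\Psi^{\perp_\V}$ (\ref{Kellysketch}), transported along the representable $\V$-functors $\C(C,-)$. The key device is the adjunction $(-) \tensorbar C \dashv [\A, \C(C,-)]$ of \eqref{eq:tensorbar_c}, whose hom-isomorphism
\[ [\A, \C](G \tensorbar C, M) \;\cong\; [\A, \V](G, \C(C, M-)) \]
is $\V$-natural in $G \in [\A, \V]$. Fixing $M \in [\A, \C]$ and $C \in \ob\C$ and applying this $\V$-natural isomorphism to the morphism $\theta_\gamma : F_\gamma \ast \y D_\gamma^\op \to \y A_\gamma$, I obtain a commuting square identifying the $\V$-morphism $[\A, \C](\theta_\gamma \tensorbar C, M)$ with $[\A, \V](\theta_\gamma, \C(C, M-))$; hence $\theta_\gamma \tensorbar C \perp_\V M$ if and only if $\theta_\gamma \perp_\V \C(C, M-)$ in $[\A, \V]$. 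Quantifying over $\gamma \in \Gamma$ and $C \in \ob\C$, and recalling that $\Theta_\Psi = \{\theta_\gamma \mid \gamma \in \Gamma\}$, this yields
\[ M \in \Delta^{\perp_\V} \iff \C(C, M-) \in \Theta_\Psi^{\perp_\V} = \sketch\Mod(\A, \V) \;\text{ for every } C \in \ob\C, \]
the last equality being \ref{Kellysketch}.

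It then remains to identify the condition that $\C(C, M-) \in \sketch\Mod(\A, \V)$ for all $C \in \ob\C$ with the condition that $M \in \sketch\Mod(\A, \C)$. For this I would invoke the standard fact that weighted limits are detected by the representables: for a fixed $\gamma \in \Gamma$, the composite cylinder $M\varphi_\gamma$ presents $MA_\gamma$ as $\{F_\gamma, MD_\gamma\}$ in $\C$ precisely when, for every $C \in \ob\C$, the induced $\V$-morphism $\C(C, MA_\gamma) \to \{F_\gamma, \C(C, MD_\gamma-)\}$ is invertible, this being exactly the representable formulation of the defining universal property of the weighted limit (the target exists since $\V$ is complete by \ref{locbdcomplete}). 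As $\C(C, -)$ is a $\V$-functor, the composite cylinder of $\C(C, M-)$ at $\gamma$ is obtained by whiskering $\C(C, -)$ onto $M\varphi_\gamma$, so the comparison map just displayed is precisely the one witnessing $\C(C, M-)$ as a $\sketch$-model in $\V$ at $\gamma$. Therefore $M \in \sketch\Mod(\A, \C)$ if and only if $\C(C, M-) \in \sketch\Mod(\A, \V)$ for every $C$, and combining this with the previous paragraph gives $\sketch\Mod(\A, \C) = \Delta^{\perp_\V}$, as required.

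I expect no conceptual obstacle; the care lies entirely in the enriched bookkeeping of the first step. Concretely, I must check that the $\V$-naturality in $G$ of the isomorphism \eqref{eq:tensorbar_c} genuinely transports precomposition with $\theta_\gamma \tensorbar C$ (the image of $\theta_\gamma$ under the $\V$-functor $(-) \tensorbar C$) to precomposition with $\theta_\gamma$, so that $\V$-invertibility of one comparison $\V$-morphism transfers to the other; this relies on $\V$-naturality rather than merely ordinary naturality of the underlying functors. A secondary point worth recording is that none of the limits $\{F_\gamma, MD_\gamma\}$ in $\C$ need be assumed to exist a priori, since both the model condition and the orthogonality condition are formulated purely through the representables $\C(C,-)$; only the completeness of $\V$ is used, namely to form the $\V$-valued limits $\{F_\gamma, \C(C, MD_\gamma-)\}$.
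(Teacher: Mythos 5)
Your proposal is correct and follows essentially the same route as the paper's proof: reduce to $\V$-valued models via the representables $\C(C,-)$ (which preserve and jointly reflect limits), invoke Kelly's identification $\sketch\Mod(\A,\V) = \Theta_\Psi^{\perp_\V}$, and transport orthogonality across the adjunction $(-)\tensorbar C \dashv [\A,\C(C,-)]$. The only difference is that you verify the orthogonality-transport step directly from the $\V$-naturality of the adjunction's hom-isomorphism, whereas the paper cites an existing lemma for it; the content is the same.
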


\begin{proof}
Let the sketch $(\A, \sketch)$ be as described above. We must show for every $\V$-functor $M : \A \to \C$ that $M$ is a $\Psi$-model iff $\theta_\gamma \tensorbar C \perp_\V M$ for all $\gamma \in \Gamma$ and $C \in \ob\C$. Since the representable $\V$-functors $\C(C, -) : \C \to \V$ ($C \in \ob\C$) preserve and jointly reflect limits, we have that $M$ is a $\Psi$-model iff $\C(C, M-) : \A \to \V$ is a $\Psi$-model for each $C \in \ob\C$. For every $C \in \ob\C$, we deduce by  \ref{Kellysketch} that $\C(C, M-) : \A \to \V$ is a $\Psi$-model iff $\theta_\gamma \perp_\V \C(C, M-)$ for all $\gamma \in \Gamma$, which is equivalent to $\theta_\gamma \tensorbar C \perp_\V M$ by \cite[3.9]{Completion} since $(-) \tensorbar C \dashv [\A, \C(C, -)]$ by \ref{param_tensor}.
\end{proof} 

\noindent We can now prove the following enrichment of Freyd and Kelly's central results \cite[5.2.1, 5.2.2]{FreydKelly} about ``categories of continuous functors":

\begin{theo}
\label{limitsketchgeneral}
Let $\C$ be a locally $\alpha$-bounded $\V$-category, let $(\A, \sketch)$ be an enriched limit sketch, and write $\sketch\Mod(\A, \C)$ for the $\V$-category of models of $(\A,\Psi)$ in $\C$. Then $\sketch\Mod(\A, \C)$ is a reflective sub-$\V$-category of $[\A, \C]$. If $\C$ is $\E$-cowellpowered, then $\sketch\Mod(\A, \C)$ is locally bounded and $\E$-cowellpowered, and the inclusion $\sketch\Mod(\A, \C) \hookrightarrow [\A,\C]$ is a bounding right adjoint.
\end{theo}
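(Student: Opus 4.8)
The plan is to realize $\sketch\Mod(\A,\C)$ as a $\V$-orthogonal sub-$\V$-category of the ambient $\V$-category $[\A,\C]$ — which is itself locally $\alpha$-bounded by \ref{functorcategorylocbd} — and then invoke the orthogonality theorem \ref{orthoreflective}. By \ref{sketchorthogonality} we already have $\sketch\Mod(\A,\C) = \Delta^{\perp_\V}$ for $\Delta = \{\theta \tensorbar C \mid \theta \in \Theta_\Psi,\ C \in \ob\C\}$, but this orthogonality class is indexed by the (large) class of all objects $C \in \ob\C$, and there is no reason for the codomains of its members to form an essentially small class, as required by \ref{orthoreflective}. So the first task is to replace $\Delta$ by an orthogonality class meeting that essential-smallness hypothesis, and this is precisely what the bifunctor lemma \ref{bifunctorlemma} is designed to accomplish.

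First I would apply \ref{bifunctorlemma} with $\X = \C$, $\Y = [\A,\V]$, $\Z = [\A,\C]$, and $\ast$ the $\V$-functor sending $(C,F)$ to $F \tensorbar C$ (that is, the $\tensorbar$ of \ref{param_tensor} with its arguments transposed), taking $\Theta = \Theta_\Psi$ and letting $\G = \h$ be the enriched $(\E,\M)$-generator of $\C$. The hypotheses hold: $\C$ and $[\A,\C]$ are cocomplete $\V$-factegories, and $[\A,\C]$ has a terminal object since it is complete by \ref{locbdcomplete}; moreover, for each $F \in \ob[\A,\V]$ the functor $(-)\ast F = F \tensorbar (-) : \C \to [\A,\C]$ preserves colimits, being left adjoint to $\{F,-\}$ by \eqref{eq:ftensorbar_ladj}, and preserves the left class, since its right adjoint $\{F,-\}$ preserves the right class by \cite[4.5]{enrichedfact} so that \ref{preservesrightclasslem} applies to the underlying ordinary adjunction. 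The lemma then yields a class $\Omega \subseteq \E$ with $\Delta^{\perp_\V} = (\Delta_1 \cup \Omega)^{\perp_\V}$, where $\Delta_1 = \{\theta_\gamma \tensorbar H \mid \gamma \in \Gamma,\ H \in \h\}$.

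The decisive gain is that the codomains of the members of $\Delta_1$ are the objects $\y A_\gamma \tensorbar H$, which range over the small set $\ob\A \times \h$ and so form an essentially small class; since $\Omega \subseteq \E$ contributes nothing, the class $\{N_\theta \mid \theta \in \Delta_1 \cup \Omega,\ \theta \notin \E\}$ is essentially small. Hence \ref{orthoreflective} applies to $\B = [\A,\C]$ and the class $\Theta = \Delta_1 \cup \Omega$, giving that $\sketch\Mod(\A,\C) = (\Delta_1 \cup \Omega)^{\perp_\V}$ is a reflective sub-$\V$-category of $[\A,\C]$, which is the first assertion.

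For the remaining claims I would first check that $[\A,\C]$ is $\E$-cowellpowered whenever $\C$ is: because the factorization system on $[\A,\C]$ is pointwise and $\A$ is small, the isomorphism class of an $\E$-quotient $e : F \to G$ is determined by the family of $\E$-quotient classes $(e_A : FA \to GA)_{A \in \ob\A}$ — the $\V$-functor structure on $G$ being forced by $\V$-naturality together with the fact that each $e_A$ is epic — and there is only a set of such families. Granting this, \ref{orthoreflective}(2) applied as above immediately gives that $\sketch\Mod(\A,\C)$ is a locally bounded $\V$-category that is $\E_\C$-cowellpowered and that the inclusion $\sketch\Mod(\A,\C) \hookrightarrow [\A,\C]$ is a bounding right adjoint. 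The main obstacle throughout is the large indexing by $\ob\C$ in $\Delta$: everything hinges on the bifunctor lemma successfully trading this large class for one ($\Delta_1$) whose relevant codomains are controlled only by the small data $\ob\A$ and $\h$, so that the essential-smallness hypothesis of \ref{orthoreflective} can be verified.
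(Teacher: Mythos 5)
Your proposal is correct and follows essentially the same route as the paper's own proof: representing $\sketch\Mod(\A,\C)$ as $\Delta^{\perp_\V}$ via \ref{sketchorthogonality}, applying \ref{bifunctorlemma} with exactly the same choices of $\X$, $\Y$, $\Z$, and $\ast$ to trade the large class $\Delta$ for $\Delta_1 \cup \Omega$ with small codomain data, and then invoking \ref{functorcategorylocbd} and \ref{orthoreflective}. The only difference is that you spell out the $\E$-cowellpoweredness of $[\A,\C]$, which the paper asserts without detail.
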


\begin{proof}
Let $\sketch$ be as described before \ref{param_tensor}. By \ref{sketchorthogonality} we have $\sketch\Mod(\A, \C) = \Delta^{\perp_\V}$ where $\Delta = \left\{\theta \tensorbar C \mid \theta \in \Theta_\Psi, C \in \ob\C\right\}$ and $\Theta_\Psi$ is the class of morphisms in $[\A, \V]$ defined in \ref{Kellysketch}. In \ref{bifunctorlemma}, let $\X = \C, \Y = [\A, \V], \Z = [\A, \C]$ (invoking \ref{functorfactegory} and \ref{locbdcomplete}), and let
$$\ast : \C \tensor [\A, \V] \longrightarrow [\A, \C]$$
be the $\V$-functor defined by $C \ast F = F \tensorbar C$ with the notation of \ref{param_tensor}.  Then for each object $F$ of $[\A,\V]$, the $\V$-functor $(-) \ast F = F \tensorbar (-):\C \rightarrow [\A,\C]$ is left adjoint to $\{F,-\}$ by \eqref{eq:ftensorbar_ladj}, so $(-)\ast F$ preserves colimits and also preserves the left class by \ref{preservesrightclasslem}, because $\{F, -\}$ preserves the right class by \cite[4.5]{enrichedfact}. So by \ref{bifunctorlemma}, there is a class of (pointwise) $\E$-morphisms $\Omega$ in $[\A, \C]$ such that
\[ \sketch\Mod(\A, \C) = \Delta^{\perp_\V} = \left(\Delta_1 \cup \Omega\right)^{\perp_\V} \] where $\Delta_1 = \left\{\theta \tensorbar H \mid \theta \in \Theta_\Psi, H \in \h\right\}$ and $\h$ is the enriched $(\E, \M)$-generator of $\C$. We know by \ref{functorcategorylocbd} that $[\A, \C]$ is a locally $\alpha$-bounded $\V$-category. Since $\A$ and $\h$ are small, it follows by the definition of $\Theta_\Psi$ in \ref{Kellysketch} that the class of codomains of morphisms in $\Delta_1$ is small. Hence, the needed conclusions now follow from Theorem \ref{orthoreflective}, using the fact that if $\C$ is $\E$-cowellpowered then $[\A, \C]$ is also $\E$-cowellpowered since $\A$ is small.
\end{proof}

\noindent Before we can prove a certain refinement of \ref{limitsketchgeneral}, we require the following:

\begin{prop}
\label{smallsketchprop}
Let $(\A, \sketch)$ be an enriched limit sketch. If $\sketch$ is small, then there is a regular cardinal $\beta$ such that every cylinder in $\sketch$ has a $\beta$-bounded-small weight. 
\end{prop}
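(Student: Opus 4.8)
The plan is to reduce the statement to the pointwise fact recorded in \ref{boundedsmallrmk} and then take a common bound. Recall that the weight of the cylinder $\varphi_\gamma : F_\gamma \to \A(A_\gamma, D_\gamma-)$ is the $\V$-functor $F_\gamma : \K_\gamma \to \V$, and that this is a \emph{small} weight since $\K_\gamma$ is small. Hence, for each $\gamma \in \Gamma$, \ref{boundedsmallrmk} supplies a regular cardinal $\beta_\gamma \geq \alpha$ for which $F_\gamma$ is $\beta_\gamma$-bounded-small.

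Next I would invoke the hypothesis that $\sketch$ is small. This means precisely that the indexing class $\Gamma$ is a (small) set, so that $\{\beta_\gamma \mid \gamma \in \Gamma\}$ is a small set of regular cardinals. By the standard fact recalled in Section \ref{background}, there is then a regular cardinal $\beta$ with $\beta \geq \beta_\gamma$ for every $\gamma \in \Gamma$ (and in particular $\beta \geq \alpha$).

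It then remains to observe that $\beta$-bounded-smallness is inherited upward in the cardinal: if a weight $W : \B \to \V$ is $\beta_\gamma$-bounded-small and $\beta \geq \beta_\gamma$, then $W$ is $\beta$-bounded-small. Indeed, $|\ob\B| < \beta_\gamma \leq \beta$, and each hom-object $\B(B, B')$ and each value $WB$, being an enriched $\beta_\gamma$-bounded object of $\V$, is also an enriched $\beta$-bounded object of $\V$, since enriched $\alpha$-bounded objects remain enriched $\beta$-bounded for larger $\beta$ (as noted in \ref{lbv_remarks}). Applying this to each $F_\gamma$ shows that every cylinder of $\sketch$ has a $\beta$-bounded-small weight, as required.

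The argument is essentially routine, and I do not anticipate a genuine obstacle. The only points that require care are the upward monotonicity of $\beta$-bounded-smallness, which must be verified simultaneously on the cardinality of $\ob\K_\gamma$ and on the hom-objects and weight-values, and the observation that smallness of $\sketch$ is exactly what licenses the passage from the class of cardinals $\beta_\gamma$ to a single bounding regular cardinal $\beta$; without smallness of $\Gamma$ one could not in general bound the $\beta_\gamma$ above.
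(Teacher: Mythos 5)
Your argument is correct and is essentially the paper's own proof: apply \ref{boundedsmallrmk} to each cylinder's weight, use smallness of $\sketch$ to pick a single regular cardinal dominating all the $\beta_\gamma$, and observe that $\beta$-bounded-smallness persists under passage to a larger regular cardinal. The paper leaves the upward-monotonicity step implicit, so your explicit verification of it is a harmless (and slightly more careful) elaboration rather than a different route.
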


\begin{proof}
Let $\sketch$ be described as before \ref{Kellysketch}. For each $\gamma \in \Gamma$, since $\K_\gamma$ is small and $\V$ is locally bounded, we can (by \ref{boundedsmallrmk}) find a regular cardinal $\beta_\gamma$ such that $F_\gamma : \K_\gamma \to \V$ is a $\beta_\gamma$-bounded-small weight. Since $\Psi$ is small, we can then find a regular cardinal $\beta$ such that every $F_\gamma$ ($\gamma \in \Gamma$) is $\beta$-bounded-small.   
\end{proof}

We now have the following refinement of \ref{limitsketchgeneral}. We say that an enriched limit sketch $(\A, \sketch)$ is an $\alpha$\textbf{-bounded-small limit sketch} if the weight of every cylinder in $\sketch$ is $\alpha$-bounded-small. Thus, \ref{smallsketchprop} says that every small enriched limit sketch is $\beta$-bounded-small for some $\beta$.  

\begin{theo}
\label{limitsketchrefined}
Let $\C$ be a locally $\alpha$-bounded and $\E$-cowellpowered $\V$-category, let $(\A, \sketch)$ be an \mbox{$\alpha$-bounded-small} limit sketch, and write $\sketch\Mod(\A, \C)$ for the $\V$-category of models of $(\A,\Psi)$ in $\C$. Then $\sketch\Mod(\A, \C)$ is a locally $\alpha$-bounded and $\E$-cowellpowered $\V$-category, and the inclusion $\sketch\Mod(\A, \C) \hookrightarrow [\A,\C]$ is an $\alpha$-bounding right adjoint.
\end{theo}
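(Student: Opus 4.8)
The plan is to run the proof of \ref{limitsketchgeneral} essentially verbatim up to its invocation of Theorem \ref{orthoreflective}, and then to apply the sharper ``if furthermore'' clause of \ref{orthoreflective}(2) in place of its general conclusion. Concretely, I would first use \ref{sketchorthogonality} to write $\sketch\Mod(\A, \C) = \Delta^{\perp_\V}$ with $\Delta = \{\theta \tensorbar C \mid \theta \in \Theta_\Psi,\ C \in \ob\C\}$, and feed this into \ref{bifunctorlemma} with $\X = \C$, $\Y = [\A, \V]$, $\Z = [\A, \C]$ and $\ast$ given by $C \ast F = F \tensorbar C$, noting as in \ref{limitsketchgeneral} that each $(-) \ast F = F \tensorbar (-)$ preserves colimits and the left class by \eqref{eq:ftensorbar_ladj} and \ref{preservesrightclasslem}. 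This produces a class $\Omega \subseteq \E$ of pointwise $\E$-morphisms in $[\A, \C]$ with $\sketch\Mod(\A, \C) = (\Delta_1 \cup \Omega)^{\perp_\V}$, where $\Delta_1 = \{\theta \tensorbar H \mid \theta \in \Theta_\Psi,\ H \in \h\}$ and $\h$ is the enriched $(\E, \M)$-generator of $\C$. Exactly as before, $[\A, \C]$ is locally $\alpha$-bounded by \ref{functorcategorylocbd} and is $\E$-cowellpowered since $\C$ is and $\A$ is small, and the class of codomains of morphisms in $\Delta_1$ is small (these codomains are the objects $(\y A_\gamma) \tensorbar H$ with $A_\gamma \in \ob\A$ and $H \in \h$).

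The one genuinely new step — and the heart of the refinement — is to verify the extra hypothesis of the ``if furthermore'' clause of \ref{orthoreflective}(2): that the domain of every morphism in $\Delta_1 \cup \Omega$ that does not lie in $\E$ is an enriched $\alpha$-bounded object of $[\A, \C]$. Since $\Omega \subseteq \E$, only the morphisms of $\Delta_1$ are relevant, and I would in fact show that \emph{every} domain occurring in $\Delta_1$ is enriched $\alpha$-bounded. By the description of $\Theta_\Psi$ in \ref{Kellysketch}, such a domain has the form $(F_\gamma \ast \y D_\gamma^\op) \tensorbar H$ with $H \in \h$. Because $(\A, \sketch)$ is an $\alpha$-bounded-small limit sketch, the weight $F_\gamma : \K_\gamma \to \V$ is $\alpha$-bounded-small, while the diagram $\y D_\gamma^\op$ takes its values among the representables $\A(D_\gamma k, -)$, each of which is an enriched $\alpha$-bounded object of $[\A, \V]$ by \ref{representablesbounded}. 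Hence $F_\gamma \ast \y D_\gamma^\op$ is an $\alpha$-bounded-small colimit of enriched $\alpha$-bounded objects, so it is itself an enriched $\alpha$-bounded object of $[\A, \V]$ by \ref{closurealphasmallcolimits}. As each $H \in \h$ is an enriched $\alpha$-bounded object of $\C$, it then follows by \ref{presheaftensorbounded} that $(F_\gamma \ast \y D_\gamma^\op) \tensorbar H$ is an enriched $\alpha$-bounded object of $[\A, \C]$, as needed.

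Finally, I would apply the ``if furthermore'' clause of \ref{orthoreflective}(2) with $\B = [\A, \C]$ and orthogonality class $\Theta = \Delta_1 \cup \Omega$: the class $\{N_\theta : \theta \in \Theta,\ \theta \notin \E\}$ is essentially small (its members are codomains of morphisms in $\Delta_1$), and the previous paragraph supplies the enriched $\alpha$-boundedness of each corresponding domain $M_\theta$. This yields at once that $\sketch\Mod(\A, \C) = (\Delta_1 \cup \Omega)^{\perp_\V}$ is a locally $\alpha$-bounded and $\E_\C$-cowellpowered $\V$-category and that its inclusion into $[\A, \C]$ is an $\alpha$-bounding right adjoint. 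I expect the main obstacle to lie not in any single estimate but in the careful bookkeeping of variances needed to recognize $F_\gamma \ast \y D_\gamma^\op$ as a colimit weighted by the $\alpha$-bounded-small weight $F_\gamma$, so that \ref{closurealphasmallcolimits} applies directly; the rest is a clean specialization of the already-established general result \ref{limitsketchgeneral}, now keeping track of the single cardinal $\alpha$ throughout.
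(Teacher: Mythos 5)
Your proposal is correct and follows essentially the same route as the paper's own proof: reuse the orthogonality presentation $\sketch\Mod(\A,\C) = (\Delta_1 \cup \Omega)^{\perp_\V}$ from \ref{limitsketchgeneral}, show each domain $(F_\gamma \ast \y D_\gamma^\op)\tensorbar H$ is an enriched $\alpha$-bounded object of $[\A,\C]$ via \ref{representablesbounded}, \ref{closurealphasmallcolimits}, and \ref{presheaftensorbounded}, and then invoke the final clause of \ref{orthoreflective}(2). No gaps.
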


\begin{proof}
As in the proof of \ref{limitsketchgeneral}, $\sketch\Mod(\A, \C) = \left(\Delta_1 \cup \Omega\right)^{\perp_\V}$ for a class of pointwise $\E$-morphisms $\Omega$ in $[\A, \C]$, where $\Delta_1 = \left\{\theta_\gamma \tensorbar H \mid \gamma \in \Gamma, H \in \h\right\}$ and we write $\h$ for the enriched $(\E, \M)$-generator of $\C$ and employ the notation of \ref{param_tensor} and \ref{Kellysketch}. To obtain the desired result by the final statement of \ref{orthoreflective}, it suffices to show that the domain of every morphism in $\Delta_1$ is an enriched $\alpha$-bounded object of $[\A, \C]$. In view of \ref{Kellysketch}, each morphism in $\Delta_1$ is of the form
$$\theta_\gamma : (F_\gamma \ast \y D_\gamma^\op)\tensorbar H \longrightarrow \y A_\gamma \tensorbar H$$
with $\gamma \in \Gamma$ and $H \in \h$.  Since the weight $F_\gamma$ is $\alpha$-bounded-small and $\left(\y D_\gamma^\op\right)K = \A(D_\gamma K, -)$ ($K \in \ob\K_\gamma$) is an enriched $\alpha$-bounded object of $[\A, \V]$ by \ref{representablesbounded}, it follows that $F_\gamma \ast \y D_\gamma^\op$ is an enriched $\alpha$-bounded object of $[\A,\V]$ by \ref{closurealphasmallcolimits}, so $(F_\gamma \ast \y D_\gamma^\op) \tensorbar H$ is an enriched $\alpha$-bounded object of $[\A,\C]$ by \ref{presheaftensorbounded}.
\end{proof}

\noindent If $\Phi$ is a class of small weights, then by a $\Phi$\emph{-theory} we mean a small $\Phi$-complete $\V$-category $\scrT$.  Given a $\Phi$-theory $\scrT$, we write $\Phi\text{-}\Cts(\scrT, \C)$ to denote the full sub-$\V$-category of $[\scrT, \C]$ consisting of $\Phi$-continuous $\V$-functors from $\scrT$ to $\C$, which we call \textit{models of $\scrT$ in $\C$}.

\begin{theo}
\label{limittheorycor}
Let $\C$ be a locally $\alpha$-bounded $\V$-category, let $\Phi$ be a class of small weights, and let $\scrT$ be a $\Phi$-theory. Then the $\V$-category $\Phi\text{-}\Cts(\scrT, \C)$ of models of $\scrT$ in $\C$ is a reflective sub-$\V$-category of $[\scrT, \C]$. If $\C$ is $\E$-cowellpowered, then $\Phi\text{-}\Cts(\scrT, \C)$ is also locally bounded and $\E$-cowellpowered, and the inclusion $i:\Phi\text{-}\Cts(\scrT, \C) \hookrightarrow [\scrT,\C]$ is a bounding right adjoint.  If every weight in $\Phi$ is $\alpha$-bounded-small and $\C$ is $\E$-cowellpowered, then $\Phi\text{-}\Cts(\scrT, \C)$ is locally $\alpha$-bounded and $\E$-cowellpowered, and the inclusion $i$ is an $\alpha$-bounding right adjoint.
\end{theo}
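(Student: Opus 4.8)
The plan is to deduce this theorem from the sketch-theoretic results \ref{limitsketchgeneral} and \ref{limitsketchrefined} by realizing $\Phi\text{-}\Cts(\scrT, \C)$ as a $\V$-category of models of a suitable enriched limit sketch on $\scrT$. Since $\scrT$ is a small $\V$-category, it is already an admissible base for a limit sketch in the sense of the paragraph preceding \ref{param_tensor}, so the only real work is to manufacture a class of cylinders on $\scrT$ whose models are precisely the $\Phi$-continuous $\V$-functors.

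First I would construct the sketch $(\scrT, \sketch)$ as follows. Because $\scrT$ is $\Phi$-complete, for each weight $W : \K \to \V$ in $\Phi$ (with $\K$ small) and each $\V$-functor $D : \K \to \scrT$, the weighted limit $\{W, D\}$ exists in $\scrT$, equipped with its limit cylinder $\varphi_{W,D} : W \to \scrT(\{W,D\}, D-)$. I would let $\sketch$ be the class of all such cylinders, indexed by the class $\Gamma$ of all pairs $(W, D)$ with $W \in \Phi$ and $D : \K \to \scrT$. This is a legitimate enriched limit sketch: each domain $\K$ is small, each target has the required form $\scrT(A_\gamma, D_\gamma-)$ with $A_\gamma = \{W,D\} \in \ob\scrT$ and $D_\gamma = D$, and $\Gamma$ is permitted to be a proper class.

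Next I would verify the identification $\sketch\Mod(\scrT, \C) = \Phi\text{-}\Cts(\scrT, \C)$. By the definition of model, a $\V$-functor $M : \scrT \to \C$ is a $\sketch$-model iff for every $(W, D) \in \Gamma$ the composite cylinder presents $M\{W,D\}$ as the limit $\{W, MD\}$ in $\C$ (which exists since $\C$ is complete by \ref{locbdcomplete}), that is, iff $M$ preserves the limit $\{W, D\}$ for every weight $W \in \Phi$ and every diagram $D$; and this holds for all such $(W, D)$ precisely when $M$ is $\Phi$-continuous. Hence the two full sub-$\V$-categories of $[\scrT, \C]$ coincide.

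With this identification in hand the three assertions follow directly. The reflectivity of $\Phi\text{-}\Cts(\scrT, \C)$ in $[\scrT, \C]$, and—under $\E$-cowellpoweredness of $\C$—its local boundedness, $\E$-cowellpoweredness, and the fact that the inclusion $i$ is a bounding right adjoint, are immediate from \ref{limitsketchgeneral}. For the final assertion I would note that if every weight in $\Phi$ is $\alpha$-bounded-small, then the weight $W$ of every cylinder $\varphi_{W,D}$ in $\sketch$ is $\alpha$-bounded-small, so $(\scrT, \sketch)$ is an $\alpha$-bounded-small limit sketch; Theorem \ref{limitsketchrefined} then yields that $\Phi\text{-}\Cts(\scrT, \C)$ is locally $\alpha$-bounded and $\E$-cowellpowered with $i$ an $\alpha$-bounding right adjoint. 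I expect the only slightly delicate points to be the bookkeeping around the possibly proper-class index $\Gamma$ and the confirmation that the sketch-model condition matches preservation of $\Phi$-limits exactly; both are routine once the sketch is set up, so there is no substantive obstacle beyond this reduction.
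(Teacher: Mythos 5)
Your proposal is correct and follows exactly the paper's own argument: the paper likewise equips $\scrT$ with the sketch consisting of all $\Phi$-limit cylinders, identifies $\Phi\text{-}\Cts(\scrT,\C)$ with $\sketch\Mod(\scrT,\C)$, and invokes Theorems \ref{limitsketchgeneral} and \ref{limitsketchrefined}. Your write-up merely spells out the routine verification that sketch-models coincide with $\Phi$-continuous $\V$-functors, which the paper leaves implicit.
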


\begin{proof}
The $\Phi$-theory $\scrT$ carries the structure of an enriched limit sketch $(\scrT, \sketch)$, where $\sketch$ consists of all the $\Phi$-limit cylinders in $\scrT$. Then $\Phi\text{-}\Cts(\scrT, \C) = \sketch\Mod(\scrT, \C)$, so the result follows by Theorems \ref{limitsketchgeneral} and  \ref{limitsketchrefined}.   
\end{proof}

\begin{rmk}
\label{Phipresentable}
Given a locally small class of small weights $\Phi$ satisfying Axiom A from \cite{LR}, a $\V$-category $\C$ is said to be \textit{locally $\Phi$-presentable} in the sense of \cite{LR} if $\C \simeq \Phi\text{-}\Cts(\scrT, \V)$ for a $\Phi$-theory $\scrT$.   Hence if $\C$ is locally $\Phi$-presentable and the locally $\alpha$-bounded closed category $\V$ is $\E$-cowellpowered, then our result \ref{limittheorycor} entails that $\C$ is a locally bounded and $\E$-cowellpowered $\V$-category (and is locally $\alpha$\emph{-bounded} if every weight in $\Phi$ is $\alpha$-bounded-small). \qed 
\end{rmk}

\begin{defn}
\label{alphaboundedsmalltheory}
An $\alpha$\textbf{-bounded-small limit theory} is a $\Phi_\alpha$-theory $\scrT$ for the class $\Phi_\alpha$ of all $\alpha$-bounded-small weights, i.e. a small $\V$-category with all $\alpha$-bounded-small limits. \qed
\end{defn}

\noindent Theorem \ref{limittheorycor} now entails the following:

\begin{theo}
Let $\C$ be an $\E$-cowellpowered locally $\alpha$-bounded $\V$-category, and let $\scrT$ be an \mbox{$\alpha$-bounded-small} limit theory.  Then $\Phi_\alpha\text{-}\Cts(\scrT, \C)$ is a locally $\alpha$-bounded and $\E$-cowellpowered \mbox{$\V$-category}, and the inclusion $\Phi_\alpha\text{-}\Cts(\scrT, \C) \hookrightarrow [\scrT,\C]$ is an $\alpha$-bounding right adjoint.
\end{theo}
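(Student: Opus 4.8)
The plan is to obtain this statement as an immediate specialization of Theorem~\ref{limittheorycor}, taking the class of weights $\Phi$ to be $\Phi_\alpha$, the class of all $\alpha$-bounded-small weights. By Definition~\ref{alphaboundedsmalltheory}, an $\alpha$-bounded-small limit theory is \emph{by definition} a $\Phi_\alpha$-theory, i.e.\ a small $\V$-category possessing all $\Phi_\alpha$-limits, so $\scrT$ is a $\Phi$-theory in the sense required by Theorem~\ref{limittheorycor} with $\Phi = \Phi_\alpha$.

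First I would verify that the hypotheses of the final assertion of Theorem~\ref{limittheorycor} are met. That assertion requires that every weight in $\Phi$ be $\alpha$-bounded-small and that $\C$ be $\E$-cowellpowered. The former holds trivially for $\Phi = \Phi_\alpha$, since by the very definition of $\Phi_\alpha$ every one of its members is $\alpha$-bounded-small; the latter is among our standing hypotheses. Invoking the final clause of Theorem~\ref{limittheorycor} then yields at once that $\Phi_\alpha\text{-}\Cts(\scrT, \C)$ is locally $\alpha$-bounded and $\E$-cowellpowered, and that the inclusion $\Phi_\alpha\text{-}\Cts(\scrT, \C) \hookrightarrow [\scrT, \C]$ is an $\alpha$-bounding right adjoint.

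There is essentially no obstacle here: the result is a pure corollary, and the only content is matching the defining property of $\scrT$ (from Definition~\ref{alphaboundedsmalltheory}) against the hypothesis of Theorem~\ref{limittheorycor}. The substantive work — the reflectivity via the orthogonality presentation of \ref{sketchorthogonality} and \ref{bifunctorlemma}, the passage through \ref{limitsketchgeneral} and \ref{limitsketchrefined}, and the $\alpha$-bounding right adjoint machinery of \ref{orthoreflective} — has already been carried out upstream, so nothing new is needed beyond the trivial observation that $\Phi_\alpha$ consists precisely of the $\alpha$-bounded-small weights.
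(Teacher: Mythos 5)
Your proposal is correct and coincides exactly with the paper's treatment: the paper states this theorem as an immediate consequence of Theorem \ref{limittheorycor} applied with $\Phi = \Phi_\alpha$, which is precisely your argument. Nothing further is needed.
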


\noindent We now use Theorem \ref{limittheorycor} to prove the following result on the local boundedness, reflectivity, and monadicity of $\V$-categories of algebras of enriched algebraic theories in locally bounded $\V$-categories. Recall from \cite[3.8]{EAT} that a \emph{system of arities} $\J$ in $\V$ can be defined as a full sub-$\V$-category $\J \hookrightarrow \V$ that contains the unit object $I$ and is closed under $\tensor$. A $\J$\emph{-theory} is then a $\V$-category $\scrT$ equipped with an identity-on-objects $\V$-functor $\tau : \J^\op \to \scrT$ that preserves $\J$-cotensors, and a $\scrT$\emph{-algebra} in a $\V$-category $\C$ is a $\V$-functor $M : \scrT \to \C$ that preserves $\J$-cotensors. The following result generalizes (part of) \cite[8.6]{EAT}:  

\begin{cor}
\label{Jtheorycor}
Let $\C$ be a locally $\alpha$-bounded and $\E$-cowellpowered $\V$-category. Let $\J$ be a small system of arities in $\V$, and let $\scrT$ be a $\J$-theory. Then the $\scrT$-algebras form a full sub-$\V$-category $\scrTAlg(\C) \hookrightarrow [\scrT, \C]$ that is reflective and locally bounded, and the forgetful $\V$-functor $U^\scrT : \scrTAlg(\C) \to \C$ given by $M \mapsto M(I)$ is monadic and is a bounding right adjoint. If every $J \in \ob\J$ is an enriched $\alpha$-bounded object of $\V$, then $\scrTAlg(\C)$ is, moreover, locally $\alpha$-bounded, and $U^\scrT$ is an $\alpha$-bounding right adjoint.   
\end{cor}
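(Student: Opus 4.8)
The plan is to recognize $\scrTAlg(\C)$ as the $\V$-category of models of a suitable limit theory and apply Theorem \ref{limittheorycor}, then analyze the forgetful $\V$-functor $U^\scrT$ by hand. First I would let $\Phi_\J$ be the class of weights for $J$-cotensors as $J$ ranges over $\ob\J$, i.e. the weights $[J] : \II \to \V$ of the form appearing in \ref{cotensorsboundedsmall}. Since $\J$ is small, $\scrT$ is a small $\V$-category (its objects are exactly those of $\J$), and $\scrT$ is $\Phi_\J$-complete because, being a $\J$-theory, it has $J$-cotensors of every object for every $J \in \ob\J$; hence $\scrT$ is a $\Phi_\J$-theory. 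By definition a $\V$-functor $M : \scrT \to \C$ preserves $\J$-cotensors precisely when it is $\Phi_\J$-continuous, so $\scrTAlg(\C) = \Phi_\J\text{-}\Cts(\scrT, \C)$. Theorem \ref{limittheorycor} then immediately yields that $\scrTAlg(\C)$ is a reflective sub-$\V$-category of $[\scrT, \C]$, is locally bounded and $\E$-cowellpowered, and that the inclusion $i : \scrTAlg(\C) \hookrightarrow [\scrT, \C]$ is a bounding right adjoint. Moreover, if every $J \in \ob\J$ is an enriched $\alpha$-bounded object of $\V$, then each weight $[J]$ is $\alpha$-bounded-small by \ref{cotensorsboundedsmall}, so the refined clause of \ref{limittheorycor} gives that $\scrTAlg(\C)$ is locally $\alpha$-bounded and $i$ is an $\alpha$-bounding right adjoint.

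Next I would treat $U^\scrT$ by factoring it as $U^\scrT = \Ev_I \circ i$, where $\Ev_I : [\scrT, \C] \to \C$ is evaluation at the object $I \in \ob\scrT$. Since the factorization system and the $\M$-unions on $[\scrT, \C]$ are both pointwise (\ref{functorfactegory}), $\Ev_I$ is a right-class $\V$-functor that preserves all $\M$-unions and hence is $\aleph_0$-bounded; it also has a left adjoint $\scrT(I, -) \tensorbar (-) : \C \to [\scrT, \C]$ by \eqref{eq:ftensorbar_ladj} and the Yoneda lemma. Composing with $i$, which has a left adjoint by reflectivity and is bounded (resp. $\alpha$-bounded), it follows that $U^\scrT$ is a right-adjoint right-class $\V$-functor that is bounded (resp. $\alpha$-bounded). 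The remaining ingredient is $\M$-conservativity, which I would extract from the $\J$-theory structure: because $\tau : \J^\op \to \scrT$ is identity-on-objects and preserves $\J$-cotensors, each object $J$ of $\scrT$ is the $J$-cotensor of $I$, so a $\scrT$-algebra $M$ satisfies $MJ \cong [J, MI]$ $\V$-naturally, whence any algebra morphism $\varphi : M \to M'$ has $\varphi_J \cong [J, \varphi_I]$. Thus $\varphi_I = U^\scrT\varphi$ invertible forces every $\varphi_J$ invertible, so $U^\scrT$ is conservative and in particular $\M$-conservative. By \ref{charn_bounding_radjs}, $U^\scrT$ is therefore a bounding right adjoint, and an $\alpha$-bounding right adjoint in the refined case.

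Finally, for monadicity I would invoke the general theory of $\J$-theories: $U^\scrT$ has a left adjoint (established above) and is conservative, while $\C$ is complete and cocomplete by \ref{locbdcomplete}, so $\C$ has $\J$-cotensors together with all colimits needed to construct free $\scrT$-algebra structure. Monadicity of $U^\scrT$ then follows from \cite[8.6]{EAT} (equivalently, from the enriched Beck monadicity theorem, using that $\scrTAlg(\C)$ inherits, and $U^\scrT$ preserves, the relevant coequalizers). The main obstacle I anticipate is not the local-boundedness bookkeeping --- which is packaged cleanly by \ref{limittheorycor} and \ref{charn_bounding_radjs} --- but rather making the $\M$-conservativity and monadicity claims for $U^\scrT$ fully precise, namely carefully justifying that morphisms of $\scrT$-algebras are determined on their $I$-component through the cotensor isomorphisms $MJ \cong [J, MI]$. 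This is the one place where the specific structure of $\J$-theories, rather than that of a generic enriched limit sketch, is genuinely used, and it is where I would need to be most careful to invoke the correct facts from \cite{EAT} about $\J$-theories and $\J$-cotensor-preserving $\V$-functors.
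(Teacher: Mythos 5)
Your proposal is correct and follows the paper's strategy for the main reduction: identify $\scrTAlg(\C)$ with $\Phi\text{-}\Cts(\scrT,\C)$ for $\Phi$ the class of weights for $\J$-cotensors (the paper cites \cite[4.3]{EAT} for $\scrT$ having $\J$-cotensors), apply Theorem \ref{limittheorycor} together with \ref{cotensorsboundedsmall} for the refined $\alpha$-bounded clause, and factor $U^\scrT = \Ev_I \circ i$. Where you diverge is in how the three remaining properties of $U^\scrT$ are obtained, and in which order. The paper first observes that $U^\scrT$ is a continuous, bounded, right-class $\V$-functor and invokes its own adjoint functor theorem \ref{enrichedadjfunctorthm} to produce the left adjoint; it then gets monadicity from \cite[8.1]{EAT} (whose hypothesis is exactly the existence of that left adjoint), and finally extracts conservativity, hence $\M$-conservativity, as a \emph{consequence} of monadicity, concluding via \ref{charn_bounding_radjs}. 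You instead obtain the left adjoint more directly as a composite of left adjoints (the reflector for $i$ and $\scrT(I,-)\tensorbar(-)$ for $\Ev_I$), and you prove conservativity independently from the cotensor isomorphisms $MJ \cong [J, MI]$ forced by the identity-on-objects, $\J$-cotensor-preserving structure map $\tau$. Both routes are sound; the paper's has the advantage that the delicate $\V$-naturality bookkeeping you flag as the main risk is never needed, since conservativity comes for free once monadicity is in hand, and it showcases the adjoint functor theorem that the locally bounded framework was built to supply. Two small corrections: the relevant citation for monadicity in the general-$\C$ case is \cite[8.1]{EAT}, not \cite[8.6]{EAT} (the latter is essentially the $\C = \V$ instance that this corollary generalizes); and if you do take the monadicity-first route, your worry about justifying determination of algebra morphisms by their $I$-components evaporates entirely.
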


\begin{proof}
By \cite[4.3]{EAT}, $\scrT$ has $\J$-cotensors and so is, in particular, a $\Phi$-theory where $\Phi$ is the class of weights for $\J$-cotensors, so that $\scrTAlg(\C) = \Phi\text{-}\Cts(\scrT, \C)$. Hence, by Theorem \ref{limittheorycor} there is some regular cardinal $\beta$ such that $\scrTAlg(\C)$ is an $\E$-cowellpowered locally $\beta$-bounded $\V$-category and the inclusion $i:\scrTAlg(\C) \hookrightarrow [\scrT,\C]$ is a $\beta$-bounding right adjoint, while if every $J \in \ob\J$ is an enriched $\alpha$-bounded object of $\V$ then we may take $\beta = \alpha$ (again by \ref{limittheorycor}, since each of the weights in $\Phi$ is $\alpha$-bounded-small by \ref{cotensorsboundedsmall}).  Since both $i$ and the evaluation $\V$-functor $\mathsf{Ev}_{I} : [\scrT, \C] \to \C$ are continuous $\beta$-bounded right-class $\V$-functors, the composite $U^\scrT = \mathsf{Ev}_{I} \circ i$ is also a continuous $\beta$-bounded right-class $\V$-functor.  Hence $U^\scrT$ has a left adjoint by \ref{enrichedadjfunctorthm}, so $U^\scrT$ is monadic by \cite[8.1]{EAT}.  Being monadic, $U^\scrT$ is also conservative and hence $\M$-conservative, so $U^\scrT$ is a $\beta$-bounding right adjoint by \ref{charn_bounding_radjs}.
\end{proof}

\begin{egg}
\label{lastrmk}
As a special case of \ref{Jtheorycor}, with $\V = \Set$ and $\J$ the finite cardinals (see \cite[3.3, 4.2.1, 5.3.1]{EAT}), if $\C$ is a locally $\alpha$-bounded and $\E$-cowellpowered category and $\scrT$ is a Lawvere theory, then the category $\scrTAlg(\C)$ of $\scrT$-algebras in $\C$ is reflective in the functor category $[\scrT,\C]$ and locally $\alpha$-bounded, and the forgetful functor $\scrTAlg(\C) \to \C$ is monadic.  Indeed, this follows from \ref{Jtheorycor} since finite cardinals are clearly $\alpha$-bounded objects of $\Set$. \qed
\end{egg}

\section{Locally bounded closed \texorpdfstring{$\V$}{V}-categories of models for monoidal limit theories}
\label{sec:lbclosed_vcats_sm_phitheories}

We conclude the paper with a result (\ref{Daylocbd} below) that uses \ref{limittheorycor}, along with Day convolution \cite{Dayclosed} and Day's reflection theorem for closed categories \cite{Dayclosed}, to produce further examples of locally bounded closed categories, namely categories of models of enriched \emph{symmetric monoidal limit theories}, in a sense to be defined shortly (see \ref{monoidalPhitheory}). In fact, we prove a slightly stronger result, for which we require the following definition, \ref{locbdclosedVcategory}. Recall first that a \emph{symmetric monoidal} $\V$\emph{-category} is a symmetric pseudomonoid in $\V\CAT$, i.e. a $\V$-category $\W$ equipped with a $\V$-functor $\tensor_\W : \W \tensor \W \to \W$ and an object $I_\W \in \ob\W$, together with $\V$-natural isomorphisms making $\W_0$ symmetric monoidal.  A \emph{symmetric monoidal closed} $\V$\emph{-category} is a symmetric monoidal $\V$-category $\W$ such that each $\V$-functor $W \tensor_\W (-) : \W \to \W$ ($W \in \ob\W$) has a right adjoint $[W, -]_\W : \W \to \W$.  

\begin{defn}
\label{locbdclosedVcategory}
Let $\alpha$ be a regular cardinal. A \textbf{locally} $\alpha$\textbf{-bounded (symmetric monoidal) closed} $\V$\textbf{-category} is a locally $\alpha$-bounded $\V$-category $\W$ that carries the structure of a symmetric monoidal closed $\V$-category such that the unit object $I_\W$ is a $\V$-enriched $\alpha$-bounded object, the monoidal product $H \tensor_\W H'$ is a $\V$-enriched $\alpha$-bounded object for all $H, H'$ in the $\V$-enriched $(\E,\M)$-generator $\h$ of $\W$, and $[W, -]_\W : \W \to \W$ preserves the right class for all $W \in \ob\W$ (equivalently, by \ref{preservesrightclasslem}, each $W \tensor_\W (-)$ preserves the left class).  A symmetric monoidal closed $\V$-category $\W$ is \textbf{locally bounded (as a symmetric monoidal closed} $\V$\textbf{-category)} if there is some regular cardinal $\alpha$ for which $\W$ is a locally $\alpha$-bounded closed $\V$-category. \qed
\end{defn} 

\begin{prop}
\label{locbdclosedprop}
Let $\W$ be a locally $\alpha$-bounded closed $\V$-category.  Then $\W_0$ is a locally $\alpha$-bounded closed category.
\end{prop}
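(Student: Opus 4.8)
The plan is to verify directly the defining conditions of \ref{locbdclosed} for $\W_0$: that it is a cocomplete closed factegory carrying an ordinary $(\E,\M)$-generator of ordinary $\alpha$-bounded objects, with the monoidal unit and all pairwise products of generators also ordinary $\alpha$-bounded. Throughout I would use that $\W$, being a locally $\alpha$-bounded $\V$-category, is a cocomplete $\V$-factegory and hence a tensored (and cotensored) $\V$-factegory with $\M$-unions, so that the corollaries of Section \ref{locbdclosedsection} apply with $\C = \W$.

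First I would establish that $\W_0$ is a cocomplete closed factegory. Since $\W$ is a symmetric monoidal closed $\V$-category, $\W_0$ is a symmetric monoidal closed category, and it inherits from the $\V$-factegory structure on $\W$ the proper factorization system $(\E_\W, \M_\W)$. As recalled in Section \ref{definitionsbasicresults}, this system is enriched with respect to the monoidal closed structure of $\W_0$ exactly when its left class is stable under $\tensor_\W$, which holds because each $W \tensor_\W (-)$ preserves the left class by the hypothesis of \ref{locbdclosedVcategory}. Cocompleteness of $\W_0$ and the existence of arbitrary cointersections of $\E_\W$-morphisms follow from $\W$ being a cocomplete $\V$-factegory.

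Next I would obtain the ordinary generator. Applying Theorem \ref{enrichedtoordinarylocallybounded} to the locally $\alpha$-bounded $\V$-category $\W$ (with enriched $(\E,\M)$-generator $\h$) over the locally $\alpha$-bounded closed category $\V$ (with ordinary $(\E,\M)$-generator $\G$) shows that $\W_0$ is a locally $\alpha$-bounded ordinary category with ordinary $(\E,\M)$-generator $\G \tensor \h$, whose members are ordinary $\alpha$-bounded. Since the unit object $I$ of $\V$ is ordinary $\alpha$-bounded in $\V_0$, Corollary \ref{enrichedboundedcor} guarantees that every enriched $\alpha$-bounded object of $\W$ is an ordinary $\alpha$-bounded object of $\W_0$; applying this to $I_\W$, which is enriched $\alpha$-bounded by hypothesis, gives that $I_\W$ is ordinary $\alpha$-bounded.

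The main step, and the only genuine calculation, is that $(G_1 \tensor H_1) \tensor_\W (G_2 \tensor H_2)$ is ordinary $\alpha$-bounded for all $G_1, G_2 \in \G$ and $H_1, H_2 \in \h$. The key observation is that, $\W$ being closed, $\tensor_\W$ is a $\V$-left-adjoint in each variable and hence preserves copowers, so together with associativity of copowers one gets the interchange isomorphism $(G_1 \tensor H_1) \tensor_\W (G_2 \tensor H_2) \cong (G_1 \tensor G_2) \tensor (H_1 \tensor_\W H_2)$. Here $G_1 \tensor G_2$ is ordinary $\alpha$-bounded in $\V_0$ (as $\V$ is locally $\alpha$-bounded closed), hence enriched $\alpha$-bounded in $\V$ by Corollary \ref{ordinaryequalsenrichedbounded}, while $H_1 \tensor_\W H_2$ is enriched $\alpha$-bounded in $\W$ by hypothesis; so Corollary \ref{enrichedtensorbounded} shows the copower $(G_1 \tensor G_2) \tensor (H_1 \tensor_\W H_2)$ is enriched $\alpha$-bounded in $\W$, and a final application of Corollary \ref{enrichedboundedcor} yields ordinary $\alpha$-boundedness in $\W_0$. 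Assembling these facts verifies all conditions of \ref{locbdclosed}. I expect the interchange isomorphism to be the only delicate point, requiring care with the coherence of the $\V$-action against the monoidal product $\tensor_\W$ of $\W$.
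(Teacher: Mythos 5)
Your proof is correct and follows essentially the same route as the paper's: verify that $\W_0$ is a closed factegory via the hypothesis on $W \tensor_\W (-)$, invoke Theorem \ref{enrichedtoordinarylocallybounded} for the generator $\G \tensor \h$, handle the unit via \ref{enrichedboundedcor}, and use the interchange isomorphism $(G_1 \tensor H_1) \tensor_\W (G_2 \tensor H_2) \cong (G_1 \tensor G_2) \tensor (H_1 \tensor_\W H_2)$ coming from $\tensor_\W$ preserving tensors in each variable. The only cosmetic difference is that the paper concludes the final step directly from Lemma \ref{enrichedboundedlemma}, whereas you take the slightly longer detour through \ref{ordinaryequalsenrichedbounded}, \ref{enrichedtensorbounded}, and \ref{enrichedboundedcor}; both are valid under the standing assumption that $\V$ is a locally $\alpha$-bounded closed category.
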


\begin{proof}
The hypothesis clearly entails that $\W_0$ is a closed factegory.  Also, since $\W$ is a locally $\alpha$-bounded $\V$-category, we deduce by \ref{enrichedtoordinarylocallybounded} that $\W_0$ is a locally $\alpha$-bounded category with ordinary $(\E, \M)$-generator $\G \tensor \h = \left\{G \tensor H \mid G \in \G, H \in \h\right\}$, where $\G$ is the ordinary $(\E, \M)$-generator of $\V_0$ and $\h$ is the $\V$-enriched $(\E, \M)$-generator of $\W$.  Since $I_\W$ is a $\V$-enriched $\alpha$-bounded object of $\W$ by assumption, it follows by \ref{enrichedboundedcor} that $I_\W$ is an ordinary $\alpha$-bounded object of $\W_0$. Because $\tensor_\W : \W \tensor \W \to \W$ preserves $\V$-enriched weighted colimits (and hence $\V$-enriched tensors) in each variable separately, 
\[ \left(G_1 \tensor H_1\right) \tensor_\W \left(G_2 \tensor H_2\right) \cong \left(G_1 \tensor G_2\right) \tensor \left(H_1 \tensor_\W H_2\right) \] for all $G_1, G_2 \in \G$ and $H_1, H_2 \in \h$. Since $G_1 \tensor G_2$ is an ordinary $\alpha$-bounded object of $\V_0$ and $H_1 \tensor_\W H_2$ is a $\V$-enriched $\alpha$-bounded object of $\W$, it follows that $\left(G_1 \tensor G_2\right) \tensor \left(H_1 \tensor_\W H_2\right)$ is an ordinary $\alpha$-bounded object of $\W_0$, by \ref{enrichedboundedlemma}.
\end{proof}

\begin{defn}
\label{monoidalPhitheory}
Let $\Phi$ be a class of small weights. A \textbf{symmetric monoidal} $\Phi$\textbf{-theory} is a small symmetric monoidal $\V$-category $\scrT$ with $\Phi$-limits such that $\tensor_\scrT : \scrT \tensor \scrT \to \scrT$ preserves $\Phi$-limits in each variable separately. \qed 
\end{defn}

\noindent We now prove our final result of the paper:

\begin{theo}
\label{Daylocbd}
Suppose $\V$ is $\E$-cowellpowered, let $\Phi$ be a class of small weights, and let $\scrT$ be a symmetric monoidal $\Phi$-theory. Then $\Phi\text{-}\Cts(\scrT, \V)$ is an $\E$-cowellpowered locally bounded symmetric monoidal closed $\V$-category, and hence $\Phi\text{-}\Cts(\scrT, \V)_0$ is an $\E$-cowellpowered locally bounded closed category by \ref{locbdclosedprop}. If every weight in $\Phi$ is $\alpha$-bounded-small, then $\Phi\text{-}\Cts(\scrT, \V)$ is a locally $\alpha$-bounded closed $\V$-category, and $\Phi\text{-}\Cts(\scrT, \V)_0$ is a locally $\alpha$-bounded closed category. 
\end{theo}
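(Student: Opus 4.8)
The plan is to realize $\W := \Phi\text{-}\Cts(\scrT, \V)$ as a reflective sub-$\V$-category of the \emph{Day convolution} monoidal category on $[\scrT, \V]$, to transport the closed structure along the reflection via Day's reflection theorem, and only then to check the three conditions of \ref{locbdclosedVcategory}. First I would apply \ref{limittheorycor} with $\C = \V$ (which is $\E$-cowellpowered by hypothesis): this makes $\W$ an $\E$-cowellpowered locally bounded $\V$-category that is reflective in $[\scrT, \V]$. Write $L \dashv i$ for the reflection, with $i : \W \hookrightarrow [\scrT, \V]$ the inclusion and $L$ the reflector, and let $\beta$ be a regular cardinal for which $i$ is a $\beta$-bounding right adjoint (so $\beta = \alpha$ when every weight in $\Phi$ is $\alpha$-bounded-small). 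Since $\scrT$ is a small symmetric monoidal $\V$-category, Day convolution \cite{Dayclosed} makes $[\scrT, \V]$ a symmetric monoidal closed $\V$-category, with unit the representable $\scrT(I_\scrT, -)$ (for $I_\scrT$ the monoidal unit of $\scrT$), tensor $F \tensor_\Day G$, and internal hom computed pointwise by the end $[F, G]_\Day(T) = \int_{S \in \scrT} [FS, G(T \tensor_\scrT S)]$.

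The key step, which I expect to be the main obstacle, is to verify that $\W$ is an \emph{exponential ideal} of $[\scrT, \V]$, i.e. that $[F, iG]_\Day \in \W$ for all $F \in [\scrT, \V]$ and $G \in \W$; this is exactly the hypothesis needed for Day's reflection theorem. Here the defining property of a symmetric monoidal $\Phi$-theory is essential: $\tensor_\scrT$ preserves $\Phi$-limits in each variable separately. Concretely, for a $\Phi$-limit $\{W, D\}$ in $\scrT$ (with $W \in \Phi$ and $D : \K \to \scrT$) one checks that $[F, iG]_\Day$ preserves it through the chain
\[ [F, iG]_\Day\{W, D\} = \int_S [FS, iG(\{W, D\} \tensor_\scrT S)] \cong \int_S [FS, iG\{W, D \tensor_\scrT S\}] \cong \int_S \{W, [FS, iG(D \tensor_\scrT S)]\} \cong \{W, [F, iG]_\Day D\}, \]
using in turn that $(-) \tensor_\scrT S$ preserves $\Phi$-limits, that $iG$ is $\Phi$-continuous while cotensors $[FS, -]$ are continuous, and that ends commute with limits. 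Thus $[F, iG]_\Day$ is $\Phi$-continuous and lies in $\W$. By Day's reflection theorem \cite{Dayclosed}, $\W$ then inherits the structure of a symmetric monoidal closed $\V$-category, with unit $I_\W = L\scrT(I_\scrT, -)$, tensor $F \tensor_\W G = L(iF \tensor_\Day iG)$, internal hom $[F, G]_\W = [iF, iG]_\Day$, and with $L$ a strong symmetric monoidal $\V$-functor.

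It remains to verify the three conditions of \ref{locbdclosedVcategory} for the cardinal $\beta$. Applying \ref{boundingrightadjointthm} to the $\beta$-bounding right adjoint $i$, and using that $\{\scrT(T, -) \mid T \in \ob\scrT\}$ is the generator of $[\scrT, \V]$ by \ref{locallyboundedpresheafcat}, the $\V$-category $\W$ acquires the enriched $(\E, \M)$-generator $\h = \{L\scrT(T, -) \mid T \in \ob\scrT\}$ of enriched $\beta$-bounded objects. Since the Day convolution of representables is again representable, $\scrT(T, -) \tensor_\Day \scrT(T', -) \cong \scrT(T \tensor_\scrT T', -)$, strong monoidality of $L$ gives for $H = L\scrT(T, -)$ and $H' = L\scrT(T', -)$ in $\h$ that
\[ H \tensor_\W H' \cong L\bigl(\scrT(T, -) \tensor_\Day \scrT(T', -)\bigr) \cong L\scrT(T \tensor_\scrT T', -) \in \h, \]
so $H \tensor_\W H'$ is enriched $\beta$-bounded; likewise $I_\W = L\scrT(I_\scrT, -) \in \h$ is enriched $\beta$-bounded, giving the first two conditions. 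For the third, each $[W, -]_\W = [iW, i(-)]_\Day$ preserves the right class: recalling that the right class of $\W$ consists of $\V$-natural transformations between models that are pointwise in $\M$ (as $\M_\W = \M \cap \mor\W$ by \ref{orthoreflective}), a pointwise-$\M$ map $\gamma$ is sent by $[iW, -]_\Day$ to the transformation computed pointwise as $\int_S [iW(S), \gamma_{(-) \tensor_\scrT S}]$, which again lies pointwise in $\M$ because $\M$ is stable under cotensoring and under limits.

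This shows $\W$ is a locally $\beta$-bounded closed $\V$-category (and $\E$-cowellpowered by \ref{limittheorycor}), which in the special case where every weight in $\Phi$ is $\alpha$-bounded-small specializes with $\beta = \alpha$ to a locally $\alpha$-bounded closed $\V$-category. The corresponding statements about $\W_0$ then follow immediately from \ref{locbdclosedprop}. The only genuinely delicate point in the whole argument is the exponential-ideal verification of the second paragraph, where the $\Phi$-continuity of the Day internal hom hinges on the separate $\Phi$-continuity of $\tensor_\scrT$; everything else is bookkeeping with the already-established local boundedness and the formal output of Day's reflection theorem.
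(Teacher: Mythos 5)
Your proposal is correct and follows essentially the same route as the paper: reflectivity of $\Phi\text{-}\Cts(\scrT,\V)$ in the Day convolution category via \ref{limittheorycor}, the exponential-ideal check using separate $\Phi$-continuity of $\tensor_\scrT$ (your chain of isomorphisms is just an unwound form of the paper's factorization of $\llbracket F,M\rrbracket$ through three $\Phi$-continuous $\V$-functors), Day's reflection theorem, and then the verification of the three conditions of \ref{locbdclosedVcategory} using the representable generators and pointwise nature of $\M_\W$. The only cosmetic difference is that the paper observes the representables are already models, so the generator is $\{\y T\}$ rather than $\{L\y T\}$, which changes nothing.
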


\begin{proof}
Letting $\W = \Phi\text{-}\Cts(\scrT, \V)$, we first show that $\W$ is a symmetric monoidal closed $\V$-category, for which we use Day's reflection theorem for closed categories \cite{Dayclosed}. Since $\scrT$ is a small symmetric monoidal $\V$-category, we may equip the presheaf $\V$-category $[\scrT, \V]$ with the structure of a symmetric monoidal closed $\V$-category by \cite[3.3, 3.6]{Dayclosed}, whose symmetric monoidal product $\tensor_\Day$ is given by Day convolution, and whose unit object is the representable $\scrT\left(I_\scrT, -\right)$. By \ref{limittheorycor} (and \ref{VlocallyboundedVcat}) with $\C = \V$, we know that $\W$ is reflective in $[\scrT, \V]$. To apply Day's reflection theorem, we now show that if $F,M:\scrT \rightarrow \V$ are $\V$-functors and $M$ is a $\scrT$-model, then the internal hom $\llbracket F, M \rrbracket : \scrT \to \V$ in $[\scrT,\V]$ is still a $\scrT$-model. By definition
\begin{equation}\label{eq:hom_in_w} \llbracket F, M \rrbracket T = \int_{X \in \scrT} \V\left(FX, M\left(T \tensor_\scrT X\right)\right) = \biggl\{F,M\bigl(T \otimes_\scrT (-)\bigr)\biggr\}\end{equation}
$\V$-naturally in $T \in \scrT$, so $\llbracket F, M \rrbracket$ is the composite
\begin{equation}\label{eq:comp_day_hom}\scrT \xrightarrow{\widetilde{\otimes}} [\scrT,\scrT] \xrightarrow{[\scrT,M]} [\scrT,\V] \xrightarrow{\{F,-\}} \V\end{equation}
where $\widetilde{\otimes}$ is defined by $\widetilde{\otimes}\:T = T \otimes_\scrT (-)$.  But all three $\V$-functors in \eqref{eq:comp_day_hom} preserve $\Phi$-limits, since $\scrT$ is a symmetric monoidal $\Phi$-theory and $M$ is a $\scrT$-model, so $\llbracket F, M \rrbracket$ preserves $\Phi$-limits.

We now conclude by \cite[1.2]{Dayclosed} that $\W$ is a symmetric monoidal closed $\V$-category whose monoidal product is the reflection of the Day convolution monoidal product in $[\scrT, \V]$, whose internal homs are as in $[\scrT, \V]$, and whose unit object is the reflection of the representable $\scrT\left(I_\scrT, -\right)$, which is just $\scrT\left(I_\scrT, -\right)$ itself (because $\scrT\left(I_\scrT, -\right)$ is already a $\scrT$-model).   

Since $\V$ is $\E$-cowellpowered, we know by \ref{limittheorycor} that there is some $\beta \geq \alpha$ such that $\W$ is a locally $\beta$-bounded and $\E$-cowellpowered $\V$-category, and if every weight in $\Phi$ is $\alpha$-bounded-small then we may take $\beta = \alpha$.  In view of \ref{locbdclosedprop}, it suffices to show that $\W$ is a locally $\beta$-bounded closed $\V$-category.

The factorization system $(\E_\W,\M_\W)$ on $\W$ is induced from the pointwise factorization system on $[\scrT, \V]$ via the reflection $[\scrT, \V] \to \W$, so that $\M_\W$ consists of the $\V$-natural transformations that are pointwise in $\M$ (see the proof of \ref{orthoreflective}).  Hence, in view of the formula \eqref{eq:hom_in_w}, $\M_\W$ is stable under $\llbracket M, -\rrbracket$ since $\M$ is stable under weighted limits.  The $\V$-enriched $(\E_\W, \M_\W)$-generator $\h$ associated to $\W$ is the reflection of the $\V$-enriched $(\E, \M)$-generator of $[\scrT, \V]$, the latter being the set of representables $\mathscr{R} = \left\{ \y T \ \mid T \in \ob\scrT\right\}$ by \ref{locallyboundedpresheafcat}, where $\y:\scrT^\op \rightarrow [\scrT,\V]$ is the Yoneda embedding, but since the representables are already $\scrT$-models, $\h = \mathscr{R}$.  Hence, every representable $\y T$ ($T \in \ob\scrT$) is a $\V$-enriched $\beta$-bounded object of $\W$.  In particular, the unit object $\y  I_\scrT$ of $\W$ is a $\V$-enriched $\beta$-bounded object. Also, since $\y $ is strong monoidal with respect to Day convolution, we find that for all objects $T$ and $T'$ of $\scrT$, $\y T \otimes_\Day \y T' \cong \y (T \otimes_\scrT T')$ is a $\V$-enriched $\beta$-bounded object of $\W$ and is isomorphic to $\y T \otimes_\W \y T'$.
\end{proof}

\noindent We now develop some classes of examples of enriched symmetric monoidal limit theories and their locally bounded closed $\V$-categories of models. Throughout, we suppose that the given locally $\alpha$-bounded closed category $\V$ is also $\E$-cowellpowered.

\begin{egg}
\label{symmonlimtheories1}
Let $\V_\alpha$ be the full sub-$\V$-category of $\V$ consisting of the ordinary/enriched (see \ref{ordinaryequalsenrichedbounded}) $\alpha$-bounded objects of $\V$.  By \ref{unitbounded} and \ref{enrichedboundedprop}, $\V_\alpha$ contains the unit object of $\V$ and is closed under the monoidal product in $\V$, so $\V_\alpha$ is a symmetric monoidal $\V$-category.  Letting $\Phi_\alpha$ be the class of $\alpha$-bounded-small weights, we know that $\V_\alpha \hookrightarrow \V$ is closed in $\V$ under $\Phi_\alpha$-colimits by \ref{closurealphasmallcolimits}, so $\V_\alpha$ has $\Phi_\alpha$-colimits that are preserved by its monoidal product in each variable separately (since this is so for $\V$).  Letting $\scrT_\alpha$ be a skeleton of $\V_\alpha^\op$, we find that $\scrT_\alpha$ is small by \cite[2.9]{Sousa}, so $\scrT_\alpha$ is a \textit{symmetric monoidal $\alpha$-bounded-small limit theory}, i.e. a symmetric monoidal $\Phi_\alpha$-theory.  By \ref{Daylocbd}, it then follows that $\Phi_\alpha\text{-}\Cts\left(\scrT_\alpha, \V\right)_0$ is a locally $\alpha$-bounded and $\E$-cowellpowered closed category. \qed
\end{egg}

\begin{egg}
\label{symmonlimtheories2}
Let $j : \J \hookrightarrow \V$ be a small and eleutheric system of arities \cite[7.1]{EAT}, and let $\scrT$ be a commutative $\J$-theory \cite[5.9]{commutants}. Then the full sub-$\V$-category $\W = \scrTAlg^!(\V) \hookrightarrow \scrTAlg(\V)$ of \emph{normal} $\scrT$-algebras in $\V$ \cite[5.10]{EAT} is equivalent to $\scrTAlg(\V)$ \cite[5.14]{EAT} and so is a locally bounded $\V$-category by \ref{Jtheorycor}.  By \cite[3.4.1]{functional}, $\W$ is a symmetric monoidal closed $\V$-category equipped with a (lax) symmetric monoidal $\V$-adjunction $F \dashv U : \W \to \V$, where $U$ is the restriction of the $\V$-functor $U^\scrT:\scrTAlg(\V) \rightarrow \V$ of \ref{Jtheorycor}.  The left adjoint $F : \V \to \W$ is therefore strong monoidal by \cite[1.5]{Kellydoctrinal}. The Yoneda embedding $\y:\scrT^\op \rightarrow \scrTAlg(\V)$ sends each $J \in \ob\scrT = \ob\J$ to a $\scrT$-algebra $\scrT(J,-)$ that is free on $J$ (by the Yoneda lemma) and so is isomorphic to $FJ$.  Hence there is a fully faithful $\V$-functor $\scrT^\op \rightarrow \W$ that is given on objects by $J \mapsto FJ$ and therefore restricts to an equivalence $E:\scrT^\op \xrightarrow{\sim} \F$ where $\F \hookrightarrow \W$ is the full sub-$\V$-category consisting of all free normal $\scrT$-algebras on objects of $\J$.  Since $F$ is strong monoidal, $\F$ is closed under the monoidal product of $\W$ and contains its unit object, so $\scrT$ and $\F$ are symmetric monoidal $\V$-categories in such a way that $E$ is a symmetric monoidal equivalence.  The left adjoint $F$ preserves tensors, so $\F$ is closed under $\J$-tensors in $\W$, and hence $\F$ has $\J$-tensors that are preserved by its monoidal product in each variable separately (because the same is true in $\W$).  Hence $\scrT$ is a small symmetric monoidal $\J$-cotensor theory.  We therefore deduce by \ref{Daylocbd} that $\Phi\text{-}\Cts(\scrT, \V) = \scrTAlg(\V)$ is an $\E$-cowellpowered locally bounded symmetric monoidal closed $\V$-category, whose underlying ordinary category is a locally bounded closed category, while if every $J \in \ob\J$ is an enriched $\alpha$-bounded object of $\V$, then by \ref{cotensorsboundedsmall} and \ref{Daylocbd} we may refine these conclusions by replacing the phrase \textit{locally bounded} with \textit{locally $\alpha$-bounded}. \qed     
\end{egg}

\bibliographystyle{amsplain}
\bibliography{mybib}

\end{document}